\begin{document}

\title{Componentwise Polish groupoids and equivalence relations}
\author{Ruiyuan Chen}
\date{}
\maketitle

\begin{abstract}
We study Borel equivalence relations equipped with a uniformly Borel family of Polish topologies on each equivalence class, and more generally, standard Borel groupoids equipped with such a family of topologies on each connected component.
Such ``componentwise Polish topologies'' capture precisely the topological information determined by the Borel structure of a Polish group action, by the Becker--Kechris theorem.
We prove that conversely, every abstract such Borel componentwise Polish groupoid obeying suitable axioms admits a Borel equivalence of groupoids to a global open Polish groupoid.
Together with known results, this implies that every such groupoid is Borel equivalent to an action groupoid of a Polish group action; in particular, the induced equivalence relations are Borel bireducible.

Our results are also valid for Borel groupoids with componentwise quasi-Polish topologies; and under stronger uniformity assumptions, we show that such groupoids in fact themselves admit global quasi-Polish topologies.
As a byproduct, we also generalize several standard tools for Polish groups and their actions to the setting of componentwise quasi-Polish groupoids, including Vaught transforms, Effros's theorem on orbits, and the open mapping theorem.

\let\thefootnote=\relax
\footnotetext{2020 \emph{Mathematics Subject Classification}:
    03E15, 
    22A22, 
    22F10. 
}
\footnotetext{\emph{Key words and phrases}:
    Polish group,
    quasi-Polish groupoid,
    Borel equivalence relation,
    Borel groupoid,
    Borel bundle,
    topological realization,
    Becker--Kechris theorem,
    Vaught transform,
    idealistic equivalence relation.
}
\end{abstract}

\tableofcontents

\section{Introduction}
\label{sec:intro}

This paper is a contribution to the descriptive set theory and dynamics of Polish groups, groupoids, and Borel equivalence relations.
A \defn{Polish group} is a topological group whose topology is second-countable and completely metrizable.
Over the past several decades, Polish groups and their actions have played a central role in descriptive set theory and its connections to other areas, such as model theory, ergodic theory, representation theory, coarse geometry, Ramsey theory, etc.
See
\cite{becker-kechris:polgrp},
\cite{hjorth:book},
\cite{kechris-pestov-todorcevic:ramsey},
\cite{gao:idst},
\cite{kechris:ergodic},
\cite{rosendal:coarse}.

A fundamental tool in the study of Polish group actions $G \actson X$ is to consider, on each orbit $G \cdot x \subseteq X$, the quotient topology induced from the group topology on $G$ via the action $G ->> G \cdot x$.
The family of such quotient topologies, which we call the \defn{orbitwise topology}, forms a local topological structure on $X$ that is intermediate between the Borel sigma-algebra and a global Polish topology rendering the action continuous, and serves as the basis for many important techniques and arguments.
These include \emph{Vaught transforms} \cite{vaught:invariant}, which enable working with the orbitwise topology in a ``uniformly Borel'' manner, and have been extended to certain contexts without a group action \cite{benyaacov-doucha-nies-tsankov:scott}; Hjorth's ``orbit continuity lemma'' \cite{hjorth:book} (see also \cite{lupini-panagiotopoulos:games}), which shows that the orbitwise topology is preserved by Borel homomorphisms modulo Baire category; and the theory of \emph{idealistic equivalence relations} \cite{kechris:loccpt}, \cite{kechris-louveau:hypersmooth}, which formalizes the properties of the orbitwise meager ideals induced by the orbitwise topology.

The canonicity and significance of the orbitwise topology on a Polish group action is made precise by the \emph{Becker--Kechris topological realization theorem} \cite{becker-kechris:polgrp}, which shows, in one formulation, that the Borel orbitwise open sets are precisely those which can be made open in some finer Polish topology rendering the action continuous.
In other words, the orbitwise topology encodes precisely the topological rigidity of a Polish group action that is determined already by the Borel sigma-algebra.
In \cite{chen:beckec}, we gave a new proof of the Becker--Kechris theorem that emphasized this point of view; the techniques developed in that paper form the precursor to the present work.

\subsection{Classwise Polish equivalence relations}
\label{sec:intro-bocper}

The goal of this paper is to isolate the key features of the orbitwise topology on a Polish group action that enable many of the aforementioned techniques, without reference to the action itself.
That is, we propose an axiomatic notion of a ``Borel, locally Polish'' structure on a space, and seek to develop an analog of the orbit equivalence theory of Polish group actions, to the extent possible in this more abstract context.

\begin{definition}
\label{intro:def:bocper}
A \defn{Borel-overt classwise Polish equivalence relation} consists of a Borel equivalence relation $E \subseteq X^2$ on a standard Borel space $X$, together with a Polish topology on each $E$-class, such that these topologies are ``uniformly Borel'' in the sense of the following axioms:
\begin{enumerate}[roman]
\item \label{intro:def:bocper:basis}
there exists a countable family of Borel sets $U_i \subseteq E$ such that for each $x \in X$, the fibers $(U_i)_x = \set{y | (x,y) \in U_i}$ form a basis for the topology on $[x]_E$
(``uniform second-countability'');
\item \label{intro:def:bocper:overt}
for each such $U_i$, its projection $\pi_1(U_i) = \set{x | (U_i)_x \ne \emptyset} \subseteq X$ onto the first coordinate is a Borel set
(``uniformly testable nonemptiness of basic open sets'').
\end{enumerate}
Equivalently, \cref{intro:def:bocper:overt} means that $\pi_1(U)$ is Borel for \emph{any} Borel set $U \subseteq E$ whose vertical fibers are open.
This is the intended connotation of ``overt'', a term borrowed from constructive topology \cite{spitters:overt}.
(See \cref{def:fib-bor-qpol,rmk:fibqpolgpd}.)
\end{definition}

The motivating example is an orbit equivalence relation $E$ of a Borel action of a Polish group $G \actson X$, equipped with the orbitwise topology described above.
Note however that not every Polish group action induces a Borel equivalence relation.
Indeed, those that do are precisely those for which the orbitwise topology obeys the uniform second-countability axiom \cref{intro:def:bocper:basis} above, by another result of Becker--Kechris \cite[7.1.2]{becker-kechris:polgrp}.
See \cref{ex:polgrp-orbit}.

Our main result for Borel-overt classwise Polish equivalence relations is a representation theorem showing that, in at least one sense, all examples are reducible to this one:

\begin{theorem}[\cref{thm:bocqper-polact}; see also \cref{rmk:fibpolgpd-realiz}]
\label{intro:thm:bocper-polact}
Every Borel-overt classwise Polish equivalence relation $(X,E)$ is Borel bireducible with the orbit equivalence relation of a free Borel action of a Polish group $G \actson Y$.
\end{theorem}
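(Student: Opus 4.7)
The plan is to exhibit $(X,E)$ as the orbit equivalence relation of a Borel-overt componentwise Polish groupoid satisfying the hypotheses of the main realization theorem announced in the abstract, and then pipe the output through the two known reductions ``global open Polish groupoid is Borel equivalent to an action groupoid'' and ``any Borel Polish group action can be enlarged to a free one''.

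First I would view $E \rightrightarrows X$ as the pair groupoid of the equivalence relation: $X$ as object space, $E$ as morphism space, the two coordinate projections as source and target, and the unique composition $(y,z)\circ(x,y) = (x,z)$. The source fiber at $x$ canonically bijects with $[x]_E$, so the given Polish class topologies endow every source fiber with a Polish topology. The countable Borel family $\{U_i\}$ of \cref{intro:def:bocper:basis}, re-read as Borel subsets of the morphism space $E$, witnesses uniform second-countability of these source-fiber topologies, and \cref{intro:def:bocper:overt} is exactly the overtness condition on the source projection. Continuity of composition and inversion on source fibers is automatic from the uniqueness of morphisms in a pair groupoid. Hence $E \rightrightarrows X$ qualifies as a Borel-overt componentwise Polish groupoid in the paper's sense, and its orbit equivalence relation is precisely $E$.

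Next I would apply the main realization theorem to obtain a Borel equivalence of groupoids between $E \rightrightarrows X$ and some global open Polish groupoid $\mathcal{G}$; such an equivalence is a Borel span whose legs are suitably weakly invertible morphisms, and it automatically induces a Borel bireducibility between the orbit equivalence relations on the two sides. I would then invoke the known result, explicitly cited in the abstract, that every open Polish groupoid is itself Borel equivalent to the action groupoid of a Borel action $H \actson Z$ of a Polish group; composing with the previous bireducibility yields a Borel bireducibility between $E$ and the orbit equivalence relation of $H \actson Z$.

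Finally, to promote $H \actson Z$ to a \emph{free} action, I would replace $Z$ by $Z \times H^{\mathbb{N}}$ with $H$ acting diagonally via the given action on $Z$ and by coordinatewise left translation on $H^{\mathbb{N}}$; the shift factor forces freeness, while the projection $Z \times H^{\mathbb{N}} \to Z$ together with any Borel section produces a Borel bireducibility between the two orbit equivalence relations, and hence between the one on $Z \times H^{\mathbb{N}}$ and $E$. The main obstacle is of course the realization theorem itself, which is the principal technical contribution of the paper; the surrounding steps are essentially formal --- the pair-groupoid reinterpretation of $E$, the transfer of bireducibility along a Borel equivalence of groupoids, and the standard product trick for freeness.
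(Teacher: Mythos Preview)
Your first three steps are correct and match the paper's approach exactly: regard $E$ as a groupoid, apply the main realization theorem (\cref{thm:fibqpolgpd-realiz-equiv-pol}) to get a Borel equivalence to an open Polish groupoid, then invoke \cite{chen:polgpdrep} to get a Borel equivalence to an action groupoid $H \ltimes Z$.

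Your step 4, however, is both unnecessary and incorrect as stated. The claim that the projection $Z \times H^{\mathbb{N}} \to Z$ ``together with any Borel section'' yields a bireduction is false. For a concrete counterexample, take $H = \mathbb{Z}/2$ acting trivially on a one-point space $Z$: then the orbit equivalence on $Z$ has a single class, while the orbit equivalence on $Z \times H^{\mathbb{N}} = (\mathbb{Z}/2)^{\mathbb{N}}$ (with the diagonal flip) has continuum many classes, so no bireduction exists. More generally, a section $z \mapsto (z, f(z))$ is a reduction only if one can always find $h'$ with $h' z_1 = z_2$ \emph{and} $h' f(z_1) = f(z_2)$ whenever $z_1, z_2$ are in the same orbit; a constant $f$ forces $h' = e$, and an equivariant $f$ forces smoothness.

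The paper avoids this entirely by observing that freeness is automatic: an equivalence of groupoids preserves the property of having trivial automorphism groups (isomorphic hom-sets at corresponding objects). Since $E$, as a groupoid, has $\Aut_E(x) = \{1_x\}$ for every $x$, the equivalent action groupoid $H \ltimes Z$ must also have trivial automorphism groups, i.e., $\Stab_H(z) = \{e\}$ for every $z$. That is exactly freeness of the action. So the correct fix to your argument is simply to delete step 4 and insert this one-line observation after step 3.
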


Recall that a \defn{Borel reduction} $f : (X,E) -> (Y,F)$ between equivalence relations on standard Borel spaces is a Borel map $f : X -> Y$ which descends to an injection between quotients $X/E `-> Y/F$.
For Borel-overt classwise Polish equivalence relations, this implies that the image saturation $[f(X)]_F \subseteq Y$ is Borel and there is a Borel reduction $g : ([f(X)]_F, F) -> (X,E)$ such that $f, g$ descend to inverses of each other.
This is essentially due to idealisticity of such equivalence relations, as witnessed by the classwise meager ideal; see \cref{thm:fibqpolgpd-er-idl,thm:fibqpolgpd-equiv}.

We note that \cref{intro:thm:bocper-polact}, as well as idealisticity, fail for ``Borel classwise Polish equivalence relations'' not obeying the overtness axiom \labelcref{intro:def:bocper}\cref{intro:def:bocper:overt}.
This is analogous to the distinction between orbit equivalence relations of Polish group actions, versus arbitrary Borel equivalence relations: for example, a closed equivalence relation can have a quotient space which is analytic but not Borel, but this cannot happen for a Polish group action, due to idealisticity.

\subsection{Fiberwise Polish groupoids}
\label{sec:intro-bofpg}

\Cref{intro:def:bocper} is a special case of an analogous notion for groupoids, which are a more natural context in which to study such ``local topologies''.
Indeed, the proof of \cref{intro:thm:bocper-polact} works by treating $E$ as a groupoid, and is not made substantially easier by $E$ being an equivalence relation.

A \defn{groupoid} $(X,G)$ is, in short, a category with inverses, and consists concretely of two sets $X$ (of \emph{objects}) and $G$ (of \emph{morphisms}), a \emph{source} or ``domain'' map $\sigma : G -> X$, a \emph{target} or ``codomain'' map $\tau : G -> X$, as well as multiplication, identity, and inverse operations on morphisms, satisfying the usual axioms such as associativity and matching endpoint conditions.

A groupoid $(X,G)$ can be regarded as the common generalization of a group (when $X = \set{*}$ is a singleton), an equivalence relation (when $G \subseteq X^2$ and $\sigma, \tau$ are the projections), and a group action (when $G$ consists of morphisms $x -> gx$ ``labeled'' by acting group elements $g$).
For this reason, many notions in dynamics admit natural generalizations from group actions and/or equivalence relations to groupoids.
In particular, a descriptive set theory of Polish groupoids has been developed \cite{ramsay:polgpd}, \cite{lupini:polgpd}, paralleling and generalizing that of Polish groups and their actions.

Similarly to \cref{intro:def:bocper}, we propose an axiomatic notion of ``Borel, locally Polish'' groupoid, where the topology lives on the connected components of the groupoid.
Note however that the objects in a connected component do not in general capture the algebraic structure of the groupoid on that component (in the extreme case of a group, the objects are trivial).
Thus in contrast to \cref{intro:def:bocper}, where we took the classwise topology as primitive and then lifted it to the fibers of $E$ to express the uniformity condition, here we must take the fiberwise topology as primitive and then explicitly impose invariance between the fibers within a component.

\begin{definition}[see \cref{def:fibqpolgpd}]
\label{intro:def:fibpolgpd}
A \defn{Borel-overt fiberwise Polish groupoid} consists of a standard Borel groupoid $(X,G)$, together with a Polish topology on each fiber of the source map $\sigma^{-1}(x)$, i.e., on the set of morphisms starting at each $x \in X$; such that these topologies are invariant under right translation: for any morphism $g : x -> y \in G$, the bijection
\begin{eqaligned*}
\sigma^{-1}(y) &--> \sigma^{-1}(x) \\
h &|--> hg
\end{eqaligned*}
is a homeomorphism; and the topologies are ``uniformly Borel'' over all fibers, meaning
\begin{enumerate}[roman]
\item \label{intro:def:fibpolgpd:basis}
there exists a countable family of Borel sets $U_i \subseteq G$ such that for each $x \in X$, the fibers $U_i \cap \sigma^{-1}(x)$ form a basis for the topology on $\sigma^{-1}(x)$
(``uniform second-countability'');
\item \label{intro:def:fibpolgpd:overt}
for each such $U_i$, $\sigma(U_i) \subseteq X$ is a Borel set
(``uniformly testable nonemptiness of open sets'').
\end{enumerate}
\end{definition}

The motivating example is the action groupoid induced by an arbitrary Borel action of a Polish group (not necessarily with a Borel orbit equivalence relation); see \cref{ex:polgrp-action}.
We again prove that all examples are ``equivalent'' to this one:

\begin{theorem}[\cref{thm:fibqpolgpd-polact}; see also \cref{rmk:fibpolgpd-realiz}]
\label{intro:thm:fibpolgpd-polact}
Every Borel-overt fiberwise Polish groupoid admits a Borel equivalence of groupoids to an action groupoid of a Polish group action.
\end{theorem}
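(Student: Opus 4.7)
The plan is a two-step strategy which is foreshadowed in the paper's abstract. The first and main step is to show that every Borel-overt fiberwise Polish groupoid $(X,G)$ admits a Borel equivalence of groupoids to a \emph{global} open Polish groupoid $(Y,H)$, i.e.\ one carrying Polish topologies compatible with the Borel structures under which source, target, composition, and inversion are continuous, and source and target are open. The second step is to appeal to the Becker--Kechris-style topological realization theorem for open Polish groupoids (due to Ramsay and Lupini), which produces a Polish group action whose action groupoid is Borel equivalent to $(Y,H)$. Composing the two Borel equivalences of groupoids then yields the theorem.

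The first step I would carry out by directly generalizing the proof of Becker--Kechris, with the uniformly Borel fiberwise basis $\{U_i\} \subseteq G$ playing the role of a countable subbasis for a ``group topology''. One first constructs a compatible Polish topology on $X$ by taking the projections $\sigma(U_i)$, which are Borel by the overtness axiom \labelcref{intro:def:fibpolgpd}\cref{intro:def:fibpolgpd:overt}, and closing under the Boolean and groupoid Vaught-transform operations to obtain a countable family that separates points and generates a Polish refinement of the Borel structure. Then $G$ itself is topologized by taking the fiberwise basis $\{U_i\}$ together with $\sigma$-pullbacks of the basic opens on $X$, refined in the same way. The right-translation invariance axiom is then used to promote fiberwise continuity and openness of the multiplication map to global joint continuity. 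A further change-of-topology argument, of the kind standard in Becker--Kechris-style theorems, upgrades the resulting space to a Polish groupoid while keeping the Borel structure unchanged, yielding the required equivalence to $(Y,H)$.

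The main obstacle lies in this global topologization: ensuring simultaneously that the new topology on $G$ is Polish, that $\sigma$ is open (which uses overtness in an essential way), and that multiplication $m : G \times_X G \to G$ is jointly continuous rather than merely separately or fiberwise continuous. In the classical group case joint continuity is secured by the Pettis theorem; here the needed substitute is the analog of Effros's theorem and the open mapping theorem for fiberwise quasi-Polish groupoids, both of which the paper advertises as byproducts of its axiomatic development and which would serve as the principal technical inputs. A secondary difficulty is that, to invoke the open-Polish-groupoid realization theorem in the second step, one must verify that the constructed $(Y,H)$ satisfies whatever second-countability hypotheses are built into that theorem; this should follow essentially for free from the uniform second-countability axiom \labelcref{intro:def:fibpolgpd}\cref{intro:def:fibpolgpd:basis}, since the basis $\{U_i\}$ descends through the Borel equivalence.
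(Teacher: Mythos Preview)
Your two-step strategy is correct at the top level: the paper also reduces to the case of an open Polish groupoid, then cites a prior result (specifically \cite{chen:polgpdrep}, not Ramsay--Lupini) that every open Polish groupoid is Borel equivalent to a Polish group action.

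The genuine gap is in step one. You propose to topologize the \emph{original} groupoid $(X,G)$ directly, constructing a Polish topology on $X$ from $\sigma$-images of basic sets and then on $G$ from the fiberwise basis, so that multiplication becomes jointly continuous. This cannot work: \cref{ex:fibqpolgpd-disctscocy} exhibits a (countable!) Borel-overt fiberwise Polish groupoid for which the difference map $(g,h)\mapsto g^{-1}h$ is discontinuous on $\sigma$-fibers, so no global topology restricting to the given $\sigma$-fiberwise topology can make $\mu$ continuous. Even when differences are fiberwise continuous, \cref{ex:fibpolgpd-nonpol} shows a fiberwise Polish groupoid with no compatible global \emph{Polish} (as opposed to quasi-Polish) groupoid topology. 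Thus the ``upgrade to joint continuity via a Pettis-type argument'' that you flag as the main obstacle is not merely technical---it is provably obstructed on the original $(X,G)$.

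The paper's route around this is to pass to the full subgroupoid $(Y,\eval{G}_Y)$ on a Borel componentwise comeager $\*\Pi^0_2$ set $Y\subseteq X$, which is still Borel equivalent to $(X,G)$. A Solecki--Srivastava-style argument (\cref{thm:ucomqpolgpd-dense}) shows such a $Y$ can be chosen so that $\eval{G}_Y$ acquires \emph{uniformly} fiberwise continuous differences; the core \cref{thm:ucomqpolgpd} then yields a compatible open quasi-Polish groupoid topology. A second pass to a further comeager subgroupoid (\cref{thm:fibqpolgpd-pol}), using a Ramsay-type identity-neighborhood argument, upgrades quasi-Polish to Polish. The Effros and open mapping theorems you mention are in fact downstream consequences in \cref{sec:misc}, not inputs to this argument.
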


An \defn{equivalence of groupoids} (an instance of an \emph{equivalence of categories}) $F : (X,G) -> (Y,H)$ is a loosening of the natural notion of \emph{isomorphism of groupoids} that also takes morphisms \emph{within} $G, H$ into account.
A quick-and-dirty definition is that $F$ becomes an isomorphism after restricting to the full subgroupoids on subsets of objects $X' \subseteq X$ and $Y' \subseteq Y$ meeting every connected component.
Morally speaking, this means $(X,G), (Y,H)$ are the same up to containing redundant isomorphic copies of the same objects, analogous to a bireduction between equivalence relations.
See \cref{def:ftr-equiv}.
Note that the property of being an equivalence relation is preserved by an equivalence of groupoids; thus, \cref{intro:thm:fibpolgpd-polact} implies \cref{intro:thm:bocper-polact} as a special case.

As with equivalence relations, the axioms of \emph{Borel-overt fiberwise Polish groupoid} imply idealisticity of the connectedness relation, which in turn implies ``automatic existence of Borel inverse'' to a Borel equivalence of groupoids; see again \cref{thm:fibqpolgpd-er-idl,thm:fibqpolgpd-equiv}.

\subsection{Topological realization of componentwise quasi-Polish groupoids}
\label{sec:intro-comqpolgpd}

Our underlying main result on fiberwise topological groupoids does not mention groups at all:

\begin{theorem}[\cref{thm:fibqpolgpd-realiz-equiv-pol}]
\label{intro:thm:fibqpolgpd-realiz-equiv-pol}
Every Borel-overt fiberwise quasi-Polish groupoid $(X,G)$ admits a Borel equivalence of groupoids to a (globally) open Polish groupoid $(Y,H)$.
\end{theorem}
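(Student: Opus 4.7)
The plan is to produce the equivalent open Polish groupoid $(Y,H)$ by a topological refinement and ``fattening'' of the abstract groupoid data on $(X,G)$, in the spirit of the new proof of the Becker--Kechris theorem in \cite{chen:beckec}. The central difficulty is that the fiberwise quasi-Polish topologies need not glue into a global topology on $G$; the overtness axiom is precisely what enables such gluing, via a groupoid analog of the Vaught transform.

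First, I would set up a Vaught transform calculus for $(X,G)$. Given the distinguished family $(U_i)$ whose slices form bases of the source-fibers and a Borel set $A \subseteq G$, define $A^{*U_i}$ to be the set of morphisms $g$ such that the set of $h \in (U_i)_{\tau(g)}$ with $hg \in A$ is non-meager in the quasi-Polish fiber at $\tau(g)$, and similarly $A^{\triangle U_i}$ with ``comeager'' in place of ``non-meager''. Uniform second-countability reduces testing this condition to a countable Borel question modulo knowing which basic opens are nonempty, and the latter is exactly what the overtness axiom provides. One then derives in the standard way the algebraic identities of the transforms: closure under Boolean operations, compatibility with multiplication and inversion, and a decomposition $A = \bigcup_i A^{*U_i}$ for Borel $A$ with open source-fibers.

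Second, I would use these transforms to refine the Borel structures of $X$ and $G$ to a global topology. A suitable countable family of Vaught transforms on $G$, together with their $\sigma$-images in $X$, declared open, produces topologies refining the Borel structures. A change-of-topology argument at the groupoid level in the style of Kuratowski--Becker--Kechris then shows the refined topologies can be taken quasi-Polish, with source, target, multiplication, and inversion all continuous, with source and target \emph{open} --- since overtness means $\sigma$ sends basic opens to opens --- and with the prescribed fiberwise topology recovered on each source-fiber.

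Third, to upgrade from ``quasi-Polish'' to ``Polish'' and thereby exploit the flexibility of equivalence (as opposed to isomorphism) of groupoids, I would pass to a fattened object space: let $Y$ consist of pairs $(x,\alpha)$ where $\alpha$ records coherence data (for example, a uniform enumeration of a basis on the source-fiber at $x$, or a right-invariant compatible metric on the component of $x$), with $H$ the pullback of $G$ along $Y \to X$. The forgetful functor $Y \to X$ is essentially surjective on components and fully faithful, hence an equivalence of groupoids, while the extra coordinate on $Y$ supplies the separation and completeness needed to realize the global topology as Polish and still open. The main obstacle will be the joint verification of the second and third steps: openness of $\sigma$ and $\tau$, joint continuity of multiplication, complete metrizability, and consistency with the prescribed fiberwise topologies all at once. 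Overtness is what keeps the Vaught transforms Borel; right-invariance of the fiberwise topologies is what makes multiplication continuous in the refined topology; and the object-fattening promotes quasi-Polish to Polish --- coordinating these three mechanisms is the technical heart of the argument.
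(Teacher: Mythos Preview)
Your plan has a genuine gap at step~2. You claim that the Vaught transform calculus plus right-invariance will yield a global quasi-Polish groupoid topology on the \emph{original} $(X,G)$, with multiplication jointly continuous. But right-invariance of $\@O_\sigma(G)$ is \emph{not} enough for this: if any compatible global groupoid topology exists, then the difference map $(g,h)\mapsto g^{-1}h$ on each $\sigma^{-1}(x)\tensor[_\tau]\times{_\tau}\sigma^{-1}(x)$ must be continuous, and the paper exhibits (in \cref{ex:fibqpolgpd-disctscocy}) a countable Borel-overt fiberwise Polish groupoid where this fails. So there is no hope of topologizing $(X,G)$ itself; one must first pass to an equivalent subgroupoid. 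The paper does this via a groupoid analog of the Solecki--Srivastava argument: using the Vaught calculus and the fiberwise Baire property, it finds a Borel weakly componentwise $\*\Pi^0_2$ dense $Y\subseteq X$ on which the full subgroupoid $\eval{G}_Y$ acquires \emph{uniformly} fiberwise continuous differences, and only then runs the topological realization (\cref{thm:ucomqpolgpd,thm:ucomqpolgpd-qpol}) to produce an open quasi-Polish groupoid.

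Your step~3 also diverges from the paper and is not obviously workable. You propose to ``fatten'' the object space with auxiliary data (bases, metrics) to promote quasi-Polish to Polish. The paper instead shows (\cref{ex:fibpolgpd-nonpol}) that even a fiberwise \emph{Polish} open quasi-Polish groupoid can fail to admit any compatible global Polish topology, so extra fiberwise regularity data on objects does not by itself buy Polishability; what is needed is a uniform ``fundamental sequence of identity neighborhoods'' in the sense of Ramsay (\cref{thm:pcptgpd-idnbhd,thm:ucompolgpd}). The paper obtains this by a second restriction to a componentwise comeager full subgroupoid (\cref{thm:fibqpolgpd-pol}), not by enlarging the object space. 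Both equivalences in the paper are thus \emph{restrictions} to dense full subgroupoids, whereas your fattening goes in the opposite direction and would need a separate argument to control the global topology.
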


\defn{Quasi-Polish spaces} \cite{debrecht:qpol} are a canonical non-Hausdorff generalization of Polish spaces, obeying nearly all of the usual descriptive set-theoretic properties, and robust under some additional constructions that are particularly useful when working with Polish group actions and groupoids.
For instance, one characterization of quasi-Polish spaces is that they are precisely the $T_0$ quotients of Polish group actions on Polish spaces (see \cite[2.2]{chen:cbermd}).

An \defn{open topological groupoid} is a groupoid whose spaces of objects $X$ and morphisms $G$ are topological spaces, whose operations are continuous maps, and whose source map $\sigma : G -> X$ is open.
Every action groupoid of a continuous group action is an open topological groupoid; and the distinction between an open versus arbitrary (quasi-)Polish groupoid is analogous to the role of the overtness axiom \cref{intro:def:fibpolgpd:overt} in \cref{intro:def:fibpolgpd}.

In \cite{chen:polgpdrep}, we proved that every open Polish groupoid is Borel equivalent to a Polish group action.
This, together with \cref{intro:thm:fibqpolgpd-realiz-equiv-pol}, implies \cref{intro:thm:fibpolgpd-polact}, generalized to fiberwise \emph{quasi-}Polish groupoids.
Note that this generalization is new even for (globally) open quasi-Polish groupoids:

\begin{corollary}
Every open quasi-Polish groupoid is Borel equivalent to the action groupoid of a Polish group action.
(Hence in particular, the induced equivalence relations are Borel bireducible.)
\end{corollary}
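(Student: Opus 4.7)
The plan is to reduce the corollary to \cref{intro:thm:fibqpolgpd-realiz-equiv-pol} together with the main result of \cite{chen:polgpdrep}. Concretely, given an open quasi-Polish groupoid $(Y,H)$ with global quasi-Polish topology, I would first forget down to the underlying standard Borel groupoid, and equip each source fiber $\sigma^{-1}(y) \subseteq H$ with the subspace topology inherited from the global topology on $H$. The claim is that this data satisfies the axioms of a Borel-overt fiberwise quasi-Polish groupoid (\cref{intro:def:fibpolgpd}, in its quasi-Polish variant).

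The verification breaks into four standard checks. (1) Each source fiber is quasi-Polish in the subspace topology: this uses that quasi-Polish spaces are closed under the relevant subspaces arising as source fibers in an open groupoid, which I would take from the general material on quasi-Polish groupoids set up earlier in the paper (the same fiberwise picture that underlies the Becker--Kechris-style analysis in \cite{chen:beckec,chen:polgpdrep}). (2) Right translation invariance of the fiber topologies: for any $h : y \to y'$ in $H$, the map $\sigma^{-1}(y') \to \sigma^{-1}(y)$, $k \mapsto kh$, is continuous with continuous inverse since it is a restriction of the continuous groupoid multiplication, hence a homeomorphism. (3) Uniform second-countability: a countable basis $(U_i)$ for the global quasi-Polish topology on $H$ restricts fiberwise to a countable basis for each $\sigma^{-1}(y)$, and the $U_i$ are Borel. (4) Overtness: for each such basic open $U_i$, the image $\sigma(U_i) \subseteq Y$ is open since $\sigma$ is an open map, hence Borel.

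Once these are verified, \cref{intro:thm:fibqpolgpd-realiz-equiv-pol} applies to produce a Borel equivalence of groupoids from $(Y,H)$ to an (globally) open Polish groupoid $(Z,K)$. Composing with the Borel equivalence from $(Z,K)$ to an action groupoid of a Polish group action furnished by the main theorem of \cite{chen:polgpdrep} yields the desired Borel equivalence. Since Borel equivalence of groupoids preserves the property of being an equivalence relation and descends to a Borel bijection between the orbit sets (as discussed in the paragraph following \cref{intro:thm:bocper-polact}, thanks to idealisticity), the induced orbit equivalence relations on $Y$ and on the Polish $G$-space are Borel bireducible.

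The main obstacle I anticipate is step (1), namely verifying that the source fibers of an open quasi-Polish groupoid are themselves quasi-Polish in the subspace topology. In the Polish (Hausdorff) case, source fibers are closed by $T_1$-ness of the object space, but in the quasi-Polish case points need not be closed and some care is needed to identify $\sigma^{-1}(y)$ as a quasi-Polish subspace — this is presumably handled by the general theory of (open) quasi-Polish groupoids developed earlier in the paper, so in practice I would simply invoke that framework rather than redo it. The remaining steps (2)--(4), and the final assembly of the two equivalences, are essentially formal.
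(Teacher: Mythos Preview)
Your proposal is correct and matches the paper's approach exactly: the corollary is deduced by observing that an open quasi-Polish groupoid is in particular a Borel-overt fiberwise quasi-Polish groupoid (this is \cref{ex:qpolgpd-orbtop}, which handles your step (1) since singletons in a $T_0$ second-countable space are $\Pi^0_2$, hence so are $\sigma$-fibers), then applying \cref{intro:thm:fibqpolgpd-realiz-equiv-pol} followed by \cite{chen:polgpdrep}. Your anticipated obstacle is thus a non-issue, and the remaining verifications (2)--(4) are indeed routine as you say.
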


The added flexibility of the quasi-Polish context is essential for the intermediate constructions used in the proofs of the above results, even in the Polish case, as we now explain.

Note that the statements of \cref{intro:thm:bocper-polact,intro:thm:fibpolgpd-polact,intro:thm:fibqpolgpd-realiz-equiv-pol} seem somewhat suboptimal, in that the input data of a ``Borel, locally topological'' structure need not be preserved by the resulting Borel equivalence.
It is natural to ask, for instance in \cref{intro:thm:fibqpolgpd-realiz-equiv-pol}, whether it is possible to find a global groupoid topology on the original $(X,G)$, restricting to the given fiberwise topology.

In \cref{ex:fibqpolgpd-disctscocy} we show that this is not always possible.
In order for a fiberwise topological groupoid $(X,G)$ to admit a compatible global topology, the ``difference'' operation on morphisms, $(g,h) |-> g^{-1}h$, must be continuous when restricted to each fiber of the source map $\sigma : G -> X$.
This turns out to be the only ``local'' obstruction (see \cref{thm:comqpolgpd-qpol-connected}); however, we are unable to prove that this condition suffices for a global topological realization.
Therefore in \cref{def:ucomqpolgpd} we formulate a further strengthening: we say a Borel-overt fiberwise quasi-Polish groupoid $(X,G)$ has \defn{uniformly fiberwise continuous differences} if, roughly speaking, the difference map $(g,h) |-> g^{-1}h$ is continuous on $\sigma$-fibers, and this can be witnessed ``in a Borel way''; we also say that $G$ is a \defn{Borel-overt uniformly componentwise quasi-Polish groupoid} in this case.

\begin{theorem}[\cref{thm:ucomqpolgpd-qpol}]
\label{intro:thm:ucomqpolgpd-qpol}
Let $(X,G)$ be a Borel-overt uniformly componentwise quasi-Polish groupoid.
Then there exist compatible quasi-Polish topologies on $X, G$ turning $G$ into an open quasi-Polish groupoid, and restricting to the originally given fiberwise topology on $G$.
\end{theorem}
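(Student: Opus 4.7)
The plan is to construct the global topologies by hand from a countable subbase dictated by the axioms, upgrade to quasi-Polish via a change-of-topology argument, and use uniform fiberwise continuity of differences to secure global continuity of the groupoid operations.

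First I would define the topologies. Overtness applies to \emph{any} Borel subset of $G$ with open vertical $\sigma$-fibers, including all finite intersections $U_{i_1} \cap \cdots \cap U_{i_n}$, so each $\sigma(U_{i_1} \cap \cdots \cap U_{i_n}) \subseteq X$ is Borel; there are countably many such sets. A standard change-of-topology theorem in the quasi-Polish setting (presumably developed earlier in the paper) then yields a compatible quasi-Polish topology $\mathcal{T}_X$ on $X$ making all of these open. I would declare $\mathcal{T}_G$ to be the topology on $G$ generated by the subbase $\{U_i\}_{i} \cup \{\sigma^{-1}(V) : V \in \mathcal{T}_X\}$. Several properties then follow immediately: $\mathcal{T}_G$ restricts on each fiber $\sigma^{-1}(x)$ to the given fiberwise topology (since $\sigma^{-1}(V)$ restricts trivially there); and $\sigma$ is continuous by construction and open because it sends each basic $\mathcal{T}_G$-open to a finite intersection of $\sigma(U_i)$'s with some $V \in \mathcal{T}_X$, all of which lie in $\mathcal{T}_X$.

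The substantive work is (a) verifying that $\mathcal{T}_G$ is quasi-Polish and (b) deriving continuity of the remaining groupoid operations. For (a), I would realize $(G, \mathcal{T}_G)$ as a $\Pi^0_2$ subspace of $X \times \mathbb{S}^{\omega}$ via $g \mapsto (\sigma(g), (\mathbf{1}_{U_i}(g))_i)$, encoding as $\Pi^0_2$ conditions the coherence of the pointwise filter data with an actual point of the fiber (via the Borel structure of $G$ and uniform second-countability). For (b), I would extract from uniform fiberwise continuity of differences, for each basic $U_i$ and each composable pair $(g,h) \in G \times_{\sigma} G$ with $g^{-1}h \in U_i$, a Borel-parametrized choice of basic opens $U_j, U_k$ around $g, h$ whose fiberwise difference lands in $U_i$ over some $\mathcal{T}_X$-neighborhood $W$ of $\sigma(g)$; this exhibits continuity of the difference map in $\mathcal{T}_G$, from which continuity of inversion, multiplication, and target follow by composition with the identity section.

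The main obstacle is orchestrating the interplay between (a) and (b): the subbase must be rich enough that uniform fiberwise continuity truly witnesses global continuity of multiplication, yet structured enough that the resulting topology remains quasi-Polish. Realistically I expect one must iterate the change-of-topology step to absorb the auxiliary Borel sets arising as witnesses of uniform fiberwise continuity (and as images $\sigma(\cdot)$ of further finite intersections of such witnesses), while keeping the family countable and closed under the relevant operations; the precise axiomatization of ``uniformly componentwise quasi-Polish'' in \cref{def:ucomqpolgpd} should be tailored precisely to make this closure process terminate.
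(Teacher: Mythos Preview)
Your topology $\mathcal{T}_G$ is generated by a countable Borel $\sigma$-fiberwise basis $\{U_i\}$ together with $\sigma^{-1}(\mathcal{T}_X)$. This is one-sided: it restricts correctly on $\sigma$-fibers, but there is no reason for $\tau$, $\nu$, or $\mu$ to be continuous. Your proposed fix, to feed the witnesses of the uniform condition back into the subbase, runs into a structural obstruction. The axiom $\mu^{-1}(\mathcal{BO}_\sigma(G)) \subseteq \mathcal{BO}_\tau(G) \otimes_X \mathcal{BO}_\sigma(G)$ produces left factors that are $\tau$-\emph{fiberwise} open, not $\sigma$-fiberwise open; adjoining such sets to $\mathcal{T}_G$ will in general change the restriction of $\mathcal{T}_G$ to each $\sigma^{-1}(x)$, destroying the very conclusion you are trying to prove. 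The claim that continuity of inversion, multiplication, and target ``follow by composition with the identity section'' from continuity of the difference map is also not right: the difference $(g,h)\mapsto g^{-1}h$ is only defined on pairs with common source \emph{and} common target, so plugging in $1_{\sigma(g)}$ does not recover $g^{-1}$ unless $g$ is already an automorphism, and there is no non-circular route from here to continuity of $\tau$.

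The paper resolves exactly this difficulty by first identifying a canonical two-sided sigma-topology $\mathcal{BO}_G(G) \subseteq \mathcal{BO}_\sigma(G) \cap \mathcal{BO}_\tau(G)$, defined via Vaught transforms as $\mathcal{BO}_\sigma(G) \circledast \mathcal{BO}_\tau(G)$ (Baire-category images under $\mu$). The hard step (\cref{thm:ucomqpolgpd}) is showing $\mu^{-1}(\mathcal{BO}_G(G)) \subseteq \mathcal{BO}_G(G) \otimes_X \mathcal{BO}_G(G)$: this uses associativity of $\mu$ together with $\exists^*_\mu$ to trade the $\tau$-fiberwise witnesses from the uniform axiom for genuinely componentwise-open sets, and a Sierpi\'nski-type quotient argument to show $\mathcal{BO}_G(G)$ is a compatible sigma-topology. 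Only then does the $\omega$-iteration you describe (\cref{thm:stopgpd}) go through, now safely inside $\mathcal{BO}_G(G)$ where the $\sigma$-fiberwise restriction is preserved. Your proposal is missing the Vaught-transform machinery that makes this two-sidedness possible.
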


This is our main topological realization theorem for ``Borel, locally topological'' groupoids, to which all aforementioned results reduce.
To derive \cref{intro:thm:fibqpolgpd-realiz-equiv-pol} from it, there are two additional steps needed, both of which involve passing to an equivalent ``componentwise comeager'' subgroupoid.
Namely, we must improve the original fiberwise quasi-Polish groupoid to a uniformly componentwise quasi-Polish groupoid; and we must make the resulting quasi-Polish groupoid Polish.

To handle the first issue, we use a groupoid version of an argument of Solecki--Srivastava \cite{solecki-srivastava:lpolgrp}.
Note that the special case of \cref{intro:thm:fibqpolgpd-realiz-equiv-pol} for a group $G$ says that a standard Borel group equipped with a compatible right-translation-invariant (quasi-)Polish topology is already a Polish group; this is a weak form of the main result of \cite{solecki-srivastava:lpolgrp}.
Running an analog of the argument of \cite{solecki-srivastava:lpolgrp} in the context of \cref{intro:thm:fibqpolgpd-realiz-equiv-pol} yields instead a ``componentwise comeager'' subgroupoid with uniformly fiberwise continuous differences, on which we may then apply \cref{intro:thm:ucomqpolgpd-qpol}.

In order to obtain a global Polish groupoid topology, one might hope for an analog of \cref{intro:thm:ucomqpolgpd-qpol} that assumes the given fiberwise topology is fiberwise Polish.
In \cref{ex:fibpolgpd-nonpol}, we show that this is insufficient to ensure global Polishability.
Instead, in \cref{thm:ucompolgpd} we give a more involved characterization of the Borel-overt uniformly componentwise quasi-Polish groupoids $(X,G)$ which admit a compatible global Polish topology.
Roughly speaking, the criterion amounts to the existence of ``fundamental sequences of identity neighborhoods $W \subseteq G$'' containing \emph{all} identity morphisms $1_x$; the connection between this condition and metrizability was first shown by \cite{ramsay:polgpd}, and was used as a key ingredient in \cite{chen:polgpdrep}.
We then show in \cref{thm:fibqpolgpd-pol} that this condition can always be achieved, again after passing to a componentwise comeager subgroupoid.

\subsection{Componentwise topologies}
\label{sec:intro-orbtop}

We now give an overview of our approach to proving our main topological realization theorem \labelcref{intro:thm:ucomqpolgpd-qpol}, which involves systematically developing the theory of Borel-overt fiberwise quasi-Polish groupoids in a manner that we believe to be of independent interest.

Given a topological groupoid $(X,G)$, for each object $x \in X$, the target map $\tau : G -> X$ restricts to a continuous map from the set $\sigma^{-1}(x)$ of morphisms starting at $x$ onto the connected component $[x]_G \subseteq X$ of $x$.
We define the \defn{componentwise topology on objects} $\@O_G(X)$ of a fiberwise topological groupoid to be the disjoint union of the quotient topologies induced by all such $\tau : \sigma^{-1}(x) ->> [x]_G$; this serves as an upper bound on any compatible global groupoid topology on $X$.
We write $\@{BO}_G(X) \subseteq \@O_G(X)$ for the Borel, componentwise open sets.
When $G$ is already an open quasi-Polish groupoid, it follows from the version of the Becker--Kechris theorem from \cite{chen:beckec} that any $B \in \@{BO}_G(X)$ can in fact be made open in a finer compatible quasi-Polish topology.

\begin{figure}
\centering
\begin{tikzcd}[
    column sep=3em,
    supseteq/.style={hookleftarrow},
    preimage/.style={dashed},
    exists*/.style={dotted},
]
&[-1em]&[-1em]&[1em]
\@{BO}_\sigma(G)
    \drar[supseteq]
    \ar[dddr, exists*, two heads, "\exists^*_\tau"'{inner sep=0pt, pos=.2}]
&[1em]
\\[-1em]
\text{morphisms} &
\@B(G)
    \rar[supseteq]
    \ar[dd, exists*, shift right, "\exists^*_\sigma"'{pos=.2}]
    \ar[dd, exists*, shift left, "\exists^*_\tau"{pos=.2}]
    &
\@{BO}_\Box(G)
    \urar[supseteq]
    \drar[supseteq]
    &&
\@{BO}_G(G)
    \rar[supseteq]
    &
\@O(G)
    \ar[dd, exists*, looseness=1.5, shift right, bend right=45, "\sigma"'{}]
    \ar[dd, exists*, looseness=1.5, shift left, bend left=45, "\tau"{}]
\\[-1em]
&&& \@{BO}_\tau(G)
    \urar[supseteq]
    \ar[dr, exists*, two heads, "\exists^*_\sigma"'{inner sep=0pt}]
\\
\text{objects} &
\@B(X)
    \ar[rrr,supseteq]
    \ar[uuurr, preimage, "\sigma^{-1}"{pos=.2, inner sep=0pt}]
    \ar[urr, preimage, "\tau^{-1}"'{inner sep=0pt}]
    &&&
\@{BO}_G(X) \rar[supseteq]
    \ar[uu, preimage, shift left, "\sigma^{-1}"]
    \ar[uu, preimage, shift right, "\tau^{-1}"']
    &
\@O(X)
    \ar[uu, preimage, shift left, "\sigma^{-1}"]
    \ar[uu, preimage, shift right, "\tau^{-1}"']
\end{tikzcd}
\caption{Fiberwise and componentwise sigma-topologies on spaces of morphisms $G$ and objects $X$, for a Borel-overt fiberwise quasi-Polish groupoid $(X,G)$, or an open quasi-Polish groupoid (for $\@O(G), \@O(X)$).
Solid arrows are inclusions; dashed arrows are preimage; dotted arrows are category quantifiers for the respective fiberwise topology.
See \cref{sec:orbtop} for precise definitions.}
\label{fig:topologies}
\end{figure}
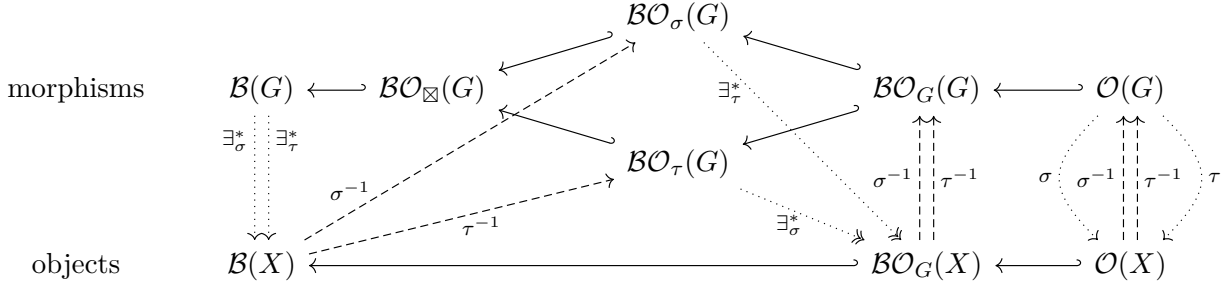

Via an analogous but more involved computation, one may determine, from the given fiberwise topology, a canonical upper bound on the topology on the space of morphisms $G$,
which we call the \defn{componentwise topology on morphisms} $\@O_G(G)$.
See \cref{sec:orbtop} for details.

\Cref{fig:topologies} depicts the Borel parts of these canonical topologies on a Borel-overt fiberwise quasi-Polish groupoid $(X,G)$, defined starting from the $\sigma$-fiberwise topology which we denote by $\@O_\sigma(G)$.
By the Becker--Kechris argument noted above, if \cref{intro:thm:ucomqpolgpd-qpol} were true, then $\@{BO}_G(X), \@{BO}_G(G)$ would be the directed unions of all compatible open quasi-Polish groupoid topologies $\@O(X), \@O(G)$.
Note that $\@{BO}_G(X), \@{BO}_G(G)$ are not themselves topologies, but rather \emph{sigma-topologies}, i.e., closed under countable unions only.
Nonetheless, we may infer that $X, G$ equipped with $\@{BO}_G(X), \@{BO}_G(G)$ should form an ``open sigma-topological groupoid''.
Our core technical result, \cref{thm:ucomqpolgpd}, shows that this is indeed the case.
Proving ``sigma-continuity'' of multiplication is the hardest part, and requires the uniform fiberwise continuity of differences assumption in \labelcref{intro:thm:ucomqpolgpd-qpol}.

\subsection{Point-free methods}
\label{sec:intro-localic}

Our approach to producing compatible Polish and quasi-Polish topologies follows the approach to the Becker--Kechris theorem from \cite{chen:beckec}, and is ultimately inspired by connections to \emph{point-free topology} (also known as \emph{locale theory}); see \cite{chen:borloc} for a comprehensive survey.

A topology may be seen as an algebraic structure equipped with the operations of finite $\cap$ and arbitrary $\bigcup$ (called a \emph{frame}).
The quasi-Polish topologies may be characterized as precisely those which can be presented by countably many generators and relations \cite{heckmann:qpol}.
By general universal algebra, a retract of a countably presented algebra remains so; because $\cap$ is finitary, it may be absorbed into the presentation, so that it is enough to have a $\bigcup$-preserving retract \cite{joyal-tierney:locgpd}, \cite{johnstone:stone}.
One instance of this is classical: Sierpiński's theorem \cite[8.19]{kechris:cdst}, \cite[Theorem~40]{debrecht:qpol} shows that an open $T_3$ ($T_0$) quotient of a (quasi-)Polish space is still (quasi-)Polish, and can be proved by treating the quotient topology as a union-preserving retract via image and preimage.\break
In \cite[2.7.5]{chen:beckec}, we gave a generalization of Sierpiński's theorem using ``Baire-categorical image'', i.e., Vaught transforms in the context of groupoids; we use this result (\cref{thm:sierpinski-exists*}) to derive \cref{intro:thm:ucomqpolgpd-qpol} from the aforementioned \cref{thm:ucomqpolgpd} for sigma-topological groupoids.

In this paper, compared to \cite{chen:beckec}, we are less pedantic about doing everything point-free, and use point-based arguments when they are simpler.
Nonetheless, we often find point-free reasoning more convenient, manipulating open and Borel sets algebraically without mentioning their elements.
Indeed, we will often manipulate \emph{collections} of sets as a whole; this allows for cleaner and more concise calculations when dealing with the various topologies that we introduce.

Because this writing style may be less familiar, we give a brief illustration here, by recalling a classical result, Pettis's automatic continuity theorem for Polish groups.
The flavor of its proof is emblematic of several in this paper (see e.g., \cref{thm:vaught-fibact,thm:fibtop-meager-exists*,thm:ucomqpolgpd}).

\begin{theorem}[Pettis]
\label{intro:thm:pettis}
The topology of a Polish group $G$ is uniquely determined by its group structure and its Borel sigma-algebra.
Namely, $U \subseteq G$ is open iff its preimage under the group multiplication $\mu : G^2 -> G$ is a countable union of Borel rectangles.
\end{theorem}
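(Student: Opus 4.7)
The plan is to prove the stated characterization of openness, from which the uniqueness of the topology follows immediately: the right-hand condition refers only to the group multiplication $\mu$ and to Borel rectangles, hence only to the group structure and the Borel $\sigma$-algebra.

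For the forward direction, if $U \subseteq G$ is open, then by continuity of $\mu$ its preimage is open in $G^2$; by second-countability, $G^2$ has a countable basis of open rectangles $V_i \times V_j$, and so $\mu^{-1}(U)$ is a countable union of such rectangles, which are Borel.

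For the reverse direction, suppose $\mu^{-1}(U) = \bigcup_n A_n \times B_n$ with $A_n, B_n$ Borel, and fix an arbitrary $u \in U$; I aim to produce an open neighborhood of $u$ contained in $U$. The fiber $\mu^{-1}(u)$ contains the point $(h, h^{-1}u)$ for every $h \in G$, since $\mu(h, h^{-1}u) = u$. The condition $(h, h^{-1}u) \in A_n \times B_n$ translates to $h \in D_n := A_n \cap u B_n^{-1}$, and $D_n$ is Borel because the group operations are Borel (indeed continuous). Thus $G = \bigcup_n D_n$, so by the Baire category theorem some $D_n$ is non-meager. By the classical Pettis lemma applied to the non-meager Borel set $D_n$, the product $D_n D_n^{-1}$ contains an open identity neighborhood $V$. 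Since $D_n \subseteq A_n$ and $D_n \subseteq u B_n^{-1}$, we obtain
\[
V \;\subseteq\; D_n D_n^{-1} \;\subseteq\; A_n \cdot (u B_n^{-1})^{-1} \;=\; A_n B_n u^{-1},
\]
hence $Vu \subseteq A_n B_n \subseteq U$, giving the desired open neighborhood of $u$.

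The main obstacle is the reverse direction, where one must pass from a purely Borel covering of $\mu^{-1}(U)$ to an honest open neighborhood in $U$. This is exactly the point at which Baire category enters, via Pettis's lemma on non-meager Borel sets; the two small technical choices that make it work are the parametrization $h \mapsto (h, h^{-1}u)$ of $\mu^{-1}(u)$ (rather than $h \mapsto (u h^{-1}, h)$, which would produce $B_n u^{-1} A_n$ instead of $A_n B_n u^{-1}$) and the use of $D_n D_n^{-1}$ rather than $D_n^{-1} D_n$. Everything else is bookkeeping.
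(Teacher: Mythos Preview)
Your proof is correct. Both your argument and the paper's rest on Pettis-style Baire category reasoning, but the organization differs in a way worth noting.

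You work pointwise: fix $u \in U$, parametrize the fiber $\mu^{-1}(u)$, extract a non-meager Borel $D_n = A_n \cap uB_n^{-1}$ by the Baire category theorem, and invoke the classical Pettis lemma to get an identity neighborhood $V \subseteq D_n D_n^{-1} \subseteq A_n B_n u^{-1}$. The paper instead works globally: it replaces each $A_i, B_i$ by open $V_i, W_i$ equal to them modulo meager (via the Baire property), and then shows directly that $U = \bigcup_i V_i W_i$ via the chain
\[
g \in U \iff A_i^{-1}g \cap B_i \text{ nonmeager for some } i \iff V_i^{-1}g \cap W_i \text{ nonmeager} \iff g \in V_i W_i,
\]
the last step being trivial since $V_i^{-1}g \cap W_i$ is open. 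Your approach is arguably more elementary, as it cites Pettis's lemma as a black box and needs nothing else. The paper's approach buys a uniform expression for $U$ as $\bigcup_i V_i W_i$ without quantifying over points, which is precisely the point-free ``Vaught transform'' style ($\@B(G) \oast \@B(G) = \@O(G) \oast \@O(G) = \@O(G) \odot \@O(G)$) that the paper develops and generalizes to groupoids later; this is why the paper presents the argument that way.
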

\begin{proof}
$\Longrightarrow$ is clear.
For $\Longleftarrow$, suppose $\mu^{-1}(U) = \bigcup_{i \in \#N} (A_i \times B_i)$ where $A_i, B_i \subseteq G$ are Borel.
By the Baire property, find open $V_i, W_i \subseteq G$ which are equal to $A_i, B_i$ respectively mod meager.
Then
\begin{eqaligned*}
g \in U
&\iff  \set{h \in G | (gh^{-1},h) \in \mu^{-1}(U)} \text{ is nonmeager} \\
&\iff  \exists i\, (\set{h \in G | (gh^{-1},h) \in A_i \times B_i} = A_i^{-1} g \cap B_i \text{ is nonmeager}) \\
&\iff  \exists i\, (V_i^{-1} g \cap W_i \text{ is nonmeager})
\iff  \exists i\, (g \in V_i \cdot W_i),
\end{eqaligned*}
so $U = \bigcup_i (V_i \cdot W_i)$ is open.

In the algebraic notation we will be using, this argument can be written as follows:
\begin{equation*}
\exists^*_\mu(\@B(G) \otimes \@B(G))
= \@B(G) \oast \@B(G)
= \@O(G) \oast \@O(G)
= \@O(G) \odot \@O(G)
= \@O(G),
\end{equation*}
where $\@O(G), \@B(G)$ denote the topology and Borel sigma-algebra respectively (\cref{def:top}),
$\exists^*_\mu$ denotes Baire category quantifier under $\mu$ (\cref{def:fib-top}),
and $\otimes, \oast, \odot$ denote countable unions of rectangles, Vaught transforms, and products respectively (\cref{def:ostar}).
\end{proof}

\subsection{Other connections and open questions}
\label{sec:intro-misc}

At the end of the paper in \cref{sec:misc}, we take this opportunity to generalize several known results, that fit naturally into the ``fiberwise topological'' framework of this paper.

Effros's theorem on orbits of Polish group actions \cite{effros:groups} was generalized to open Polish groupoids by \cite{ramsay:polgpd}, \cite{lupini:polgpd}.
We further extend this to the quasi-Polish setting:

\begin{theorem}[\cref{thm:effros}]
\label{intro:thm:effros}
Let $(X,G)$ be a Borel-overt componentwise quasi-Polish groupoid.
Suppose $G$ is connected.
Then the componentwise topologies $\@O_G(X), \@O_G(G)$ form the unique open quasi-Polish topology on $(X,G)$ such that $\@O_G(G)$ restricts to the given fiberwise topology.
\end{theorem}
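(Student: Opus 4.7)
The plan is to split the claim into existence (the componentwise topologies form some compatible open quasi-Polish groupoid topology) and uniqueness (any such compatible topology must equal the componentwise one). Existence I expect to reduce to the connected case of the global realization theorem foreshadowed in the introduction (\cref{thm:comqpolgpd-qpol-connected}): connectedness of $G$ together with the componentwise (i.e., fiberwise continuous differences) quasi-Polish structure is exactly the hypothesis under which that theorem produces a compatible open quasi-Polish groupoid topology restricting to the given fiberwise topology on each $\sigma^{-1}(x)$. Identifying that topology with $(\@O_G(X), \@O_G(G))$ will then be subsumed by the uniqueness step.

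For uniqueness, let $(\mathcal{T}_X, \mathcal{T}_G)$ be any compatible open quasi-Polish groupoid topology with $\mathcal{T}_G \restriction \sigma^{-1}(x)$ equal to the given fiberwise topology. Fix $x_0 \in X$; by connectedness, $\tau : \sigma^{-1}(x_0) \to X$ is surjective, and it is continuous into $(X, \mathcal{T}_X)$, so $\mathcal{T}_X$ is automatically coarser than the quotient topology $\@O_G(X)$. The hard direction is to show $\tau$ is \emph{open} from $\sigma^{-1}(x_0)$ onto $(X,\mathcal{T}_X)$, which is the classical Effros content. Given a basic fiberwise open $U \subseteq \sigma^{-1}(x_0)$ and $h \in U$, setting $V := U h^{-1}$ (a fiberwise open identity neighborhood in $\sigma^{-1}(\tau(h))$ by right-translation invariance), I want to show $\tau(h)$ lies in the $\mathcal{T}_X$-interior of $\tau(U)$. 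Using second countability and Baire category in the quasi-Polish fiber $\sigma^{-1}(\tau(h))$, cover it by countably many translates $g_n \cdot W$ of a sufficiently small $W \subseteq V$ with $W^{-1} W \subseteq V$; some $g_n W$ is nonmeager, so $1_{\tau(h)}$ is in the fiberwise interior of $W^{-1} W \subseteq V$; applying openness of $\sigma : (G,\mathcal{T}_G) \to (X,\mathcal{T}_X)$ to a $\mathcal{T}_G$-open witness of this fiberwise interior then produces the desired $\mathcal{T}_X$-open neighborhood of $\tau(h)$ inside $\tau(V h) = \tau(U)$. This yields $\mathcal{T}_X = \@O_G(X)$. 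The equality $\mathcal{T}_G = \@O_G(G)$ follows because, as explained in \cref{sec:orbtop}, $\@O_G(G)$ is the canonical upper bound on open groupoid topologies determined by the $\sigma$-fiberwise topology together with the topology on objects; since $\mathcal{T}_G$ is open and quasi-Polish, has the correct fiberwise restriction, and now has the correct projection to $X$, both inclusions hold.

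The main obstacle is the Baire-category step in the non-Hausdorff quasi-Polish setting, where one must be careful with meager sets and category quantifiers. I expect this to be cleanest in the point-free idiom illustrated by \cref{intro:thm:pettis}: one reformulates the target as the equality $\@O_G(X) = \exists^*_\tau \@O_\sigma(G) \subseteq \mathcal{T}_X$ of sigma-topologies inside $\@B(X)$, manipulating the operations of \cref{fig:topologies} algebraically rather than chasing points; openness of $\sigma$ in $(X,G,\mathcal{T}_X,\mathcal{T}_G)$ translates into the identity $\sigma(\@O(G)) \subseteq \@O(X)$, and combining this with right-translation invariance and the Vaught-transform identities for $\exists^*_\tau$ should give the required containment.
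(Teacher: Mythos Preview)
Your decomposition into existence (via \cref{thm:comqpolgpd-qpol-connected}) and uniqueness is correct, and the easy inclusion $\mathcal{T}_X \subseteq \@O_G(X)$, $\mathcal{T}_G \subseteq \@O_G(G)$ is fine. However, the Baire-category step you sketch for the hard inclusion is applied in the wrong space and is vacuous as written. You cover the \emph{fiber} $\sigma^{-1}(\tau(h))$ by translates $g_n W$ and observe that some $g_n W$ is nonmeager; but $W$ is fiberwise open and the fiber is quasi-Polish, so every nonempty $g_n W$ is already nonmeager there, and this gives you nothing about $\mathcal{T}_X$. The classical Effros argument applies Baire category in the \emph{base} (to the images $\tau(g_n W) \subseteq X$), using that the orbit is nonmeager in $(X,\mathcal{T}_X)$; but then you need the Baire property for the analytic sets $\tau(W)$ in $\mathcal{T}_X$, and extracting an actual $\mathcal{T}_X$-open neighborhood of $\tau(h)$ inside $\tau(U)$ from ``$\tau(W)$ is somewhere comeager'' requires a further Pettis-type step that you have not supplied (and which is delicate without regularity). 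Your final sentence about $\mathcal{T}_G = \@O_G(G)$ likewise asserts ``both inclusions hold'' without argument; only $\mathcal{T}_G \subseteq \@O_G(G)$ is immediate.

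The paper's proof avoids these issues by working one level up: rather than proving $\tau : \sigma^{-1}(x_0) \to (X,\mathcal{T}_X)$ is open directly, it first shows that the $\mathcal{T}_X$-meager and $\@O_G(X)$-meager ideals coincide. For this, take Borel $\mathcal{T}_X$-meager $A \subseteq X$; then $\tau^{-1}(A)$ is $\mathcal{T}_G$-meager (as $\tau$ is continuous open), so $\exists^*_\sigma(\tau^{-1}(A))$ is $\mathcal{T}_X$-meager by Kuratowski--Ulam; but this set is $G$-invariant, hence empty by connectedness and the Baire property of $(X,\mathcal{T}_X)$, which means $A$ is $\@O_G(X)$-meager. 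Once the meager ideals agree, the inclusion $\@O_G(X) \subseteq \mathcal{T}_X$ follows by writing $\tau^{-1}(A) = \bigcup_i(\sigma^{-1}(B_i) \cap U_i)$ via Kunugi--Novikov, replacing each $B_i$ by a $\mathcal{T}_X$-open $B_i'$ modulo meager (hence modulo $\@O_G(X)$-meager), and applying $\exists^*_\tau$; the inclusion $\@O_G(G) \subseteq \mathcal{T}_G$ is handled by the analogous Pettis/Vaught argument on $\mu^{-1}(U)$. This is the clean route; your point-free reformulation at the end gestures toward it but does not carry it out.
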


Using this, we prove an open mapping theorem for componentwise quasi-Polish groupoids, that also extends the characterization from \cite{becker-kechris:polgrp} of Polish $G$-spaces with Borel orbit equivalence relation (previously generalized to open Polish groupoids by \cite{lupini:polgpd}); see \cref{thm:openmap,thm:ucomqpolgpd-erquot}.

It is also interesting to note that we may formally deduce the Becker--Kechris theorems for groupoids from \cite{chen:beckec} and \cite{lupini:polgpd} from our main \cref{intro:thm:ucomqpolgpd-qpol}; see \cref{thm:beckec,thm:beckec-pol}.
Thus in some sense, topological realization for groupoids ``subsumes'' that for actions.

As noted in \cref{sec:intro-comqpolgpd}, \cref{intro:thm:ucomqpolgpd-qpol} necessarily requires the assumption that the difference map $(g,h) |-> g^{-1}h$ is $\sigma$-fiberwise continuous; but we are only able to prove the result assuming a ``uniform'' strengthening of this assumption.
We do not know if the uniform and non-uniform versions are equivalent.
If true, this would be particularly useful for classwise quasi-Polish equivalence relations, which trivially have (non-uniform) fiberwise continuity of differences.

\Cref{ex:fibpolgpd-nonpol} shows that \cref{intro:thm:ucomqpolgpd-qpol} can in general produce only an open quasi-Polish groupoid, not a Polish groupoid, even when given a fiberwise Polish groupoid as input.
However, we do not know if such a failure can happen for a classwise Polish equivalence relation, which trivially admit an open Polish groupoid topology on each component, similarly to in the preceding question.
If both questions could be solved positively for equivalence relations, then \cref{intro:thm:bocper-polact} can likely be strengthened to yield a classwise topological embedding into the resulting free Polish group action with the orbitwise topology, since it is known that the last step in the proof, namely \cite{chen:polgpdrep}, can be modified to produce a Borel embedding (see Remark 9.11 in that paper).

As mentioned at the start of this introduction, a well-known counterpart to the componentwise topologies we are studying is given by the theory of \emph{idealistic equivalence relations}, originally introduced by \cite{kechris:loccpt}, \cite{kechris-louveau:hypersmooth}.
There is a family of long-standing conjectures regarding the precise connection between the notions of \emph{Polish group action}, \emph{idealistic equivalence relation}, and equivalence relations not Borel reducing the relation $\#E_1$ of eventual equality on $\#R^\#N$; see \cite{hjorth-kechris:recent}, as well as \cite{calderoni-mottoros:idealistic}, \cite{solecki:filtration} for some recent related work.
In the context of this paper, a natural counterpart to these conjectures would be to generalize \cref{intro:def:fibpolgpd} to a notion of \emph{Borel-idealistic groupoid}, consisting of a Borel groupoid with a family of sigma-ideals on the $\sigma$-fibers, and to ask whether every such groupoid is Borel equivalent to a Polish group action.
See \cref{rmk:idlgpd}.

Another interesting direction would be to generalize the notion of \emph{Borel-overt classwise (quasi-\nobreak)\linebreak[0]Polish equivalence relation} (\cref{intro:def:bocper}) to analytic equivalence relations with Borel classes.
This would offer an alternative approach, different from \cref{intro:def:fibpolgpd}, to extending the scope of our framework to encompass orbitwise topologies on arbitrary Polish group actions.

\subsection*{Acknowledgments}

Research supported by NSF grant DMS-2224709.

\section{Topological preliminaries}
\label{sec:prelim}

The basic topological framework and notation of this paper follows \cite{chen:beckec}.
In particular, we will be working mostly in the topological context of de~Brecht's \emph{quasi-Polish spaces} \cite{debrecht:qpol}; and we will make extensive use of the theory of \emph{Borel bundles of quasi-Polish spaces} as developed in \cite[\S2]{chen:beckec}.
In this section, we will summarize the main definitions and facts, as well as prove some additional facts we will need which were not present in \cite{chen:beckec}.

\subsection{Topologies and Borel structures}

\begin{notation}
\label{def:top}
For a topological space $X$, we denote its topology (i.e., collection of open sets) by $\@O(X) \subseteq \@P(X)$.
Similarly, for a Borel space $X$, we denote its Borel sigma-algebra%
\footnote{We will avoid using ``$\sigma$-'' as a prefix meaning ``countable-arity'', and will always spell out ``sigma-'' instead, because the letter $\sigma$ will be used extensively for the source map of a groupoid; see \cref{def:gpd}.}
by $\@B(X)$.

We will sometimes need to consider multiple topologies on the same set; these will be denoted by other symbols like $\@S, \@T, \@O'(X)$, etc.
The symbol $\@O(X)$ is reserved for a ``default'' topology that $X$ is considered to be ``equipped'' with.
\end{notation}

Since we will be working with non-Hausdorff spaces, we use Selivanov's definition of the Borel hierarchy \cite{selivanov:domdst}, which agrees with the classical definition (see \cite{kechris:cdst}) in metrizable spaces.

\begin{definition}[Selivanov \cite{selivanov:domdst}]
\label{def:bor}
The \defn{Borel hierarchy} of a general topological space $X$ is defined by letting
$\*\Sigma^0_1(X) := \@O(X)$ be the open sets and $\*\Pi^0_1(X)$ be the closed sets, and for $\xi \ge 2$,
\begin{eqaligned*}
\*\Sigma^0_\xi(X) &:= \set{\bigcup_i (U_i \setminus V_i) | U_i, V_i \in \*\Sigma^0_{\zeta_i}(X),\, \zeta_i < \xi}, \\
\*\Pi^0_\xi(X) &:= \set{\bigcap_i (U_i => V_i) | U_i, V_i \in \*\Sigma^0_{\zeta_i}(X),\, \zeta_i < \xi}
    = \set{\neg A | A \in \*\Sigma^0_\xi(X)},
\end{eqaligned*}
where $i$ ranges over a countable index set and $(U => V) := \neg U \cup V$ (where $\neg$ means complement).
As usual, put also $\*\Delta^0_\xi := \*\Sigma^0_\xi \cap \*\Pi^0_\xi$.
\end{definition}

\begin{definition}[de~Brecht \cite{debrecht:qpol}]
\label{def:qpol}
A \defn{quasi-Polish space} is a topological space homeomorphic to a $\*\Pi^0_2$ subspace of $\#S^\#N$,
where $\#S$ is the \defn{Sierpiński space} $\set{0,1}$ with $\set{1}$ open but not closed.
\end{definition}

We will freely use the following properties of quasi-Polish spaces.
For proofs, see \cite{debrecht:qpol}, \cite{chen:qpol}.
\begin{eqenumerate}

\item \label{it:qpol-t0-cb-baire}
All quasi-Polish spaces are $T_0$, second-countable, standard Borel, and completely Baire.

\item \label{it:qpol-pol}
A topological space is Polish iff it is quasi-Polish and regular ($T_3$).

\item \label{it:qpol-dis}
If $X$ is a quasi-Polish space, and $A_i \in \*\Sigma^0_\xi(X)$ are countably many sets, then there is a finer quasi-Polish topology containing each $A_i$ and contained in $\*\Sigma^0_\xi(X)$.
In more detail,
\begin{enumerate}[alph]
\item \label{it:qpol-dis:clopen}
adjoining a single $\*\Delta^0_2$ set to the topology of $X$ preserves quasi-Polishness;
\item \label{it:qpol-dis:join}
if the intersection of countably many quasi-Polish topologies contains a second-countable $T_0$ topology, then their union generates a quasi-Polish topology.
\end{enumerate}

\item \label{it:qpol-prod}
Countable products of quasi-Polish spaces are quasi-Polish.

\item \label{it:qpol-pi02}
A subspace of a quasi-Polish space is quasi-Polish iff it is $\*\Pi^0_2$ (in the sense of \cref{def:bor}).

\item \label{it:qpol-openquot}
Let $X$ be quasi-Polish space, $E \subseteq X^2$ be an equivalence relation such that the quotient map $X ->> X/E$ to the quotient topology is open, i.e., the $E$-saturation of every open set is open.
Then $X/E$ is quasi-Polish iff it is $T_0$, iff $E \subseteq X^2$ is $\*\Pi^0_2$, iff every $E$-class $[x]_E \subseteq X$ is $\*\Pi^0_2$.

\end{eqenumerate}
(The last result is a version of Sierpiński's theorem for quasi-Polish spaces \cite[Theorem~40]{debrecht:qpol}; see also \cite[10.1]{chen:qpol}, \cite[3.4.4]{gao:idst}.)

\begin{definition}
\label{def:baire}
Given a topological space $X$ and $A, B \subseteq X$, we write
\begin{eqaligned*}
A \subseteq^* B
&\coloniff  A \subseteq B \text{ mod meager}
\iff  A \setminus B \text{ is meager}, \\
A =^* B
&\coloniff  A = B \text{ mod meager}
\iff  A \subseteq^* B \subseteq^* A.
\end{eqaligned*}

Recall that $A \subseteq X$ has the \defn{Baire property} if $A =^* U_A$ for some open $U_A$.
Every Borel $A \subseteq X$ has the Baire property, where $U_A$ may be computed inductively using the formulas
\begin{eqaligned}[][\label{eq:bp}\left\{][\right.]
U_V &:= V
    \quad \text{if $V \in \@O(X)$}, \\
U_{\bigcup_i A_i} &:= \bigcup_i U_{A_i}, \\
U_{A \setminus B} &:= U_A \setminus \-{U_B} = \bigcup_{W \in \@W; W \cap U_B = \emptyset} (W \cap U_A)
\end{eqaligned}
for any open basis $\@W \subseteq \@O(X)$.

Recall also that every dense $\*\Pi^0_2$ set, in the sense of \cref{def:bor}, contains an intersection of dense open sets (see e.g., \cite[7.2]{chen:qpol}).
Thus dense $\*\Pi^0_2$ sets also generate the comeager sigma-filter.
\end{definition}

\subsection{Sigma-topologies, clubs of topologies, and operations on topologies}

As in \cite{chen:beckec}, the following common generalization of topologies and Borel sigma-algebras will serve as a useful way to organize change-of-topology arguments, by capturing a collection of open sets with desirable properties among which we eventually want to find a finer quasi-Polish topology, without requiring the specific topology to be chosen just yet.

\begin{definition}[{see \cite[2.2.10--13]{chen:beckec}}]
\label{def:stop}
A \defn{sigma-topology} $\@S$ on a set $X$ is a collection of subsets, closed under \emph{countable} unions and finite intersections.

The notions of \defn{product sigma-topology} on a product of sigma-topological spaces $X \times Y$, \defn{subspace sigma-topology} on a subset $Y \subseteq X$ of a sigma-topological space, and \defn{continuous map} and \defn{open map} between sigma-topological spaces are defined the usual way, in analogy with the corresponding topological notions.

We call a sigma-topological space $(X,\@S)$ \defn{regular} if every $U \in \@S$ is the union of \emph{countably} many $V_i \in \@S$ each of which is contained in some $F_i \subseteq U$ with $\neg F_i \in \@S$.
(Note: ``closure'' $\-{V_i}$ is not defined.)

Finally, for a standard Borel space $X$, we call a sigma-topology $\@S$ on $X$ \defn{compatible} (with the Borel structure) if any countably many sets $U_i \in \@S$ are contained in a single quasi-Polish topology which is contained in $\@S$.
\end{definition}

\begin{remark}
\label{rmk:stop-club}
If $\@S$ is a compatible sigma-topology on $X$, then for a desired property of quasi-Polish topologies $\@T \subseteq \@S$ (e.g., being Polish), the following assertions are equivalent:
\begin{itemize}
\item
Any countably many sets $U_i \in \@S$ are contained in a quasi-Polish $\@T \subseteq \@S$ with that property.
\item
Cofinally many quasi-Polish topologies $\@T \subseteq \@S$ have that property.
\end{itemize}
If cofinally many quasi-Polish $\@T \subseteq \@S$ have a given property, and also the topology generated by any countable increasing union $\bigcupup_n \@T_n$ of topologies $\@T_n \subseteq \@S$ with the property also has the property, then following \cite[5.1.4]{becker-kechris:polgrp} we say that a \defn{club of quasi-Polish topologies $\@T \subseteq \@S$} has the property.
It is straightforward to verify that a countable intersection of clubs is still a club (see \cite[3.4]{chen:cbermd}).
\end{remark}

\begin{lemma}
\label{thm:stop-pol}
If $\@S$ is a compatible \emph{regular} sigma-topology on a standard Borel space $X$, then the \emph{Polish} topologies $\@T \subseteq \@S$ form a club.
\end{lemma}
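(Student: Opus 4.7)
The plan is to verify the two defining properties of a club from \cref{rmk:stop-club} directly: cofinality (every countable collection $\set{U_i} \subseteq \@S$ lies in some Polish $\@T \subseteq \@S$) and closure under countable increasing unions. Since quasi-Polish plus regular ($T_3$) equals Polish by \labelcref{it:qpol-pol}, the only real task for cofinality is to upgrade a quasi-Polish topology $\@T \subseteq \@S$ to one that is also regular as a \emph{topology}, using only the sigma-topological regularity of the ambient $\@S$.

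For cofinality I would run a bootstrapping construction. Start with a quasi-Polish $\@T_0 \subseteq \@S$ enclosing the given $U_i$, furnished by compatibility, and fix a countable basis $\@W_0$ for $\@T_0$. At stage $n$, given a quasi-Polish $\@T_n \subseteq \@S$ with countable basis $\@W_n$, apply the sigma-regularity of $\@S$ to each $W \in \@W_n$ to decompose $W = \bigcup_j V^W_j$ with $V^W_j \subseteq F^W_j \subseteq W$ and $V^W_j, \neg F^W_j \in \@S$; then enclose the countable family $\@W_n \cup \set{V^W_j, \neg F^W_j}_{W,j}$ inside a quasi-Polish $\@T_{n+1} \subseteq \@S$ by compatibility, and fix a basis $\@W_{n+1}$. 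Let $\@T_\infty$ denote the topology generated by $\bigcup_n \@W_n$. Then $\@T_\infty \subseteq \@S$ (since $\@S$ is closed under finite intersection and countable union and $\bigcup_n \@W_n$ is countable), and by \labelcref{it:qpol-dis:join} applied to the increasing chain $\@T_n$ (with $\@T_0$ serving as the common second-countable $T_0$ subtopology), $\@T_\infty$ is quasi-Polish. To conclude regularity of $\@T_\infty$, I would observe that a basic open set is a finite intersection $W_1 \cap \cdots \cap W_k$ with $W_i \in \@W_{n_i}$; decomposing each $W_i = \bigcup_j V^{W_i}_j$ exhibits $W_1 \cap \cdots \cap W_k$ as the union of the sets $\bigcap_i V^{W_i}_{j_i}$, whose $\@T_\infty$-closures are trapped inside the $\@T_\infty$-closed set $\bigcap_i F^{W_i}_{j_i} \subseteq W_1 \cap \cdots \cap W_k$.

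For closure under countable increasing unions of Polish topologies $\@T_n \subseteq \@S$, the generated topology is again quasi-Polish by \labelcref{it:qpol-dis:join}, and it remains regular because any finite intersection of open sets drawn from $\bigcup_n \@T_n$ already lies in some single $\@T_n$, whose regularity witnesses persist in the finer topology since closures only shrink under refinement. The main (if modest) obstacle is the bookkeeping in the cofinality bootstrap: one has to ensure that regularity witnesses eventually appear for \emph{all} basic open sets of $\@T_\infty$, not only for those introduced at some fixed stage, which is exactly why one must iterate the process over a chosen basis of each successive $\@T_n$ instead of handling only the initial $\@W_0$.
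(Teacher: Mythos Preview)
Your proposal is correct and follows essentially the same bootstrapping argument as the paper: iteratively enlarge a quasi-Polish topology to include regularity witnesses for a countable basis of the previous stage, then take the join and apply \labelcref{it:qpol-dis:join} and \labelcref{it:qpol-pol}. You spell out the verification of regularity for $\@T_\infty$ and the closure under increasing unions in more detail than the paper (which just says ``regular by construction'' and cites \cite[13.3]{kechris:cdst}), but the underlying strategy is the same.
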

\begin{proof}
Clearly the collection of Polish topologies contained in $\@S$ is closed under countable increasing joins (see \cite[13.3]{kechris:cdst}), so it suffices to verify cofinality.
Let $\@T_0 \subseteq \@S$ be any quasi-Polish topology.
Inductively given $\@T_n$, let $\@T_n \subseteq \@T_{n+1} \subseteq \@S$ be a finer quasi-Polish topology containing a countable open basis for $\@T_n$, as well as for each $U$ in said basis, countably many witnesses $V_i, \neg F_i \in \@S$ to regularity for $U$, where $V_i \subseteq F_i \subseteq U$ and $U = \bigcup_i V_i$.
Finally take the topology generated by $\bigcupup_n \@T_n$, which is quasi-Polish by \cref{it:qpol-dis}\cref{it:qpol-dis:join} and regular by construction.
\end{proof}

\begin{remark}
The Borel hierarchy (\cref{def:bor}) also makes sense for a sigma-topological space.
Note that our definition of \emph{regular sigma-topology} $\@S$ implies that $\*\Pi^0_2 = \@G_\delta$ = countable intersections of sets in $\@S$, as in Polish spaces.
\end{remark}

\begin{example}
The following are compatible sigma-topologies:
\begin{itemize}
\item
A compatible second-countable sigma-topology is just a compatible quasi-Polish topology.
\item
If $X$ is a quasi-Polish space, then $\*\Sigma^0_\xi(X)$ is compatible for each $\xi < \omega_1$ (by \labelcref{it:qpol-dis}).
\item
If $X$ is a standard Borel space, then $\@B(X)$ is a compatible sigma-topology.
\item
For compatible sigma-topologies on $X, Y$, the product sigma-topology on $X \times Y$ is compatible.
\item
For a compatible sigma-topology $\@S$ on $X$, its restriction to any $\*\Pi^0_2(X,\@S)$ subset is compatible.
\end{itemize}
See also \cref{thm:fib-bor-qpol,thm:ucomqpolgpd} for further examples.
\end{example}

\begin{notation}[see {\cite[2.1.7]{chen:beckec}}]
\label{def:ostar}
If $\@S, \@T$ are two collections of sets, and $\star$ is a binary operation on sets such as $\cap$ or $\times$, we write
\begin{align*}
\@S \ostar \@T &:= \set{\bigcup_{i \in I} (A_i \star B_i) | I \text{ countable},\, A_i \in \@S,\, B_i \in \@T}.
\end{align*}
For example, if $\@S, \@T$ are second-countable topologies (or more generally sigma-topologies) on a set $X$, then $\@S \ocap \@T$ is that generated by $\@S \cup \@T$, while $\@S \otimes \@T$ is the product (sigma-)topology on $X^2$.

We will occasionally use the analogous notion with arbitrary unions instead of countable ones, which will be denoted $\ostar^\infty$ instead of $\ostar$.
For example, for arbitrary topologies $\@S, \@T$, $\@S \otimes^\infty \@T$ is the product topology (whereas $\@S \otimes \@T$ might not be closed under arbitrary unions).
\end{notation}

\subsection{Bundles of topological spaces}

\begin{definition}[{see \cite[\S2.3]{chen:beckec}}]
\label{def:fib-top}
By a \defn{bundle} over a set $Y$, we mean an arbitrary map $p : X -> Y$, where we regard $(X,p)$ as the family of \defn{fibers} $X_y := p^{-1}(y)$ parametrized over $y \in Y$.
When we say a property or structure occurs \defn{$p$-fiberwise} in $X$, we mean it occurs in each $X_y$.

In particular, a \defn{$p$-fiberwise topology} on $X$ is a family of topologies $\@O(X_y)$ on each $X_y$.
We may identify such a family of topologies with a single global topology, namely the disjoint union topology of the fiber topologies, consisting of the \defn{fiberwise open} sets, which we denote by $\@O_p(X)$.
We call $X$ equipped with a fiberwise topology a \defn{bundle of topological spaces} over $Y$.

Given a (global) topology $\@O(X)$ on $X$, we may \defn{fiberwise restrict} it to get a fiberwise topology, whose corresponding disjoint union topology $\@O_p(X)$ then refines the original topology $\@O(X)$.

Terms such as \defn{fiberwise dense} or \defn{fiberwise meager} set $A \subseteq X$ have the self-evident meaning.
We have the \defn{Baire category quantifiers} mapping $A \subseteq X$ to
\begin{eqaligned*}
\exists^*_p(A) &:= \set{y \in Y | A_y \subseteq X_y \text{ is nonmeager}}, \\
\forall^*_p(A) &:= \set{y \in Y | A_y \subseteq X_y \text{ is comeager}} = \neg \exists^*_p(\neg A).
\end{eqaligned*}
For $A, B \subseteq X$, we write
\begin{eqaligned*}
A \subseteq^*_p B
&\coloniff  A \subseteq B \text{ mod fiberwise meager}
\iff  \forall^*_p(A => B) = Y, \\
A =^*_p B
&\coloniff  A = B \text{ mod fiberwise meager}
\iff  A \subseteq^*_p B \subseteq^*_p A.
\end{eqaligned*}
\end{definition}

\begin{definition}
\label{def:fib-pb}
If $p : X -> Y$ is a bundle as above, and $f : Z -> Y$ is another map,
\begin{eqaligned*}
Z \times_Y X := Z \tensor[_f]\times{_p} Y := \set{(z,x) \in Z \times_Y X | f(z) = p(x)}
\end{eqaligned*}
is the \defn{pullback} or \defn{fiber product} of $Z, X$ over $Y$, fitting into a commutative square
\begin{eqtikzcd*}
Z \times_Y X \dar["\pi_1"'] \rar["\pi_2"] & X \dar["p"] \\
Z \rar["f"] & Y
\end{eqtikzcd*}
where $\pi_1, \pi_2$ are the projections.
While the definition is symmetric in $X, Z$, if we regard $X$ as a bundle over $Y$, then it is natural to regard $Z \times_Y X$ as a bundle over $Z$ via $\pi_1$, where each fiber $(Z \times_Y X)_z = \{z\} \times X_{f(z)}$ is canonically identified via $\pi_2$ with the fiber $X_{f(z)}$.

If $X$ has been equipped with a $p$-fiberwise topology $\@O_p(X)$ over $Y$, then by default we equip $Z \times_Y X$ with the $\pi_1$-fiberwise topology over $Z$ given by copying the topology on each $X_{f(z)}$ to $(Z \times_Y X)_z$ via the above identification.

If $Z$ has also been equipped with a fiberwise topology over $Y$, then we may also equip $Z \times_Y X$ with the fiberwise product topology over $Y$, which restricts to the $\pi_1$-fiberwise topology.

By virtue of the fiberwise nature of their definition, the category quantifiers obey
\begin{eqenumerate}

\item \label{it:fib-baire-bc}
(\defn{Beck--Chevalley condition})
For a pullback bundle of spaces as above, for $A \subseteq X$,
\begin{equation*}
f^{-1}(\exists^*_p(A)) = \exists^*_{\pi_1}(\pi_2^{-1}(A)).
\end{equation*}

\item \label{it:fib-baire-frob}
(\defn{Frobenius reciprocity})
In particular, for $Z \subseteq Y$,
\begin{equation*}
Z \cap \exists^*_p(A) = \exists^*_p(p^{-1}(Z) \cap A).
\end{equation*}

\item \label{it:fib-baire-surj}
In particular, for $Z \subseteq \exists^*_p(X)$,
\begin{equation*}
Z = \exists^*_p(p^{-1}(Z)).
\end{equation*}

\end{eqenumerate}
\end{definition}

\subsection{The Kunugi--Novikov theorems}

We are interested in bundles of well-behaved (Polish or quasi-Polish) spaces as above over a standard Borel base space $Y$, where the topologies are ``uniformly Borel''.
The key tool enabling such a theory is the \emph{Kunugi--Novikov uniformization theorem}, which says that every Borel fiberwise open set is automatically ``uniformly fiberwise open''; see \cite[28.7]{kechris:cdst}, which is the classical statement in the case of a product bundle $X \times Y -> Y$.
In \cite[8.14]{chen:polgpdrep} and \cite[2.4.1]{chen:beckec} it was pointed out that the same proof works verbatim for arbitrary Borel bundles.

In this paper, we will need even more general forms of the Kunugi--Novikov theorem, which nonetheless follow from the same argument.
For the sake of completeness, we will give several versions of the theorem.
The core result is a simple reflection argument:

\begin{theorem}[Kunugi--Novikov reflection]
\label{thm:kunugi-novikov-refl}
Let $p : X -> Y$ be a Borel map between standard Borel spaces, $(U_i)_{i \in I}$ be a countable family of Borel sets in $X$.
Let $A \subseteq X$ be $\*\Sigma^1_1$, and suppose
\begin{align*}
A &\subseteq \bigcup_i (p^{-1}(C_i) \cap U_i) \\
\intertext{for $\*\Pi^1_1$ sets $C_i \subseteq Y$.
Then there exist Borel $B_i \subseteq C_i$ such that}
A &\subseteq \bigcup_i (p^{-1}(B_i) \cap U_i).
\end{align*}
\end{theorem}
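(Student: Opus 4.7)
The plan is to reduce to the case where $A$ is Borel, then use Kondo's $\*\Pi^1_1$-uniformization theorem to partition $A$ into Borel pieces indexed by $i$, and finally apply Lusin separation to each image under $p$.

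For the reduction, I would observe that $V := \bigcup_i (p^{-1}(C_i) \cap U_i)$ is $\*\Pi^1_1$: its complement $\bigcap_i (p^{-1}(\neg C_i) \cup \neg U_i)$ is a countable intersection of $\*\Sigma^1_1$ sets, each factor being the union of a $\*\Sigma^1_1$ and a Borel set, and $\*\Sigma^1_1$ is closed under countable intersections. Lusin separation applied to the $\*\Sigma^1_1$ set $A$ and the $\*\Pi^1_1$ set $V$ yields a Borel $A'$ with $A \subseteq A' \subseteq V$, so we may replace $A$ by $A'$ and assume $A$ itself is Borel. Next, applying Kondo's theorem to the $\*\Pi^1_1$ relation $R := \{(x,i) \in X \times I : x \in A \cap U_i \wedge p(x) \in C_i\}$, I would obtain a $\*\Pi^1_1$ uniformization $R^* \subseteq R$ whose $x$-sections are singletons and whose projection onto $X$ equals $A$. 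The sections $A_i := \{x : (x,i) \in R^*\}$ are then $\*\Pi^1_1$ subsets of $A$ that partition $A$.

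The key step is to upgrade each $A_i$ to Borel: the complementary union $\bigsqcup_{j \ne i} A_j$ is a countable union of $\*\Pi^1_1$ sets, hence $\*\Pi^1_1$, so $A_i = A \setminus \bigsqcup_{j \ne i} A_j$ is also $\*\Sigma^1_1$ (Borel minus $\*\Pi^1_1$), and combined with its $\*\Pi^1_1$ description it is Borel by Souslin's theorem. Finally, since $A_i \subseteq U_i \cap p^{-1}(C_i)$, the image $p(A_i)$ is $\*\Sigma^1_1$ and disjoint from the $\*\Sigma^1_1$ set $\neg C_i$; a second Lusin separation produces a Borel $B_i$ with $p(A_i) \subseteq B_i \subseteq C_i$, after which $A = \bigsqcup_i A_i \subseteq \bigcup_i (p^{-1}(B_i) \cap U_i)$ as required. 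The main subtlety is precisely the Borelness of the $A_i$: the direct attempt to define these pieces as level sets of $x \mapsto \min\{i : x \in U_i \wedge p(x) \in C_i\}$ only yields a $\*\Pi^1_1$-measurable function, and it is the partition property together with Souslin's theorem that promotes the pieces to Borel.
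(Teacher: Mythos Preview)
Your proof is correct, but it takes a substantially longer route than the paper's and invokes much heavier machinery. The paper dispatches the result in two lines: it applies the Novikov separation theorem (Kechris 28.5) directly to the cover of the $\*\Sigma^1_1$ set $A$ by the $\*\Pi^1_1$ sets $p^{-1}(C_i)\cap U_i$, obtaining Borel $V_i \subseteq p^{-1}(C_i)\cap U_i$ with $A \subseteq \bigcup_i V_i$; then $p(V_i)\subseteq C_i$ is $\*\Sigma^1_1$, and a single Lusin separation gives the Borel $B_i$. No reduction to Borel $A$, no uniformization, no Souslin argument.

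Your approach essentially re-derives a special case of Novikov separation by hand, using Kondo's $\*\Pi^1_1$-uniformization to manufacture the partition of $A$ into pieces $A_i$. Kondo's theorem is a genuinely deep result (far harder than Novikov), so this is a sledgehammer for a nail; moreover, since the second coordinate $I$ is countable, you do not even need full Kondo but only the number-uniformization (or reduction) property for $\*\Pi^1_1$. Your observation that the $A_i$ end up Borel via the partition property and Souslin's theorem is a nice touch, but the paper's use of Novikov separation sidesteps all of this: Novikov hands you Borel pieces inside each $\*\Pi^1_1$ set from the outset, with no need for them to be disjoint.
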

\begin{proof}
By the Novikov separation theorem (see \cite[28.5]{kechris:cdst}), there exist Borel
$V_i \subseteq p^{-1}(C_i) \cap U_i$
such that
$A \subseteq \bigcup_i V_i$.
It follows that
$p(V_i) \subseteq C_i$,
so by the Lusin separation theorem, there exist Borel
$p(V_i) \subseteq B_i \subseteq C_i$.
Then $V_i \subseteq p^{-1}(B_i)$, so
$A
\subseteq \bigcup_i V_i
\subseteq \bigcup_i (p^{-1}(B_i) \cap U_i)$.
\end{proof}

\begin{corollary}[Kunugi--Novikov separation]
\label{thm:kunugi-novikov-sep}
Let $p : X -> Y$ be a Borel map between standard Borel spaces, $(U_i)_{i \in I}$ be a countable family of Borel sets in $X$.
Let $A \subseteq X$ be $\*\Sigma^1_1$ and $C \subseteq X$ be $\*\Pi^1_1$, and let $(T_i)_{i \in I}$ be a family of auxiliary $\*\Pi^1_1$ ``test sets'' in $Y$.
Suppose every fiber $A_y$ of $A$ can be covered by $(U_i)_y$ for $i \in I$ such that $(U_i)_y \subseteq C$ and $y \in T_i$.
Then there exist Borel $B_i \subseteq T_i$ such that
\begin{eqaligned*}
A &\subseteq \bigcup_i (p^{-1}(B_i) \cap U_i) \subseteq C.
\end{eqaligned*}
In particular, if every fiber $A_y$ can be covered by $(U_i)_y \subseteq C$, then there exist Borel $B_i \subseteq Y$ such that the above inclusions hold.
\end{corollary}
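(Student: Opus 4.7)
The plan is to reduce the separation statement to the reflection statement \labelcref{thm:kunugi-novikov-refl} by folding the ``stays inside $C$'' condition into an additional $\*\Pi^1_1$ test set on the base.

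More precisely, first I would define, for each $i \in I$,
\begin{equation*}
S_i := Y \setminus p(U_i \setminus C),
\end{equation*}
noting that since $U_i$ is Borel and $C$ is $\*\Pi^1_1$, the set $U_i \setminus C$ is $\*\Sigma^1_1$, hence its image $p(U_i \setminus C)$ is $\*\Sigma^1_1$ and $S_i$ is $\*\Pi^1_1$. The key observation is the pointwise reformulation
\begin{equation*}
y \in S_i \iff (U_i)_y \cap (X \setminus C) = \emptyset \iff (U_i)_y \subseteq C.
\end{equation*}

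Next, set $C_i := T_i \cap S_i$, which is again $\*\Pi^1_1$ as the intersection of two such sets. The hypothesis says exactly that for every $y \in Y$, the fiber $A_y$ is covered by those $(U_i)_y$ with $y \in C_i$; unwinding fiberwise, this is the global inclusion
\begin{equation*}
A \subseteq \bigcup_i \bigl(p^{-1}(C_i) \cap U_i\bigr).
\end{equation*}
Now I would invoke \cref{thm:kunugi-novikov-refl} to produce Borel sets $B_i \subseteq C_i$ satisfying $A \subseteq \bigcup_i (p^{-1}(B_i) \cap U_i)$. Since $B_i \subseteq C_i \subseteq T_i$, the first required inclusion holds. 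Since also $B_i \subseteq S_i$, the defining property of $S_i$ gives $p^{-1}(B_i) \cap U_i \subseteq C$ for each $i$, whence $\bigcup_i (p^{-1}(B_i) \cap U_i) \subseteq C$, which is the second required inclusion.

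Finally, the ``in particular'' clause follows by applying the above with $T_i := Y$ for all $i$ (trivially $\*\Pi^1_1$). There is no real obstacle here: the entire content of the argument is recognizing that the fiberwise containment condition $(U_i)_y \subseteq C$ is itself of the form $y \in S_i$ for a $\*\Pi^1_1$ set $S_i$ on the base, so that it can be absorbed into the ``test set'' data already handled by reflection.
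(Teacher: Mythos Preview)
Your proof is correct and essentially identical to the paper's: the paper defines $C_i := \{y \in T_i \mid (U_i)_y \subseteq C\}$ and applies \cref{thm:kunugi-novikov-refl}, which is exactly your $C_i = T_i \cap S_i$ once one unwinds your formula $S_i = Y \setminus p(U_i \setminus C)$. You have simply made explicit the verification that this set is $\*\Pi^1_1$ and that the resulting $B_i$ satisfy both required inclusions, details the paper leaves to the reader.
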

\begin{proof}
Take $C_i := \set{y \in T_i | (U_i)_y \subseteq C}$ in \cref{thm:kunugi-novikov-refl}.
For the last statement, take $T_i := Y$.
\end{proof}

\begin{corollary}[Kunugi--Novikov uniformization]
\label{thm:kunugi-novikov}
Let $p : X -> Y$ be a Borel map between standard Borel spaces, $(U_i)_{i \in I}$ be a countable family of Borel sets in $X$.
Let $A \subseteq X$ be Borel, and let $(T_i)_{i \in I}$ be a family of auxiliary $\*\Pi^1_1$ ``test sets'' in $Y$.
Suppose every fiber $A_y$ of $A$ is a union of $(U_i)_y$ for $i \in I$ such that $y \in T_i$.
Then there exist Borel $B_i \subseteq T_i$ such that
\begin{eqaligned*}
A &= \bigcup_i (p^{-1}(B_i) \cap U_i).
\end{eqaligned*}
In particular, if $A$ is fiberwise a union of the $U_i$'s, then there are Borel $B_i \subseteq Y$ such that this holds.
\end{corollary}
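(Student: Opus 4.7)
The plan is to derive this as a direct corollary of the Kunugi--Novikov separation theorem (\cref{thm:kunugi-novikov-sep}), applied with the upper-bound set $C$ taken to be $A$ itself. This is legitimate because $A$ is Borel, hence simultaneously $\*\Sigma^1_1$ and $\*\Pi^1_1$, so it meets the typing requirements on both the set being covered and the ambient ``test set'' $C$ in the separation statement.

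The hypothesis translation is the only step requiring a moment's thought. The assumption here is that each fiber $A_y$ equals a union $\bigcup_{i \in I_y} (U_i)_y$ for some $I_y \subseteq \set{i \in I | y \in T_i}$. In particular, for every index $i \in I_y$ that participates in the union, $(U_i)_y \subseteq A_y \subseteq A$. Thus the hypothesis of \cref{thm:kunugi-novikov-sep} with $C := A$ is satisfied: each fiber $A_y$ is covered by those $(U_i)_y$ for which $y \in T_i$ and $(U_i)_y \subseteq C = A$. The separation theorem then produces Borel sets $B_i \subseteq T_i$ with
\begin{equation*}
A \;\subseteq\; \bigcup_i (p^{-1}(B_i) \cap U_i) \;\subseteq\; A,
\end{equation*}
which forces equality, as desired.

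The ``in particular'' clause is the special case $T_i := Y$ for all $i$, which is trivially $\*\Pi^1_1$; here the condition $y \in T_i$ becomes vacuous, so the hypothesis reduces to the bare statement that each fiber $A_y$ is a union of the $(U_i)_y$'s.

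There is no real obstacle at this stage: the substantive content has already been absorbed into \cref{thm:kunugi-novikov-refl,thm:kunugi-novikov-sep}, whose proofs used Novikov and Lusin separation to turn the $\*\Pi^1_1$ sets $C_i$ into Borel $B_i$. What this corollary adds is just the upgrade from ``covering'' to ``equality,'' obtained for free by feeding the Borel set $A$ back into the separation statement in both roles.
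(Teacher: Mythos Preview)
Your proof is correct and is exactly the paper's approach: the paper's proof is the single line ``Take $C := A$ above,'' and you have simply spelled out why this substitution works (Borel $A$ is both $\*\Sigma^1_1$ and $\*\Pi^1_1$, and the equality hypothesis guarantees the covering $(U_i)_y \subseteq A = C$ needed by \cref{thm:kunugi-novikov-sep}).
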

\begin{proof}
Take $C := A$ above.
\end{proof}

See \cref{thm:fib-bor-pol-reg} for an example use of the ``test sets'' in these statements.

\subsection{Borel bundles of (quasi-)Polish spaces}

\begin{definition}[{see \cite[2.4.2, 2.4.5]{chen:beckec}}]
\label{def:fib-bor-qpol}
Let $p : X -> Y$ be a Borel map between standard Borel spaces, equipped with a fiberwise topology $\@O_p(X)$ on $X$.
We call this fiberwise topology a \defn{Borel fiberwise quasi-Polish topology} on $X$, and call $X$ a \defn{standard Borel bundle of quasi-Polish spaces} over $Y$, if
\begin{enumerate}[roman]
\item  the topology $\@O(X_y)$ on each fiber $X_y$ is quasi-Polish; and
\item  these topologies are ``uniformly second-countable'': there exists a countable family $\@U$ of Borel, fiberwise open sets, without loss of generality closed under finite intersections, which restricts to an open basis in each fiber (we call such $\@U$ a \defn{fiberwise open basis}).
\end{enumerate}
In that case, we write
\begin{eqaligned*}
\@{BO}_p(X) := \@B(X) \cap \@O_p(X)
\end{eqaligned*}
for the collection of Borel fiberwise open sets in $X$, which forms a sigma-topology (\cref{def:stop}).
Note that by Kunugi--Novikov \labelcref{thm:kunugi-novikov}, for any countable Borel fiberwise basis $\@U$, we then have
\begin{align}
\label{eq:fib-bor-qpol-basis}
\@{BO}_p(X) = p^{-1}(\@B(X)) \ocap \@U
= \set*{\bigcup_{U \in \@U} (p^{-1}(B_U) \cap U) | (B_U)_U \in \@B(X)^\@U}
\end{align}
(recall the meaning of $\ocap$ from \cref{def:ostar}).
More generally, by \cref{thm:kunugi-novikov-sep} we have
\begin{eqenumerate}
\item \label{it:fib-bor-qpol-sep}
For any $A \in \*\Sigma^1_1(X)$ and $C \in \*\Pi^1_1(X)$ such that $A$ is contained in the fiberwise interior of $C$, there is a $U \in \@{BO}_p(X)$ such that $A \subseteq U \subseteq C$.
\end{eqenumerate}

We call such a Borel fiberwise quasi-Polish topology on $p : X -> Y$ \defn{Borel-overt}, and call $X$ a \defn{standard Borel-overt bundle of quasi-Polish spaces} over $Y$, if moreover
\begin{eqaligned*}
p(\@{BO}_p(X)) \subseteq \@B(Y),
\end{eqaligned*}
i.e., the $p$-projection of every Borel fiberwise open set is Borel.
By Kunugi--Novikov via \cref{eq:fib-bor-qpol-basis}, it suffices to check this on any countable Borel fiberwise basis $\@U \subseteq \@{BO}_p(X)$.

We call $X$ a \defn{standard Borel(-overt) bundle of Polish spaces} if each fiber is Polish.
\end{definition}

The robustness of \cref{def:fib-bor-qpol} for quasi-Polish bundles is demonstrated by

\begin{proposition}[topological realization of Borel bundles of spaces {\cite[2.4.3, 2.4.5]{chen:beckec}}]
\label{thm:fib-bor-qpol}
\leavevmode
\begin{enumerate}[alph]

\item
For a standard Borel bundle of quasi-Polish spaces $p : X -> Y$, there exist compatible (global) quasi-Polish topologies on $X, Y$ making $p$ into a continuous map and such that $\@O(X)$ restricts fiberwise to the original fiberwise topology $\@O_p(X)$.

Moreover, $\@O(X)$ may be chosen to include any countably many sets in $\@{BO}_p(X)$.
In particular, $\@{BO}_p(X)$ is a compatible sigma-topology on $X$ (in the sense of \cref{def:stop}).

\item
If $X$ is moreover Borel-overt, then such topologies may be chosen to make $p$ open.

\end{enumerate}
\end{proposition}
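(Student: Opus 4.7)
The plan is to realize $X$ as a subspace of $Y \times \#S^\#N$ via the characteristic-function map of a countable Borel fiberwise basis, then pull back a suitable compatible quasi-Polish topology.

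\textbf{Setup and embedding.} Fix a countable Borel fiberwise basis $\@U = \set{U_n | n \in \#N}$ for $X$, closed under finite intersections. To arrange that prescribed countably many sets in $\@{BO}_p(X)$ become open, use \cref{eq:fib-bor-qpol-basis} to write each as $\bigcup_i (p^{-1}(B_i) \cap U_{n_i})$ with $B_i \in \@B(Y)$, and enlarge $\@U$ to contain all such $U_{n_i}$. Choose any compatible quasi-Polish topology $\@T_Y$ on $Y$ containing the collected $B_i$'s as open sets. Define $\phi(x) := (\chi_{U_n}(x))_n$ and $f := (p, \phi) : X -> Y \times \#S^\#N$; this is Borel and injective, since on each quasi-Polish fiber $X_y$ the restriction $\set{(U_n)_y}_n$ is a $T_0$ basis, so $\phi|_{X_y}$ is a topological embedding of $X_y$ into $\#S^\#N$ as a $\*\Pi^0_2$-subspace (by the definition of quasi-Polish combined with \labelcref{it:qpol-pi02}).

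\textbf{Uniformizing the $\*\Pi^0_2$-representations.} The key step is to refine $\@T_Y$ to a finer compatible quasi-Polish topology $\@T_Y'$ in which $f(X) \subseteq Y \times \#S^\#N$ becomes $\*\Pi^0_2$ in the product topology. The fiberwise $\*\Pi^0_2$-representation of $\phi(X_y)$ is determined by the Boolean lattice relations (finite containments $(U_{n_1})_y \cap \dots \cap (U_{n_k})_y \subseteq (U_m)_y$, etc.) among $\set{(U_n)_y}_n$, and these relations are Borel conditions in $y$. Applying Kunugi--Novikov uniformization \cref{thm:kunugi-novikov} to the resulting Borel data yields a uniform-in-$y$ $\*\Pi^0_2$-representation of $f(X)$; the refinement amounts to adjoining countably many Borel subsets of $Y$ to $\@T_Y$ as open sets, with the resulting $\@T_Y'$ quasi-Polish by \labelcref{it:qpol-dis}. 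Crucially, the refinements added to the product topology are only of rectangular form $B \times \#S^\#N$, so the subspace topology on each $\set{y} \times \#S^\#N$ remains the original Sierpi\'nski topology.

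\textbf{Pull back and verify.} Now $f(X)$ is a $\*\Pi^0_2$-subspace of the quasi-Polish space $(Y, \@T_Y') \times \#S^\#N$, hence is itself quasi-Polish by \labelcref{it:qpol-pi02}. Pulling back the subspace topology via the Borel isomorphism $f$ yields a compatible quasi-Polish topology $\@O(X)$ on $X$. Then $p = \mathrm{pr}_Y \circ f$ is continuous; each $U_n = f^{-1}(\set{s | s_n = 1})$ is open, as is each prescribed set $\bigcup_i (p^{-1}(B_i) \cap U_{n_i})$; and the subspace topology on each fiber $X_y$ matches the original, because $\phi|_{X_y}$ is a topological embedding and the fiber topology of $(Y, \@T_Y') \times \#S^\#N$ over $Y$ coincides with the original $\#S^\#N$ topology. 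For part (b), if $X$ is Borel-overt, additionally adjoin the Borel sets $p(U_n)$ to $\@T_Y'$ while preserving quasi-Polishness; then basic open sets of $\@O(X)$ take the form $p^{-1}(A) \cap U_n$ with $A \in \@T_Y'$, and $p(p^{-1}(A) \cap U_n) = A \cap p(U_n) \in \@T_Y'$, so $p$ is open.

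\textbf{Main obstacle.} The essential difficulty is the uniformization step: turning the pointwise $\*\Pi^0_2$-representations of the $\phi(X_y)$'s into a single uniform one for $f(X)$, achievable through a refinement of the $Y$-topology alone so as not to contaminate the fiberwise topologies. This is handled by a careful application of Kunugi--Novikov to Borel codes for the lattice structure of the basic opens in $\@U|_{X_y}$ as $y$ varies, and is where the full strength of uniform second-countability in \cref{def:fib-bor-qpol} is used.
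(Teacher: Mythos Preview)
Your overall architecture matches the paper's: embed $X$ into $Y \times \#S^{\#N}$ via indicator functions of a countable Borel fiberwise basis, show the image is ``uniformly fiberwise $\*\Pi^0_2$'' by refining only the $Y$-topology, then pull back. The handling of prescribed open sets and of the overt case is also essentially the same.

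The gap is in the uniformization step. Your claim that the fiberwise $\*\Pi^0_2$ description of $\phi(X_y)$ is ``determined by the Boolean lattice relations (finite containments \dots)'' is incomplete: a quasi-Polish space is recovered from its frame of opens, and this requires the countable \emph{covering} relations $(U_n)_y = \bigcup_i (U_{m_i})_y$, not just finite meets. There are uncountably many such potential covers, so ``these relations are Borel conditions in $y$'' does not immediately yield a countable family to adjoin to $\@T_Y$. More seriously, Kunugi--Novikov (\cref{thm:kunugi-novikov}) only uniformizes Borel fiberwise \emph{open} sets; it does not by itself give a uniform representation of a Borel fiberwise $\*\Pi^0_2$ set. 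The paper handles precisely this step by invoking the Saint~Raymond uniformization theorem (\cref{thm:saint-raymond}), which is the level-$2$ analog of Kunugi--Novikov and asserts that any Borel fiberwise $\*\Pi^0_2$ set $A$ can be written as $\bigcap_U (U \Rightarrow \bigcup_V (p^{-1}(B_{U,V}) \cap V))$ for Borel $B_{U,V} \subseteq Y$. This is a genuinely stronger statement with its own proof (reducing to the product-bundle case from \cite[2.4.4]{chen:beckec}), and it is exactly what your ``Main obstacle'' paragraph correctly identifies as the essential difficulty --- but Kunugi--Novikov alone does not resolve it.
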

\begin{proof}[Proof sketch]
Using the indicator functions of a countable Borel fiberwise basis $\@U$, we may find a Borel fiberwise continuous embedding into a product bundle $e : X -> Y \times \#S^\@U$, which has fiberwise $\*\Pi^0_2$ image by \cref{it:qpol-pi02}.
We now apply the following quasi-Polish generalization \cite[2.4.4]{chen:beckec} of Saint~Raymond's uniformization theorem \cite{saint-raymond:ksigma} for Borel fiberwise $\*\Pi^0_2$ sets (which is the analog of Kunugi--Novikov for level 2 of the Borel hierarchy; see also \cite[28.10, 35.45]{kechris:cdst}):

\begin{lemma}[Saint~Raymond uniformization for quasi-Polish bundles]
\label{thm:saint-raymond}
Let $p : X -> Y$ be a standard Borel bundle of quasi-Polish spaces, $A \subseteq X$ be Borel fiberwise $\*\Pi^0_2$.
Then for any countable Borel fiberwise basis $\@U \subseteq \@{BO}_p(X)$, there are Borel $B_{U,V} \subseteq Y$ for each $U, V \in \@U$ such that
\begin{eqaligned*}
A = \bigcap_{U \in \@U} \paren*{U => \bigcup_{V \in \@U} (p^{-1}(B_{U,V}) \cap V)}.
\end{eqaligned*}
\end{lemma}
\begin{proof}
First, consider the case where $X, Y$ are quasi-Polish, $p$ is continuous, and $\@U \subseteq \@O(X)$.
When $X = Y \times Z$ is a product bundle, where $Z$ is also quasi-Polish, and $p : Y \times Z -> Y$ is the first projection, this is \cite[2.4.4]{chen:beckec}.
For a general continuous $p : X -> Y$, apply the product bundle case to the graph of $\eval*{p}_A \subseteq X \times Y$ equipped with the second projection $\pi_2 : X \times Y -> Y$ and the fiberwise basis $\@U \times Y$; we get that $\eval*{p}_A$ may be written as
\begin{eqaligned*}
\eval*{p}_A = \bigcap_{U \in \@U} \paren*{U \times Y => \bigcup_{V \in \@U} (V \times B_{U,V})} \subseteq X \times Y,
\end{eqaligned*}
now take preimage under $(\id_X,p) : X -> X \times Y$.
Finally, the general case of a Borel bundle follows from \cref{thm:fib-bor-qpol} (whose proof only uses the product bundle case of this lemma).
\end{proof}

Applying this lemma to $e(X) \subseteq Y \times \#S^\@U$, equipped with the first projection $p : Y \times \#S^\@U -> Y$, shows that $e(X)$ is ``uniformly fiberwise $\*\Pi^0_2$''.
Now choose the topology on $Y$ to contain these $B_{U,V}$.
In the overt case, refine the topology to make the $p$-images of a countable basis in $X$ open in $Y$.
\end{proof}

We now show the analogous result in the Polish context.

\begin{lemma}
\label{thm:fib-bor-pol-reg}
Let $p : X -> Y$ be a standard Borel bundle of Polish spaces.
Then $\@{BO}_p(X)$ is a regular sigma-topology (recall again \cref{def:stop}).
\end{lemma}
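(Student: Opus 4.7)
The plan is to leverage the fiberwise Polish structure to produce a countable family of ``(open ball, closed ball)'' pairs witnessing fiberwise regularity in a uniformly Borel way, and then apply Kunugi--Novikov separation (\cref{thm:kunugi-novikov-sep}) to decompose any $U \in \@{BO}_p(X)$ accordingly.

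The first step is to construct a Borel fiberwise compatible complete metric $d : X \times_Y X -> \#R$ together with a Borel sequence of sections $x^i : Y -> X$ whose values $\{x^i(y)\}_i$ are dense in $X_y$ for every $y$. Both follow from the fiberwise Polish hypothesis combined with \cref{thm:fib-bor-qpol} and the uniform Borel structure provided by the countable Borel fiberwise basis $\@U$: producing a Borel selector of compatible complete metrics and of dense sequences on each fiber is a standard uniformization exercise. For each $i \in \#N$ and rational $q > 0$, set
\begin{eqaligned*}
V_{i,q} &:= \set*{(y,x) \in X | d_y(x, x^i(y)) < q},  \\
F_{i,q} &:= \set*{(y,x) \in X | d_y(x, x^i(y)) \le q}.
\end{eqaligned*}
Then $V_{i,q} \in \@{BO}_p(X)$, $\neg F_{i,q} \in \@{BO}_p(X)$ (so $F_{i,q}$ is ``Borel fiberwise closed''), and $V_{i,q} \subseteq F_{i,q}$. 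Moreover, by the triangle inequality in each Polish fiber, for every $y$, every open $W \subseteq X_y$, and every $x \in W$, some $(i,q)$ satisfies $x \in (V_{i,q})_y \subseteq (F_{i,q})_y \subseteq W$.

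Next, for any $U \in \@{BO}_p(X)$, define the $\*\Pi^1_1$ test sets $T_{i,q} := \set*{y \in Y | (F_{i,q})_y \subseteq U_y}$. By the previous paragraph, every $(y,x) \in U$ lies in some $(V_{i,q})_y$ with $y \in T_{i,q}$ and $(V_{i,q})_y \subseteq U_y$. Applying \cref{thm:kunugi-novikov-sep} with $A = U$, $C = U$, basic opens $V_{i,q}$, and test sets $T_{i,q}$, we obtain Borel $B_{i,q} \subseteq T_{i,q}$ satisfying $U = \bigcup_{i,q} \paren*{p^{-1}(B_{i,q}) \cap V_{i,q}}$. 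Setting $V'_{i,q} := p^{-1}(B_{i,q}) \cap V_{i,q}$ and $F'_{i,q} := p^{-1}(B_{i,q}) \cap F_{i,q}$, we then have $V'_{i,q} \in \@{BO}_p(X)$, $V'_{i,q} \subseteq F'_{i,q} \subseteq U$, and $\neg F'_{i,q} = \neg F_{i,q} \cup p^{-1}(Y \setminus B_{i,q}) \in \@{BO}_p(X)$, which is the required regular decomposition of $U$. The main technical obstacle is the uniform Borel construction of the metric and dense sections; once in hand, everything reduces to a clean application of Kunugi--Novikov separation. Combined with \cref{thm:stop-pol}, this regularity then yields the Polish analog of \cref{thm:fib-bor-qpol}: the Polish topologies contained in $\@{BO}_p(X)$ form a club.
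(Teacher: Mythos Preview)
Your argument is correct, and the overall shape---apply Kunugi--Novikov with test sets of the form $\{y : (\text{closed approximant})_y \subseteq U_y\}$---matches the paper's.  The difference lies in how you manufacture Borel fiberwise \emph{closed} approximants.  You invoke a Borel fiberwise compatible metric together with Borel dense sections, so that the closed balls $F_{i,q}$ are genuinely Borel; this lets a single application of \cref{thm:kunugi-novikov-sep} finish the job.  The paper instead works only with the abstract Borel fiberwise basis $\@V$ and its fiberwise closures $\-V_p$, which by \cref{eq:fib-bor-closure} are merely $\*\Sigma^1_1$.  After the first Kunugi--Novikov step yields $U = \bigcup_V (p^{-1}(B_V)\cap V)$ with $p^{-1}(B_V)\cap\-V_p \subseteq U$, the paper therefore needs a \emph{second} Kunugi--Novikov separation (in the form \cref{it:fib-bor-qpol-sep}, applied to $\neg U$ inside the $\*\Pi^1_1$ fiberwise open set $\neg(p^{-1}(B_V)\cap\-V_p)$) to extract a Borel fiberwise closed $F_V$ sandwiched between $p^{-1}(B_V)\cap V$ and $U$.

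The trade-off: your route front-loads the work into the Borel fiberwise metric and dense sections, which is a known but nontrivial construction lying outside the paper's self-contained toolkit (you yourself flag it as the main technical obstacle, and note that e.g.\ empty fibers and uniform Borelness of the metric need care).  The paper's two-step route avoids that external input entirely, at the modest cost of one extra invocation of Kunugi--Novikov separation.
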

\begin{proof}
Fix a countable Borel fiberwise basis $\@V \subseteq \@{BO}_p(X)$.
Let $U \in \@{BO}_p(X)$.
Then in each fiber over $y \in Y$, $U_y$ is the union of $V_y$ for all $V \in \@V$ such that $\-{V_y} \subseteq U_y$.
Note that the fiberwise closure
\begin{align}
\label{eq:fib-bor-closure}
\-V_p
:= \bigcup_{y \in Y} \-{V_y}
= \set{x \in X | \forall W \in \@V\, (x \in W \implies V_x \cap W_x \ne \emptyset)}
\end{align}
is $\*\Sigma^1_1$.
Thus by Kunugi--Novikov \labelcref{thm:kunugi-novikov} (with the ``test sets'' $T_V := \set{y \in Y | \-{V_y} \subseteq U_y}$), there are Borel $B_V \subseteq Y$ for each $V \in \@V$ such that
\begin{align*}
U &= \bigcup_{V \in \@V} (p^{-1}(B_V) \cap V), &
\-{(p^{-1}(B_V) \cap V)}_p = p^{-1}(B_V) \cap \-V_p \subseteq U.
\end{align*}
For each $V$, since the Borel set $\neg U$ is contained in the $\*\Pi^1_1$ fiberwise open $\neg \-{(p^{-1}(B_V) \cap V)}_p$, by Kunugi--Novikov again in the form of \labelcref{it:fib-bor-qpol-sep} there is a Borel fiberwise open $\neg F_V$ sandwiched in between.
Then the countable cover $U = \bigcup_{V \in \@V} (p^{-1}(B_V) \cap V)$ by Borel fiberwise open sets contained in Borel fiberwise closed sets $p^{-1}(B_V) \cap V \subseteq F_V \subseteq U$ witnesses regularity for $U$.
\end{proof}

\begin{corollary}[topological realization of Borel bundles of Polish spaces]
For a standard Borel(-overt) bundle of Polish spaces $p : X -> Y$, there exist compatible Polish topologies on $X, Y$ making $p$ into a continuous (open) map and such that $\@O(X)$ restricts fiberwise to the original fiberwise topology $\@O_p(X)$.
Moreover, $\@O(X)$ may be chosen to include any countably many sets in $\@{BO}_p(X)$.
\end{corollary}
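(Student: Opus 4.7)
The plan is to reduce to the quasi-Polish version of the corollary (\cref{thm:fib-bor-qpol}) by combining the preceding regularity lemma \cref{thm:fib-bor-pol-reg} with the abstract Polish-club lemma \cref{thm:stop-pol}. Concretely, \cref{thm:fib-bor-pol-reg} says that $\@{BO}_p(X)$ is a regular sigma-topology, and \cref{thm:fib-bor-qpol}(a) says it is compatible; \cref{thm:stop-pol} then yields a club of Polish topologies $\@T \subseteq \@{BO}_p(X)$, i.e., any countably many sets in $\@{BO}_p(X)$ are contained in a Polish topology contained in $\@{BO}_p(X)$.

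For the continuous (non-overt) case, first fix any Polish topology on $Y$ with countable basis $\@V \subseteq \@B(Y)$, and fix a countable Borel fiberwise basis $\@U \subseteq \@{BO}_p(X)$. Invoke the club property to choose a Polish $\@O(X) \subseteq \@{BO}_p(X)$ containing $\@U$, the countably many prescribed sets in $\@{BO}_p(X)$, and $p^{-1}(\@V)$. Continuity of $p$ is immediate from $p^{-1}(\@V) \subseteq \@O(X)$. Since $\@O(X) \subseteq \@{BO}_p(X) \subseteq \@O_p(X)$ as collections of subsets of $X$, the fiberwise restriction of $\@O(X)$ is contained in $\@O_p(X)$; since $\@O(X)$ contains the fiberwise basis $\@U$, this fiberwise restriction equals $\@O_p(X)$, as required.

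For the overt case, we also need $p$ open, which is arranged by a standard back-and-forth. Start with $\@O(Y)_0$ and $\@O(X)_0 \subseteq \@{BO}_p(X)$ as above. Given Polish topologies $\@O(Y)_n$ and $\@O(X)_n \subseteq \@{BO}_p(X)$ with $p : (X,\@O(X)_n) -> (Y, \@O(Y)_n)$ continuous, let $\@O(Y)_{n+1}$ be a Polish topology on $Y$ containing $\@O(Y)_n$ together with the $p$-images of a countable basis of $\@O(X)_n$ (these images lie in $\@B(Y)$ by overtness, using \cref{eq:fib-bor-qpol-basis} and the fact that $\@O(X)_n \subseteq \@{BO}_p(X)$); then use the club property on $X$ to obtain a Polish $\@O(X)_{n+1} \subseteq \@{BO}_p(X)$ containing $\@O(X)_n$ and $p^{-1}$ of a countable basis of $\@O(Y)_{n+1}$. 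Finally, let $\@O(Y), \@O(X)$ be the topologies generated by the increasing unions $\bigcup_n \@O(Y)_n$ and $\bigcup_n \@O(X)_n$; these remain Polish (by the club property/\cref{it:qpol-dis}\cref{it:qpol-dis:join}), with $\@O(X) \subseteq \@{BO}_p(X)$, and by construction $p$ is both continuous and open.

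The only subtle point is coordinating the back-and-forth so that at each stage the needed enlargements stay inside $\@{BO}_p(X)$ and remain Polish; this is exactly what the club formulation delivers. There is no genuine obstacle beyond bookkeeping: all the real content sits in \cref{thm:fib-bor-pol-reg} (which in turn rests on Kunugi--Novikov applied to fiberwise closures as in \cref{eq:fib-bor-closure}) and \cref{thm:stop-pol}.
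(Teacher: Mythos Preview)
Your proposal is correct and follows essentially the same approach as the paper: both combine \cref{thm:fib-bor-pol-reg} with \cref{thm:stop-pol} to obtain Polish topologies inside $\@{BO}_p(X)$, then coordinate with a Polish topology on $Y$ to arrange continuity (and openness in the overt case). The paper's argument is slightly more streamlined in the overt case---it first invokes \cref{thm:fib-bor-qpol} to get quasi-Polish topologies making $p$ open, refines $\@O(X)$ once to be Polish, then refines $\@O(Y)$ once to be Polish (after adjoining the $p$-images of a countable basis for the new $\@O(X)$), and finally adjoins $p^{-1}(\@O(Y))$ back to $\@O(X)$; a single round suffices since a basic open set in the final $\@O(X)$ has the form $U \cap p^{-1}(V)$ with $p(U \cap p^{-1}(V)) = p(U) \cap V$ already open. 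Your $\omega$-length back-and-forth achieves the same thing and is certainly correct, just more elaborate than necessary.
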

\begin{proof}
By \cref{thm:fib-bor-qpol}, find compatible quasi-Polish topologies satisfying the required conditions.
By \cref{thm:stop-pol} and the preceding lemma, there is a finer Polish topology on $X$ still contained in $\@{BO}_p(X)$.
If necessary, also refine the topology on $Y$ to make it Polish (after first making the images of countably many basic open sets in $X$ open, in the overt case), and adjoin the preimages of the new open sets to $\@O(X)$.
\end{proof}

We have the following ways of deriving new Borel bundles of spaces from old ones:

\begin{remark}
\label{rmk:fib-bor-qpol-pb}
Given a standard Borel bundle of quasi-Polish spaces $p : X -> Y$, and a Borel map $f : Z -> Y$ from another standard Borel space $Z$, recall that we have the pullback fiberwise topology $\@O_{\pi_1}(Z \times_Y X)$ on $\pi_1 : Z \times_Y X -> Z$ (\cref{def:fib-pb}).

By definition, if $\@U \subseteq \@{BO}_p(X)$ is a countable Borel fiberwise basis, then $Z \times_Y \@U \subseteq \@{BO}_{\pi_1}(Z \times_Y X)$ is a countable Borel fiberwise basis.
Thus $\pi_1 : Z \times_Y X -> Z$ is a standard Borel bundle of quasi-Polish spaces, with (by Kunugi--Novikov \labelcref{eq:fib-bor-qpol-basis})
\begin{eqaligned*}
\@{BO}_{\pi_1}(Z \times_Y X) = \@B(Z) \otimes_Y \@U = \@B(Z) \otimes_Y \@{BO}_p(X).
\end{eqaligned*}
If $p : X -> Y$ is Borel-overt, then so is $\pi_1 : Z \times_Y X -> Z$, by Frobenius reciprocity \cref{it:fib-baire-frob}.

If in this situation $f$ is also equipped with a Borel fiberwise quasi-Polish topology, then the fiberwise product topology on $Z \times_Y X -> Y$ (see again \cref{def:fib-pb}) is also a Borel fiberwise quasi-Polish topology; a countable fiberwise basis is given by
$\@U \times \@V := \set{U \times V | U \in \@U,\, V \in \@V}$ for any countable fiberwise bases $\@U \subseteq \@{BO}_p(X)$ and $\@V \subseteq \@{BO}_f(Z)$.
If both $\@O_p(X)$ and $\@O_f(Z)$ are Borel-overt, then so is $\@O_{f\pi_1=p\pi_2}(Z \times_Y X)$.
\end{remark}

\begin{lemma}
\label{thm:fib-bor-subsp}
For a Borel fiberwise $\*\Pi^0_2$ set $X' \subseteq X$ in a standard Borel bundle of quasi-Polish spaces $p : X -> Y$, equipped with the fiberwise subspace topology, we have $\@{BO}_p(X') = X' \cap \@{BO}_p(X)$.
\end{lemma}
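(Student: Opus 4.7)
The plan is to establish the two inclusions separately. The inclusion $X' \cap \@{BO}_p(X) \subseteq \@{BO}_p(X')$ is immediate from the definitions: any Borel fiberwise open $V \subseteq X$ meets the Borel set $X'$ in a Borel set whose fibers $V_y \cap X'_y$ are open in the subspace topology on $X'_y$.

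For the reverse inclusion $\@{BO}_p(X') \subseteq X' \cap \@{BO}_p(X)$, I will extend any $U \in \@{BO}_p(X')$ to a Borel fiberwise open set $V \subseteq X$ with $V \cap X' = U$, via Kunugi--Novikov separation (\cref{thm:kunugi-novikov-sep}) applied in the ambient bundle $X$. First I fix a countable Borel fiberwise basis $\@U \subseteq \@{BO}_p(X)$, noting that $\{W \cap X' : W \in \@U\}$ is a fiberwise basis for the subspace topology on $X'$. The key trick is to pass to the Borel ``upper envelope'' $C := U \cup \neg X' \subseteq X$; since $U \subseteq X'$, we have $C \cap X' = U$, and moreover $W_y \cap X'_y \subseteq U_y$ iff $W_y \subseteq C_y$. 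I then set $T_W := \{y \in Y : W_y \subseteq C_y\} = Y \setminus p(W \cap \neg C)$, which is $\*\Pi^1_1$ as the complement of the projection of a Borel set.

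Since each $U_y$ is a union of basic open sets $W_y \cap X'_y$ of the subspace topology with $W_y \cap X'_y \subseteq U_y$, equivalently $W_y \subseteq C_y$ and $y \in T_W$, the hypotheses of \cref{thm:kunugi-novikov-sep} hold with $A := U$ (Borel, hence $\*\Sigma^1_1$), the above $C$ (Borel, hence $\*\Pi^1_1$), and the test sets $T_W$. This produces Borel $B_W \subseteq T_W$ such that
\[
U \;\subseteq\; V \;:=\; \bigcup_{W \in \@U} (p^{-1}(B_W) \cap W) \;\subseteq\; C.
\]
Then $V$ is a countable union of sets in $\@{BO}_p(X)$, hence lies in $\@{BO}_p(X)$; and since $U \subseteq X'$, intersecting the chain $U \subseteq V \subseteq C$ with $X'$ gives $U \subseteq V \cap X' \subseteq C \cap X' = U$, so $V \cap X' = U$ as required.

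The main subtlety is the choice of $C$: one cannot simply cover $U$ by basic open sets $W \in \@U$ contained in $U$, because $U$ is not in general open in $X$ itself (only in the subspace topology on $X'$, where $\Pi^0_2$-ness of the fibers of $X'$ guarantees that this subspace topology is quasi-Polish). The envelope $C = U \cup \neg X'$ absorbs the ``excess'' of $W$ lying outside $X'$, allowing the separation to be carried out entirely within the ambient bundle $X$ while still guaranteeing $V \cap X' = U$.
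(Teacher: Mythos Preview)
Your proof is correct and takes essentially the same approach as the paper: the paper's one-line proof applies \cref{it:fib-bor-qpol-sep} to the pair $U'$, $X' \setminus U'$, which amounts to separating $A = U'$ from $X' \setminus U'$ by a Borel fiberwise open set inside $C = U' \cup (X \setminus X')$---exactly your envelope $C$. You have simply unfolded that citation into an explicit invocation of Kunugi--Novikov separation. (Your test sets $T_W$ are in fact redundant here, since the condition $y \in T_W$ coincides with $W_y \subseteq C_y$ and $C$ is already Borel; the ``in particular'' clause of \cref{thm:kunugi-novikov-sep}, equivalently \cref{it:fib-bor-qpol-sep}, suffices.)
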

\begin{proof}
Given $U' \in \@{BO}_p(X')$, apply \cref{it:fib-bor-qpol-sep} to $U', X' \setminus U'$ to get $U \in \@{BO}_p(X)$ with $U' = X' \cap U$.
\end{proof}

\begin{corollary}
\label{thm:fib-bor-subsp-dense}
If $p : X -> Y$ is a standard Borel bundle of quasi-Polish spaces, and $X' \subseteq X$ is a Borel fiberwise dense $\*\Pi^0_2$ set, then $p$ is a Borel-overt bundle iff $\eval*{p}_{X'} : X' -> Y$ is.
\end{corollary}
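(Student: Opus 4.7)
The plan is to reduce the corollary to two simple ingredients: Lemma \ref{thm:fib-bor-subsp}, which identifies $\@{BO}_p(X')$ with $X' \cap \@{BO}_p(X)$, and the elementary observation that fiberwise density of $X' \subseteq X$ forces the $p$-projections of a fiberwise open set in $X$ and its trace on $X'$ to coincide. Concretely, for any fiberwise open $U \subseteq X$ and any $y \in Y$, density of $X'_y$ in $X_y$ gives $U_y \ne \emptyset \iff X'_y \cap U_y \ne \emptyset$, whence
\begin{equation*}
p(U) = p(X' \cap U) = (p \restriction X')(X' \cap U) \subseteq Y.
\end{equation*}
Note also that $X' \to Y$ is itself a standard Borel bundle of quasi-Polish spaces under the fiberwise subspace topology: each fiber is quasi-Polish as a $\*\Pi^0_2$ subspace (by \labelcref{it:qpol-pi02}), and any countable Borel fiberwise basis $\@U$ for $X$ gives $X' \cap \@U$ as a countable Borel fiberwise basis for $X'$.

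For the forward direction, assuming $p$ is Borel-overt, take any $U' \in \@{BO}_{p \restriction X'}(X')$. By \cref{thm:fib-bor-subsp}, $U' = X' \cap U$ for some $U \in \@{BO}_p(X)$; then $(p \restriction X')(U') = p(U)$ by the density identity, which is Borel by assumption on $p$. For the backward direction, assuming $p \restriction X'$ is Borel-overt, take any $U \in \@{BO}_p(X)$ and set $U' := X' \cap U$. This is Borel in $X'$ and fiberwise open in the subspace topology, so $U' \in \@{BO}_{p \restriction X'}(X')$, and the density identity yields $p(U) = (p \restriction X')(U')$, Borel by assumption.

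There is no real obstacle: all the content has been packaged into \cref{thm:fib-bor-subsp} and the trivial fiberwise-density observation. The only point worth double-checking is that Borel-overtness is symmetric under replacing ``a countable Borel fiberwise basis'' with ``all Borel fiberwise open sets'', which is immediate from \cref{eq:fib-bor-qpol-basis} via Kunugi--Novikov uniformization.
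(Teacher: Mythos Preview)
Your proof is correct and follows essentially the same approach as the paper: invoke \cref{thm:fib-bor-subsp} to write any $U' \in \@{BO}_{p\restriction X'}(X')$ as $X' \cap U$ for some $U \in \@{BO}_p(X)$, and use fiberwise density to conclude $p(U') = p(U)$. The paper's proof is simply terser, recording only this identity and leaving the two directions implicit.
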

\begin{proof}
Every Borel fiberwise open $U' \subseteq X'$ is by the preceding lemma $U' = X' \cap U$ for $U \in \@{BO}_p(X)$; and by fiberwise density, $p(U') = p(U)$.
\end{proof}

\subsection{Borel category quantifiers}

Borel-overt bundles $p : X -> Y$ are those in which the fiberwise Baire property and Baire category quantifiers $\exists^*_p, \forall^*_p$ (\cref{def:fib-top}) may be computed in a Borel way; see \cite[2.3.16, 2.4.7]{chen:beckec}.
We will recall the proof here, as we will need a bound on the Borel complexity of the witnesses.

\begin{proposition}
\label{thm:fib-baire-borel}
Let $p : X -> Y$ be a continuous open map between quasi-Polish spaces.
\begin{enumerate}[alph]
\item \label{thm:fib-baire-borel:bp}
(fiberwise Baire property)
For any $A \in \*\Sigma^0_\xi(X)$, there is a fiberwise open $U_A \subseteq X$, of the form
\tagsright
\begin{align*}
U_A = \bigcup_i (p^{-1}(B_i) \cap U_i)
\in p^{-1}(\*\Sigma^0_\xi(Y)) \ocap \@O(X)
\tag{$*$}
\end{align*}
where $B_i \in \*\Sigma^0_\xi(Y)$ and $U_i \in \@O(X)$,
such that $A =^*_f U_A$,
as witnessed by a fiberwise dense $\@G_\delta$
\begin{align*}
G_A = \bigcap_j \bigcup_k (p^{-1}(C_{j,k}) \cap V_{j,k})
\tag{$\dagger$}
\end{align*}
where $C_{j,k} \in \*\Delta^0_\xi(Y)$ and $V_{j,k} \in \@O(X)$,
on which $G_A \cap A = G_A \cap U_A$.
\item \label{thm:fib-baire-borel:exists*}
Thus, $\exists^*_f(\*\Sigma^0_\xi(X)) \subseteq \*\Sigma^0_\xi(Y)$.
\end{enumerate}
Similarly if $p : X -> Y$ is a standard Borel-overt bundle of quasi-Polish spaces, then these hold with $\@B$ in place of $\*\Sigma^0_\xi, \*\Delta^0_\xi$.
\end{proposition}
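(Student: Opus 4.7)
The plan is to prove (a) by transfinite induction on $\xi$, fiberwise lifting the classical inductive construction of $U_A$ from \cref{eq:bp} while carefully tracking the Borel complexity of all auxiliary sets.  For the base case $\xi = 1$, $A \in \@O(X)$ is already open, so take $U_A := A$ and $G_A := X$.  For the inductive step, write $A = \bigcup_i (P_i \setminus Q_i)$ with $P_i, Q_i \in \*\Sigma^0_{\zeta_i}(X)$ and $\zeta_i < \xi$, and apply the inductive hypothesis to each $P_i, Q_i$ to obtain $U_{P_i}, U_{Q_i}, G_{P_i}, G_{Q_i}$ in the required forms.

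Fix a countable open basis $\@W$ of $X$, so that for any fiberwise open $V \subseteq X$ the fiberwise closure satisfies $X \setminus \-V_p = \bigcup_{W \in \@W} (W \cap p^{-1}(Y \setminus p(W \cap V)))$ (cf.\ \cref{eq:fib-bor-closure}).  I define
\begin{align*}
U_A := \bigcup_i \bigl(U_{P_i} \cap (X \setminus \-{U_{Q_i}}_p)\bigr).
\end{align*}
Openness of $p$ combined with the inductive form $U_{Q_i} = \bigcup_j (p^{-1}(B_j) \cap V_j)$ with $B_j \in \*\Sigma^0_{\zeta_i}(Y)$, $V_j \in \@O(X)$ gives $p(W \cap U_{Q_i}) = \bigcup_j (B_j \cap p(W \cap V_j)) \in \*\Sigma^0_{\zeta_i}(Y)$, whose complement lies in $\*\Pi^0_{\zeta_i}(Y) \subseteq \*\Sigma^0_\xi(Y)$; expanding the analogous form of $U_{P_i}$ then puts $U_A$ in the required form $(*)$.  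For the witness, set
\begin{align*}
G_A := \bigcap_i (G_{P_i} \cap G_{Q_i}) \cap \bigcap_i \bigl(U_{Q_i} \cup (X \setminus \-{U_{Q_i}}_p)\bigr).
\end{align*}
The second factor forces each fiber $(G_A)_y$ to avoid the fiberwise boundary $\-{(U_{Q_i})_y} \setminus (U_{Q_i})_y$, which is closed nowhere dense in the Baire space $X_y$, keeping $(G_A)_y$ a dense $\@G_\delta$ on which $A$ and $U_A$ coincide.  Crucially, both $U_{Q_i} \in p^{-1}(\*\Sigma^0_{\zeta_i}(Y)) \ocap \@O(X)$ and $X \setminus \-{U_{Q_i}}_p \in p^{-1}(\*\Pi^0_{\zeta_i}(Y)) \ocap \@O(X)$ lie in $p^{-1}(\*\Delta^0_\xi(Y)) \ocap \@O(X)$ because $\zeta_i < \xi$, so the form $(\dagger)$ is preserved.

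Part (b) is then immediate: $y \in \exists^*_p(A)$ iff $(A)_y$ is nonmeager iff $(U_A)_y \ne \emptyset$ (using $A =^*_p U_A$ together with the Baire property of fibers applied to the fiberwise open $U_A$), iff $y \in p(U_A) = \bigcup_i (B_i \cap p(U_i))$; openness of $p$ makes each $p(U_i) \in \@O(Y)$, so $p(U_A) \in \*\Sigma^0_\xi(Y)$.  For the Borel-overt case, apply \cref{thm:fib-bor-qpol} to install compatible quasi-Polish topologies on $X, Y$ making $p$ continuous and open and absorbing any prescribed countable family of Borel fiberwise open sets; every Borel set of $X$ then becomes $\*\Sigma^0_\xi$ for some $\xi$ in the refined topology, and the quasi-Polish case yields $U_A, G_A$ of the desired forms now with $\@B(Y)$-valued parameters on $Y$.

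The main obstacle is securing the sharp $\*\Delta^0_\xi$ bound in $(\dagger)$ rather than the naive $\*\Sigma^0_\xi$ bound: this is precisely what dictates the choice of boundary-avoiding factor as $U_{Q_i} \cup (X \setminus \-{U_{Q_i}}_p)$, combining the $\*\Sigma^0_{\zeta_i}$ form of $U_{Q_i}$ with the $\*\Pi^0_{\zeta_i}$ form of its image-complement so that both collapse into $\*\Delta^0_\xi$ only because $\zeta_i < \xi$.  Without this bookkeeping the induction would fail to close at successor stages of the Borel hierarchy.
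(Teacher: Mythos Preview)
Your proof is correct and follows essentially the same approach as the paper.  The paper separates the inductive step into a union case and a difference case, writing $U_{A \setminus B} := \bigcup_{W \in \@W}(W \cap U_A \setminus p^{-1}(p(W \cap U_B)))$ and $G_{A \setminus B} := G_A \cap G_B \cap (\-{(U_B)}_p \Rightarrow U_B)$; your combined formula $U_A = \bigcup_i (U_{P_i} \cap (X \setminus \-{U_{Q_i}}_p))$ expands to exactly this once you substitute the expression for the fiberwise exterior, and your boundary-avoiding factor $U_{Q_i} \cup (X \setminus \-{U_{Q_i}}_p)$ is literally the paper's $\-{(U_{Q_i})}_p \Rightarrow U_{Q_i}$.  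Your bookkeeping of the $\*\Delta^0_\xi$ bound in $(\dagger)$ matches the paper's as well.
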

\begin{proof}
\cref{thm:fib-baire-borel:bp} follows by induction on the Borel complexity of $A$, using the fiberwise versions of \cref{eq:bp}:
\begin{eqaligned}[][\label{eq:fib-baire-borel:bp}\left\{][\right.]
U_V &:= V, &
G_V &:= X
    \quad \text{for $V \in \@O(X)$},
\\
U_{\bigcup_i A_i} &:= \bigcup_i U_{A_i}, &
G_{\bigcup_i A_i} &:= \bigcap_i G_{A_i},
\\
U_{A \setminus B} &:= \bigcup_{W \in \@W} \paren[\big]{W \cap U_A \setminus p^{-1}(p(W \cap U_B))}, &
G_{A \setminus B} &:= G_A \cap G_B \cap \paren[\big]{\-{(U_B)}_p => U_B}
\end{eqaligned}
where $\@W$ is any countable open basis for $X$.
Here $\-{(U_B)}_p$ is the fiberwise closure, as in \cref{eq:fib-bor-closure} which shows that $\-{(U_B)}_p => U_B$ (the complement of the fiberwise boundary of $U_B$) is of the form
\begin{eqaligned*}
\paren[\big]{\-{(U_B)}_p => U_B}
&= \paren*{\bigcap_{W \in \@W} \paren[\big]{W => p^{-1}(p(U_B \cap W))}} => U_B \\
&= \bigcup_{W \in \@W} \paren[\big]{W \setminus p^{-1}(p(U_B \cap W))} \cup U_B.
\end{eqaligned*}
When $B \in \*\Sigma^0_\xi(X)$, then by ($*$) (and Frobenius reciprocity \labelcref{it:fib-baire-frob}),
$p(U_B \cap W) \in \*\Sigma^0_\xi(Y)$;
this along with ($*$) verifies ($\dagger$) in the inductive case for successor ordinals.

\cref{thm:fib-baire-borel:exists*} follows from applying $\exists^*_p$ to ($*$), by Frobenius reciprocity \cref{it:fib-baire-frob}.
The formulas \cref{eq:fib-baire-borel:bp} become
\begin{eqaligned}[][\label{eq:fib-baire-borel:exists*}\left\{][\right.]
\exists^*_p(V) &= p(V)
    \quad \text{for $V \in \@O(X)$ (or more generally $V \in \@O_p(X)$)}, \\
\exists^*_p(\bigcup_i A_i) &= \bigcup_i \exists^*_p(A_i), \\
\exists^*_p(A \setminus B) &= \bigcup_{W \in \@W} (\exists^*_p(W \cap A) \setminus \exists^*_p(W \cap B)).
\end{eqaligned}

The Borel version now follows from \cref{thm:fib-bor-qpol}, or by the same proof.
We note that in \cref{eq:fib-baire-borel:bp}, hence also in \cref{eq:fib-baire-borel:exists*}, the set $\@W$ may in fact be any countable Borel \emph{fiberwise} basis for $X$.
\end{proof}

\begin{corollary}
\label{thm:qpol-openquot-bor}
Let $f : X ->> Y$ be a continuous open surjection between quasi-Polish spaces.
Then $B \subseteq Y$ is $\*\Sigma^0_\xi$ iff $f^{-1}(B) \subseteq X$ is.
\end{corollary}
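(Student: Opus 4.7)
The forward direction follows by induction on $\xi$: $f^{-1}$ sends open sets to open sets by continuity, and commutes with the countable unions and differences used to build $\*\Sigma^0_\xi$.

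For the converse, assume $f^{-1}(B) \in \*\Sigma^0_\xi(X)$. My plan is to recover $B$ by applying the Baire category quantifier $\exists^*_f$: \cref{thm:fib-baire-borel:exists*} guarantees $\exists^*_f(\*\Sigma^0_\xi(X)) \subseteq \*\Sigma^0_\xi(Y)$, so it is enough to verify the identity
\begin{equation*}
B = \exists^*_f(f^{-1}(B)).
\end{equation*}
By Frobenius reciprocity \cref{it:fib-baire-frob}, the right-hand side equals $B \cap \exists^*_f(X)$, reducing the task to showing $\exists^*_f(X) = Y$, i.e., that every fiber of $f$ is nonmeager in itself. Nonemptiness of fibers is handed to me by surjectivity.

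The only remaining issue, and the step I expect to be the crux in this non-Hausdorff setting, is Baireness of the fibers. In the Polish case this is automatic, since fibers of continuous maps are closed, but a point of a quasi-Polish space need not be $\@G_\delta$, so a fiber need not even be $\*\Pi^0_2$ in $X$ a priori. To sidestep this, I will use that an open continuous surjection is always a quotient map, so $Y$ carries the open quotient topology induced by the fiber equivalence relation on $X$; since $Y$ is $T_0$, the quasi-Polish form of Sierpiński's theorem recorded in \cref{it:qpol-openquot} forces each fiber $f^{-1}(y)$ to be $\*\Pi^0_2$ in $X$, hence quasi-Polish in the subspace topology, hence Baire by \cref{it:qpol-t0-cb-baire}. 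Combined with surjectivity, this yields $\exists^*_f(X) = Y$ and the required identity, completing the proof.
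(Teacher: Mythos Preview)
Your proof is correct and follows the same approach as the paper, which simply writes $B = \exists^*_f(f^{-1}(B))$ and appeals to \cref{thm:fib-baire-borel}\cref{thm:fib-baire-borel:exists*}. Your explicit justification that fibers are $\*\Pi^0_2$ (via \cref{it:qpol-openquot}) and hence Baire is a detail the paper leaves implicit, but it is exactly the right way to fill that gap in the non-Hausdorff setting.
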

\begin{proof}
$\Longleftarrow$ is clear; for $\Longrightarrow$, $B = \exists^*_f(f^{-1}(B))$.
\end{proof}

We also have the Kuratowski--Ulam theorem, which admits the following formulation in terms of bundles; see \cite[2.4.8]{chen:beckec}, \cite[7.6]{chen:qpol}, \cite[A.1]{melleray-tsankov:abelian}, \cite[8.41]{kechris:cdst}.
Since we will need a slightly more general form than what these references state, we briefly recall the proof.

\begin{theorem}[Kuratowski--Ulam]
\label{thm:kuratowski-ulam}
Let $p : X -> Z$ and $q : Y -> Z$ be two bundles of topological spaces, and $f : X -> Y$ be a fiberwise continuous open map over $Z$, so that $p = q \circ f$.
\begin{eqtikzcd*}
X \drar["p"'] \rar["f"] & Y \dar["q"] \\
& Z
\end{eqtikzcd*}
Suppose $X$ is $p$-fiberwise second-countable and $f$-fiberwise Baire.
Then for any $p$-fiberwise Baire-measurable $A \subseteq X$, we have that
\begin{enumerate}[alph]
\item
$A \cap f^{-1}(y) \subseteq f^{-1}(y)$ is Baire-measurable for $q$-fiberwise comeagerly many $y \in Y$;
\item
$\exists^*_f(A) \subseteq Y$ is $q$-fiberwise Baire-measurable;
\item
$\exists^*_p(A) = \exists^*_q(\exists^*_f(A))$.
\end{enumerate}
\end{theorem}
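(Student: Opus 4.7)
The plan is to exploit the fiberwise nature of all data to reduce to the case $Z = \{*\}$: the hypotheses (fiberwise second-countability, $f$-fiberwise Baire-ness), the fiberwise Baire-measurability of $A$, and the Baire category quantifiers $\exists^*_f, \exists^*_p, \exists^*_q$ (\cref{def:fib-top}) all factor through $Z$ pointwise; moreover, since $p = q \circ f$, each $f$-fiber lies inside a single $p$-fiber, so Baire-ness of $f$-fibers in $X$ coincides with Baire-ness within the $p$-fiber. Fixing $z \in Z$ and restricting, it suffices to prove: if $f : X \to Y$ is continuous open with $X$ second-countable and every $f^{-1}(y)$ Baire, and $A \subseteq X$ is Baire-measurable, say $A = U \triangle M$ with $U \in \@O(X)$ and $M$ meager, then (a) $A \cap f^{-1}(y)$ is Baire-measurable in $f^{-1}(y)$ for comeager $y \in Y$, (b) $\exists^*_f(A) \subseteq Y$ is Baire-measurable, and (c) $A$ is meager in $X$ iff $\exists^*_f(A)$ is meager in $Y$.

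The argument rests on three observations. First, for any open $V \subseteq X$, $\exists^*_f(V) = f(V)$: $y \in f(V)$ makes $V \cap f^{-1}(y)$ a nonempty open subset of the Baire space $f^{-1}(y)$, hence nonmeager, while $y \notin f(V)$ gives $V \cap f^{-1}(y) = \emptyset$; note $f(V) \in \@O(Y)$ by openness of $f$. Second, for $F \subseteq X$ closed nowhere dense, $\exists^*_f(F)$ is meager. The key point is that for each $W$ in a fixed countable open basis $\@W$ of $X$, $f(W) \setminus f(W \setminus F)$ is nowhere dense in $Y$: indeed, if some nonempty open $V \subseteq Y$ lay in its closure, then $V \subseteq Y \setminus f(W \setminus F)$ (which is closed, since $W \setminus F$ is open and $f$ is open) and $V \cap f(W) \neq \emptyset$, whence $W \cap f^{-1}(V)$ is a nonempty open subset of $X$ disjoint from $X \setminus F$, contradicting that $F$ is nowhere dense. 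Now if $F \cap f^{-1}(y)$ is nonmeager in the Baire space $f^{-1}(y)$, being closed it must have nonempty interior there, so $\emptyset \ne W \cap f^{-1}(y) \subseteq F$ for some $W \in \@W$, placing $y \in f(W) \setminus f(W \setminus F)$. Thus $\exists^*_f(F) \subseteq \bigcup_{W \in \@W} (f(W) \setminus f(W \setminus F))$ is meager. Third, by countable union, $\exists^*_f(M)$ is meager for any meager $M \subseteq X$.

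From these, (a) follows since $A \cap f^{-1}(y) = (U \cap f^{-1}(y)) \triangle (M \cap f^{-1}(y))$ is open-mod-meager in $f^{-1}(y)$ for all $y$ outside the meager set $\exists^*_f(M)$; and (b) follows since modulo $\exists^*_f(M)$, $\exists^*_f(A)$ agrees with the open set $f(U) = \exists^*_f(U)$. For (c), the forward direction is immediate by applying the third computation to $A$ itself. For the reverse, if $\exists^*_f(A)$ is meager in $Y$, then so is $f(U)$; covering $f(U) \subseteq \bigcup_n F_n$ with each $F_n \subseteq Y$ closed nowhere dense, we get $U \subseteq \bigcup_n f^{-1}(F_n)$, where each $f^{-1}(F_n)$ is closed (by continuity) with empty interior---any open $W' \subseteq f^{-1}(F_n)$ would give $f(W') \subseteq F_n$ an open subset of a nowhere dense set, hence empty, forcing $W' = \emptyset$---so $U$ and therefore $A$ is meager. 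The main technical hurdle is the nowhere-density computation in step two, which is the only point where the openness of $f$ interacts nontrivially with a closed nowhere dense set on the source.
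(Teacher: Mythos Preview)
Your proof is correct and follows essentially the same route as the paper's: reduce to $Z=\{*\}$, show $\exists^*_f$ of an open set is its (open) image, show $\exists^*_f(F)$ is meager for closed nowhere dense $F$ via the cover $\bigcup_{W\in\@W}(f(W)\setminus f(W\setminus F))$, then deduce (a)--(c) from the decomposition $A = U \triangle M$. The paper phrases the key step as ``$f(W\setminus F)$ is dense in $f(W)$'' and leaves the reverse direction of (c) implicit, but your more explicit arguments for nowhere density and for ``$f(U)$ meager $\Rightarrow U$ meager'' are just filling in those details.
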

\begin{proof}[Proof sketch]
By restricting to the fibers over each $z \in Z$, it suffices to consider the case $Z = \set{*}$.
Since $X$ is $f$-fiberwise Baire, $\exists^*_f(W) = f(W)$ for open $W \subseteq X$.
If $F \subseteq X$ is closed nowhere dense, and $W \subseteq X$ is open, then it is easily seen that $f(W \setminus F) \subseteq f(W)$ is dense; fixing a countable basis $\@W \subseteq \@O(X)$, it follows that $\exists^*_f(F) = \bigcup_{W \in \@W} (f(W) \setminus f(W \setminus F))$ is meager.
Hence if $F \subseteq X$ is meager, then so is $\exists^*_f(F) \subseteq Y$.
Now for Baire-measurable $A \subseteq X$, find $A =^* U \in \@O(X)$, so that $A \triangle U$ is meager, hence so is $\exists^*_f(A \triangle U) \supseteq \exists^*_f(A) \triangle \exists^*(U) = \exists^*_f(A) \triangle f(U)$; all three claims easily follow.
\end{proof}

\begin{remark}
We will typically be interested in the above situation when $p : X -> Z$ and $q : Y -> Z$ are standard Borel-overt bundles of quasi-Polish spaces, and $f : X -> Y$ is Borel fiberwise continuous open over $Z$.
In such a situation, it does not seem possible to prove that $f$ is automatically ``uniformly fiberwise open'', in the sense that $f(\@{BO}_p(X)) \subseteq \@{BO}_q(Y)$.

However, we have the following ``approximate'' version of this:
\end{remark}

\begin{lemma}
\label{thm:fib-open-borel-reg}
Let $p : X -> Z$ and $q : Y -> Z$ be standard Borel-overt bundles of quasi-Polish spaces, and $f : X -> Y$ be Borel fiberwise continuous open over $Z$.
Then for any Borel $p$-fiberwise open $U \in \@{BO}_p(X)$, the $q$-fiberwise exterior of the image $Y \setminus \-{f(U)}_p \subseteq Y$ is Borel (and $q$-fiberwise open), thus so is the $q$-fiberwise regularized open image $(\-{f(U)}_p)^\circ_p$ (which is $=^*_q f(U)$).
\end{lemma}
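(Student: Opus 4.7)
The plan is to exploit the fiberwise closure formula \cref{eq:fib-bor-closure} together with Borel-overtness to compute $\-{f(U)}_q$ directly as a Borel set, even though $f(U)$ itself may fail to be Borel. The exterior is then its complement, and the regularized open interior is obtained by iterating the closure computation once more. The assertion that $(\-{f(U)}_q)^\circ_q$ agrees with $f(U)$ modulo $q$-fiberwise meager will follow immediately from $f$ being fiberwise open into the quasi-Polish fibers of $q$.

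More concretely, fix a countable Borel fiberwise basis $\@V \subseteq \@{BO}_q(Y)$, closed under finite intersections. For each $W \in \@V$, the set $U \cap f^{-1}(W) \subseteq X$ is Borel, and in the fiber over $z \in Z$ equals $U_z \cap f_z^{-1}(W_z)$, which is open because $U_z$ is open and $f_z : X_z \to Y_z$ is continuous; hence $U \cap f^{-1}(W) \in \@{BO}_p(X)$. By Borel-overtness of $p$, the set $B_W := p(U \cap f^{-1}(W)) \subseteq Z$ is then Borel. Using $q \circ f = p$, for $y \in Y$ with $q(y) = z$ the fiberwise meeting condition $f(U)_z \cap W_z \ne \emptyset$ is equivalent to $z \in B_W$, so \cref{eq:fib-bor-closure} yields
\[
\-{f(U)}_q = \bigcap_{W \in \@V} \bigl((Y \setminus W) \cup q^{-1}(B_W)\bigr),
\]
which is Borel. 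Its complement $Y \setminus \-{f(U)}_q$ is thus Borel and, as a fiberwise exterior, $q$-fiberwise open, placing it in $\@{BO}_q(Y)$.

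To deduce that $(\-{f(U)}_q)^\circ_q$ is Borel I would iterate: rewrite it as $Y \setminus \-{(Y \setminus \-{f(U)}_q)}_q$ and apply \cref{eq:fib-bor-closure} to the Borel fiberwise open set $Y \setminus \-{f(U)}_q$, this time using Borel-overtness of $q$ directly rather than via $f$. For the $=^*_q$ assertion, in each fiber $Y_z$ the set $f_z(U_z)$ is open (since $f_z$ is an open map), so $(\-{f_z(U_z)})^\circ \setminus f_z(U_z)$ lies in the boundary of the open set $f_z(U_z)$, which is nowhere dense and hence meager in the quasi-Polish fiber $Y_z$.

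I do not expect a serious obstacle in any of these steps. The only subtle point worth flagging is that $f(U)$ itself need not be Borel, so the first closure has to be computed ``virtually'' by translating the fiberwise meeting condition $f(U)_z \cap W_z \ne \emptyset$ into the Borel condition $z \in p(U \cap f^{-1}(W))$ through the identity $q \circ f = p$. This translation is precisely what lets Borel-overtness of $p$ (rather than of $q$) drive the first step; after that, the iteration only needs Borel-overtness of $q$.
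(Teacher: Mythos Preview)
Your proposal is correct and follows essentially the same approach as the paper: the paper's proof is the single-line formula
\[
Y \setminus \-{f(U)}_q = \bigcup_{V \in \@V} \paren[\big]{V \setminus q^{-1}(p(U \cap f^{-1}(V)))},
\]
which is precisely the complement of your intersection formula for $\-{f(U)}_q$, with your $B_W = p(U \cap f^{-1}(W))$ substituted in. Your additional paragraphs spelling out the iteration for the regularized interior and the $=^*_q$ assertion are more detailed than the paper (which leaves these implicit in the word ``thus''), but the core idea is identical.
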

\begin{proof}
Fixing any countable fiberwise basis $\@V \subseteq \@{BO}_q(Y)$, we have
\begin{eqaligned*}[b][\qedhere]
Y \setminus \-{f(U)}_p
&= \bigcup_{V \in \@V} \paren[\big]{V \setminus q^{-1}(p(U \cap f^{-1}(V)))}.
\end{eqaligned*}
\end{proof}

Finally, we have the following method of constructing quotient quasi-Polish topologies from \cite{chen:beckec}, generalizing Sierpiński's theorem \cref{it:qpol-openquot}:

\begin{theorem}[{\cite[2.7.5]{chen:beckec}}]
\label{thm:sierpinski-exists*}
Let $p : X ->> Y$ be a standard Borel-overt bundle of nonempty quasi-Polish spaces, and suppose given another (global) quasi-Polish topology $\@O(X)$ on $X$, whose fiberwise restriction is \emph{finer} than the fiberwise topology $\@O_p(X)$, and such that
\begin{eqaligned*}
p^{-1}(\exists^*_p(\@O(X))) \subseteq \@O(X).
\end{eqaligned*}
Then the quotient topology on $Y$ induced by $\@O(X)$ via $p$ is equal to $\exists^*_p(\@O(X))$, and is quasi-Polish.
\end{theorem}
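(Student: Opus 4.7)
The plan is to split the proof into two stages: first, identify $\exists^*_p(\@O(X))$ with the quotient topology $\@T_q$ on $Y$; then deduce quasi-Polishness via the open quotient theorem \cref{it:qpol-openquot}.

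For the identification step, the inclusion $\exists^*_p(\@O(X)) \subseteq \@T_q$ is essentially the hypothesis $p^{-1}(\exists^*_p(\@O(X))) \subseteq \@O(X)$, which says precisely that each $\exists^*_p(U)$ has open preimage under $p$ and hence belongs to the quotient topology. For the reverse inclusion, given $B \subseteq Y$ with $p^{-1}(B) \in \@O(X)$, I would apply Frobenius reciprocity \cref{it:fib-baire-surj}: using surjectivity of $p$ together with the fact that every fiber is a nonempty quasi-Polish space (hence Baire and in particular nonmeager in itself), one has $B = \exists^*_p(p^{-1}(B)) \in \exists^*_p(\@O(X))$. This also yields as a byproduct that $\exists^*_p(\@O(X))$ is a topology on $Y$, since it coincides with the quotient topology.

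For the quasi-Polishness step, I would aim to apply \cref{it:qpol-openquot} to the continuous surjection $p : (X,\@O(X)) ->> (Y,\@T_q)$: continuity is tautological by construction of $\@T_q$, and I would verify openness by showing that $p(U) = \exists^*_p(U)$ for each $U \in \@O(X)$. The inclusion $\exists^*_p(U) \subseteq p(U)$ is immediate, while conversely, for any $y \in p(U)$, the set $U \cap X_y$ is a nonempty open subset of $X_y$ in the fiber topology coming from $\@O(X)$, which is quasi-Polish and hence Baire, so $U \cap X_y$ is nonmeager in that fiber. The $T_0$ requirement of \cref{it:qpol-openquot} can then be extracted from compatibility of $\@T_q$ with the standard Borel structure of $Y$: since $\@T_q$ is second-countable (generated, after the identification, by the countable family $\exists^*_p(\@W)$ for any countable basis $\@W$ of $\@O(X)$) and its Borel sigma-algebra separates points of $Y$, some basic open must distinguish any two distinct points.

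The hardest part will be making the equality $p(U) = \exists^*_p(U)$ rigorous, because the Baire category quantifier $\exists^*_p$ is originally defined with respect to the Borel-overt fiber topology $\@O_p(X)$, whereas the openness of $U \cap X_y$ that I want to exploit is with respect to the potentially strictly finer fiber topology inherited from $\@O(X)$. Ensuring that nonempty open sets in the finer fiber topology are indeed nonmeager with respect to the coarser topology $\@O_p(X)$ requires a careful Baire-categorical comparison between the two compatible quasi-Polish topologies on each fiber, which I would approach by leveraging \cref{it:qpol-dis}\cref{it:qpol-dis:join} to express the finer topology as a countable extension of the coarser one and then propagate Baire category through such extensions.
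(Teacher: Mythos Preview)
Your identification of $\exists^*_p(\@O(X))$ with the quotient topology is correct, and the overall strategy of reducing to Sierpi\'nski's open quotient theorem \cref{it:qpol-openquot} matches the paper's. The genuine gap is in your claim that $p(U) = \exists^*_p(U)$ for all $U \in \@O(X)$: this is simply false, and no Baire-category comparison between the two fiber topologies will salvage it, because the conclusion you are trying to reach does not hold. Take $Y = \{*\}$, $X = \#R$ with $\@O_p(X)$ the usual topology, and $\@O(X)$ the quasi-Polish refinement obtained by adjoining the $\*\Delta^0_2$ singleton $\{0\}$. Then $U = \{0\} \in \@O(X)$ is nonempty, so $p(U) = \{*\}$, yet $U$ is meager in $\@O_p(X)$, so $\exists^*_p(U) = \emptyset$. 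The fiberwise restriction of $\@O(X)$ need not even be quasi-Polish in general (the fibers $X_y$ need not be $\*\Pi^0_2$ in $(X,\@O(X))$), so your appeal to \cref{it:qpol-dis}\cref{it:qpol-dis:join} on each fiber has no footing; but even when it is quasi-Polish, as in this example, a nonempty set open in a finer compatible quasi-Polish topology can be nowhere dense in a coarser one.

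The paper's fix is to pass to the $\*\Pi^0_2$ ``support'' set
\[
Z := \bigcap_{U \in \@W} \paren[\big]{U \Rightarrow p^{-1}(\exists^*_p(U))} \subseteq X
\]
(for $\@W$ a countable basis of $\@O(X)$), which is precisely the set of points $x$ at which every $\@O(X)$-neighborhood is $\@O_p$-fiberwise nonmeager. On $Z$ one does have $p(Z \cap U) = \exists^*_p(U)$, and $Z$ is quasi-Polish as a $\*\Pi^0_2$ subspace of $(X,\@O(X))$; the open quotient theorem then applies to $\eval{p}_Z$. So your outline is essentially right once you insert this restriction, but without it the openness step fails.
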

\begin{proof}[Proof sketch]
Let $Z \subseteq X$ be the fiberwise $\@O(X)$-closed support of the $\@O_p(X)$-comeager sigma-filter, namely
$Z = \bigcap_U (U => p^{-1}(\exists^*_p(U)))$
where $U$ ranges over any countable basis for $\@O(X)$.
This is $\*\Pi^0_2$, hence quasi-Polish, and we have $\exists^*_p(U) = p(Z \cap U)$, hence the result follows from \cref{it:qpol-openquot} applied to $\eval*{p}_Z$.
(See \cite[\S2.6--7]{chen:beckec} for details.)
\end{proof}

We will also need the sigma-topological version of this:

\begin{corollary}
\label{thm:sierpinski-exists*-stop}
Let $p : X ->> Y$ be a surjective standard Borel-overt bundle of quasi-Polish spaces, and $\@S(X)$ be a compatible sigma-topology on $X$, which contains a countable Borel fiberwise basis for a finer fiberwise topology than $\@O_p(X)$, and such that
\begin{eqaligned*}
p^{-1}(\exists^*_p(\@S(X))) \subseteq \@S(X).
\end{eqaligned*}
Then $\exists^*_p(\@S(X))$ is a compatible sigma-topology on $Y$.
\end{corollary}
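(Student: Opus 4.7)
The plan is to verify that $\exists^*_p(\@S(X))$ is a sigma-topology and then produce, for any countably many sets in it, a quasi-Polish topology on $Y$ below it that contains them --- the latter via a direct application of \cref{thm:sierpinski-exists*} to a carefully constructed quasi-Polish topology on $X$.

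First, I would check that $\exists^*_p(\@S(X))$ is closed under countable unions and finite intersections. Countable unions are immediate from $\exists^*_p(\bigcup_i A_i) = \bigcup_i \exists^*_p(A_i)$ (the formula in \cref{eq:fib-baire-borel:exists*}). For finite intersections, given $A, B \in \@S(X)$, Frobenius reciprocity \cref{it:fib-baire-frob} gives
\begin{equation*}
\exists^*_p(A) \cap \exists^*_p(B) = \exists^*_p(p^{-1}(\exists^*_p(B)) \cap A);
\end{equation*}
by hypothesis $p^{-1}(\exists^*_p(B)) \in \@S(X)$, so the argument on the right is in $\@S(X)$. The top element $Y$ is $\exists^*_p(X)$, since quasi-Polish fibers are Baire in themselves and $p$ is surjective.

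The main point is compatibility. Given countably many $\exists^*_p(A_i) \in \exists^*_p(\@S(X))$ with $A_i \in \@S(X)$, I would build an increasing sequence of quasi-Polish topologies $\@T_0 \subseteq \@T_1 \subseteq \cdots \subseteq \@S(X)$, as follows. Using compatibility of $\@S(X)$, pick $\@T_0 \subseteq \@S(X)$ quasi-Polish containing all the $A_i$ together with a countable Borel fiberwise basis (provided by hypothesis) for a fiberwise topology finer than $\@O_p(X)$. Given $\@T_n$ with countable basis $\@B_n$, for each $B \in \@B_n$ note that $\exists^*_p(B) \in \exists^*_p(\@S(X))$, hence $p^{-1}(\exists^*_p(B)) \in \@S(X)$ by the standing hypothesis; use compatibility of $\@S(X)$ again to find a quasi-Polish $\@T_{n+1} \subseteq \@S(X)$ refining $\@T_n$ and containing each $p^{-1}(\exists^*_p(B))$ for $B \in \@B_n$. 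Let $\@T$ be the topology generated by $\bigcup_n \@T_n$; by the club property of compatible quasi-Polish topologies (\cref{rmk:stop-club}, \cref{it:qpol-dis}\cref{it:qpol-dis:join}), $\@T$ is quasi-Polish and still contained in $\@S(X)$.

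By construction $\@T$ has a basis $\bigcup_n \@B_n$ on which $p^{-1} \circ \exists^*_p$ lands back in $\@T$; since $\exists^*_p$ commutes with countable unions, this extends to all of $\@T$, giving $p^{-1}(\exists^*_p(\@T)) \subseteq \@T$. The fiberwise restriction of $\@T$ refines $\@O_p(X)$ by choice of $\@T_0$. Hence \cref{thm:sierpinski-exists*} applies, yielding that $\exists^*_p(\@T)$ is a quasi-Polish topology on $Y$, contained in $\exists^*_p(\@S(X))$ and containing each $\exists^*_p(A_i)$. This delivers compatibility.

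The main obstacle I expect is the closure-under-$p^{-1}\exists^*_p$ step: one must simultaneously keep the topology inside $\@S(X)$ (so that the Sierpiński hypothesis holds), keep it quasi-Polish (so that the conclusion of Sierpiński makes sense), and arrange closure of a basis under $p^{-1} \circ \exists^*_p$. The iterative ``$\omega$-chain'' construction above is designed precisely to reconcile these three demands, leveraging the compatibility hypothesis on $\@S(X)$ at each stage and the hypothesis $p^{-1}(\exists^*_p(\@S(X))) \subseteq \@S(X)$ to stay inside $\@S(X)$ after taking $p^{-1} \circ \exists^*_p$.
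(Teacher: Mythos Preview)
Your proposal is correct and follows essentially the same approach as the paper: build an $\omega$-chain of quasi-Polish topologies $\@T_n \subseteq \@S(X)$ closed in the limit under $p^{-1} \circ \exists^*_p$, then apply \cref{thm:sierpinski-exists*}. The paper's proof is terser and does not separately verify that $\exists^*_p(\@S(X))$ is a sigma-topology (leaving this implicit), but your explicit check via Frobenius reciprocity is correct and the core iteration is identical.
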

\begin{proof}
Given countably many $\exists^*_p(U_i) \in \exists^*_p(\@S(X))$, where each $U_i \in \@S(X)$, find a quasi-Polish topology $\@O_0(X) \subseteq \@S(X)$ containing each $U_i$ as well as a countable Borel fiberwise basis for a finer fiberwise topology than $\@O_p(X)$, and inductively find quasi-Polish topologies $\@O_n(X) \cup p^{-1}(\exists^*_p(\@O_n(X))) \subseteq \@O_{n+1}(X) \subseteq \@S(X)$; then the quasi-Polish topology $\@O(X)$ generated by $\bigcupup_n \@O_n(X)$ obeys $p^{-1}(\exists^*_p(\@O(X))) \subseteq \@O(X)$, whence by \cref{thm:sierpinski-exists*}, $\exists^*_p(\@O(X)) \subseteq \exists^*_p(\@S(X))$ is a quasi-Polish topology containing each $\exists^*_p(U_i)$.
\end{proof}

\section{Fiberwise quasi-Polish groupoids}
\label{sec:fibgpd}

\subsection{Preliminaries on groupoids and actions}

\begin{definition}
\label{def:gpd}
A \defn{groupoid} $(X,G) = (X,G,\sigma,\tau,\mu,\iota,\nu)$ consists of two sets $X$ (of \defn{objects}) and $G$ (of \defn{morphisms}) together with five structure maps:
\begin{itemize}
\item  $\sigma, \tau : G -> X$ (\defn{source} and \defn{target}); if $g \in G$ with $\sigma(g) = x$ and $\tau(g) = y$, then we also write $g : x -> y$ or $g \in G(x,y)$ where $G(x,y)$ is the \defn{hom-set} of all morphisms from $x$ to $y$;
\item
$\iota : X -> G$ (\defn{identity}), also denoted $\iota(x) =: 1_x$;
\item
$\nu : G -> G$ (\defn{inverse}), also denoted $\nu(g) =: g^{-1}$;
\item
$\mu : G \times_X G := G \tensor[_\sigma]\times{_\tau} G -> G$ (\defn{multiplication}), also denoted $\mu(g,h) =: g \cdot h =: gh$;
\end{itemize}
satisfying the usual axioms:
$\sigma(1_x) = \tau(1_x) = x$,
$\sigma(g^{-1}) = \tau(g)$,
$\tau(g^{-1}) = \sigma(g)$,
$\sigma(gh) = \sigma(h)$,
$\tau(gh) = \tau(g)$,
$(gh)k = g(hk)$,
$1_{\tau(g)}g = g = g1_{\sigma(g)}$,
$g^{-1}g = 1_{\sigma(g)}$, and
$gg^{-1} = 1_{\tau(g)}$.
See \cite{maclane:cats}.
\end{definition}

Note that for a groupoid $(X,G)$, there are two canonical maps $\sigma, \tau : G -> X$, leading to potential ambiguity in a fiber product such as $G \times_X G$ above.
To resolve this, we adopt

\begin{convention}
For a fiber product $G \times_X (-)$ or $(-) \times_X G$ over the space of objects $X$ of a groupoid, where one of the factors is the space of morphisms, we take the map $G -> X$ by default to be $\sigma$ if $G$ appears on the left, and $\tau$ if $G$ appears on the right.
\end{convention}

\begin{definition}
\label{def:gpd-ftr}
A \defn{functor} between groupoids $F : (X,G) -> (Y,H)$ is a homomorphism of groupoids regarded as 2-sorted first-order structures.
In other words, we have two maps $F : X -> Y$ and $F : G -> H$, preserving all of the groupoid operations.
\end{definition}

\begin{example}
A one-object groupoid $(*,G)$ is the same thing as a group.
A functor between two such groupoids is a group homomorphism.

In general, for any groupoid $(X,G)$, for each object $x \in X$, we have the \defn{automorphism group} (also known as \emph{isotopy group}) $\Aut_G(x) := G(x,x)$.
\end{example}

\begin{example}
A groupoid $(X,G)$ such that $\abs{G(x,y)} \le 1$ for all $x,y \in X$ (equivalently, $\Aut_G(x) = \set{1_x}$ for each $x \in X$) is just an equivalence relation $G \subseteq X^2$, up to isomorphism of groupoids, where we identify $g : x -> y \in G$ with the pair $(x,y) \in X^2$.

In general, for a groupoid $(X,G)$, the \defn{connectedness relation} $\#E_G \subseteq X^2$ is given by
\begin{eqaligned*}
x \mathrel{\#E_G} y \coloniff G(x,y) \ne \emptyset.
\end{eqaligned*}
We have a quotient functor $(X,G) ->> (X,\#E_G)$, collapsing parallel morphisms.
We write $X/G = X/\#E_G$ for the set of \defn{connected components} of $G$.
We call $G$ a \defn{connected} groupoid if $\abs{X/G} = 1$.
\end{example}

\begin{definition}
\label{def:action}
An \defn{action} of a groupoid $(X,G)$ on a bundle $p : M -> X$ is a map
\begin{eqaligned*}
\alpha : G \times_X M = G \tensor[_\sigma]\times{_p} M --> M,
\end{eqaligned*}
also denoted $\alpha(g,a) =: g \cdot a =: ga$, satisfying the axioms $p(ga) = \tau(g)$, $(gh)a = g(ha)$, and $1_{p(a)} \cdot a = a$.
Thus each $g : x -> y \in G$ acts as a bijection $M_x -> M_y$.

The \defn{action groupoid} of such an action has space of objects $M$ and space of morphisms
\begin{eqaligned*}
G \ltimes M := G \times_X M
\end{eqaligned*}
where each $(g,x) \in G \times_X M$ represents a morphism $x -> gx$ labeled by $g$, with the operations
\begin{align*}
\sigma_{G \ltimes M} &:= \pi_2 : G \times_X M -> M
    \quad \text{(the second coordinate projection)}, \\
\tau_{G \ltimes M} &:= \alpha : G \times_{X} M -> M, \\
\mu_{G \ltimes M} &:= \mu_G \times M :
    \begin{array}[t]{@{}r@{}>{{}}l<{{}}@{}c@{}>{{}}r<{{}}@{}l@{}}
    (G \times_{X} M) \tensor[_{\pi_2}]\times{_\alpha} (G \times_{X} M) &\cong& G \times_{X} G \times_{X} M &->& G \times_{X} M \\
    ((g,ha),(h,a)) &|->& (g,h,a) &|->& (gh,a),
    \end{array} \\
\iota_{G \ltimes M} &:= (\iota_G \circ p, \id_M) : \begin{zaligned}[t]
M &-> G \times_{X} M \\
a &|-> (1_{p(a)}, a),
\end{zaligned} \\
\nu_{G \ltimes M} &:= (\nu_G \circ \pi_1, \alpha) : \begin{zaligned}[t]
G \times_{X} M &-> G \times_{X} M \\
(g, a) &|-> (g^{-1}, ga).
\end{zaligned}
\end{align*}

The action groupoid is equipped with two canonical functors
\begin{eqtikzcd}[][\label{diag:action-gpd}]
& (M, G \ltimes M) \dlar["\phi"'] \drar[two heads] \\
(X, G) && (M, \#E_{G \ltimes M})
\end{eqtikzcd}
where the right leg is the quotient functor to the connectedness relation or \defn{orbit equivalence relation} of the action; and the left leg $\phi$, called the \defn{cocycle associated to the action}, takes each object $a \in M$ to its projection $p(a) \in X$ and each morphism $(g,a) \in G \ltimes M$ to its label $g$.
This $\phi$ is a \defn{discrete fibration} of groupoids, meaning that we have a bijection
\begin{eqaligned*}
(\phi, \sigma_H) : H := G \ltimes M &--> G \times_X M \\
h = (g,a) &|--> (\phi(h), \sigma_H(h)).
\end{eqaligned*}
In other words, for each $g : x -> y \in G$, and each lift of its source $a \in M_x$, there is a unique lift $h = (g,a)$ of the entire morphism with that source.
In fact, discrete fibrations are essentially the same thing as actions: given any groupoid $(M,H)$ with a discrete fibration $\phi : (M,H) -> (X,G)$,
\begin{equation}
\label{eq:action-fib}
\alpha : G \times_X M --->{(\phi,\sigma_H)^{-1}} H --->{\tau_H} M
\end{equation}
defines an action of $G$ on $M$, such that $H \cong G \ltimes M$.
\end{definition}

\begin{example}
\label{ex:action-trivial}
The \defn{trivial action} of $G$ on $\id_X : X -> X$ is given by $\alpha = \tau : G -> X$.
\end{example}

\begin{example}
\label{ex:action-transl}
The \defn{left translation action} of $G$ on $\tau : G -> X$ is given by $\mu : G \times_X G -> X$.
\end{example}

\begin{definition}
\label{def:gpd-top}
A \defn{topological groupoid} is a groupoid where the spaces of morphisms and of objects are topological spaces, and all 5 groupoid operations are continuous maps.
A \defn{(quasi-)Polish groupoid} is a topological groupoid where the spaces of morphisms and of objects are (quasi-)Polish spaces.
Similarly, a \defn{standard Borel groupoid} is a groupoid where the spaces are standard Borel and the maps are Borel.

A \defn{continuous action} of a topological groupoid $(X,G)$ on a bundle $p : M -> X$ which is a continuous map between topological spaces is an action $\alpha : G \times_X M -> M$ which is continuous (jointly in both variables); we also call such $M$ a \defn{topological $G$-space}.
Similarly, a \defn{Borel action} or \defn{Borel $G$-space} $p : M -> X$ for a standard Borel groupoid $(X,G)$ is a standard Borel bundle equipped with a Borel action $\alpha$.

An \defn{open topological $G$-space} $p : M -> X$ will mean one where $p$ is an open map.
This is equivalent to saying that $\pi_1 : G \times_X M -> G$ is open, since $\pi_1, p$ are pullbacks of each other (along $\sigma, \iota$ respectively), and a pullback of a continuous open map is open.

An \defn{open topological groupoid} $(X,G)$ is one where $\sigma : G -> X$ is open, or equivalently $\tau$ is, or equivalently $\pi_1$ or $\pi_2 : G \times_X G -> G$ is (by considering the left translation action), or equivalently $\mu : G \times_X G -> G$ is (since $\pi_2, \mu$ are the source and target maps for the action groupoid $G \ltimes G$).
\end{definition}

\subsection{Borel-overt fiberwise quasi-Polish groupoids}
\label{sec:fibqpolgpd}

The following is the central concept of this paper; we will spend the rest of this section developing their basic theory.

\begin{definition}
\label{def:fibqpolgpd}
A \defn{standard Borel-overt fiberwise quasi-Polish groupoid} $(X,G)$ is a standard Borel groupoid (\cref{def:gpd-top}), equipped with a Borel-overt $\sigma$-fiberwise quasi-Polish topology $\@O_\sigma(G)$ on $G$ (\cref{def:fib-bor-qpol}), which is fiberwise invariant under right translation by each $g : x -> y \in G$, i.e., the bijection
\begin{eqaligned*}
\sigma^{-1}(y) &--> \sigma^{-1}(x) \\
h &|--> hg
\end{eqaligned*}
is a homeomorphism between the respective fiberwise topologies.

Given such an $\@O_\sigma(G)$, we always put $\@O_\tau(G) := \@O_\sigma(G)^{-1}$, which is thus a Borel-overt $\tau$-fiberwise quasi-Polish topology invariant under left translation.

A \defn{standard Borel-overt fiberwise Polish groupoid} is such a groupoid where $\@O_\sigma(G)$ is a $\sigma$-fiberwise Polish topology, i.e., fiberwise regular (see again \cref{def:fib-bor-qpol}).

A \defn{Borel-overt classwise (quasi-)Polish equivalence relation} $E \subseteq X^2$ on a standard Borel space $X$ is a Borel equivalence relation equipped with such a fiberwise topology $\@O_\sigma(E)$ as above when $E$ is regarded as a groupoid.
(See \cref{rmk:bocqper} below for a simpler reformulation in this case, recovering \cref{intro:def:bocper} from the introduction.)
\end{definition}

\begin{remark}
\label{rmk:fibqpolgpd}
We now comment on our choice of terminology for the above notions.

First, the adjective ``overt'' is included, because the analogous concept without requiring overtness of the fiberwise topology is also meaningful and arises naturally.
For example, the kernel of a continuous map $f : X -> Y$ between Polish spaces whose image is $\*\Sigma^1_1$ but not Borel will be a Borel classwise Polish equivalence relation which is not overt.
However, in this paper we will largely confine our study to the overt case.

Some adjective such as ``fiberwise'' is also required in front of ``(quasi-)Polish'', in order to disambiguate from the global concept (\cref{def:gpd-top}).
We have chosen ``fiberwise'', rather than ``$\sigma$-fiberwise'' or ``$\tau$-fiberwise'', because the two fiberwise topologies determine each other, and will play complementary roles in developing the theory of such groupoids.
For the same reason, we do not use a term such as ``left-topological groupoid'' (as would be analogous to the term ``left-topological group'' commonly used in topological group and semigroup theory; see e.g., \cite{arhangelskii-tkachenko:topgrp}).

We will show below that starting from such a fiberwise topology on a groupoid $(X,G)$, we may define a canonical topology on the spaces of \emph{all} morphisms as well as objects in a single connected component; see \cref{def:orbtop}.
In the case of an equivalence relation $E \subseteq X^2$, this topology on an $E$-class $[x]_E$ is that induced via the canonical bijection with the $\sigma$-fiber $[x]_E \cong \{x\} \times [x]_E = \sigma^{-1}(x)$ (and the topology on the morphisms $[x]_E^2$ is just the product topology); see \cref{rmk:bocqper}.
For this reason, we use the simpler adjective ``classwise'' for an equivalence relation.

However, for a general groupoid, it turns out that the above axioms are too weak to imply that the topologies induced on each component form \emph{groupoid} topologies, i.e., the multiplication may be discontinuous within a component; see \cref{ex:fibqpolgpd-disctscocy}.
We thus say ``fiberwise quasi-Polish groupoid'' for the above notion, rather than ``componentwise quasi-Polish groupoid'' which will be reserved for strengthened axioms that do imply componentwise continuity; see \cref{def:comqpolgpd}.
\end{remark}

\subsection{Derived topologies}
\label{sec:orbtop}

Fix a standard Borel-overt fiberwise quasi-Polish groupoid $(X,G)$.
Starting from the $\sigma$-fiberwise topology $\@O_\sigma(G)$, we will define several other related canonical (sigma-)topologies on $G$ and $X$.

\begin{remark}
\label{rmk:fibqpolgpd-bor}
Following \cref{def:fib-bor-qpol}, $\@{BO}_\sigma(G)$ denotes the Borel $\sigma$-fiberwise open sets in $G$.
For such a set $U \subseteq G$, by right-invariance of $\@O_\sigma(G)$, $\mu^{-1}(U) \subseteq G \times_X G$ is $\pi_2$-fiberwise open, thus by Kunugi--Novikov \cref{eq:fib-bor-qpol-basis} applied to $\pi_1^{-1}$ of any countable Borel fiberwise basis for $\@O_\sigma(G)$,
\begin{align*}
\mu^{-1}(\@{BO}_\sigma(G)) &\subseteq \@{BO}_\sigma(G) \otimes_X \@B(G), &
\mu^{-1}(\@{BO}_\tau(G)) &\subseteq \@B(G) \otimes_X \@{BO}_\tau(G)
\end{align*}
(where as in \cref{def:ostar}, $\otimes_X$ denotes countable unions of rectangles).
It easily follows that
\begin{align*}
\@{BO}_\sigma(G) &= (\mu^{-1})^{-1}(\@{BO}_\sigma(G) \otimes_X \@B(G)), &
\@{BO}_\tau(G) &= (\mu^{-1})^{-1}(\@B(G) \otimes_X \@{BO}_\tau(G))
\end{align*}
(where $(\mu^{-1})^{-1}$ denotes preimage under $\mu^{-1}$),
i.e., $U \in \@{BO}_\sigma(G) \iff \mu^{-1}(U) \in \@{BO}_\sigma(G) \otimes_X \@B(G)$ and similarly for $\@{BO}_\tau$.
($\subseteq$ follows from the above; $\supseteq$ is by definition and the fact that $\@B(G) = (\mu^{-1})^{-1}(\@B(G \times_X G))$, by the Lusin separation theorem.)
\end{remark}

\begin{remark}
\label{rmk:homtop}
On each hom-set $G(x,y) \subseteq G$, we have \emph{a priori} two different subspace topologies, induced by the $\sigma$-fiberwise topology on $\sigma^{-1}(x)$ and $\tau$-fiberwise topology on $\tau^{-1}(y)$.

We do not know if these two topologies are the same in general, nor if they are (quasi-)Polish, i.e., if $G(x,y)$ is always $\*\Pi^0_2$ in $\sigma^{-1}(x)$ and $\tau^{-1}(y)$.
These questions turn out to be the same.

Indeed, note that any nonempty hom-set $G(x,y)$ is homeomorphic, in the $\sigma$-fiberwise topology, to $G(y,y)$, via right translation $(-)g^{-1}$ by any $g \in G(x,y)$.
Similarly, $G(x,y)$ is $\@O_\tau$-homeomorphic to $G(x,x)$.
Thus if $G(x,y)$ is Polish in $\@O_\sigma$ or $\@O_\tau$, then so is $G(y,y)$ or $G(x,x)$ respectively.

If some automorphism group $G(x,x)$ is Polish in $\@O_\sigma(\sigma^{-1}(x))$, say, then since the topology is right-translation-invariant, it follows from \cite{solecki-srivastava:lpolgrp} that $G(x,x)$ is a Polish group.
Hence, inversion is continuous, and so the two fiberwise topologies on $G(x,x)$ agree.
In fact, by the groupoid generalization of \cite{solecki-srivastava:lpolgrp} we will prove later, which works in particular for groups (see \cref{ex:qpolgrp-realiz}), it suffices here for $G(x,x) \subseteq \sigma^{-1}(x)$ to be quasi-Polish, i.e., $\*\Pi^0_2$.
This then implies that each $G(x,y)$ is $\@O_\tau$-homeomorphic to $G(x,x)$, and each $G(y,x)$ is $\@O_\sigma$-homeomorphic to $G(x,x)$.

Note also that if two automorphism groups $G(x,x), G(y,y)$ for $x, y$ in the same connected component are both Polish, then they are isomorphic Polish groups, being Borel conjugate by any $g \in G(x,y)$, hence continuously conjugate by Pettis's theorem.
It then follows that $\@O_\sigma, \@O_\tau$ agree on $G(x,y)$, being homeomorphic to $G(y,y), G(x,x)$ respectively via translation by $g$.

Thus, in each connected component, between any two objects $x, y$ whose automorphism groups have Polish ($\sigma$- or $\tau$-fiberwise) topologies, the hom-set $G(x,y)$ has a canonical Polish topology given by restricting either $\@O_\sigma$ or $\@O_\tau$, and is a homeomorphic copy of the Polish group $G(x,x)$ or $G(y,y)$.

We say that $G$ \defn{has $\*\Pi^0_2$ hom-sets} if each of its automorphism groups $G(x,x)$ is $\*\Pi^0_2$ (in either $\@O_\sigma(\sigma^{-1}(x))$ or $\@O_\tau(\tau^{-1}(x))$), hence a Polish group, hence every hom-set is $\*\Pi^0_2$ in both $\@O_\sigma$ and $\@O_\tau$.
(We will show in \cref{thm:homtop-dense} that this always holds for ``comeagerly many'' objects in each connected component, but will not use this fact in developing the basic theory in this section.)
\end{remark}

\begin{definition}
\label{def:orbtop}
The \defn{componentwise topology on objects} is the topology on $X$ given by
\begin{align*}
\@O_G(X)
:=& (\tau^{-1})^{-1}(\@O_\sigma(G)) = \set{A \subseteq X | \tau^{-1}(A) \in \@O_\sigma(G)} \\
=& (\sigma^{-1})^{-1}(\@O_\tau(G)) = \set{A \subseteq X | \sigma^{-1}(A) \in \@O_\tau(G)}.
\end{align*}
In other words, it is the quotient (strong) topology on $X$ induced by the surjection $\tau : G ->> X$ from the topology $\@O_\sigma(G)$ on $G$.
Note that it is a disjoint union of topologies on the components, i.e., it is a fiberwise topology for the quotient map $X ->> X/G$.
We thus have the usual notions of \defn{componentwise meager} or \defn{componentwise dense} set of objects $A \subseteq X$.
We write
\begin{eqaligned*}
A \subseteq^*_G B
&\coloniff  A \subseteq B \text{ mod componentwise meager}, \\
A =^*_G B
&\coloniff  A = B \text{ mod componentwise meager}.
\end{eqaligned*}

Similarly, the \defn{componentwise topology on morphisms} is defined by
\begin{align*}
\@O_G(G) :=& (\mu^{-1})^{-1}(\@O_\sigma(G) \otimes_X^\infty \@O_\tau(G)) \\
=& \set{U \subseteq G | \mu^{-1}(G) = \bigcup_i (U_i \times_X V_i),\, U_i \in \@O_\sigma(G),\, V_i \in \@O_\tau(G)}
\end{align*}
(where as in \cref{def:ostar}, $\otimes_X^\infty$ denotes arbitrary unions of rectangles),
which is the quotient topology induced by $\mu : G \times_X G ->> G$ from the fiberwise product topology on the ``middle vertex'' bundle $\sigma\pi_1 = \tau\pi_2 : G \times_X G -> X$ mapping $(g,h) |-> \sigma(g) = \tau(h)$.
\end{definition}

\begin{remark}
\label{rmk:orbtop-openquot}
We have continuous open surjections
\begin{zalign*}
\tau : && (G &, \@O_\sigma(G)) &&-->> (X,\@O_G(X)), \\
\mu : && (G \times_X G &, \@O_\sigma(G) \otimes_X^\infty \@O_\tau(G)) &&-->> (G, \@O_G(G)).
\end{zalign*}
Indeed, more is true: $(X,\@O_G(X))$ is the topological disjoint union of the components $C \in X/G$, over each of which $\tau^{-1}(C)$ splits as the disjoint union of the $\sigma$-fibers $\sigma^{-1}(x)$ for each $x \in C$, each of which is a quasi-Polish space with a continuous open surjection
\begin{align}
\label{eq:orbtop-orb}
\tau : (\sigma^{-1}(x),\@O_\sigma(\sigma^{-1}(x))) &-->> (C,\@O_G(C)),
\end{align}
since for $U \in \@O_\sigma(\sigma^{-1}(x))$, we have
$\tau^{-1}(\tau(U)) = \set{g : x -> y | \exists h : x -> y \in U} = U \cdot G$
(since any such $h$ is $g(g^{-1}h)$), which is $\sigma$-fiberwise open by right-invariance of $\@O_\sigma(G)$, hence $\tau(U) \in \@O_G(C)$.

Similarly for $U \in \@O_\sigma(G)$ and $V \in \@O_\tau(G)$, $\mu^{-1}(\mu(U \times_X V))$ is easily seen to be the saturation of $U \times_X V$ under the diagonal action of $G$ on the ``middle vertex'' bundle $G \times_X G -> X$, where $g \cdot (h,k) := (hg^{-1},gk)$; by right-invariance of $\@O_\sigma$ and left-invariance of $\@O_\tau$, $\mu^{-1}(\mu(U \times_X V)) \in \@O_\sigma(G) \otimes_X^\infty \@O_\tau(G)$, hence $\mu(U \times_X V) \in \@O_G(G)$.
As for $\@O_G(X)$, this in fact holds componentwise.
\end{remark}

\begin{remark}
\label{rmk:orbtop-t0}
We do \emph{not} know if the componentwise topologies $\@O_G(X), \@O_G(G)$ must be $T_0$.
By the preceding remark and \cref{it:qpol-openquot}, this happens iff each component is quasi-Polish, iff each fiber of the continuous open surjection \cref{eq:orbtop-orb} is $\*\Pi^0_2$, i.e., iff $G$ has $\*\Pi^0_2$ hom-sets (\cref{rmk:homtop}).

If this is the case, then by \cref{thm:qpol-openquot-bor} and Kuratowski--Ulam \labelcref{thm:kuratowski-ulam}, the componentwise Borel hierarchy as well as Baire category would be reflected along the map \cref{eq:orbtop-orb}.

Since we do not know this in general, we make the following \emph{ad hoc} definition as a workaround:
\end{remark}

\begin{definition}
\label{def:orbtop-weak}
We say that $A \subseteq X$ is \defn{weakly componentwise $\*\Sigma^0_2$} or $\*\Pi^0_2$ if $\tau^{-1}(A) \subseteq G$ is $\sigma$-fiberwise $\*\Sigma^0_2$ or $\*\Pi^0_2$, respectively.

Similarly, we say $A$ is \defn{weakly componentwise meager} if $\tau^{-1}(A) \subseteq G$ is $\sigma$-fiberwise meager.
Note that these form a sigma-ideal (namely the $\tau$-pushforward of the $\@O_\sigma$-meager sigma-ideal on $G$).
We say \defn{weakly componentwise comeager} for the dual notion; this implies componentwise dense,\break since \cref{eq:orbtop-orb} is a continuous surjection.
Imitating \cref{def:orbtop}, we write $\subseteq^{**}_G, =^{**}_G$ for containment and equality mod weakly componentwise meager; thus
\begin{eqaligned*}
A \subseteq^{**}_G B  &\coloniff  \tau^{-1}(A) \subseteq^*_\sigma \tau^{-1}(B)  \iff  \sigma^{-1}(A) \subseteq^*_\tau \sigma^{-1}(B).
\end{eqaligned*}

Clearly, being componentwise $\*\Sigma^0_2$, i.e., $\*\Sigma^0_2$ with respect to the componentwise topology $\@O_G(X)$, implies being weakly componentwise $\*\Sigma^0_2$, by continuity of \cref{eq:orbtop-orb}; and componentwise meager implies weakly componentwise meager, since \cref{eq:orbtop-orb} is category-preserving.
The weak and ordinary notions agree if $G$ has $\*\Pi^0_2$ hom-sets, by \cref{thm:qpol-openquot-bor} and Kuratowski--Ulam \labelcref{thm:kuratowski-ulam}.
\end{definition}

\begin{remark}
\label{rmk:bocqper}
When $G = E$ is a Borel-overt classwise quasi-Polish equivalence relation $E \subseteq X^2$, we also call $\@O_E(X), \@O_E(E)$ defined as in \labelcref{def:orbtop} the \defn{classwise topologies}.

In this case, $\tau, \mu$ from \cref{rmk:orbtop-openquot} become topological coverings, with \cref{eq:orbtop-orb} a homeomorphism
\begin{eqaligned*}
\tau : \sigma^{-1}(x) = \set{x} \times [x]_E \cong ([x]_E, \@O_E([x]_E)).
\end{eqaligned*}
It follows that $\@O_E(X)$ determines the $\sigma$-fiberwise topology $\@O_\sigma(E)$ in this case: namely, $U \subseteq E$ is in $\@O_\sigma(E)$ iff for every $x \in X$, the fiber $U_y = \set{y | (x,y) \in U}$ is in $\@O_E(X)$.
In other words,
\begin{eqenumerate}
\item \label{it:bocqper-fibtop}
$\@O_\sigma(E)$ is the subspace topology on $E \subseteq X^2$ induced by the product topology of the discrete topology $\@P(X)$ and the classwise topology $\@O_E(X)$; similarly for $\@O_\tau(E)$.
\end{eqenumerate}
We may thus equivalently define a \defn{Borel-overt classwise quasi-Polish equivalence relation} to be a Borel equivalence relation equipped with a classwise quasi-Polish topology $\@O_E(X)$, such that the corresponding $\sigma$-fiberwise topology $\@O_\sigma(E)$ derived via \cref{it:bocqper-fibtop} is Borel-overt fiberwise quasi-Polish.
This recovers the definition given in the introduction (\cref{intro:def:bocper}).

As for $\@O_E(E)$: we have that $U \subseteq E$ is in $\@O_E(E)$ iff
$\mu^{-1}(U) = \set{((y,z),(x,y)) \in E^2 | (x,z) \in U}$
is in $\@O_\sigma(E) \otimes_X^\infty \@O_\tau(E)$, which means by \cref{it:bocqper-fibtop} that for each $y \in X$, $U \cap [y]_E^2 \in \@O_E([y]_E)^2$.
So
\begin{eqenumerate}
\item \label{it:bocqper-orbtop-mor}
$\@O_E(E)$ is the subspace topology on $E \subseteq X^2$ induced by the product of $\@O_E(X)$ with itself.
\end{eqenumerate}

Note that clearly, a Borel-overt classwise quasi-Polish equivalence relation $E$ has $\*\Pi^0_2$ hom-sets; thus the preceding remark and definition are irrelevant in this case.
\end{remark}

\begin{remark}
\label{rmk:orbtop-bor}
Returning to the groupoid case, as in \cref{rmk:fibqpolgpd-bor}, it follows from \cref{def:orbtop} and Kunugi--Novikov that the Borel sets that are open in the componentwise topologies are given by
\begin{align}
\label{eq:orbtop-obj-bor}
\@{BO}_G(X) &= (\tau^{-1})^{-1}(\@{BO}_\sigma(G)) = (\sigma^{-1})^{-1}(\@{BO}_\tau(G)), \\
\label{eq:orbtop-mor-bor}
\@{BO}_G(G) &= (\mu^{-1})^{-1}(\@{BO}_\sigma(G) \otimes_X \@{BO}_\tau(G)).
\end{align}
In particular, in conjunction with \cref{rmk:fibqpolgpd-bor}, we get
\begin{align}\label{eq:orbtop-fibtop}
\@{BO}_G(G) \subseteq \@{BO}_\sigma(G) \cap \@{BO}_\tau(G).
\end{align}
We also clearly have
\begin{align}\label{eq:orbtop-inv}
\@{BO}_G(G)^{-1} = \@{BO}_G(G).
\end{align}
Moreover, the following laws hold:
\begin{align}\label{eq:orbtop-st}
\sigma^{-1}(\@{BO}_G(X)) &\subseteq \@{BO}_G(G), &
\tau^{-1}(\@{BO}_G(X)) &\subseteq \@{BO}_G(G).
\end{align}
(By \cref{eq:orbtop-obj-bor,eq:orbtop-fibtop}, this means $\@{BO}_G(X) = (\sigma^{-1})^{-1}(\@{BO}_G(G)) = (\tau^{-1})^{-1}(\@{BO}_G(G))$.)
\begin{align}\label{eq:orbtop-id}
\iota^{-1}(\@{BO}_G(G)) &\subseteq \@{BO}_G(X).
\end{align}
\end{remark}

\begin{proof}[Proof of \cref{eq:orbtop-st}]
By \cref{eq:orbtop-mor-bor} and
\begin{eqaligned*}[b][\qedhere]
\mu^{-1}(\sigma^{-1}(\@{BO}_G(X)))
&= G \otimes_X \sigma^{-1}(\@{BO}_G(X))
    &&\text{since $\sigma \circ \mu = \sigma \circ \pi_2$} \\
&\subseteq G \otimes_X \@{BO}_\tau(G)
    &&\text{by definition of $\@O_G(X)$} \\
&\subseteq \@{BO}_\sigma(G) \otimes_X \@{BO}_\tau(G).
\end{eqaligned*}
\end{proof}

\begin{proof}[Proof of \cref{eq:orbtop-id}]
By \cref{eq:orbtop-obj-bor} and
\begin{eqaligned*}[b][\qedhere]
\tau^{-1}(\iota^{-1}(\@{BO}_G(G)))
&= (\id_G, \nu)^{-1}(\mu^{-1}(\@{BO}_G(G)))
    &&\text{since $1_{\tau(g)} = gg^{-1}$} \\
&\subseteq (\id_G, \nu)^{-1}(\@{BO}_\sigma(G) \otimes_X \@{BO}_\tau(G))
    &&\text{by \cref{eq:orbtop-mor-bor}} \\
&= \@{BO}_\sigma(G) \ocap \nu^{-1}(\@{BO}_\tau(G))
= \@{BO}_\sigma(G).
\end{eqaligned*}
\end{proof}

\begin{example}
\label{ex:qpolgpd-orbtop}
Every open (quasi-)Polish groupoid $(X,G)$ induces an underlying Borel-overt fiberwise (quasi-)Polish groupoid, by forgetting the global topology and remembering only the Borel structure and the fiberwise restriction of the topology.
By Kunugi--Novikov \cref{eq:fib-bor-qpol-basis},
\begin{align*}
\@{BO}_\sigma(G) &= \sigma^{-1}(\@B(X)) \ocap \@O(G), &
\@{BO}_\tau(G) &= \tau^{-1}(\@B(X)) \ocap \@O(G).
\end{align*}
We will show in \cref{thm:ucomqpolgpd-action-orbtop} that also in this case,
\begin{align*}
\@{BO}_G(G) &= \sigma^{-1}(\@{BO}_G(X)) \ocap \@O(G) = \tau^{-1}(\@{BO}_G(X)) \ocap \@O(G).
\end{align*}
By the Becker--Kechris theorem \cite[4.3.12]{chen:beckec}, $\@{BO}_G(X)$ consists of all sets $A \subseteq X$ that can be made open in some finer quasi-Polish topology on $X$ for which the trivial action $G \actson X$ (\cref{ex:action-trivial}) remains continuous, whence the action groupoid $G \ltimes X$ yields a finer groupoid topology on $G$; the union of all such finer topologies on $G$ is then $\@{BO}_G(G)$.

Our definition of the componentwise topologies $\@{BO}_G(X), \@{BO}_G(G)$ in a general fiberwise quasi-Polish groupoid is motivated by this special case.
Indeed, our main results below (under additional assumptions; see \cref{thm:ucomqpolgpd-qpol}) will show that $\@{BO}_G(X), \@{BO}_G(G)$ consist in general of precisely all open sets in \emph{some} open quasi-Polish groupoid topology inducing the fiberwise topology.
\end{example}

We introduce one more sigma-topology, which is a common refinement of $\@{BO}_\sigma, \@{BO}_\tau$ in contrast to $\@{BO}_G$ which is a common coarsening.
Unlike the others, this sigma-topology only makes sense in the Borel context, and does not consist of the Borel sets in a fiberwise topology.
(Its closure under arbitrary unions is the hom-set-wise topology given by copies of the unique Polish group topology in each component as in \cref{rmk:homtop}, by Pettis's \cref{intro:thm:pettis}.)

\begin{definition}
\label{def:boxtop}
Put
\begin{eqaligned*}
\@{BO}_\Box(G) := (\mu^{-1})^{-1}(\@B(G) \otimes_X \@B(G)).
\end{eqaligned*}
That is, $U \in \@B(G)$ is in $\@{BO}_\Box(G)$ iff $\mu^{-1}(U) \subseteq G \times_X G$ is a countable union of Borel rectangles $V_i \times_X W_i$, where $V_i, W_i \in \@B(G)$.
By \cref{rmk:fibqpolgpd-bor},
\begin{align*}
\@{BO}_\sigma(G) &\subseteq \@{BO}_\Box(G), &
\@{BO}_\tau(G) &\subseteq \@{BO}_\Box(G).
\end{align*}
\end{definition}

\Cref{fig:topologies} from the introduction summarizes the Borel sigma-topologies we have defined, starting from a Borel-overt fiberwise quasi-Polish groupoid $(X,G)$.
(See \cref{thm:orbtop-exists*} below for the behavior of the Baire category quantifiers $\exists^*$.)

\subsection{Topologies on actions}
\label{sec:actions}

In this subsection, we specialize the topologies on a groupoid introduced above to the case of an action groupoid $G \ltimes M$ over an existing fiberwise topological groupoid $G$.
We will then discuss one of the main motivating examples for all of these notions: Borel actions of Polish groups.

Fix as before a standard Borel-overt fiberwise quasi-Polish groupoid $(X,G)$.

\begin{definition}
\label{def:fibqpolgpd-action}
Let $p : M -> X$ be a standard Borel $G$-space (\cref{def:gpd-top}), with the action map denoted $\alpha : G \times_X M -> M$.
Consider the action groupoid $G \ltimes M = G \times_X M$ (\cref{def:action}); recall that its source map is the projection $\sigma_{G \ltimes M} = \pi_2 : G \times_X M -> M$, while its target map is the action $\tau_{G \ltimes M} = \alpha$.

The \defn{lifted fiberwise topology} on $G \ltimes M$ will mean the $\sigma$-fiberwise topology $\@O_\sigma(G \ltimes M)$ given by pulling back (\cref{def:fib-pb}) the $\sigma$-fiberwise topology $\@O_\sigma(G)$ on $\sigma_G : G -> X$ along $p : M -> X$.
In other words, the cocycle associated to the action \cref{diag:action-gpd} becomes a $\sigma$-fiberwise homeomorphism
\begin{eqtikzcd*}
(G \ltimes M, \@O_\sigma(G \ltimes M)) \dar["\sigma_{G \ltimes M}"'] \rar["\phi"] &
(G, \@O_\sigma(G)) \dar["\sigma_G"]
\\
M \rar["p"] &
X
\end{eqtikzcd*}
By \cref{rmk:fib-bor-qpol-pb}, $\@O_\sigma(G \ltimes M)$ is a Borel-overt $\sigma_{G \ltimes M}$-fiberwise quasi-Polish topology, with
\begin{align*}
\@{BO}_\sigma(G \ltimes M) = \@{BO}_\sigma(G) \otimes_X \@B(M).
\end{align*}
It is also easily seen to be right-translation-invariant.
Thus, $G \ltimes M$ becomes a Borel-overt fiberwise quasi-Polish groupoid in its own right.

The \defn{$\alpha$-fiberwise topology} on $G \ltimes M = G \times_X M$ will always refer to the corresponding $\tau$-fiberwise topology $\@O_\tau(G \ltimes M) = \@O_\sigma(G \ltimes M)^{-1}$, as in \cite[4.1.8]{chen:beckec} (the ``twist involution'' $\dagger$ from \cite[4.1.5]{chen:beckec} being the inversion map $\nu$ of $G \ltimes M$).
We call the componentwise topologies on $G \ltimes M$ (\cref{def:orbtop}) the \defn{orbitwise topologies}, denoted via the shorthands
\begin{eqaligned*}
\@O_G(M) &:= \@O_{G \ltimes M}(M), \\
\@O_G(G \ltimes M) &:= \@O_{G \ltimes M}(G \ltimes M).
\end{eqaligned*}
We similarly abbreviate ``componentwise meager'' to ``\defn{orbitwise meager}'', and write $\subseteq^*_G, =^*_G$ instead of $\subseteq^*_{G \ltimes M}, =^*_{G \ltimes M}$; similarly for the ``weak'' notions introduced in \cref{def:orbtop-weak}.
In this notation, \cref{eq:orbtop-obj-bor} and \labelcref{def:orbtop-weak} say, for $A \subseteq M$:
\begin{zalign}
\label{eq:orbtop-action}
A \in \@{BO}_G(M)  &\iff  \alpha^{-1}(A) \in \@{BO}_\sigma(G) \otimes_X \@B(M)  &&\iff  G \times_X A \in \@{BO}_\alpha(G \times_X M), \hspace{-1em} \\
\label{eq:orbtop-meager-action}
A \text{ weakly orbwise mgr}  &\iff  \alpha^{-1}(A) \text{ $\pi_2$-fiberwise meager}  &&\iff  G \times_X A \text{ $\alpha$-fiberwise mgr}.\hspace{-2em}
\end{zalign}
\end{definition}

\begin{remark}
Under additional assumptions on $G$, we moreover have
\begin{align*}
\@{BO}_G(G \ltimes M) = \@{BO}_G(G) \otimes_X \@{BO}_G(M);
\end{align*}
see \cref{thm:ucomqpolgpd-action-orbtop}.
In particular, this holds when the fiberwise topology on $G$ is the restriction of an open quasi-Polish groupoid topology (as in the following example).
\end{remark}

\begin{example}
\label{ex:polgrp-action}
Let $G$ be a Polish group, regarded as a one-object open Polish groupoid.
Let $M$ be a standard Borel $G$-space.
Then the lifted $\sigma$-fiberwise topology on $G \ltimes M = G \times M$ defined above is the $\sigma = \pi_2$-fiberwise topology consisting of a copy of the topology of $G$ on each $\sigma$-fiber
\begin{align*}
\sigma^{-1}(a) = \pi_2^{-1}(a) = G \times \set{a}.
\end{align*}
Per \cref{rmk:orbtop-openquot}, the orbitwise topology $\@O_G(M)$ is the disjoint union of the topologies on each orbit $G \cdot a$ induced by the continuous open surjection
\begin{eqaligned*}
\alpha : \sigma^{-1}(a) = G \times \set{a} -->> G \cdot a.
\end{eqaligned*}
By the preceding remark, the orbitwise topology on morphisms $\@O_G(G \ltimes M)$ is the product topology on $G \times M$ where $M$ has the orbitwise topology $\@O_G(M)$.
This may be seen directly again using \cref{rmk:orbtop-openquot}, according to which $\@O_G(G \ltimes X)$ is the disjoint union of the topologies on the subspace of morphisms $G \times Ga$ within each orbit $Ga$ induced by the continuous open surjection
\begin{eqaligned*}
\mu_{G \ltimes M} : \sigma^{-1}(a) \times \tau^{-1}(a) = (G \times \set{a}) \times (G \times \set{a})^{-1} -->> G \times Ga.
\end{eqaligned*}
A basic open set on the left is $(U \times \set{a}) \times (V \times \{a\})^{-1}$ for $U, V \in \@O(G)$, whose image under $\mu_{G \ltimes M}$ is
$\set{(gh^{-1},ha) | g \in U,\, h \in V}$.
Using continuity of multiplication in $G$, such a set is easily seen to be a union of sets of the form
$U'V'^{-1} \times V'a \in \@O(G) \otimes \@O_G([a]_G)$,
for $U' \subseteq U$ and $V' \subseteq V$.
\end{example}

\begin{example}
\label{ex:polgrp-orbit}
Consider again a Borel action of a Polish group $G \actson M$.
Now instead of taking the action groupoid, assume the orbit equivalence relation $E := \#E_{G \ltimes M} \subseteq M^2$ is Borel.
We have a classwise topology $\@O_E(M)$ as above, where each $[a]_E = Ga$ is equipped with the quotient topology via the action $G ->> Ga$; thus for each $U \in \@O(G)$, we have an open set $Ua \subseteq Ga$.
Via \cref{rmk:bocqper}, the corresponding $\sigma$-fiberwise topology on $E$ is generated by the fiberwise open sets
\begin{eqaligned*}
\~U := \bigcup_{a \in M} (\set{a} \times Ua) = \set{(a,b) \in E | \exists g \in U\, (ga = b)}.
\end{eqaligned*}
By Becker--Kechris \cite[7.1.2]{becker-kechris:polgrp}, this set is Borel provided $E$ is; and clearly $\sigma(\~U) = M$ for $U \ne \emptyset$.
We thus have a Borel-overt classwise Polish topology on $E$, with a countable fiberwise basis given by $\~U$ for any countably many basic $U \in \@O(G)$.

(The same construction works for orbit equivalence relations of open Polish groupoid actions, using \cite[5.2.2]{lupini:polgpd}.
For the generalization to open quasi-Polish groupoids, see \cref{thm:ucomqpolgpd-erquot}.)
\end{example}

\subsection{Vaught transforms}
\label{sec:vaught}

In this subsection, we again fix a standard Borel-overt fiberwise quasi-Polish groupoid $(X,G)$.
We now consider the natural generalization, to fiberwise quasi-Polish groupoids, of the following standard tool in the study of Polish group(oid) actions (see \cite[\S3.2]{gao:idst}, \cite[\S2.10]{lupini:polgpd}).

\begin{definition}
\label{def:vaught}
Let $p : M -> X$ be a standard Borel $G$-space, with the action map denoted $\alpha : G \times_X M -> M$, and consider the action groupoid $G \ltimes M = G \times_X M$ with the lifted fiberwise topology (\cref{def:fibqpolgpd-action}).
For $U \subseteq G$ and $A \subseteq M$, define the \defn{Vaught transform}
\begin{eqaligned*}
U * A
:=& \exists^*_{\tau_{G \ltimes M}}(U \times_X A) = \exists^*_\alpha(U \times_X A) \\
=& \set{b \in M | \exists^* g \in \tau^{-1}(p(b))\, (g \in U \AND b \in gA)}.
\end{eqaligned*}
(This binary $*$ notation is from \cite{chen:beckec}; the original notation from \cite{vaught:invariant} is $A^{\triangle U^{-1}}$.)

Following \cref{def:ostar}, for collections of sets $\@U \subseteq \@B(G)$ and $\@A \subseteq \@B(M)$, we write $\@U \oast \@A$ for the collection of all countable unions $\bigcup_i (U_i * A_i)$, where $U_i \in \@U$ and $A_i \in \@A$.
\end{definition}

\begin{remark}
\label{thm:vaught-props}
The Vaught transform satisfies the following algebraic laws.
These are copied from \cite[\S4.2]{chen:beckec}, with the privoso that not all laws from there carry over, since that paper assumed an open quasi-Polish groupoid $G$, whereas here we only have a fiberwise quasi-Polish groupoid.

Using \cref{eq:fib-baire-borel:exists*}, we have for any $U \subseteq G$ and $A \subseteq M$ that
\begin{eqenumerate}

\item \label{it:vaught-im}
$U * A \subseteq U \cdot A$, with equality if $U$ is $\tau$-fiberwise open and $A$ is orbitwise open;

\end{eqenumerate}
for countably many $U_i \subseteq G$ and $A_j \subseteq M$,
\begin{align}
\label{it:vaught-union}
(\bigcup_i U_i) * (\bigcup_j A_j) &= \bigcup_{i,j} (U_i * A_j);
\end{align}
and for $U \subseteq G$, for $A, B \subseteq M$, and for any countable fiberwise basis $\@W \subseteq \@O_\tau(G)$,
\begin{align}
\label{it:vaught-diff}
U * (A \setminus B) &= \bigcup_{W \in \@W} \paren[\big]{((W \cap U) * A) \setminus ((W \cap U) * B)}.
\end{align}
We have the following consequences of \defn{Frobenius reciprocity} \cref{it:fib-baire-frob}: for any $B \subseteq X$, $U \subseteq G$, $A \subseteq M$, and $G$-invariant $C \subseteq M$,
\begin{align}
\label{it:vaught-frob}
(\tau^{-1}(B) \cap U) * A &= p^{-1}(B) \cap (U * A), &
U * (A \cap C) &= (U * A) \cap C.
\end{align}
The proofs of these are straightforward and the same as in \cite[\S4.2]{chen:beckec}.

We call the following laws \defn{Pettis's theorem}.
(Recall \cref{def:orbtop-weak} of ``weakly'' below.)
\begin{eqenumerate}

\item \label{it:vaught-meager}
If $U \subseteq G$ is $\tau$-fiberwise meager, or $A \subseteq M$ is weakly orbitwise meager, then
$U * A = \emptyset$.

\item \label{it:vaught-pettis}
If $U \subseteq^*_\tau V \subseteq G$ and $A \subseteq^{**}_G B \subseteq M$, then $U * A \subseteq V * B$.

\end{eqenumerate}
By the $\tau$-fiberwise Baire property for $G$ (\cref{thm:fib-baire-borel}), and the weak orbitwise Baire property for $M$ (see \cref{thm:orbtop-bp} below), it follows that for any $U \subseteq G$ and $A \subseteq M$,
\begin{align}
\label{it:vaught-bp}
\@B(G) * A &= \@{BO}_\tau(G) * A, &
U * \@B(M) &= U * \@{BO}_G(M).
\end{align}
In particular, $\@B(G) * \@B(M) = \@{BO}_\tau(G) * \@{BO}_G(M)$, i.e., any Vaught transform $U * A$ is equal to one where $U$ is $\tau$-fiberwise open and $A$ is orbitwise open, which is just $U \cdot A$ by \cref{it:vaught-im}.
\end{remark}

\begin{proof}[Proof of \cref{it:vaught-meager,it:vaught-pettis}]
If $U \subseteq G$ is $\tau_G$-fiberwise meager, then $(U \times_X M)^{-1} = U^{-1} \times_X M$ is $\sigma_{G \ltimes M} = \pi_2$-fiberwise meager, hence $U \times_X M \subseteq G \ltimes M$ is $\tau_{G \ltimes M}$-fiberwise meager, hence $U * A \subseteq U * M = \emptyset$.
If $A \subseteq M$ is weakly orbitwise meager, then $G \times_X A$ is $\tau_{G \ltimes M} = \alpha$-fiberwise meager by \cref{eq:orbtop-meager-action}.
To get \cref{it:vaught-pettis}, apply \cref{it:vaught-union} to $(V \cup (U \setminus V)) * (B \cup (A \setminus B))$.
\end{proof}

The following was essentially part of \cite[4.3.7]{chen:beckec} (for open quasi-Polish $G$):

\begin{lemma}
\label{thm:vaught-fibact}
Let $p : M -> X$ be a Borel bundle of quasi-Polish spaces, equipped with a Borel action of $G$ such that each $g : x -> y \in G$ acts via a homeomorphism $p^{-1}(x) -> p^{-1}(y)$.
Then
\begin{eqaligned*}
\@B(G) \oast \@{BO}_p(M) &\subseteq \@{BO}_p(M).
\end{eqaligned*}
\end{lemma}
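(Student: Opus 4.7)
Since $\@{BO}_p(M)$ is closed under countable unions, it suffices to show $U * A \in \@{BO}_p(M)$ for each individual $U \in \@B(G)$ and $A \in \@{BO}_p(M)$. My strategy is to realize $U * A$ as the Baire-category image $\exists^*_{\pi_2}$ of a certain $\pi_1$-fiberwise open Borel set $R$ inside the pullback bundle $G \times_X M$, and then apply Kunugi--Novikov fiber-by-fiber over $X$ to conclude.

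First I reparameterize the $\alpha$-fibers of the action groupoid via inversion (compare \cref{def:fibqpolgpd-action}): for $b \in M$,
\begin{eqaligned*}
\alpha^{-1}(b) = \set{(k, k^{-1}b) | k \in \tau^{-1}(p(b))},
\end{eqaligned*}
with $\alpha$-fiberwise topology corresponding to $\@O_\tau$ on $\tau^{-1}(p(b))$. Under the substitution $g := k^{-1}$, which is a $\@O_\tau$-to-$\@O_\sigma$ homeomorphism, the Vaught transform reads $U * A = \exists^*_{\pi_2}(R)$, where $R := \set{(g,b) \in G \times_X M | g \in U^{-1} \AND gb \in A}$ is Borel, and $\pi_2 : G \times_X M \to M$, equipped with the pullback of $\@O_\sigma(G)$ along $p$, is a standard Borel-overt bundle of quasi-Polish spaces by \cref{rmk:fib-bor-qpol-pb}. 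Hence $U * A$ is Borel by \cref{thm:fib-baire-borel}.

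For $p$-fiberwise openness, I fix $y \in X$ and restrict attention to $\pi_2^{-1}(M_y) = \sigma^{-1}(y) \times M_y$, a product Borel-overt bundle over $\sigma^{-1}(y)$ via $\pi_1$. The crucial observation is that $R \cap (\sigma^{-1}(y) \times M_y)$ is $\pi_1$-fiberwise open: for each $g \in \sigma^{-1}(y) \cap U^{-1}$ with $\tau(g) = z$, its vertical slice equals $g^{-1}(A \cap M_z)$, which is $\@O_p$-open in $M_y$ \emph{precisely} because the hypothesis of the lemma makes $g^{-1} : M_z \to M_y$ a fiberwise homeomorphism. Applying Kunugi--Novikov \labelcref{eq:fib-bor-qpol-basis} then yields a decomposition $R \cap (\sigma^{-1}(y) \times M_y) = \bigcup_n (B_n \times V_n)$, with $\set{V_n}_n$ ranging over a fixed countable basis of $M_y$ and each $B_n \subseteq \sigma^{-1}(y)$ Borel.

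A short computation closes the argument. For each $b \in M_y$ the horizontal slice $\set{g | (g,b) \in R} = \bigcup_{n :\, b \in V_n} B_n$ is $\@O_\sigma$-nonmeager iff some individual $B_n$ with $b \in V_n$ is, since countable unions of meager sets are meager; hence
\begin{eqaligned*}
(U * A) \cap M_y = \exists^*_{\pi_1}\paren[\big]{\bigcup\nolimits_n (B_n \times V_n)} = \bigcup_{n :\, B_n \text{ is $\@O_\sigma$-nonmeager}} V_n,
\end{eqaligned*}
manifestly open in $M_y$. The main obstacle is the verification of $\pi_1$-fiberwise openness of $R$: without the fiberwise-homeomorphism hypothesis one would only know each vertical slice of $R$ to be Borel, and Kunugi--Novikov would not produce $p$-fiberwise open summands $V_n$; the rest is routine.
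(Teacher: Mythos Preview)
Your proof is correct and shares the same core idea as the paper's---reparameterize the $\alpha$-fibers via inversion, use the fiberwise-homeomorphism hypothesis to see that $\alpha^{-1}(A)$ is $\pi_1$-fiberwise open, then apply Kunugi--Novikov and a category quantifier---but the execution differs in a way worth noting. You split the conclusion into two parts (global Borelness via $\exists^*_{\pi_2}$, then $p$-fiberwise openness by fixing each $y \in X$ and applying Kunugi--Novikov on the product $\sigma^{-1}(y) \times M_y$), whereas the paper does everything in one global, point-free computation: it shows $\nu_{G \ltimes M}$ maps $\@B(G) \otimes_X \@{BO}_p(M)$ into itself (this is Kunugi--Novikov applied once to the bundle $\pi_1 : G \times_X M \to G$, using fiberwise continuity of the action), and then observes that $\exists^*_{\tau_{G \ltimes M}} = \exists^*_{\pi_2} \circ \nu_{G \ltimes M}$ together with Frobenius reciprocity \cref{it:fib-baire-frob} gives $\exists^*_{\pi_2}(\@B(G) \otimes_X \@{BO}_p(M)) \subseteq \@{BO}_p(M)$ directly. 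The paper's route is shorter and yields Borelness and fiberwise openness simultaneously, at the cost of requiring fluency with the algebraic $\otimes_X, \oast$ notation; your route is more elementary and makes the role of the homeomorphism hypothesis more visibly explicit.
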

\begin{proof}
By fiberwise continuity and Kunugi--Novikov, we have (recall $\nu_{G \ltimes M}$ from \cref{def:action})
\begin{eqaligned*}
\nu_{G \ltimes M}(\@B(G) \otimes_X \@{BO}_p(M))
&= (\nu_G \pi_1, \alpha)^{-1}(\@B(G) \otimes_X \@{BO}_p(M)) \\
&\subseteq \@B(G) \otimes_X \@{BO}_p(M),
\end{eqaligned*}
thus
\begin{eqaligned*}[b][\qedhere]
\@B(G) \oast \@{BO}_p(M)
&= \exists^*_{\tau_{G \ltimes M}}(\@B(G) \otimes_X \@{BO}_p(M)) \\
&= \exists^*_{\sigma_{G \ltimes M}}(\nu_{G \ltimes M}(\@B(G) \otimes_X \@{BO}_p(M))) \\
&\subseteq \exists^*_{\pi_2}(\@B(G) \otimes_X \@{BO}_p(M)) \\
&= \@{BO}_p(M)
    \quad \text{by Frobenius \cref{it:fib-baire-frob}}.
\end{eqaligned*}
\end{proof}

\begin{example}
\label{ex:vaught-mul}
We are particularly interested in the Vaught transform for the left translation action $\alpha = \mu : G \times_X G -> G$ (\cref{ex:action-transl}).
In that case, we have fiberwise homeomorphisms
\begin{eqtikzcd}[column sep=6em][\label{diag:gpd-2simp-edge}]
(G \times_X G, \@O_{\pi_1})
    \drar["\pi_1"']
    &
(G \times_X G, \@O_\mu)
    \dar["\mu"]
    \lar[<->, "{(g,h) |-> (gh,h^{-1})}"']
    \rar[<->, "{(g,h) |-> (g^{-1},gh)}"]
    &
(G \times_X G, \@O_{\pi_2})
    \dlar["\pi_2"]
\\
& G
\end{eqtikzcd}
where the right homeomorphism is by \cref{def:fibqpolgpd-action} of $\@O_\mu(G \times_X G) = \@O_\alpha(G \times_X G)$, and the left homeomorphism is because its composite with the right homeomorphism is the fiberwise homeomorphism $(g,h) |-> (h^{-1}g^{-1},g)$ from $\@O_{\pi_1}(G \times_X G)$ to $\@O_{\pi_2}(G \times_X G)$ (because $h |-> h^{-1}g^{-1}$ is a homeomorphism from $\@O_\tau(G)$ to $\@O_\sigma(G)$).
Thus the \defn{$\mu$-fiberwise topology} on $G \times_X G$ is symmetric, in that it could have equivalently been defined via the \emph{right} translation action $G \actedby G$.

It follows that the definition of Vaught transform $U * V$ for $U, V \subseteq G$ is symmetric as well:
\begin{align}
\label{eq:vaught-inv}
U * V = (V^{-1} * U^{-1})^{-1}.
\end{align}
It is also easy to see that the orbitwise topology on objects \cref{eq:orbtop-action} for the left action $G \actson G$ coincides with the $\sigma$-fiberwise topology $\@O_\sigma(G)$.
(Note that $\@O_\sigma(G)$ is $T_0$, obviating \cref{rmk:orbtop-t0}; hence the ``weak'' notions in \cref{def:orbtop-weak} become redundant.)
Thus, the laws listed in \cref{thm:vaught-props} form dual pairs, swapping the roles of $\tau, \sigma$.
We also get \cref{thm:vaught-fibact} and its dual:
\begin{align}
\label{eq:vaught-fibtop}
\@B(G) \oast \@{BO}_\tau(G) &\subseteq \@{BO}_\tau(G), &
\@{BO}_\sigma(G) \oast \@B(G) &\subseteq \@{BO}_\sigma(G).
\end{align}
\end{example}

\begin{definition}
Consider now the ``vertex projections'' $\tau\pi_1$, $\sigma\pi_1 = \tau\pi_2$, and $\sigma\pi_2 : G \times_X G -> X$.
These are related via fiberwise bijections (cf.\ \labelcref{diag:gpd-2simp-edge}; here we are projecting to $X$ instead of $G$):
\begin{eqtikzcd}[column sep=6em][\label{diag:gpd-2simp-vertex}]
(G \times_X G, \@O_{\tau\pi_1})
    \drar["\tau\pi_1"']
    &
(G \times_X G, \@O_{\sigma\pi_1}{=}\@O_{\tau\pi_2})
    \dar["\sigma\pi_1=\tau\pi_2\mathstrut"{anchor=center,fill=white,inner sep=0pt}]
    \lar[<->, "{(g,h) |-> (g^{-1},gh)}"']
    \rar[<->, "{(g,h) |-> (gh,h^{-1})}"]
    &
(G \times_X G, \@O_{\sigma\pi_2})
    \dlar["\sigma\pi_2"]
\\
& X
\end{eqtikzcd}
By the \defn{$\sigma\pi_1 = \tau\pi_2$-fiberwise topology} on $G \times_X G$, we will mean the fiberwise product topology of $\@O_\sigma(G)$ and $\@O_\tau(G)$, which is a Borel-overt fiberwise quasi-Polish topology on the bundle
$\sigma\pi_1 = \tau\pi_2 : G \times_X G -> X$
(by \cref{rmk:fib-bor-qpol-pb}).
We then transfer this fiberwise topology via the above fiberwise bijections to the \defn{$\tau\pi_1$-fiberwise} and \defn{$\sigma\pi_2$-fiberwise topologies} on $G \times_X G$.

By the \defn{Kuratowski--Ulam theorem} \labelcref{thm:kuratowski-ulam}, for Borel $W \subseteq G \times_X G$ we have
\begin{align}
\label{eq:vaught-ku-middle}
\exists^*_\sigma(\exists^*_{\pi_1}(W)) = \exists^*_{\sigma\pi_1}(W) = \exists^*_{\tau\pi_2}(W) = \exists^*_\tau(\exists^*_{\pi_2}(W)).
\end{align}
Transferring this from the middle bundle in \cref{diag:gpd-2simp-vertex} to the two side bundles yields
\begin{align}
\label{eq:vaught-ku}
\exists^*_\tau(\exists^*_{\pi_1}(W)) &= \exists^*_{\tau\pi_1}(W) = \exists^*_\tau(\exists^*_\mu(W)), &
\exists^*_\sigma(\exists^*_\mu(W)) &= \exists^*_{\sigma\pi_2}(W) = \exists^*_\sigma(\exists^*_{\pi_2}(W)).
\end{align}
In particular, for $W = U \times_X V$ where $U, V \subseteq G$, we have (by Frobenius reciprocity \labelcref{it:fib-baire-frob})
\begin{align}
\label{eq:vaught-fib}
U * \exists^*_\tau(V) = \exists^*_\tau(U \cap \sigma^{-1}(\exists^*_\tau(V))) &= \exists^*_\tau(U * V), &
\exists^*_\sigma(U * V) &= \exists^*_\sigma(\tau^{-1}(\exists^*_\sigma(U)) \cap V)
\end{align}
where here $U * \exists^*_\tau(V)$ refers to the Vaught transform for the trivial action $G \actson X$ (\cref{ex:action-trivial}).

More generally, we have a Kuratowski--Ulam associativity law for Vaught transforms for arbitrary Borel actions $G \actson M$ as in \cite[4.2.20]{chen:beckec}: taking $G$ above to be $G \ltimes M$ and $W$ in \cref{eq:vaught-ku} to be $U \times_X V \times_X A$ for $U, V \subseteq G$ and $A \subseteq M$ yields
\begin{align}
U * (V * A) = \exists^*_\alpha(U \times_X (V * A)) = \exists^*_\alpha((U * V) \times_X A) = (U * V) * A.
\end{align}
In particular this holds for the left translation action $G \actson G$ and Borel $U, V, A \subseteq G$.
\end{definition}

\begin{remark}
The above fiberwise topologies may be generalized by considering the simplicial nerve $(G_n)_{n \in \#N}$ of $G$; see \cite[\S1.4]{cisinski:hoalg}.
This is the (symmetric) simplicial set with
\begin{itemize}
\item  vertices $G_0 := X$;
\item  edges $G_1 := G$;
\item  triangles $G_2 := G \times_X G =$ composable pairs $(g,h) \cong$ commuting triangles \smash{\begin{tikzcd}[row sep=1em, column sep=1em, every cell/.append style={inner sep=1pt}]
    \cdot && \cdot \ar[ll,"gh"'] \dlar["h"] \\
    & \cdot \ular["g"]
\end{tikzcd}};
\item  3-cells $G_3 := G \times_X G \times_X G =$ composable triples $\cong$ commuting tetrahedra;
\end{itemize}
etc.
For each face map $\partial_s : G_n ->> G_m$, $m \le n$, corresponding to an injection between standard simplices $s : \{0,\dotsc,m\} `-> \{0,\dotsc,n\}$, we get a canonical $\partial_s$-fiberwise topology on $G_n$, given by identifying an $n$-simplex with an $m$-simplex together with $(n-m)$-many morphisms with common source (or target), and then taking the $(n-m)$-fold fiberwise product topology of $\@O_\sigma$ (or $\@O_\tau$).
\end{remark}

\subsection{Baire category for derived topologies}

We now apply Vaught transforms to further study the various topologies on a Borel-overt fiberwise quasi-Polish groupoid $(X,G)$ introduced in \cref{sec:fibqpolgpd,sec:orbtop}.

\begin{lemma}
\label{thm:orbtop-exists*}
The componentwise topologies (\cref{def:orbtop}) may be obtained via
\begin{align*}
\@{BO}_G(X) &= \exists^*_\tau(\@{BO}_\sigma(G)) = \exists^*_\sigma(\@{BO}_\tau(G)), \\
\@{BO}_G(G) &= \exists^*_\mu(\@{BO}_\sigma(G) \otimes_X \@{BO}_\tau(G)) = \@{BO}_\sigma(G) \oast \@{BO}_\tau(G). \\
\intertext{Thus, for a Borel $G$-space $p : M -> X$, the orbitwise topology (\cref{def:fibqpolgpd-action}) may be obtained via}
\@{BO}_G(M) &= \exists^*_\alpha(\@{BO}_\sigma(G \ltimes M)) = \@{BO}_\sigma(G) \oast \@B(M).
\end{align*}
\end{lemma}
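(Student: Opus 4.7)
The strategy is to prove each of the three identities in turn. The forward inclusion (of $\@{BO}_G$ into the quantified family) is the easy direction in each case, handled by surjectivity Frobenius \cref{it:fib-baire-surj}: for any $A \in \@{BO}_G(X)$, $\tau^{-1}(A) \in \@{BO}_\sigma(G)$ by \cref{eq:orbtop-obj-bor}, and $A = \exists^*_\tau(\tau^{-1}(A))$ since every fiber $\tau^{-1}(x)$ contains $1_x$ and is a nonempty quasi-Polish (hence Baire) space, so $X = \exists^*_\tau(G)$. The same pattern handles the forward inclusion for $\@{BO}_G(G)$ (using $(1_{\tau(k)}, k) \in \mu^{-1}(k)$) and for $\@{BO}_G(M)$ (using $(1_{p(b)}, b) \in \alpha^{-1}(b)$).

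For the reverse inclusion in the first identity, given $U \in \@{BO}_\sigma(G)$, I need to verify $\tau^{-1}(\exists^*_\tau(U)) \in \@{BO}_\sigma(G)$ (Borelness of $\exists^*_\tau(U)$ itself is from \cref{thm:fib-baire-borel}). The plan is to rewrite this preimage as the Vaught transform $U * G$ for the left translation action of \cref{ex:vaught-mul}: tracing through the identification $\mu^{-1}(k) \cong \tau^{-1}(\tau(k))$ from \cref{diag:gpd-2simp-edge}, the set $(U \times_X G) \cap \mu^{-1}(k)$ corresponds to $U \cap \tau^{-1}(\tau(k))$ equipped with the $\@O_\tau$ topology, whose nonmeagerness is by definition exactly $\tau(k) \in \exists^*_\tau(U)$. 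The Pettis law $\@{BO}_\sigma(G) \oast \@B(G) \subseteq \@{BO}_\sigma(G)$ from \cref{eq:vaught-fibtop} then gives $U * G \in \@{BO}_\sigma(G)$. The symmetric equality $\@{BO}_G(X) = \exists^*_\sigma(\@{BO}_\tau(G))$ follows by applying inversion $\nu$.

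The crux is the reverse inclusion in the second identity: for $U \in \@{BO}_\sigma$ and $V \in \@{BO}_\tau$, one must show $\mu^{-1}(U * V) \in \@{BO}_\sigma(G) \otimes_X \@{BO}_\tau(G)$. I would apply Beck--Chevalley \cref{it:fib-baire-bc} to the associativity pullback square with apex $G \times_X G \times_X G$ and legs $\mu \times \id, \id \times \mu : G \times_X G \times_X G \to G \times_X G$ (a pullback by $(fg)h = f(gh)$), yielding $\mu^{-1}(U * V) = \exists^*_{\mu \times \id}(U \times_X \mu^{-1}(V))$. Decomposing $\mu^{-1}(V) = \bigcup_i (B_i \times_X V_i)$ with $B_i \in \@B(G)$ and $V_i \in \@{BO}_\tau(G)$ by \cref{rmk:fibqpolgpd-bor}, the fact that $\mu \times \id$ is essentially a parametric family of $\mu$-maps (identity in the last coordinate) lets a Kuratowski--Ulam type calculation reduce each $\exists^*_{\mu \times \id}(U \times_X B_i \times_X V_i)$ to $(U * B_i) \times_X V_i$; by the same Pettis law, each $U * B_i \in \@{BO}_\sigma(G)$, so the overall union lies in $\@{BO}_\sigma \otimes_X \@{BO}_\tau$. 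The middle equality $\exists^*_\mu(\@{BO}_\sigma \otimes_X \@{BO}_\tau) = \@{BO}_\sigma \oast \@{BO}_\tau$ is tautological, given $\exists^*_\mu(U \times_X V) = U * V$ and distributivity over countable unions.

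The action case then follows by applying the first identity to the action groupoid $(M, G \ltimes M)$, itself a Borel-overt fiberwise quasi-Polish groupoid by \cref{def:fibqpolgpd-action}, whose source and target maps are $\pi_2$ and $\alpha$ respectively, with $\@{BO}_\sigma(G \ltimes M) = \@{BO}_\sigma(G) \otimes_X \@B(M)$; the remaining equality is again tautological. The principal obstacle is the associativity-pullback computation in the second identity: the Pettis laws alone yield only $U * V \in \@{BO}_\sigma(G) \cap \@{BO}_\tau(G)$, which is strictly weaker than membership in $\@{BO}_G(G)$, and the 3-simplex Beck--Chevalley step is what bridges this gap by producing the required joint factorization into a product of sets in $\@{BO}_\sigma$ and $\@{BO}_\tau$.
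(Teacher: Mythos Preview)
Your argument is correct, and for the first and third identities it matches the paper's proof essentially verbatim (including the key identification $\tau^{-1}(\exists^*_\tau(U)) = U * G$ and the appeal to \cref{eq:vaught-fibtop}).

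For the second identity you take a genuinely different route from the paper. The paper computes $\mu^{-1}(U * V)$ directly: after the change of variables $h' \mapsto g := h^{-1}h'$ it recognizes the result as the Vaught transform $G * (U \times_X V)$ for the \emph{diagonal action} $g \cdot (h,k) := (hg^{-1}, gk)$ on the middle-vertex bundle $\sigma\pi_1 = \tau\pi_2 : G \times_X G \to X$, and then invokes \cref{thm:vaught-fibact} in one shot. Your approach instead pulls back along the associativity square via Beck--Chevalley, decomposes $\mu^{-1}(V)$ using \cref{rmk:fibqpolgpd-bor}, and then reduces to the \emph{same} underlying fact \cref{eq:vaught-fibtop} (itself a corollary of \cref{thm:vaught-fibact}) applied to each $U * B_i$. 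Your route avoids having to spot the diagonal action and stays closer to pure formal manipulation; the paper's route is slightly more conceptual and avoids the intermediate Kunugi--Novikov decomposition. One small quibble: the step $\exists^*_{\mu \times \id}(U \times_X B_i \times_X V_i) = (U * B_i) \times_X V_i$ is not really ``Kuratowski--Ulam''---it is just a direct fiber-by-fiber computation (the $(\mu \times \id)$-fiber over $(a,b)$ is canonically $\mu^{-1}(a) \times \{b\}$, and one checks the pullback $\mu$-topology agrees under right translation by $b$)---but the computation itself is sound.
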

(See \cref{fig:topologies}.
Note that this is the Baire-categorical analog of \cref{rmk:orbtop-openquot}.)
\begin{proof}
$\subseteq$ follows from \cref{eq:orbtop-obj-bor,eq:orbtop-mor-bor} and the fact that $\tau, \sigma, \mu$ are surjective and fiberwise quasi-Polish, hence fiberwise nonmeager (see \labelcref{it:fib-baire-surj}).

For $\supseteq$, imitating the argument in \cref{rmk:orbtop-openquot}, we have for any $U \in \@{BO}_\sigma(G)$ that
\begin{eqaligned}
\label{eq:fibtop-sat}
\tau^{-1}(\exists^*_\tau(U))
&= \set{g \in G | \exists^* h \in \tau^{-1}(\tau(g))\, (h \in U)}
= U * G
\in \@{BO}_\sigma(G)
\end{eqaligned}
by \cref{eq:vaught-fibtop}, whence $\exists^*_\tau(U) \in \@{BO}_G(X)$ by \cref{eq:orbtop-obj-bor}.
And for any $U \in \@{BO}_\sigma(G)$ and $V \in \@{BO}_\tau(G)$,
\begin{eqaligned*}
\mu^{-1}(U * V)
&= \set*{(h,k) \in G \times_X G | \exists^* h' \in \tau^{-1}(\tau(h))\, (h' \in U \AND hk \in h'V)} \\
&= \set*{(h,k) \in G \times_X G | \exists^* g \in \tau^{-1}(\sigma(h))\, (h \in Ug^{-1} \AND k \in gV)}
\end{eqaligned*}
using the $\@O_\tau(G)$-homeomorphism $h^{-1} \cdot (-) : \tau^{-1}(\tau(h)) \cong \tau^{-1}(\sigma(h))$ taking $h'$ to $g$; but this last set is $G * (U \times_G V)$ for the fiberwise continuous action $g \cdot (h,k) := (hg^{-1},gk)$ on the ``middle vertex'' bundle $\sigma\pi_1 = \tau\pi_2 : G \times_X G -> X$ equipped with the fiberwise product topology of $\@O_\sigma(G)$ and $\@O_\tau(G)$, and so again by \cref{thm:vaught-fibact} we have $\mu^{-1}(U * V) \in \@{BO}_\sigma(G) \times_X \@{BO}_\tau(G)$ whence $U * V \in \@{BO}_G(G)$ by \cref{eq:orbtop-mor-bor}.
\end{proof}

\begin{remark}
\label{rmk:boxtop-vaught}
We have the following analogous, but weaker, Baire-categorical description of the sigma-topology $\@{BO}_\Box(G)$ introduced in \cref{def:boxtop}:
\begin{eqaligned*}
\@{BO}_\Box(G) \subseteq \@B(G) \oast \@B(G) = \@{BO}_\tau(G) \oast \@{BO}_\sigma(G).
\end{eqaligned*}
The $\subseteq$ follows as above from surjectivity of $\mu$ \cref{it:fib-baire-surj}; the equality follows from Pettis's theorem \cref{it:vaught-bp}.
(The $\subseteq$ becomes equality under additional assumptions; see \cref{thm:ucomqpolgpd}.)
\end{remark}

We will also need the following analog of \cref{thm:orbtop-exists*} for the second level of the Borel hierarchy.
We recall from \cref{def:orbtop-weak} that the ``weakly'' in the following results can be dropped if $G$ has $\*\Pi^0_2$ hom-sets, e.g., if $G$ is an equivalence relation or an open quasi-Polish groupoid.

\begin{lemma}
For any Borel $\sigma$-fiberwise $\*\Sigma^0_2$ set $F \subseteq G$, $\exists^*_\tau(F) \subseteq X$ is weakly componentwise $\*\Sigma^0_2$.
\end{lemma}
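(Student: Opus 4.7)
The plan is to adapt the computation from the proof of \cref{thm:orbtop-exists*} to the second level of the Borel hierarchy, by decomposing $F$ via Saint--Raymond uniformization and handling each piece with the fiberwise Baire-property formulas.

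First, apply Saint--Raymond uniformization (\cref{thm:saint-raymond}) to the Borel $\sigma$-fiberwise $\*\Pi^0_2$ set $\neg F$, with respect to a countable Borel $\sigma$-fiberwise basis $\@U \subseteq \@{BO}_\sigma(G)$: this yields Borel sets $B_{U,V} \subseteq X$ such that $\neg F = \bigcap_{U \in \@U} (U => \phi_U)$, where $\phi_U := \bigcup_{V \in \@U} (\sigma^{-1}(B_{U,V}) \cap V) \in \@{BO}_\sigma(G)$. Taking complements yields $F = \bigcup_{U \in \@U} (U \setminus \phi_U)$, a countable union of differences of sets in $\@{BO}_\sigma(G)$. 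Since $\exists^*_\tau$ commutes with countable unions, it suffices to show each $\exists^*_\tau(U \setminus \phi_U)$ is weakly componentwise $\*\Sigma^0_2$, equivalently that $\tau^{-1}(\exists^*_\tau(U \setminus \phi_U)) = (U \setminus \phi_U) * G$ lies in the $\*\Sigma^0_2$-level of the sigma-topology $\@{BO}_\sigma(G)$.

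For a single piece, I would apply the Baire-property difference formula $\exists^*_\tau(A \setminus B) = \bigcup_{W \in \@W} (\exists^*_\tau(W \cap A) \setminus \exists^*_\tau(W \cap B))$ from \cref{eq:fib-baire-borel:exists*} with $\@W$ a countable Borel $\tau$-fiberwise basis, then pull the resulting expression back through $\tau^{-1}$ and use the Vaught-transform absorption laws \cref{eq:vaught-fibtop} together with the characterization $\@{BO}_G(X) = \exists^*_\tau(\@{BO}_\sigma(G))$ from \cref{thm:orbtop-exists*} to re-express each summand as a difference of Borel $\sigma$-fiberwise open sets. Assembling, $(U \setminus \phi_U) * G$ becomes a countable union of such differences, as desired.

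The delicate step is this last conversion: the formula produces terms $\exists^*_\tau(W \cap U)$ with $W \cap U$ merely Borel (rather than $\sigma$-fiberwise open), so these are not immediately in $\@{BO}_G(X)$ by \cref{thm:orbtop-exists*}. The expected remedy is a Kuratowski--Ulam argument (\cref{thm:kuratowski-ulam}) in the spirit of \cref{thm:vaught-fibact}, exploiting that both $U$ and $\phi_U$ lie in $\@{BO}_\sigma(G)$: one analyses $(U \setminus \phi_U) * G = \exists^*_\mu((U \setminus \phi_U) \times_X G)$ via the ``middle vertex'' fiberwise product topology $\@O_\sigma \otimes_X \@O_\tau$ on $G \times_X G$, in which the set $(U \setminus \phi_U) \times_X G$ is fiberwise locally closed with respect to the first factor, and extracts the $\sigma$-fiberwise $\*\Sigma^0_2$ complexity by applying Kuratowski--Ulam to this product decomposition.
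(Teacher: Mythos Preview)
Your reduction via Saint--Raymond to $F = U \setminus V$ with $U,V \in \@{BO}_\sigma(G)$ is correct and matches the paper, as does the identification $\tau^{-1}(\exists^*_\tau(F)) = (U \setminus V) * G$ via \cref{eq:fibtop-sat}.

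The gap is in the decomposition you choose next. Applying the $\exists^*_\tau$-difference formula from \cref{eq:fib-baire-borel:exists*} forces $\@W$ to be a \emph{$\tau$-fiberwise} basis, so the resulting terms $\exists^*_\tau(W \cap U)$ mix a $\tau$-fiberwise open $W$ with a $\sigma$-fiberwise open $U$; you correctly notice that $W \cap U$ is then merely Borel, and there is no reason for $\tau^{-1}(\exists^*_\tau(W \cap U)) = (W \cap U) * G$ to lie in $\@{BO}_\sigma(G)$. Your proposed Kuratowski--Ulam workaround is too vague to constitute a proof, and in fact would amount to rederiving the correct formula below.

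The missing idea is to decompose \emph{after} passing to the Vaught transform, not before: apply the \emph{dual} of \cref{it:vaught-diff} (valid by the symmetry \cref{eq:vaught-inv}) directly to $(U \setminus V) * G$, which gives
\[
(U \setminus V) * G = \bigcup_{W \in \@W} \bigl((U * W) \setminus (V * W)\bigr)
\]
where now $\@W \subseteq \@{BO}_\sigma(G)$ is a \emph{$\sigma$-fiberwise} basis. Since $U,V,W \in \@{BO}_\sigma(G)$, each $U * W$ and $V * W$ lies in $\@{BO}_\sigma(G)$ by \cref{eq:vaught-fibtop}, so the union is $\sigma$-fiberwise $\*\Sigma^0_2$ on the nose --- no further Baire-category or Kuratowski--Ulam argument needed. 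The point is that the $\mu$-fiberwise basis underlying the Vaught difference formula comes from $\@O_\sigma$ (via the homeomorphisms in \cref{diag:gpd-2simp-edge}), not from $\@O_\tau$.
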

\begin{proof}
By Saint Raymond's uniformization theorem (\cref{thm:saint-raymond}), it suffices to assume $F = U \setminus V$ for $U, V \in \@{BO}_\sigma(G)$.
As in \cref{eq:fibtop-sat}, we have
\begin{eqaligned*}
\tau^{-1}(\exists^*_\tau(U \setminus V))
&= (U \setminus V) * G
= \bigcup_{W \in \@W} ((U * W) \setminus (V * W))
\end{eqaligned*}
for any countable Borel $\sigma$-fiberwise basis $\@W \subseteq \@{BO}_\sigma(G)$, by the dual of \cref{it:vaught-diff}.
This set is $\sigma$-fiberwise $\*\Sigma^0_2$ by \cref{eq:vaught-fibtop}, thus $\exists^*_\tau(U \setminus V)$ is weakly componentwise $\*\Sigma^0_2$ by \cref{def:orbtop-weak}.
\end{proof}

The following is the groupoid analog of \cite[2.2(i)]{solecki-srivastava:lpolgrp}:

\begin{lemma}
\label{thm:fibtop-meager-exists*}
If $U \subseteq G$ is Borel $\sigma$-fiberwise meager, then $\exists^*_\tau(U) \subseteq X$ is weakly componentwise meager.
Dually, if $U$ is Borel $\tau$-fiberwise meager, then $\exists^*_\sigma(U)$ is weakly componentwise meager.
\end{lemma}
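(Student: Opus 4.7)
The plan is to reduce everything to a short Kuratowski--Ulam reassociation, using the Vaught transform computation already carried out in the proof of \cref{thm:orbtop-exists*}. By \cref{def:orbtop-weak}, $\exists^*_\tau(U)$ is weakly componentwise meager iff $\tau^{-1}(\exists^*_\tau(U)) \subseteq G$ is $\sigma$-fiberwise meager, which is equivalent to $\exists^*_\sigma(\tau^{-1}(\exists^*_\tau(U))) = \emptyset$. The crucial observation is that this preimage has already been identified, in \cref{eq:fibtop-sat}, as a Vaught transform for the left translation action $G \actson G$:
\[
\tau^{-1}(\exists^*_\tau(U)) \;=\; U * G \;=\; \exists^*_\mu(U \times_X G).
\]

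Applying $\exists^*_\sigma$ and invoking the second equation of the Kuratowski--Ulam identity \cref{eq:vaught-ku} with the Borel set $W := U \times_X G$ gives
\[
\exists^*_\sigma(U * G) \;=\; \exists^*_\sigma(\exists^*_\mu(U \times_X G)) \;=\; \exists^*_\sigma(\exists^*_{\pi_2}(U \times_X G)).
\]
Now I would unpack the inner quantifier. By \cref{def:fibqpolgpd-action} and \cref{rmk:fib-bor-qpol-pb}, the $\pi_2$-fiberwise topology on $G \times_X G = G \tensor[_\sigma]\times{_\tau} G$ is just the lifted $\sigma$-fiberwise topology on $G$: for each $k \in G$, the fiber $\pi_2^{-1}(k)$ is canonically identified (via $(g,k) \mapsto g$) with $\sigma^{-1}(\tau(k))$, under which the fiber of $U \times_X G$ becomes $U \cap \sigma^{-1}(\tau(k))$. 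By hypothesis the latter is meager, so $\exists^*_{\pi_2}(U \times_X G) = \emptyset$; hence so is $\exists^*_\sigma(U * G)$, which is the required conclusion.

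The dual statement follows formally by applying the proven statement to $U^{-1}$. Since the inversion $\nu : G \to G$ is a fiberwise homeomorphism $\@O_\sigma(G) \cong \@O_\tau(G)$ swapping $\sigma$ and $\tau$, the $\tau$-fiberwise meagerness of $U$ is equivalent to the $\sigma$-fiberwise meagerness of $U^{-1}$, and $\exists^*_\tau(U^{-1}) = \exists^*_\sigma(U)$; by the case already proved, this set is weakly componentwise meager.

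I do not anticipate a real obstacle: the nontrivial content of the statement is essentially absorbed into the Kuratowski--Ulam identity \cref{eq:vaught-ku} and the Vaught transform interpretation \cref{eq:fibtop-sat}, both of which are available. The only thing to be careful about is matching the fiberwise topologies on $G \times_X G$ correctly when extracting $\exists^*_{\pi_2}(U \times_X G) = \emptyset$.
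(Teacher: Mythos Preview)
Your proposal is correct and takes essentially the same approach as the paper: both reduce to showing $\exists^*_\sigma(\tau^{-1}(\exists^*_\tau(U))) = \emptyset$ via the identification $\tau^{-1}(\exists^*_\tau(U)) = U * G$ from \cref{eq:fibtop-sat} followed by a Kuratowski--Ulam reassociation. The only cosmetic difference is that the paper invokes the specialized identity \cref{eq:vaught-fib} (giving $\exists^*_\sigma(U * G) = \exists^*_\sigma(\tau^{-1}(\exists^*_\sigma(U))) = \emptyset$ directly), whereas you use the more primitive \cref{eq:vaught-ku} and then unpack $\exists^*_{\pi_2}(U \times_X G) = \emptyset$ by hand; these are the same computation at different levels of abstraction.
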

\begin{proof}
Suppose $U$ is $\sigma$-fiberwise meager.
Then
\begin{eqaligned*}
\exists^*_\sigma(\tau^{-1}(\exists^*_\tau(U)))
&= \exists^*_\sigma(U * G)
    &&\text{by \cref{eq:fibtop-sat}} \\
&= \exists^*_\sigma(\tau^{-1}(\exists^*_\sigma(U)))
    &&\text{by Kuratowski--Ulam \cref{eq:vaught-fib}} \\
&= \emptyset
    &&\text{since $U$ is $\sigma$-fiberwise meager},
\end{eqaligned*}
i.e., $\tau^{-1}(\exists^*_\tau(U))$ is $\sigma$-fiberwise meager,
fulfilling \cref{def:orbtop-weak}.
\end{proof}

\begin{corollary}
\label{thm:orbtop-meager-bp}
Any Borel weakly componentwise meager $A \subseteq X$ is contained in a Borel weakly componentwise meager $\*\Sigma^0_2$ set.
\end{corollary}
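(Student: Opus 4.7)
The plan is to exploit the $\sigma \leftrightarrow \tau$ symmetry built into \cref{def:orbtop-weak} and apply the Borel fiberwise Baire property (\cref{thm:fib-baire-borel}) inside the $\tau$-bundle, then push the outcome forward using the two lemmas immediately preceding this corollary (in their dual formulations).

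First I restate the hypothesis using the equivalent formulation of $\subseteq^{**}_G$ recorded in \cref{def:orbtop-weak}: saying that $A \subseteq X$ is weakly componentwise meager is the same as saying that $\sigma^{-1}(A) \subseteq G$ is $\tau$-fiberwise meager, and it is Borel because $A$ is.

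Next I apply \cref{thm:fib-baire-borel} to the Borel set $\sigma^{-1}(A)$, viewed in the standard Borel-overt bundle $\tau : G \to X$ equipped with the $\tau$-fiberwise quasi-Polish topology $\@O_\tau(G)$. This yields a Borel $\tau$-fiberwise open $U$ with $\sigma^{-1}(A) =^*_\tau U$, witnessed by a Borel $\tau$-fiberwise dense $\@G_\delta$ set $G_{\sigma^{-1}(A)}$ on which $\sigma^{-1}(A)$ and $U$ coincide. Since every nonempty open subset of a quasi-Polish space is nonmeager, the combination of $\tau$-fiberwise openness and $\tau$-fiberwise meagerness forces $U = \emptyset$; hence $\sigma^{-1}(A) \subseteq \neg G_{\sigma^{-1}(A)}$, which is a Borel set that is simultaneously $\tau$-fiberwise $\*\Sigma^0_2$ and $\tau$-fiberwise meager.

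Finally I take $B := \exists^*_\sigma(\neg G_{\sigma^{-1}(A)})$. By the dual of the lemma preceding \cref{thm:fibtop-meager-exists*}, $B$ is weakly componentwise $\*\Sigma^0_2$; by the dual form of \cref{thm:fibtop-meager-exists*} itself, $B$ is weakly componentwise meager. The inclusion $A \subseteq B$ is immediate: for each $x \in A$ the $\sigma$-fiber $\sigma^{-1}(x)$ lies entirely inside $\sigma^{-1}(A) \subseteq \neg G_{\sigma^{-1}(A)}$, so the trace of $\neg G_{\sigma^{-1}(A)}$ on $\sigma^{-1}(x)$ is the whole fiber, nonempty (it contains $1_x$) and hence nonmeager in itself, placing $x$ in $B$. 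There is no real obstacle once the right bundle is chosen; the only mild subtlety is the forced vanishing of $U$, which is precisely what motivates working with $\sigma^{-1}(A)$ in the $\tau$-bundle rather than manipulating $\tau^{-1}(A)$ in the $\sigma$-bundle, where one would be unable to build the $\tau$-invariant envelope $B$ directly.
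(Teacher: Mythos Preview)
Your proof is correct and is essentially the mirror image of the paper's argument under the $\sigma \leftrightarrow \tau$ symmetry: the paper works with $\tau^{-1}(A)$ in the $\sigma$-bundle, finds a Borel $\sigma$-fiberwise meager $F_\sigma$ set $F \supseteq \tau^{-1}(A)$ via \cref{thm:fib-baire-borel}, and then takes $\exists^*_\tau(F)$, invoking the two preceding lemmas in their originally stated (non-dual) form. Your final remark that this alternative route ``would be unable to build the envelope directly'' is therefore mistaken---it is precisely what the paper does, and it works for the same reason yours does (for $x \in A$ the entire $\tau$-fiber $\tau^{-1}(x)$ sits inside $F$, hence is nonmeager)---but this does not affect the validity of your argument.
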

\begin{proof}
Since $A$ is weakly componentwise meager, $\tau^{-1}(A) \subseteq G$ is $\sigma$-fiberwise meager, hence contained in a Borel $\sigma$-fiberwise meager $\@F_\sigma$ set $F \subseteq G$ by \cref{thm:fib-baire-borel}; then $\exists^*_\tau(F)$ works by the preceding two lemmas.
\end{proof}

\begin{corollary}[weak componentwise Baire property]
\label{thm:orbtop-bp}
For any Borel $A \subseteq X$, there is a Borel weakly componentwise comeager $\*\Pi^0_2$ $Y \subseteq X$ and Borel componentwise open $A'$ with $Y \cap A = Y \cap A'$.
\end{corollary}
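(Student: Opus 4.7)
The plan is to mimic the construction of the classical Baire approximation, but performed one level down on $G$ using the $\sigma$-fiberwise topology, and then pushed up to $X$ via the category quantifier $\exists^*_\tau$.

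First, I would apply the Borel version of \cref{thm:fib-baire-borel} to the Borel set $\tau^{-1}(A) \subseteq G$, viewed as a subset of the standard Borel-overt bundle of quasi-Polish spaces $(G, \@O_\sigma(G))$ over $X$. This yields a Borel $\sigma$-fiberwise open set $U \in \@{BO}_\sigma(G)$ with $\tau^{-1}(A) =^*_\sigma U$. I then set
\[
A' := \exists^*_\tau(U),
\]
which lies in $\@{BO}_G(X)$ by \cref{thm:orbtop-exists*} and is therefore Borel componentwise open as required.

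Next, I need to show that $A$ and $A'$ agree modulo a weakly componentwise meager set, and the key trick is that $\tau^{-1}(A)$ is \emph{constant} on each $\tau$-fiber: since every fiber $\tau^{-1}(x)$ contains $1_x$ and is a nonempty quasi-Polish space, we have the tautology $A = \exists^*_\tau(\tau^{-1}(A))$. Combined with the general pointwise inclusion
\[
\exists^*_\tau(B) \triangle \exists^*_\tau(C) \subseteq \exists^*_\tau(B \triangle C)
\qquad\text{(straightforward fiberwise case check)},
\]
this gives
\[
A \triangle A'
= \exists^*_\tau(\tau^{-1}(A)) \triangle \exists^*_\tau(U)
\subseteq \exists^*_\tau(\tau^{-1}(A) \triangle U).
\]
Since $\tau^{-1}(A) \triangle U$ is Borel and $\sigma$-fiberwise meager, \cref{thm:fibtop-meager-exists*} implies that $\exists^*_\tau(\tau^{-1}(A) \triangle U)$ is weakly componentwise meager, and hence so is the Borel set $A \triangle A'$.

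Finally, I invoke \cref{thm:orbtop-meager-bp} to enlarge $A \triangle A'$ to a Borel weakly componentwise meager $\*\Sigma^0_2$ set $M$, and put $Y := X \setminus M$. Then $Y$ is Borel, $\tau^{-1}(Y) = G \setminus \tau^{-1}(M)$ is $\sigma$-fiberwise $\*\Pi^0_2$ and $\sigma$-fiberwise comeager, so $Y$ is weakly componentwise comeager and weakly componentwise $\*\Pi^0_2$ in the sense of \cref{def:orbtop-weak}, and by construction $Y \cap A = Y \cap A'$. There is no real obstacle in this argument; the only subtlety is being careful that ``$\*\Pi^0_2$'' is interpreted in the weak sense of \cref{def:orbtop-weak}, which is exactly what \cref{thm:orbtop-meager-bp} delivers.
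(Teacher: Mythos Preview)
Your proof is correct and follows essentially the same route as the paper: apply the $\sigma$-fiberwise Baire property to $\tau^{-1}(A)$ to get $U \in \@{BO}_\sigma(G)$, set $A' := \exists^*_\tau(U)$, bound $A \triangle A'$ by $\exists^*_\tau(\tau^{-1}(A) \triangle U)$, and then invoke \cref{thm:fibtop-meager-exists*} and \cref{thm:orbtop-meager-bp}. The only cosmetic differences are that you cite \cref{thm:orbtop-exists*} explicitly for $A' \in \@{BO}_G(X)$ and apply \cref{thm:orbtop-meager-bp} to $A \triangle A'$ itself rather than to the slightly larger $\exists^*_\tau(\tau^{-1}(A) \triangle U)$; both are valid.
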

Note that this holds in particular for an action groupoid $G \ltimes M$, yielding a \emph{weak orbitwise Baire property} for Borel $G$-spaces $M$.
This can be seen as a version of Hjorth's ``orbit continuity lemma'' \cite[3.17]{hjorth:book} (the name was coined in \cite[2.5]{lupini-panagiotopoulos:games}).
\begin{proof}
By the $\sigma$-fiberwise Baire property (\cref{thm:fib-baire-borel}), find $\tau^{-1}(A) =^*_\sigma U \in \@{BO}_\sigma(G)$, i.e., $\tau^{-1}(A) \triangle U$ is $\sigma$-fiberwise meager; then $A = \exists^*_\tau(\tau^{-1}(A)) \subseteq \exists^*_\tau(U) \cup \exists^*_\tau(\tau^{-1}(A) \triangle U)$, and similarly, $\exists^*_\tau(U) \subseteq A \cup \exists^*_\tau(\tau^{-1}(A) \triangle U)$, so $A = A'$ on the weakly componentwise comeager $X \setminus \exists^*_\tau(\tau^{-1}(A) \triangle U)$, which contains a weakly componentwise comeager $\*\Pi^0_2$ set by the preceding corollary.
\end{proof}

\subsection{Idealisticity}

\begin{definition}[{see \cite[5.4.9]{gao:idst}}]
\label{def:er-idl}
An equivalence relation $E \subseteq X^2$ on a standard Borel space with Borel equivalence classes is \defn{idealistic} if there exists a family $(\@I_C)_{C \in X/E}$ of sigma-ideals of Borel sets $\@I_C \subseteq \@B(C)$ on each equivalence class $C \in X/E$, such that defining the \defn{$\@I$-quantifier}
\begin{align*}
\exists^\@I_\sigma : \@B(E) &--> \@P(X) \\
U &|--> \set{x \in X | U_x \notin \@I_{[x]_E}},
\end{align*}
we have that
\begin{enumerate}[roman]
\item  \label{def:er-idl:borel}
$\exists^\@I_\sigma$ maps Borel sets to Borel sets (i.e., $\exists^\@I_\sigma : \@B(E) -> \@B(X)$);
\item  \label{def:er-idl:retr}
each $\@I_C$ is a proper ideal (i.e., $\exists^\@I_\sigma(E) = X$, or equivalently $\exists^\@I_\sigma(\sigma^{-1}(A)) = A$ for all $A \in \@B(X)$).
\end{enumerate}
\end{definition}

\begin{remark}
The $\@I$-quantifier $\exists^\@I_\sigma$ contains essentially the same information as the family of sigma-ideals $(\@I_C)_C$: conversely, given any countable union-preserving map $e : \@B(E) -> \@B(X)$ which obeys the Frobenius reciprocity condition \cref{it:fib-baire-frob} with respect to $\sigma$ and is ``right-translation-invariant'' (i.e., agrees on $\sigma$-fibers in the same $E$-class, in the sense that $x \in e(U) \iff y \in e(V)$ whenever $U, V \in \@B(E)$ and $x \mathrel{E} y$ with $U_x = V_y$), then the family of sigma-ideals
\begin{align*}
\@I_{[x]_E} := \set{D \subseteq [x]_E | x \notin e(\set{x} \times D)}
\end{align*}
recovers $e = \exists^\@I_\sigma$.
(See \cite[\S2.6]{chen:beckec} for a general discussion of this correspondence.)
\end{remark}

The following important application of idealisticity is known as (one version of) the ``large section uniformization theorem''; see \cite[18.6$^*$]{kechris:cdst-corrections}, \cite[5.4.11]{gao:idst}.

\begin{theorem}[Kechris]
\label{thm:largeunif}
Let $E \subseteq X^2$ be an idealistic equivalence relation on a standard Borel space $X$ which is \defn{smooth}, i.e., is the kernel $E = \ker(f) = \set{(x,x') | f(x) = f(x')}$ of a Borel map $f : X -> Y$ to a standard Borel space $Y$.
Then $X/E \cong f(X) \subseteq Y$ is standard Borel, and there exists a Borel section $g : f(X) `-> X$ of $f$, i.e., a Borel map such that $f \circ g = \id_{f(X)}$.
\end{theorem}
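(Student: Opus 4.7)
The plan is to derive both conclusions from the construction of a Borel \emph{selector} $s : X -> X$ for $E$, i.e., a Borel map with $s(x) \in [x]_E$ for all $x$ and $s(x) = s(x')$ whenever $x \mathrel{E} x'$. Given such an $s$, its image $T := s(X) = \set{x \in X | s(x) = x}$ is a Borel transversal for $E$, so $\eval*{f}_T : T -> Y$ is an injective Borel map with image $f(X)$; by the Lusin--Souslin theorem, $f(X)$ is Borel in $Y$ and $(\eval*{f}_T)^{-1} : f(X) -> T \subseteq X$ is the desired Borel section $g$, with $X/E \cong T \cong f(X)$ as standard Borel spaces.

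To construct $s$, refine the standard Borel structure on $X$ to a compatible Polish topology in which $f$ is continuous (by adjoining preimages of a countable basis for a chosen Polish topology on $Y$), and fix a compatible complete metric $d$ of diameter $\le 1$ on $X$ together with a countable dense sequence $(x_j)_{j \in \#N}$; each class $[x]_E = f^{-1}(f(x))$ is now closed in $X$. Greedily construct a decreasing sequence of Borel sets $E \supseteq U_0 \supseteq U_1 \supseteq \dotsb$ in $X \times X$, each $E$-invariant in the first coordinate, such that for every $x$ the fiber $(U_n)_x := \set{y | (x,y) \in U_n}$ is a subset of $[x]_E$ lying outside $\@I_{[x]_E}$, of $d$-diameter at most $1/n$ for $n \ge 1$. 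Starting with $U_0 := E$, at stage $n+1$ cover $X$ by the balls $B_j := B(x_j, 1/(2(n+1)))$; sigma-additivity and properness of $\@I_{[x]_E}$ force some $B_j$ to satisfy $(U_n)_x \cap B_j \notin \@I_{[x]_E}$. The Borel $E$-invariant sets
\[
C^n_j := \exists^\@I_\sigma\bigl(U_n \cap \pi_2^{-1}(B_j)\bigr),
\]
Borel by \cref{def:er-idl:borel}, detect this in a Borel way, and we pick the least such $j = j_n(x)$ by the $E$-invariant Borel rule $j_n(x) := \min\{j : x \in C^n_j\}$, setting $U_{n+1} := \bigcup_j \bigl(U_n \cap \pi_1^{-1}(\set{x | j_n(x) = j}) \cap \pi_2^{-1}(B_j)\bigr)$.

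For each $x$, the closures $\overline{(U_n)_x}$ are then a decreasing sequence of nonempty closed subsets of the closed set $[x]_E$ with diameters tending to $0$, so by completeness of $d$ they share a unique common point, which we take as $s(x)$. Borelness of $s$ follows from the formula $s(x) = \lim_n x_{j_n(x)}$, and constancy of $s$ on $E$-classes follows from the $E$-invariance of each $C^n_j$ (hence of $j_n$, hence of the fibers $(U_n)_x$, inductively). The main obstacle is arranging the greedy selection to be Borel and class-invariant simultaneously: this is precisely the role of the two clauses of \cref{def:er-idl}, with \cref{def:er-idl:borel} providing Borelness of the ``largeness'' test and \cref{def:er-idl:retr} ensuring, together with sigma-additivity of each $\@I_{[x]_E}$, that at every stage a legal choice exists.
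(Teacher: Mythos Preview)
The paper does not actually supply a proof of this theorem: it is stated as a known result attributed to Kechris, with citations to \cite[18.6$^*$]{kechris:cdst-corrections} and \cite[5.4.11]{gao:idst}, and is then used as a black box later on. So there is no ``paper's own proof'' to compare against.

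Your argument is correct and is essentially the standard proof appearing in those references. The key points---that each $E$-class is closed once $f$ is made continuous, that sigma-additivity and properness of $\@I_{[x]_E}$ guarantee at each stage some ball $B_j$ meets $(U_n)_x$ outside the ideal, that the idealistic quantifier $\exists^\@I_\sigma$ makes the selection of $j_n(x)$ Borel, and that $E$-invariance of the fibers $(U_n)_x$ is maintained inductively so that $j_n$ and hence $s$ are constant on classes---are all handled properly. The formula $s(x) = \lim_n x_{j_n(x)}$ gives Borelness, and the Lusin--Souslin wrap-up is routine. One very minor stylistic point: you might make explicit that the $E$-invariance of $C^n_j$ uses both the inductive $E$-invariance of $(U_n)_x$ \emph{and} the fact that $\@I_{[x]_E}$ depends only on the class, but this is clear from context.
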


Clearly, every Borel-overt classwise quasi-Polish equivalence relation $E \subseteq X^2$ is idealistic, with $\exists^\@I_\sigma := \exists^*_\sigma$ which corresponds to the meager ideal $\@I_C$ in the classwise topology on each $C \in X/E$.
More generally, we have

\begin{proposition}
\label{thm:fibqpolgpd-er-idl}
The connectedness relation $\#E_G$ of every Borel-overt fiberwise quasi-Polish groupoid $(X,G)$ is idealistic.
\end{proposition}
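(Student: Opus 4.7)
The plan is to take, as the sigma-ideal on each connected component $[x]_G$, the pushforward of the meager ideal on the quasi-Polish $\sigma$-fiber $\sigma^{-1}(x)$ under the canonical continuous open surjection $\tau : \sigma^{-1}(x) ->> [x]_G$ from \cref{rmk:orbtop-openquot}. Concretely, in the notation of \cref{def:er-idl}, set
\begin{align*}
\@I_{[x]_G} := \set{D \in \@B([x]_G) | \tau^{-1}(D) \cap \sigma^{-1}(x) \text{ is meager in } (\sigma^{-1}(x),\@O_\sigma)};
\end{align*}
equivalently, define the $\@I$-quantifier via
\begin{align*}
\exists^\@I_\sigma(U) := \exists^*_\sigma((\sigma,\tau)^{-1}(U))
\quad \text{for } U \in \@B(\#E_G),
\end{align*}
where $\exists^*_\sigma$ is the Baire category quantifier associated to the Borel-overt bundle $\sigma : G -> X$.

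Executing this plan requires verifying three things in order. First, well-definedness of $\@I_{[x]_G}$ independently of the representative: for $x \mathrel{\#E_G} y$ and any $g \in G(y,x)$, the right-translation bijection $h \mapsto hg : \sigma^{-1}(x) -> \sigma^{-1}(y)$ is a homeomorphism by the right-invariance axiom of \cref{def:fibqpolgpd}, and it commutes with $\tau$ since $\tau(hg) = \tau(h)$, so meagerness of $\tau^{-1}(D) \cap \sigma^{-1}(\cdot)$ is transported between $\sigma$-fibers in the same component. Second, the Borelness condition \cref{def:er-idl:borel}: the map $(\sigma,\tau) : G -> X^2$ is Borel, so $(\sigma,\tau)^{-1}(U) \in \@B(G)$ for any $U \in \@B(\#E_G)$, and the Borel form of \cref{thm:fib-baire-borel} then gives $\exists^*_\sigma((\sigma,\tau)^{-1}(U)) \in \@B(X)$. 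Third, properness \cref{def:er-idl:retr}: each $\sigma^{-1}(x)$ is a nonempty (it contains $1_x$) quasi-Polish space, hence non-meager in itself, giving $\exists^*_\sigma(G) = X$.

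I do not anticipate any serious obstacle, since the Borel-overt fiberwise Baire-categorical apparatus developed in \cref{sec:prelim,sec:fibqpolgpd} packages exactly the data required to witness idealisticity of $\#E_G$. The only point requiring a moment's care is the invariance check above, and even that reduces immediately to the right-translation-invariance built into \cref{def:fibqpolgpd}.
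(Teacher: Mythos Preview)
Your construction of the sigma-ideals and verification of \labelcref{def:er-idl}\cref{def:er-idl:borel,def:er-idl:retr} are correct and match the paper's approach exactly: the paper likewise takes $\@I_C$ to be the weakly componentwise meager ideal and computes
\begin{equation*}
\exists^\@I_{\sigma_{\#E_G}}(U) = \exists^*_{\sigma_G}\paren[\big]{((\sigma_G, \tau_G) : G ->> \#E_G)^{-1}(U)}.
\end{equation*}

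However, you have missed a genuine step. \Cref{def:er-idl} requires the equivalence relation to have \emph{Borel equivalence classes}, and nothing in the apparatus of \cref{sec:prelim,sec:fibqpolgpd} immediately gives this: a connected component $[x]_G = \tau(\sigma^{-1}(x))$ is \emph{a priori} only analytic, being the Borel image of a Borel set. Your definition $\@I_{[x]_G} \subseteq \@B([x]_G)$ already presupposes that $[x]_G$ is standard Borel, so the gap is not merely cosmetic.

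The paper handles this by a forward reference to \cref{thm:homtop-dense}: each component contains some $x$ whose automorphism group $G(x,x)$ is Polish, and then $[x]_G = \tau(\sigma^{-1}(x))$ is the quotient of the standard Borel space $\sigma^{-1}(x)$ by the right translation action of the Polish group $G(x,x)$, hence Borel by the classical idealisticity of Polish group actions. This is a nontrivial ingredient that does not follow from the fiberwise Baire-categorical machinery alone, and your proposal needs to include it (or some substitute) to be complete.
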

\begin{proof}
Let $\@I_C$ be the weakly componentwise meager ideal (\cref{def:orbtop-weak}) on each $C \in X/G$.
Then $U_x \subseteq [x]_G$ is in $\@I_{[x]_G}$ iff $\tau^{-1}(U_x) \subseteq \sigma^{-1}(x)$ is $\sigma$-fiberwise meager, so that for Borel $U \subseteq \#E_G$,
\begin{equation*}
\exists^\@I_{\sigma_{\#E_G}}(U) = \exists^*_{\sigma_G}\paren[\big]{((\sigma_G, \tau_G) : G ->> \#E_G)^{-1}(U)}.
\end{equation*}

In order to verify that each component $C \subseteq X$ is indeed Borel, we use a result from later: by \cref{thm:homtop-dense}, $C$ contains an $x$ whose automorphism group $G(x,x)$ is Polish; then $C = \tau(\sigma^{-1}(x))$ is the quotient of the standard Borel space $\sigma^{-1}(x)$ by the right translation action of $G(x,x)$, hence Borel since Polish group actions induce idealistic equivalence relations (see \cite[5.4.10]{gao:idst}).
\end{proof}

\begin{remark}
There are several variations of the definition of idealisticity in the literature.
The original definition from \cite{kechris-louveau:hypersmooth} requires, in addition to \labelcref{def:er-idl}\cref{def:er-idl:borel,def:er-idl:retr} above, that each of the sigma-ideals $\@I_C$ obey the countable chain condition (ccc); this strengthening was called \emph{ccc-idealistic} in \cite{derancourt-miller:ctblunion}.
The latter paper also defined another strengthening, \emph{strongly idealistic}, to mean that
(in an equivalent formulation) condition \labelcref{def:er-idl}\cref{def:er-idl:borel} above also holds for every pullback of the bundle $\sigma : E -> X$ along an arbitrary Borel map $Z -> X$ (equipped with the obvious pullback sigma-ideals on each fiber).
The above proof easily adapts to show that $\#E_G$ for a Borel-overt fiberwise quasi-Polish groupoid $G$ is strongly ccc-idealistic.
\end{remark}

\begin{corollary}
\label{thm:fibqpolgpd-idl}
If the connectedness relation of a Borel-overt fiberwise quasi-Polish groupoid, or more generally the orbit equivalence relation of a Borel action of such a groupoid, is smooth, then its quotient is standard Borel and admits a Borel selector of a unique point in each class.
\qed
\end{corollary}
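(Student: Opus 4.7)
The plan is to reduce this corollary to a direct combination of \cref{thm:fibqpolgpd-er-idl} and \cref{thm:largeunif}, with essentially no new content beyond packaging. First I would observe that the ``more generally'' clause in fact subsumes the statement about the connectedness relation: given a Borel action of $(X,G)$ on a standard Borel bundle $p : M -> X$, the action groupoid $G \ltimes M$ is itself a standard Borel-overt fiberwise quasi-Polish groupoid via the lifted fiberwise topology from \cref{def:fibqpolgpd-action}, and by construction its connectedness relation is exactly the orbit equivalence relation $\#E_{G \ltimes M}$ of the action. (One can also view the first claim as the special case $M = X$ equipped with the trivial action of \cref{ex:action-trivial}.) So it is enough to prove the statement for the connectedness relation $\#E_H$ of an arbitrary Borel-overt fiberwise quasi-Polish groupoid $(Y,H)$.

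Now by \cref{thm:fibqpolgpd-er-idl}, $\#E_H$ is idealistic, the witnessing family of sigma-ideals being the weakly componentwise meager ideals on the components. Assuming smoothness, $\#E_H = \ker(f)$ for some Borel $f : Y -> Z$ into a standard Borel space, so \cref{thm:largeunif} applies and gives both that $Y/\#E_H \cong f(Y) \subseteq Z$ is standard Borel and that there is a Borel section $g : f(Y) `-> Y$ of $f$. Composing yields a Borel selector $g \circ f : Y -> Y$ picking out the unique chosen point in each $\#E_H$-class, as required.

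There is no real obstacle here: both hard inputs have already been done, namely idealisticity (which relied on the weak componentwise Baire-category quantifier $\exists^*_\tau$, Kuratowski--Ulam, and the fact, cited forward to \cref{thm:homtop-dense}, that each component contains an object with Polish automorphism group so that components are Borel) and the large section uniformization theorem. The only mild bookkeeping step is the identification, in the action case, of $\#E_{G \ltimes M}$ with the orbit equivalence relation and the verification that $G \ltimes M$ satisfies the axioms of \cref{def:fibqpolgpd}, both of which are immediate from \cref{def:fibqpolgpd-action}.
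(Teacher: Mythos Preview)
Your proposal is correct and matches the paper's approach exactly: the corollary is marked \qed\ with no written proof, being an immediate consequence of \cref{thm:fibqpolgpd-er-idl} and \cref{thm:largeunif}, and your reduction of the action case to the groupoid case via the lifted fiberwise topology on $G \ltimes M$ from \cref{def:fibqpolgpd-action} is the intended packaging.
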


\begin{remark}
\label{rmk:idlgpd}
It seems natural to define more generally a \defn{Borel-idealistic groupoid} $(X,G)$ to mean a standard Borel groupoid equipped with a right-translation-invariant family $(\@I_x)_{x \in X}$ of proper sigma-ideals of Borel sets $\@I_x \subseteq \sigma^{-1}(x)$, such that the corresponding quantifier $\exists^\@I_\sigma : \@B(G) -> \@P(X)$ preserves Borel sets.
As above, it follows that $\#E_G \subseteq X^2$ is idealistic.

As noted in the introduction (\cref{sec:intro-misc}), a natural extension to the family of conjectures surrounding Polish group actions and idealistic equivalence relations would be that every (strongly/ccc) Borel-idealistic groupoid is Borel equivalent to a Polish group action.
Equivalently by \cref{thm:fibqpolgpd-polact}, it would suffice to have an equivalence to a Borel-overt fiberwise quasi-Polish groupoid.
\end{remark}

\subsection{Equivalences of groupoids}
\label{sec:gpd-equiv}

In this subsection, we consider how a Borel-overt fiberwise quasi-Polish topology interacts with various kinds of ``injective'' and ``surjective'' functors.

\begin{definition}[see \cite{maclane:cats}]
\label{def:ftr-equiv}
Let $F : (X,G) -> (Y,H)$ be a functor between groupoids.
\begin{itemize}

\item
We will call $F$ \defn{injective}, \defn{surjective}, or \defn{bijective} respectively if it is so on objects.

\item
$F$ is \defn{faithful} if for each $x_1,x_2 \in X$, $F$ restricts to an injection $G(x_1,x_2) `-> H(F(x_1),F(x_2))$.

\item
$F$ is \defn{full} if for each $x_1,x_2 \in X$, $F$ restricts to a surjection $G(x_1,x_2) ->> H(F(x_1),F(x_2))$.

\item
$(X,G)$ is a \defn{subgroupoid} of $(Y,H)$ if $X \subseteq Y$, $G \subseteq H$, and the groupoid operations are the same.
The inclusion of a subgroupoid is a faithful injective functor; conversely, every faithful injective functor is an isomorphism onto its image which is a subgroupoid of the codomain.

\item
A \defn{full subgroupoid} of $(Y,H)$ is a subgroupoid for which the inclusion is full, i.e., of the form $(X, \eval{H}_X)$ for some $X \subseteq Y$, where $\eval{H}_X := \sigma_H^{-1}(X) \cap \tau_H^{-1}(X)$.
(In other words, the underlying directed multigraph is the induced subgraph on a subset of objects.)

The \defn{full image} of $F$ is the full subgroupoid on $F(X) \subseteq Y$.

\item
$F$ is \defn{essentially surjective} if the image saturation $[F(X)]_H \subseteq Y$ is all of $Y$, i.e., $F$ descends to a surjection on the quotient set of connected components $X/G ->> Y/H$.

The full subgroupoid on $[F(X)]_H \subseteq Y$ is called the \defn{essential image} (or \emph{replete image}) of $F$.

\item
$F$ is an \defn{equivalence of groupoids} if it is full, faithful, and essentially surjective, or equivalently it has an \defn{inverse-up-to-isomorphism} $F^{-1} : H -> G$, i.e., a functor such that $F \circ F^{-1}$ and $F^{-1} \circ F$ admit natural transformations to the identity.

\item
Recall from \cref{def:action} that $F$ is a \defn{discrete fibration} if for any morphism in $H$ and any lift of its source in $X$, there is a unique lift of the entire morphism with that source.

If only the existence of the lift holds, but not necessarily uniqueness, then $F$ is a \defn{fibration}.

\end{itemize}
\end{definition}

\begin{remark}
The above notions have well-known specializations in the case of a homomorphism between equivalence relations (especially in the invariant descriptive set-theoretic context):
a \emph{full} functor is a \emph{reduction} between equivalence relations;
an \emph{equivalence of groupoids} is a \emph{bireduction};
a \emph{fibration} is a \emph{class-surjective homomorphism}; and
a \emph{discrete fibration} is a \emph{class-bijective homomorphism}.
See \cite[\S2.3]{chen-kechris:structurable}.
\end{remark}

We now consider these notions in the Borel context.
The following is essentially \cite[2.8]{chen:polgpdrep}, and generalizes \cite[5.2.3]{gao:idst} (when $G$ is an action groupoid and $H$ is an equivalence relation):

\begin{proposition}
\label{thm:fibqpolgpd-equiv}
Let $F : (X,G) -> (Y,H)$ be a Borel full functor from a Borel-overt fiberwise quasi-Polish groupoid to an arbitrary standard Borel groupoid.
Then the image saturation $[F(X)]_H \subseteq Y$ is Borel, and there is a Borel map $Y \ni y |-> (h_y,x_y) \in H \times_Y X$ taking each object $y \in Y$ to a preimage-up-to-isomorphism $x_y \in X$ with $h_y : F(x_y) -> y \in H$.

If moreover $F$ is an equivalence of groupoids, then $F$ has a Borel inverse-up-to-isomorphism, as witnessed by Borel natural isomorphisms to the identity.
\end{proposition}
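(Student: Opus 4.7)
The plan is to build the preimage-selection via Kechris's large section uniformization theorem (\cref{thm:largeunif}), applied to a natural $G$-action on a pullback space. Form the standard Borel space
\[
B := H \times_Y X = \set{(h,x) \in H \times X | \sigma_H(h) = F(x)},
\]
regarded as a Borel bundle over $X$ via $\pi_2 : B \to X$, and equipped with the Borel $G$-action
\[
g \cdot (h,x) := (h \cdot F(g)^{-1},\, \tau_G(g)) \qquad \text{for } g : x \to x' \in G.
\]
The Borel map $\tau_H \circ \pi_1 : B \to Y$ is $G$-invariant, with image exactly $[F(X)]_H$.

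The key observation is that the $G$-orbits in $B$ coincide with the nonempty fibers of $\tau_H \circ \pi_1$. One inclusion is invariance; for the converse, if $(h_1,x_1), (h_2,x_2) \in B$ both lie over $y \in Y$, then $h_2^{-1} h_1 : F(x_1) \to F(x_2)$ in $H$, and fullness of $F$ yields $g : x_1 \to x_2$ in $G$ with $F(g) = h_2^{-1} h_1$, so $g \cdot (h_1,x_1) = (h_2,x_2)$. Since $G$ is Borel-overt fiberwise quasi-Polish, so is the action groupoid $G \ltimes B$ by \cref{def:fibqpolgpd-action}, and hence its orbit equivalence relation is idealistic by \cref{thm:fibqpolgpd-er-idl}. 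This orbit relation is smooth, being the kernel of the Borel map $\tau_H \circ \pi_1$, so \cref{thm:fibqpolgpd-idl} yields both that $[F(X)]_H \subseteq Y$ is standard Borel and that there exists a Borel section $y \mapsto (h_y, x_y)$ of $\tau_H \circ \pi_1$, proving the first assertion.

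For the final statement, suppose $F$ is an equivalence of groupoids, so in addition $F$ is faithful and $[F(X)]_H = Y$. Define $F^{-1}$ on objects by $F^{-1}(y) := x_y$. For a morphism $k : y_1 \to y_2$ in $H$, let $F^{-1}(k)$ be the unique $g : x_{y_1} \to x_{y_2}$ in $G$ with $F(g) = h_{y_2}^{-1} \cdot k \cdot h_{y_1}$; existence and uniqueness of such a $g$ follow from full-faithfulness, and Borelness follows by Lusin--Souslin applied to the Borel injection $(y_1, y_2, g) \mapsto (y_1, y_2, F(g))$ on the Borel set $\set{(y_1, y_2, g) \in Y^2 \times G | \sigma_G(g) = x_{y_1},\, \tau_G(g) = x_{y_2}}$. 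The family $(h_y)_{y \in Y}$ then constitutes a Borel natural isomorphism $F \circ F^{-1} \Rightarrow \id_H$; dually, for each $x \in X$, the unique $g_x : x \to x_{F(x)}$ with $F(g_x) = h_{F(x)}^{-1}$, extracted Borelly the same way, furnishes the natural isomorphism $\id_G \Rightarrow F^{-1} \circ F$.

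The main obstacle is the correct setup of the $G$-action on $B$, so that fullness of $F$ is converted into equality between $G$-orbits and the fibers of $\tau_H \circ \pi_1$; once this bridge between fullness and idealistic smoothness is in place, both parts of the proposition follow from \cref{thm:fibqpolgpd-idl} together with classical Borel selection arguments.
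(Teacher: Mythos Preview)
Your proof is correct and follows essentially the same approach as the paper: both define the same $G$-action on $H \times_Y X$, identify orbits with fibers of $\tau_H\pi_1$ via fullness, and invoke \cref{thm:fibqpolgpd-idl} to obtain the Borel section. Your treatment of the inverse functor is slightly more explicit---you justify Borelness of $F^{-1}$ on morphisms via Lusin--Souslin, whereas the paper merely asserts the standard formula is ``easily Borel''---but this is a matter of presentation, not strategy.
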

It follows that the phrase ``Borel equivalence of groupoids'' has an unambiguous meaning when used between Borel-overt fiberwise quasi-Polish groupoids.
\begin{proof}
Let $G$ act on the bundle $\pi_2 : H \times_Y X -> X$ via
\begin{equation*}
(g : x -> x') \cdot (h, x) := (hF(g)^{-1}, x').
\end{equation*}
The orbit equivalence relation of this action is precisely the kernel of $\tau_H \pi_1 : H \times_Y X -> Y$: indeed, it is clearly contained in said kernel; conversely, if $(h,x), (h',x') \in H \times_Y X$ with $\tau(h) = \tau(h')$, then by fullness of $F$ there exists $g : x -> x' \in G$ with $F(g) = h'^{-1}h : F(x) -> F(x')$; then $g \cdot (h,x) = (h',x')$.
Hence by \cref{thm:fibqpolgpd-idl}, the desired Borel section $y |-> (h_y,x_y)$ of $\tau_H \pi_1$ exists.

If $F$ is an equivalence, then the standard formula for an inverse-up-to-isomorphism (see e.g., \cite[IV~\S4~Theorem~1]{maclane:cats}) is easily Borel, using the above selector $y |-> (h_y,x_y)$: namely $F^{-1}(y) := x_y$ for $y \in Y$, and for a morphism $h : y -> y' \in H$, $F^{-1}(h) := (\eval{F}_{G(x_y,x_{y'})})^{-1}(h_{y'}^{-1}hh_y)$.
The natural isomorphisms are $y |-> h_y : FF^{-1} \cong \id_H$ and $x |-> (\eval{F}_{G(x_{F(x)},x)})^{-1}(h_{F(x)}) : F^{-1}F \cong \id_G$.
\end{proof}

\begin{remark}
\label{rmk:hograph}
The above construction may be understood groupoid-theoretically as follows: the space $H \times_Y X$, when equipped with the pullback groupoid of $G$ along $\pi_2^2 : (H \times_Y X)^2 -> X^2$, is the \emph{homotopy graph} (also known as \emph{comma category} $(F \mathbin\down H)$ \cite[II~\S6]{maclane:cats}, \emph{pathspace fibration}, or \emph{mapping cocylinder} \cite[\S7.3]{may:algtop}) of the functor $F$, and yields a canonical factorization
\begin{equation*}
(X,G) \overset{\simeq}{`-->} H \times_Y X -->> (Y,H)
\end{equation*}
of $F$ into an injective equivalence of groupoids with a canonical retraction ($\pi_2$), followed by a fibration ($\tau_H \pi_1$).
Finding a section-up-to-isomorphism of $F$ amounts to finding a section-on-the-nose of $\tau_H \pi_1$.
The above idealistic action groupoid of $G \actson H \times_Y X$ is the kernel of the functor $\tau_H \pi_1$, which is a normal subgroupoid whose quotient recovers $(Y,H)$ (see \cite{brown-hardy:topgpd}).
\end{remark}

One way to get an equivalent groupoid is to pass to the full subgroupoid on a complete section of objects (i.e., a subset of objects meeting each connected component).
For fiberwise quasi-Polish groupoids, this may be done as follows.

\begin{remark}
\label{rmk:fibqpolgpd-full}
If $(X,G)$ is a Borel-overt fiberwise quasi-Polish groupoid, and $Y \subseteq X$ is Borel and weakly componentwise $\*\Pi^0_2$ (\cref{def:orbtop-weak}), then $\tau^{-1}(Y) \subseteq G$ is $\sigma$-fiberwise $\*\Pi^0_2$.
So we get an induced fiberwise quasi-Polish topology on $\eval{G}_Y$, which however may not be Borel-overt in general.

If $Y \subseteq X$ is moreover componentwise dense, then $\eval{G}_Y \subseteq G$ is $\sigma$-fiberwise $\*\Pi^0_2$ dense over $Y$, by \cref{rmk:orbtop-openquot}.
Hence in this case, the full subgroupoid $(Y, \eval{G}_Y)$ with the fiberwise subspace topology becomes a Borel-overt fiberwise quasi-Polish groupoid in its own right, by \cref{thm:fib-bor-subsp-dense}.

Assume given such Borel weakly componentwise $\*\Pi^0_2$ dense $Y \subseteq X$.
By \cref{thm:fib-bor-subsp},
\begin{align*}
\@{BO}_\sigma(\eval{G}_Y) &= \eval*{\@{BO}_\sigma(G)}_Y, &
\@{BO}_\tau(\eval{G}_Y) &= \eval*{\@{BO}_\tau(G)}_Y.
\end{align*}
Clearly, for any $U \subseteq G$, we have by Baire category
\begin{align*}
\exists^*_\sigma(\eval*{U}_Y) &= \exists^*_\sigma(U) \cap Y, &
\exists^*_\tau(\eval*{U}_Y) &= \exists^*_\tau(U) \cap Y.
\end{align*}
Thus by \cref{thm:orbtop-exists*},
\begin{eqaligned*}
\@{BO}_{\eval{G}_Y}(Y)
&= \exists^*_\tau(\@{BO}_\sigma(\eval{G}_Y))
= \exists^*_\tau(\eval*{\@{BO}_\sigma(G)}_Y)
= \exists^*_\tau(\@{BO}_\sigma(G)) \cap Y
= \@{BO}_G(X) \cap Y.
\end{eqaligned*}
Also in the diagram \cref{diag:gpd-2simp-edge} used to define Vaught transforms in $G$, $\eval{G}_Y \times_Y \eval{G}_Y \subseteq G \times_X G$ is $\pi_2$-fiberwise dense over $\eval{G}_Y \subseteq G$, hence also $\mu$-fiberwise dense; thus for Borel $U, V \subseteq G$,
\begin{align*}
\eval*{U}_Y *_{\eval{G}_Y} \eval*{V}_Y = \eval*{(U *_G V)}_Y,
\end{align*}
and so by \cref{thm:orbtop-exists*} we similarly have
\begin{eqaligned*}
\@{BO}_{\eval{G}_Y}(\eval{G}_Y) = \eval*{\@{BO}_G(G)}_Y.
\end{eqaligned*}
In short, all of the important topological data on $\eval{G}_Y$ is just the restriction of that on $G$.
\end{remark}

Under stronger hypotheses, we will be able to say more about general fiberwise quasi-Polish (not necessarily fiberwise dense or full) subgroupoids, as well as ``quotient groupoids''; see \cref{sec:misc}.

\section{Componentwise quasi-Polish groupoids}
\label{sec:comgpd}

In this section, we put the theory of Borel-overt fiberwise quasi-Polish groupoids developed above to use, by proving our main representation theorems and several other related results.
To do so, we must first introduce the following strengthened notion.

\subsection{(Uniform) continuity of differences}
\label{sec:comqpolgpd}

\begin{definition}
\label{def:comqpolgpd}
Let $(X,G)$ be a Borel-overt fiberwise quasi-Polish groupoid.
We say $G$ has \defn{fiberwise continuous differences}, or that it is a \defn{Borel-overt \emph{componentwise} quasi-Polish groupoid}, if the following equivalent conditions hold:
\begin{enumerate}[roman]
\item  \label{def:comqpolgpd:diff1}
For any $x \in X$, the following map is continuous with respect to the $\sigma$-fiberwise topology:
\begin{eqaligned*}
\sigma^{-1}(x) \tensor[_\tau]\times{_\tau} \sigma^{-1}(x) &--> G(x,x) \subseteq \sigma^{-1}(x) \\
(g,h) &|--> g^{-1} h.
\end{eqaligned*}
\item  \label{def:comqpolgpd:diff2}
For any $x, y \in X$, the following map is continuous with respect to the $\sigma$-fiberwise topologies:
\begin{eqaligned*}
\sigma^{-1}(y) \tensor[_\tau]\times{_\tau} \sigma^{-1}(x) &--> G(x,y) \subseteq \sigma^{-1}(x) \\
(g,h) &|--> g^{-1} h.
\end{eqaligned*}
\item  \label{def:comqpolgpd:mul}
For any $x, y \in X$, the multiplication map
\begin{eqaligned*}
\mu : \tau^{-1}(y) \times_X \sigma^{-1}(x) &--> G(x,y) \subseteq \sigma^{-1}(x)
\end{eqaligned*}
is continuous (with respect to the product of the $\tau$- and $\sigma$-fiberwise topologies).
\end{enumerate}
(Recall our notation for fiber products from \cref{def:fib-pb}.)
Conditions \cref{def:comqpolgpd:diff1,def:comqpolgpd:diff2} are related by a homeomorphism $(-)k : \sigma^{-1}(y) \cong \sigma^{-1}(x)$, for any $k \in G(x,y)$ (if $G(x,y) \ne \emptyset$; otherwise \cref{def:comqpolgpd:diff2} is trivial),
while \cref{def:comqpolgpd:diff2,def:comqpolgpd:mul} are related by a homeomorphism $\nu : \sigma^{-1}(y) \cong \tau^{-1}(y)$.
\end{definition}

Clearly, every (globally) open quasi-Polish groupoid has fiberwise continuous differences.
In other words, given a Borel-overt fiberwise quasi-Polish groupoid $(X,G)$, in order for there to exist a compatible global open quasi-Polish groupoid topology restricting to the given fiberwise topology, the fiberwise topology must necessarily have fiberwise continuous differences.
In \cref{ex:fibqpolgpd-disctscocy}, we will construct a fiberwise Polish groupoid \emph{without} fiberwise continuous differences, which thus does not admit a compatible global quasi-Polish topology.
This is purely a phenomenon of topological groupoids, with no descriptive set-theoretic content: the groupoid in question is countable, hence Borelness is not an issue.

We will show in \cref{thm:comqpolgpd-qpol-connected} that conversely, every Borel-overt componentwise quasi-Polish groupoid, \emph{on each component}, admits a compatible open quasi-Polish groupoid topology; hence the name.
However, we do not know if this can always be done uniformly over all components.
This motivates the following ``uniform'' version of the above definition.
In contrast to \cref{ex:fibqpolgpd-disctscocy}, here we have no example separating the two versions; by \cref{thm:comqpolgpd-qpol-connected}, any such example must necessarily be due to definability issues, rather than purely topological groupoid-theoretic.

\begin{definition}
\label{def:ucomqpolgpd}
Let $(X,G)$ be a Borel-overt componentwise quasi-Polish groupoid.
Note that \labelcref{def:comqpolgpd}\cref{def:comqpolgpd:mul} can be restated as
\begin{eqaligned*}
\mu^{-1}(\@O_\sigma(G)) \subseteq \@O_\tau(G) \otimes^\infty_X \@O_\sigma(G).
\end{eqaligned*}
Indeed, the right-hand side (recall \labelcref{def:ostar}) is the disjoint union topology on $\bigsqcup_{x,y \in X} (\tau^{-1}(y) \times_X \sigma^{-1}(x))$.

We say that $G$ has \defn{\emph{uniformly} fiberwise continuous differences}, or that it is a \defn{Borel-overt \emph{uniformly} componentwise quasi-Polish groupoid}, if in fact,
\begin{align}
\tag{$*$}
\label{def:ucomqpolgpd:mu-sigma}
\mu^{-1}(\@{BO}_\sigma(G)) \subseteq \@{BO}_\tau(G) \otimes_X \@{BO}_\sigma(G).
\end{align}
When $G \subseteq X^2$ is an equivalence relation, we call it a \defn{Borel-overt uniformly classwise quasi-Polish equivalence relation}.
(Note that the non-uniform condition is trivial in this case.)
\end{definition}

We now have the core result of the paper:

\begin{theorem}
\label{thm:ucomqpolgpd}
Let $(X,G)$ be a Borel-overt uniformly componentwise quasi-Polish groupoid.
Then
\begin{enumeqno}[label=(\roman*), series=thm:ucomqpolgpd]
\begin{gather}
\label{thm:ucomqpolgpd:mu-fib}
\begin{aligned}
\mu^{-1}(\@{BO}_\sigma(G)) &\subseteq \@{BO}_G(G) \otimes_X \@{BO}_\sigma(G), &\quad
\mu^{-1}(\@{BO}_\tau(G)) &\subseteq \@{BO}_\tau(G) \otimes_X \@{BO}_G(G),
\end{aligned} \\
\label{thm:ucomqpolgpd:mu-orb}
\mu^{-1}(\@{BO}_G(G)) \subseteq \@{BO}_G(G) \otimes_X \@{BO}_G(G).
\end{gather}
Hence, $(X,G)$ becomes an open sigma-topological groupoid when equipped with the componentwise sigma-topologies $\@{BO}_G(X), \@{BO}_G(G)$.
Moreover, these sigma-topologies are compatible with the Borel structure; and $\@{BO}_G(G)$ induces the other topologies on $G$:
\begin{gather}
\label{thm:ucomqpolgpd:fibtop}
\begin{aligned}
\@{BO}_\sigma(G) &= \sigma^{-1}(\@B(X)) \ocap \@{BO}_G(G), &\qquad
\@{BO}_\tau(G) &= \tau^{-1}(\@B(X)) \ocap \@{BO}_G(G),
\end{aligned} \\
\label{thm:ucomqpolgpd:boxtop}
\begin{aligned}[t]
\@{BO}_\Box(G)
&= \tau^{-1}(\@B(X)) \ocap \@{BO}_\sigma(G)
= \sigma^{-1}(\@B(X)) \ocap \@{BO}_\tau(G) \\
&= \sigma^{-1}(\@B(X)) \ocap \tau^{-1}(\@B(X)) \ocap \@{BO}_G(G) \\
&= \@B(G) \oast \@B(G)
= \@{BO}_\tau(G) \oast \@{BO}_\sigma(G).
\end{aligned}
\end{gather}
\end{enumeqno}
\end{theorem}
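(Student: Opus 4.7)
The proof is an interlocking seven-way induction in which all the claims feed into each other, and every step rests on the defining inclusion \cref{def:ucomqpolgpd:mu-sigma} combined with the Vaught transform calculus of \cref{sec:vaught}. The starting observation is that uniform fiberwise continuity of differences makes each $\mu$-fiber $\mu^{-1}(k)$ quasi-Polish when parametrized via $g \mapsto (g, g^{-1}k)$ over $\tau^{-1}(\tau(k))$, and the change-of-variables $(g,h) \mapsto (g, gh)$ from the $\pi_1$-fiberwise to the $\tau\pi_1$-fiberwise bundle (see \cref{diag:gpd-2simp-edge}) is a fiberwise homeomorphism; this is the ``Pettis input'' that makes the whole machine turn.

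The first step is to derive from \cref{def:ucomqpolgpd:mu-sigma} its symmetric dual $\mu^{-1}(\@{BO}_\tau(G)) \subseteq \@{BO}_\tau(G) \otimes_X \@{BO}_\sigma(G)$ via the identity $\mu \circ (\nu \times \nu) \circ \mathrm{swap} = \nu \circ \mu$ together with $\nu : \@O_\sigma \cong \@O_\tau$. Next I prove \cref{thm:ucomqpolgpd:boxtop}: the inclusion $\@{BO}_\Box \subseteq \@B \oast \@B = \@{BO}_\tau \oast \@{BO}_\sigma$ is \cref{rmk:boxtop-vaught}; for the reverse, given $V \in \@{BO}_\tau, W \in \@{BO}_\sigma$, \cref{it:vaught-im} gives $V*W = VW$, and unfolding the condition $gh \in VW$ via nonempty-iff-nonmeager in the quasi-Polish $\mu$-fiber yields the Pettis formula $gh \in VW \iff \exists^* k \in \tau^{-1}(\sigma(g))\,(gk \in V \wedge k^{-1}h \in W)$. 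Applying the assumption (and its dual) to the inner condition expresses $\mu^{-1}(VW)$ as an element of $\@B \otimes_X \@B$, so $VW \in \@{BO}_\Box$. The other identities in \cref{thm:ucomqpolgpd:boxtop} follow by intersecting with $\tau^{-1}(\@B)$ and $\sigma^{-1}(\@B)$ and invoking $\tau \mu = \tau\pi_1$ and $\sigma \mu = \sigma \pi_2$.

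The heart of the proof is \cref{thm:ucomqpolgpd:fibtop}. The direction $\supseteq$ is immediate from \cref{eq:orbtop-fibtop}. For $\subseteq$, take $U \in \@{BO}_\sigma(G)$, decompose $\mu^{-1}(U) = \bigcup_i V_i \times_X W_i$ by the assumption, then apply $\exists^*_\mu$ to obtain $U = \bigcup_i V_i * W_i = \bigcup_i V_i W_i \in \@{BO}_\Box$. Using \cref{thm:ucomqpolgpd:boxtop}, each $V_i W_i$ therefore already lies in $\sigma^{-1}(\@B) \ocap \@{BO}_\tau$. To promote the $\@{BO}_\tau$-factor to $\@{BO}_G(G)$, I iterate: the dual of the assumption lets me rewrite each of its Borel-$\tau$-fiberwise-open factors again using the Pettis formula, and \cref{thm:orbtop-exists*} together with the Kuratowski--Ulam commutations \cref{eq:vaught-ku-middle,eq:vaught-ku} repackages the resulting $\exists^*_\mu$-expressions as sets in $\@{BO}_\sigma \oast \@{BO}_\tau = \@{BO}_G(G)$; a single round of this iteration, pushed through via Kunugi--Novikov uniformization (\cref{thm:kunugi-novikov}) to absorb the countably many bookkeeping parameters into Borel selectors on $X$, suffices. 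The $\@{BO}_\tau = \tau^{-1}(\@B) \ocap \@{BO}_G$ statement is the inversion dual.

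Once \cref{thm:ucomqpolgpd:fibtop} is in hand, the continuity inclusions \cref{thm:ucomqpolgpd:mu-fib,thm:ucomqpolgpd:mu-orb} are essentially substitutions: in $\mu^{-1}(U) = \bigcup_i V_i \times_X W_i$, write each $V_i \in \@{BO}_\tau$ as $\bigcup_j \tau^{-1}(B_{i,j}) \cap V'_{i,j}$ with $V'_{i,j} \in \@{BO}_G(G)$, and absorb the constraint $\tau(g) \in B_{i,j}$ into the $W$-factor via the matching $\sigma(g) = \tau(h)$; then do the same on the $W$-side to get \cref{thm:ucomqpolgpd:mu-orb}. The remaining sigma-topological groupoid axioms (continuity of $\iota, \nu$ and openness of $\sigma, \tau$ with respect to $\@{BO}_G(X), \@{BO}_G(G)$) are all in \cref{eq:orbtop-inv,eq:orbtop-st,eq:orbtop-id} together with Borel-overtness and the identity $\sigma(\@{BO}_G(G)) = \@{BO}_G(X)$ (which follows from \cref{thm:orbtop-exists*} and Frobenius reciprocity); compatibility of $\@{BO}_G(G)$ as a sigma-topology reduces via \cref{thm:fib-bor-qpol} to the compatibility of $\@{BO}_\sigma(G)$, which is built into the hypothesis. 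The main obstacle is the iterative bootstrap in \cref{thm:ucomqpolgpd:fibtop}: this is the precise groupoid analog of Pettis's automatic continuity argument (\cref{intro:thm:pettis}), and uniform fiberwise continuity of differences is exactly the input that allows the Pettis computation to be carried out Borel-uniformly across all $\sigma$-fibers rather than only componentwise.
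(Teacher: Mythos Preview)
Your proposal has a genuine gap: the ``iteration'' and ``substitution'' steps do not work as stated, and the missing ingredient is the associativity trick that drives the paper's proof.

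For \cref{thm:ucomqpolgpd:fibtop}: starting from $U \in \@{BO}_\sigma(G)$ and landing in $\sigma^{-1}(\@B(X)) \ocap \@{BO}_\tau(G)$ via \cref{thm:ucomqpolgpd:boxtop} is fine, but your iteration cannot escape this. Applying the (dual) assumption to a $\@{BO}_\tau$-factor and taking $\exists^*_\mu$ yields an element of $\@{BO}_\tau(G) \oast \@{BO}_\sigma(G)$, which by \cref{thm:ucomqpolgpd:boxtop} is $\@{BO}_\Box(G)$ again, \emph{not} $\@{BO}_\sigma(G) \oast \@{BO}_\tau(G) = \@{BO}_G(G)$. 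The order of the factors matters, and you are cycling inside $\@{BO}_\Box$ without ever producing a set in $\@{BO}_G$. No amount of Kunugi--Novikov bookkeeping fixes this; the obstruction is algebraic.

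For \cref{thm:ucomqpolgpd:mu-fib}: even granting \cref{thm:ucomqpolgpd:fibtop}, your absorption step fails. Writing $V_i = \bigcup_j (\tau^{-1}(B_{i,j}) \cap V'_{i,j})$ with $V'_{i,j} \in \@{BO}_G(G)$ gives a constraint on $\tau(g)$, but the fiber product matching in $G \times_X G$ is $\sigma(g) = \tau(h)$, so $\tau(g) \in B_{i,j}$ cannot be moved to the second factor. (It equals $\tau(\mu(g,h)) \in B_{i,j}$, but that is a constraint on the \emph{image} under $\mu$, not a rectangle.)

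What the paper does instead is pass to the triple product $G \times_X G \times_X G$ and exploit associativity $\mu \circ (\mu \times \id_G) = \mu \circ (\id_G \times \mu)$: apply $\exists^*$ over one grouping while taking preimage over the other. Two passes of this yield the intermediate fact $\mu^{-1}(\@{BO}_\sigma(G)) \subseteq \@{BO}_\sigma(G) \otimes_X \@{BO}_\sigma(G)$, from which \cref{thm:ucomqpolgpd:mu-fib}, \cref{thm:ucomqpolgpd:mu-orb}, and \cref{thm:ucomqpolgpd:fibtop} all fall out in short order. This is the genuine Pettis-for-groupoids step, and it is not visible from your two-factor computations. Finally, compatibility of $\@{BO}_G(G)$ does not reduce to \cref{thm:fib-bor-qpol} (it is not the Borel-fiberwise-open sigma-topology of any bundle); the paper uses \cref{thm:sierpinski-exists*-stop} applied to $\mu : G \times_X G \to G$.
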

(Recall the meanings of $\ocap, \oast$ from \cref{def:ostar,def:vaught}, $\@{BO}_\Box$ from \cref{def:boxtop}, and \emph{compatible sigma-topology} from \cref{def:stop}.)
\begin{proof}
First, from \cref{def:ucomqpolgpd}, we get
\begin{enumeqno}[resume*=thm:ucomqpolgpd]
\begin{eqaligned}[b][\label{thm:ucomqpolgpd:inv}]
\@{BO}_\tau(G)
&= (\nu, \iota\tau)^{-1}(\mu^{-1}(\@{BO}_\sigma(G)))
    &&\text{since $g^{-1} = g^{-1} \cdot 1_{\tau(g)}$, i.e., $\nu = \mu \circ (\nu, \iota\tau)$} \\
&\subseteq (\nu, \iota\tau)^{-1}(\@{BO}_\tau(G) \otimes_X \@{BO}_\sigma(G))
    &&\text{by \labelcref{def:ucomqpolgpd}\cref{def:ucomqpolgpd:mu-sigma}} \\
&\subseteq \@{BO}_\sigma(G) \ocap \tau^{-1}(\@B(X)).
\end{eqaligned}
Now we show \cref{thm:ucomqpolgpd:boxtop}:
\begin{eqaligned*}
\@{BO}_\Box(G)
&\subseteq \@B(G) \oast \@B(G) = \@{BO}_\tau(G) \oast \@{BO}_\sigma(G)
    &&\text{by \cref{rmk:boxtop-vaught}} \\
&\subseteq (\tau^{-1}(\@B(X)) \ocap \@{BO}_\sigma(G)) \oast (\@{BO}_\tau(G) \ocap \sigma^{-1}(\@B(X)))
    &&\text{by \cref{thm:ucomqpolgpd:inv} and its dual} \\
&= \tau^{-1}(\@B(X)) \ocap (\@{BO}_\sigma(G) \oast \@{BO}_\tau(G)) \ocap \sigma^{-1}(\@B(X))
    &&\text{by Frobenius reciprocity \cref{it:vaught-frob}} \\
&= \tau^{-1}(\@B(X)) \ocap \@{BO}_G(G) \ocap \sigma^{-1}(\@B(X))
    &&\text{by \cref{thm:orbtop-exists*}} \\
&\subseteq (\tau^{-1}(\@B(X)) \ocap \@{BO}_\sigma(G)) \cap (\sigma^{-1}(\@B(X)) \ocap \@{BO}_\tau(G))
    &&\text{by \cref{eq:orbtop-fibtop}} \\
&\subseteq \@{BO}_\Box(G)
    &&\text{by \cref{def:boxtop}};
\end{eqaligned*}
thus all of these sigma-topologies are equal.

Next, we show \cref{thm:ucomqpolgpd:mu-fib}: the idea is to use the associativity axiom $(gh)k = g(hk)$, i.e., $\mu \circ (\mu \times \id_G) = \mu \circ (\id_G \times \mu)$, and apply $\mu$-preimage and $\exists^*_\mu$ while switching the order of multiplication.
\begin{zalign*}
\mu^{-1}(\@{BO}_\sigma(G))
&\subseteq \exists^*_{\id_G \times \mu}((\id_G \times \mu)^{-1}(\mu^{-1}(\@{BO}_\sigma(G))))
    &\quad&\text{by \cref{it:fib-baire-surj}} \\
&= \exists^*_{\id_G \times \mu}((\mu \times \id_G)^{-1}(\mu^{-1}(\@{BO}_\sigma(G))))
    &&\text{by associativity} \\
&\subseteq \exists^*_{\id_G \times \mu}((\mu \times \id_G)^{-1}(\@{BO}_\sigma(G) \otimes_X \@B(G)))
    &&\text{by \cref{rmk:fibqpolgpd-bor}} \\
&\subseteq \exists^*_{\id_G \times \mu}(\@{BO}_\sigma(G) \otimes_X \@B(G) \otimes_X \@B(G))
    &&\text{by \cref{rmk:fibqpolgpd-bor}} \\
&= \@{BO}_\sigma(G) \otimes_X (\@B(G) \oast \@B(G)) \\
&= \@{BO}_\sigma(G) \otimes_X (\tau^{-1}(\@B(X)) \ocap \@{BO}_\sigma(G))
    &&\text{by \cref{thm:ucomqpolgpd:boxtop}} \\
\yesnumber \label{thm:ucomqpolgpd:mu-ss}
&\subseteq \@{BO}_\sigma(G) \otimes_X \@{BO}_\sigma(G)
    &&\text{since $\tau\pi_2 = \sigma\pi_1$};
\\[1ex]
\mu^{-1}(\@{BO}_\sigma(G))
&\subseteq \exists^*_{\mu \times \id_G}((\mu \times \id_G)^{-1}(\mu^{-1}(\@{BO}_\sigma(G))))
    &&\text{by \cref{it:fib-baire-surj}} \\
&= \exists^*_{\mu \times \id_G}((\id_G \times \mu)^{-1}(\mu^{-1}(\@{BO}_\sigma(G))))
    &&\text{by associativity} \\
&\subseteq \exists^*_{\mu \times \id_G}((\id_G \times \mu)^{-1}(\@{BO}_\sigma(G) \otimes_X \@{BO}_\sigma(G)))
    &&\text{by \cref{thm:ucomqpolgpd:mu-ss}} \\
&\subseteq \exists^*_{\mu \times \id_G}(\@{BO}_\sigma(G) \otimes_X \@{BO}_\tau(G) \otimes_X \@{BO}_\sigma(G))
    &&\text{by \labelcref{def:ucomqpolgpd}\cref{def:ucomqpolgpd:mu-sigma}} \\
&= (\@{BO}_\sigma(G) \oast \@{BO}_\tau(G)) \otimes_X \@{BO}_\sigma(G) \\
&= \@{BO}_G(G) \otimes_X \@{BO}_\sigma(G)
    &&\text{by \cref{thm:orbtop-exists*}}.
\end{zalign*}
From this, we get \cref{thm:ucomqpolgpd:mu-orb}:
\begin{eqaligned*}
\mu^{-1}(\@{BO}_G(G))
&= \exists^*_{\id_G \times \mu}(\mu^{-1}(\@{BO}_\sigma(G)) \otimes_X \@{BO}_\tau(G))
    &&\text{by associativity as above} \\
&\subseteq \exists^*_{\id_G \times \mu}(\@{BO}_G(G) \otimes_X \@{BO}_\sigma(G) \otimes_X \@{BO}_\tau(G))
    &&\text{by \cref{thm:ucomqpolgpd:mu-fib}} \\
&= \@{BO}_G(G) \otimes_X \@{BO}_G(G)
    &&\text{by \cref{thm:orbtop-exists*} as above}.
\end{eqaligned*}
\Cref{thm:ucomqpolgpd:fibtop} also follows:
\begin{eqaligned*}
\@{BO}_\sigma(G)
&\subseteq \@{BO}_\sigma(G) \oast \@{BO}_\sigma(G)
    &&\text{by \cref{thm:ucomqpolgpd:mu-ss,it:fib-baire-surj}} \\
&\subseteq \@{BO}_\sigma(G) \oast (\@{BO}_\tau(G) \ocap \sigma^{-1}(\@B(X)))
    &&\text{by the dual of \cref{thm:ucomqpolgpd:inv}} \\
&\subseteq (\@{BO}_\sigma(G) \oast \@{BO}_\tau(G)) \ocap \sigma^{-1}(\@B(X))
    &&\text{by Frobenius \cref{it:vaught-frob}} \\
&= \@{BO}_G(G) \ocap \sigma^{-1}(\@B(X))
    &&\text{by \cref{thm:orbtop-exists*}};
\end{eqaligned*}
\end{enumeqno}
whereas $\supseteq$ is clear (by \labelcref{eq:orbtop-fibtop}).

By \cref{thm:ucomqpolgpd:mu-orb}, $\mu : G \times_X G -> G$ is continuous between the sigma-topologies
$\@{BO}_G(G) \otimes_X \@{BO}_G(G)$ and $\@{BO}_G(G)$.
We already verified in \cref{rmk:orbtop-bor} that the other 4 groupoid operations are also continuous between the Borel componentwise sigma-topologies, and also $\sigma(\@{BO}_G(G)) = \exists^*_\sigma(\@{BO}_G(G)) \subseteq \@{BO}_X(G)$ by fiberwise Baireness and \cref{thm:orbtop-exists*}.
Hence, $(X,G)$ equipped with $\@{BO}_G(X)$ and $\@{BO}_G(G)$ is an open sigma-topological groupoid.

Finally, to show compatibility of $\@{BO}_G(X)$, we apply \cref{thm:sierpinski-exists*-stop} to the Borel-overt bundle $\tau : (G,\@{BO}_\tau(G)) ->> X$ and the compatible sigma-topology
$\@S(G) := \@{BO}_\sigma(G)$
on $G$.
We have
\begin{eqaligned*}
\tau^{-1}(\exists^*_\tau(\@{BO}_\sigma(G)))
= \tau^{-1}(\@{BO}_G(X))
\subseteq \@{BO}_\sigma(G)
\end{eqaligned*}
by \cref{thm:orbtop-exists*}, and $\@{BO}_\sigma(G)$ contains a countable Borel fiberwise basis for a finer $\tau$-fiberwise topology than $\@O_\tau(G)$ by \cref{thm:ucomqpolgpd:inv}:
namely, take any Borel fiberwise basis of $U_i \in \@{BO}_\tau(G)$, and write each
$U_i = \bigcup_j (\tau^{-1}(B_j) \cap V_{ij})$
where $V_{ij} \in \@{BO}_\sigma(G)$; then the $V_{ij}$'s work.

Similarly, to show compatibility of $\@{BO}_G(G)$, we apply \cref{thm:sierpinski-exists*-stop} to $\mu : G \times_X G ->> G$ with the $\mu$-fiberwise topology (\cref{ex:vaught-mul}) and the compatible sigma-topology
$\@S(G \times_X G) := \@{BO}_\sigma(G) \otimes_X \@{BO}_\tau(G)$, whose image under $\exists^*_\mu$ is $\@{BO}_G(G) \subseteq (\mu^{-1})^{-1}(\@S(G \times_X G))$ again by \cref{thm:orbtop-exists*}.
To find a countable Borel fiberwise basis for a finer topology than $\@O_\mu$ contained in $\@S(G \times_X G)$: for any Borel fiberwise basis of $U_i \in \@{BO}_\sigma(G)$, the sets $U_i \times_X G$ form a $\pi_2$-fiberwise basis for $G \times_X G$, hence their $G \ltimes G$-inverses $U_i^{-1} \times_X G$ (recall \cref{def:action}) form a $\mu$-fiberwise basis.
By \cref{thm:ucomqpolgpd:inv},
\begin{eqaligned*}
U_i^{-1} = \bigcup_j (\tau^{-1}(B_j) \cap V_{ij})
\end{eqaligned*}
for $B_j \in \@B(X)$ and $V_{ij} \in \@{BO}_\sigma(G)$.
Then the sets
\begin{eqaligned*}
(\tau^{-1}(B_j) \cap V_{ij}) \times_X G = \mu^{-1}(\tau^{-1}(B_j)) \cap (V_{ij} \times_X G),
\end{eqaligned*}
hence also $V_{ij} \times_X G \in \@{BO}_\sigma(G) \otimes_X \@{BO}_\tau(G)$, generate a finer fiberwise topology than $\@O_\mu(G \times_X G)$.
\end{proof}

\Cref{thm:ucomqpolgpd} is the essence of a global topological realization result for Borel-overt uniformly componentwise quasi-Polish groupoids, except that it yields a sigma-topological groupoid, rather than a quasi-Polish groupoid.
From here, a standard $\omega$-iteration argument shows that arbitrarily fine quasi-Polish groupoid topologies may be found within the orbitwise sigma-topologies:

\begin{lemma}
\label{thm:stopgpd}
Let $(X,G)$ be a standard Borel groupoid, equipped with compatible sigma-topologies $\@S(X) \subseteq \@B(X)$ and $\@S(G) \subseteq \@B(G)$ making all of the groupoid operations continuous (and $\sigma, \tau$ open).
Then a club of pairs $(\@O(X), \@O(G))$ of quasi-Polish topologies $\@O(X) \subseteq \@S(X)$ and $\@O(G) \subseteq \@S(G)$ turn $(X,G)$ into an (open) quasi-Polish groupoid.
\end{lemma}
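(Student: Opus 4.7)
The plan is an $\omega$-iteration inside the compatible sigma-topologies $\@S(X), \@S(G)$, in the spirit of the proofs of \cref{thm:stop-pol} and \cref{thm:sierpinski-exists*-stop}. I will establish two facts: cofinality (every quasi-Polish pair inside $\@S$ can be refined to one making $(X,G)$ an open quasi-Polish groupoid), and closure under countable increasing joins. Together these give the club property.

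For cofinality, starting from an arbitrary quasi-Polish pair $\@O_0(X) \subseteq \@S(X), \@O_0(G) \subseteq \@S(G)$, I inductively build a chain of quasi-Polish pairs $\@O_n(X), \@O_n(G)$. At stage $n$, I pick countable bases $\@V_n \subseteq \@O_n(X)$ and $\@W_n \subseteq \@O_n(G)$, and use the hypothesized continuity of the five groupoid operations (and openness of $\sigma, \tau$) in $\@S$ to produce countably many witnesses in $\@S(X), \@S(G)$: the sets $\sigma^{-1}(V), \tau^{-1}(V)$ for $V \in \@V_n$; the sets $\sigma(W), \tau(W), \iota^{-1}(W), \nu^{-1}(W)$ for $W \in \@W_n$; and, crucially, for each such $W$, a \emph{countable} rectangle decomposition $\mu^{-1}(W) = \bigcup_i (U_{W,i} \times_X U'_{W,i})$ with $U_{W,i}, U'_{W,i} \in \@S(G)$, available because $\mu$-continuity into $(G,\@S(G))$ exactly says $\mu^{-1}(\@S(G)) \subseteq \@S(G) \otimes_X \@S(G)$. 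Then by compatibility of $\@S$ (\cref{def:stop}) I pick quasi-Polish $\@O_{n+1}(X) \subseteq \@S(X)$ and $\@O_{n+1}(G) \subseteq \@S(G)$ refining $\@O_n$ and absorbing all of these witnesses.

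The final topologies $\@O_\omega(X), \@O_\omega(G)$ are the quasi-Polish joins of $\bigcupup_n \@O_n(X), \bigcupup_n \@O_n(G)$, existing by \cref{it:qpol-dis:join} and contained in $\@S$. Since $\bigcup_n \@W_n$ is a basis for $\@O_\omega(G)$ (and likewise for $X$), continuity of each groupoid operation and openness of $\sigma, \tau$ in $\@O_\omega$ reduces to verifying the relevant containments on this basis, which holds by construction; for $W \in \@W_n$ the rectangle sides of $\mu^{-1}(W)$ live in $\@O_{n+1}(G) \subseteq \@O_\omega(G)$, making $\mu$ continuous from the product topology on $G \times_X G$ into $\@O_\omega(G)$. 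Closure of the family under countable increasing joins follows the same pattern of verifying continuity and openness on bases.

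The main obstacle, and the only place where I need more than routine sigma-topological bookkeeping, is the multiplication step: it is essential that each basic $\mu^{-1}(W)$ admits a \emph{countable} rectangle decomposition in $\@S(G) \otimes_X \@S(G)$, so that the countably many rectangle sides fit into a single quasi-Polish refinement $\@O_{n+1}(G) \subseteq \@S(G)$. This is precisely where the sigma-topology hypothesis (as opposed to an arbitrary collection of open sets) is exploited; all other operations are unary and pose no such issue.
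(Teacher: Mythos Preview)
Your proposal is correct and takes essentially the same approach as the paper's proof: an $\omega$-iteration that at each stage absorbs into a refined quasi-Polish pair the preimages under the groupoid operations, the $\sigma,\tau$-images, and countable rectangle witnesses for $\mu^{-1}$ of each basic open set, then takes the join. The paper's proof is more terse (it dismisses closure under countable increasing joins with ``clearly'' and gives less detail on which witnesses are needed), but your identification of the $\mu$-continuity step as the one genuinely requiring the sigma-topology hypothesis matches the paper's parenthetical emphasis exactly.
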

(Recall the notion of a \emph{club of quasi-Polish topologies} from \cref{rmk:stop-club};
a \emph{club of pairs of quasi-Polish topologies} is the obvious generalization to the product lattice of pairs of topologies.)
\begin{proof}
Clearly such pairs $(\@O(X), \@O(G))$ are closed under countable increasing joins, so it suffices to verify unboundedness.
Let $\@O_0(X) \subseteq \@S(X)$ and $\@O_0(G) \subseteq \@S(G)$ be any quasi-Polish topologies.
Given $\@O_n(X), \@O_n(G)$, take refinements $\@O_{n+1}(X), \@O_{n+1}(G)$ containing their preimages under all the groupoid operations (and $\sigma, \tau$-images; for preimage under $\mu$, for each basic $U \in \@O_n(G)$, include into $\@O_{n+1}(G)$ an arbitrary countable family of sets in $\@S(G)$ witnessing $\mu^{-1}(U) \in \@S(G) \otimes_X \@S(G)$).
Let $\@O(X), \@O(G)$ be the topologies generated by $\bigcupup_n \@O_n(X), \bigcupup_n \@O_n(G)$.
\end{proof}

\begin{theorem}
\label{thm:ucomqpolgpd-qpol}
Let $(X,G)$ be a Borel-overt uniformly componentwise quasi-Polish groupoid.
Then there exist compatible quasi-Polish topologies $\@O(X) \subseteq \@{BO}_G(X)$ and $\@O(G) \subseteq \@{BO}_G(G)$ turning $(X,G)$ into an open quasi-Polish groupoid, such that $\@O(G)$ restricts to the original fiberwise topology.

Moreover, such pairs $(\@O(X), \@O(G))$ form a club below $(\@{BO}_G(X), \@{BO}_G(G))$, i.e., any countably many Borel componentwise open $A_i \in \@{BO}_G(X)$ and $U_i \in \@{BO}_G(G)$ may be included in $\@O(X), \@O(G)$.
\end{theorem}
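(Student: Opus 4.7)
The plan is to combine the sigma-topological structure established by \cref{thm:ucomqpolgpd} with the topological-realization mechanism of \cref{thm:stopgpd}. By \cref{thm:ucomqpolgpd}, the groupoid $(X,G)$ equipped with the componentwise sigma-topologies $\@{BO}_G(X)$ and $\@{BO}_G(G)$ becomes an open sigma-topological groupoid, and these sigma-topologies are compatible with the Borel structure; this is precisely the hypothesis of \cref{thm:stopgpd}, which then supplies a club of pairs of quasi-Polish topologies $\@O(X) \subseteq \@{BO}_G(X)$ and $\@O(G) \subseteq \@{BO}_G(G)$ turning $(X,G)$ into an open quasi-Polish groupoid and accommodating any prescribed countably many sets.

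The main obstacle is ensuring that $\@O(G)$ does in fact restrict to the given $\sigma$-fiberwise topology $\@O_\sigma(G)$, not something strictly coarser. Since $\@O(G) \subseteq \@{BO}_G(G) \subseteq \@{BO}_\sigma(G)$, the $\sigma$-fiberwise restriction of $\@O(G)$ is automatically contained in $\@O_\sigma(G)$, so only the reverse inclusion is at stake. To secure it, I would appeal to the identity $\@{BO}_\sigma(G) = \sigma^{-1}(\@B(X)) \ocap \@{BO}_G(G)$ from \labelcref{thm:ucomqpolgpd:fibtop}. Fixing a countable Borel $\sigma$-fiberwise basis $\@U \subseteq \@{BO}_\sigma(G)$ for $\@O_\sigma(G)$, each $U \in \@U$ admits a decomposition $U = \bigcup_j (\sigma^{-1}(B_{U,j}) \cap V_{U,j})$ with $B_{U,j} \in \@B(X)$ and $V_{U,j} \in \@{BO}_G(G)$. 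The resulting countable family $\@V := \{V_{U,j}\}_{U,j}$ lies in $\@{BO}_G(G)$, and fiberwise at each $x \in X$ we have $U \cap \sigma^{-1}(x) = \bigcup_{j : x \in B_{U,j}} (V_{U,j} \cap \sigma^{-1}(x))$; hence the restrictions of $\@V$ to $\sigma^{-1}(x)$ generate $\@O_\sigma(\sigma^{-1}(x))$ under unions, even though individual $B_{U,j}$'s need not lie in $\@{BO}_G(X)$.

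To conclude, I would use cofinality of the club from \cref{thm:stopgpd} to arrange that $\@O(G)$ additionally contain $\@V$, alongside any prescribed $U_i \in \@{BO}_G(G)$ and $A_i \in \@{BO}_G(X)$. For any such $\@O(G)$, the $\sigma$-fiberwise restriction then contains the union-closure of the restrictions of $\@V$, which fills up $\@O_\sigma(\sigma^{-1}(x))$, while remaining contained in $\@O_\sigma(\sigma^{-1}(x))$ — so equality holds fiberwise. Since fiberwise restriction commutes with countable increasing joins of topologies, the condition of restricting to $\@O_\sigma(G)$ is preserved under such joins, so intersecting with the club from \cref{thm:stopgpd} still yields a club, delivering the ``moreover'' clause.
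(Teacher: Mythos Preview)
Your proposal is correct and follows essentially the same approach as the paper: combine \cref{thm:ucomqpolgpd} with \cref{thm:stopgpd}, and use \labelcref{thm:ucomqpolgpd}\cref{thm:ucomqpolgpd:fibtop} to extract a countable family in $\@{BO}_G(G)$ whose fiberwise restrictions generate $\@O_\sigma(G)$, including this family among the prescribed sets to force the correct fiberwise restriction. The paper compresses all of this into a single sentence; your version simply unpacks the details (the decomposition $U = \bigcup_j (\sigma^{-1}(B_{U,j}) \cap V_{U,j})$ and the club-intersection argument) that the paper leaves implicit.
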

\begin{proof}
By \cref{thm:ucomqpolgpd} and \cref{thm:stopgpd}, with countably many $U_i \in \@{BO}_G(G)$ forming a countable fiberwise basis for $\@O_\sigma(G)$ by \labelcref{thm:ucomqpolgpd}\cref{thm:ucomqpolgpd:fibtop} included in the resulting $\@O(G)$ in order to ensure that $\@O(G)$ fiberwise restricts to the originally given $\@O_\sigma(G)$.
\end{proof}

\begin{remark}
It is not possible to find an open quasi-Polish groupoid topology $(\@O(X),\@O(G))$ restricting \emph{componentwise} to $(\@{BO}_G(X),\@{BO}_G(G))$, unless $\#E_G$ is smooth; see \cref{thm:ucomqpolgpd-smooth}.
\end{remark}

\begin{corollary}
\label{thm:ucomqpolgpd-homtop}
Every Borel-overt uniformly componentwise quasi-Polish groupoid $(X,G)$ has $\*\Pi^0_2$ hom-sets.
Hence, hom-sets within each component are homeomorphic copies of the same Polish group; the componentwise topologies $\@O_G(X), \@O_G(G)$ are quasi-Polish on each component; and every weakly componentwise meager or $\*\Sigma^0_2$ set $A \subseteq X$ is strongly so.
\end{corollary}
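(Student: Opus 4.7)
The plan is to apply Theorem~\ref{thm:ucomqpolgpd-qpol} to produce a global open quasi-Polish groupoid topology on $(X,G)$ restricting to the given fiberwise topology, read off the $\*\Pi^0_2$ hom-set property as a direct consequence, and then deduce everything else formally from Remarks~\ref{rmk:homtop} and~\ref{rmk:orbtop-t0} already in the text.

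Concretely, I would first invoke Theorem~\ref{thm:ucomqpolgpd-qpol} to fix compatible quasi-Polish topologies $\@O(X) \subseteq \@{BO}_G(X)$ and $\@O(G) \subseteq \@{BO}_G(G)$ making $(X,G)$ into an open quasi-Polish groupoid with $\@O(G)$ restricting $\sigma$-fiberwise to the given $\@O_\sigma(G)$. Since $(X,\@O(X))$ is $T_0$ and second-countable, every singleton $\{x\} \subseteq X$ is the countable intersection of basic open sets containing $x$ together with the complements of basic open sets not containing $x$, hence $\*\Pi^0_2$ in the Selivanov sense. By continuity of $\tau$, the preimage $\tau^{-1}(\{x\})$ is $\*\Pi^0_2$ in $(G,\@O(G))$; intersecting with $\sigma^{-1}(x)$ and using that the $\@O(G)$-subspace topology on $\sigma^{-1}(x)$ is exactly $\@O_\sigma(\sigma^{-1}(x))$ by our choice of $\@O(G)$, we conclude that $G(x,x) = \sigma^{-1}(x) \cap \tau^{-1}(x)$ is $\*\Pi^0_2$ in the given $\sigma$-fiberwise topology. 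This is precisely the $\*\Pi^0_2$ hom-set property of Remark~\ref{rmk:homtop}.

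Once this is in hand, the remaining claims are pure bookkeeping. Remark~\ref{rmk:homtop} itself then gives that every hom-set in a given connected component is $\*\Pi^0_2$ in both $\@O_\sigma$ and $\@O_\tau$ and is a homeomorphic copy of a single Polish group (the automorphism group of any chosen object). Remark~\ref{rmk:orbtop-t0}, together with~\cref{it:qpol-openquot}, then promotes the continuous open surjection $\tau : \sigma^{-1}(x) \twoheadrightarrow [x]_G$ of~\cref{eq:orbtop-orb} to a quasi-Polish quotient on each component, so $\@O_G(X)$ and $\@O_G(G)$ are quasi-Polish on every component. Finally, by Kuratowski--Ulam~\labelcref{thm:kuratowski-ulam} and \cref{thm:qpol-openquot-bor} applied to the same open surjection (as recorded in the closing sentence of Remark~\ref{rmk:orbtop-t0}), weak componentwise meagerness and weak componentwise $\*\Sigma^0_2$-ness of subsets of $X$ coincide with the honest componentwise versions.

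I do not anticipate any genuine obstacle: the whole substantive content is delivered by Theorem~\ref{thm:ucomqpolgpd-qpol}, and the corollary is really an unpacking of its consequences. The only mild care required is to verify that singletons in a $T_0$ second-countable space are $\*\Pi^0_2$ in Selivanov's sense and not merely $\@G_\delta$ in a metrizable sense; but this is immediate from \cref{def:bor} as indicated.
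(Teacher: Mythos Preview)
Your proposal is correct and follows essentially the same route as the paper: apply Theorem~\ref{thm:ucomqpolgpd-qpol} to obtain a global open quasi-Polish groupoid topology restricting to the given fiberwise one, observe that hom-sets are then $\*\Pi^0_2$, and read off the remaining claims from Remarks~\ref{rmk:homtop}, \ref{rmk:orbtop-t0}, and Definition~\ref{def:orbtop-weak}. Your write-up is simply more explicit about why hom-sets of a quasi-Polish groupoid are $\*\Pi^0_2$ (via singletons in a $T_0$ second-countable space being $\*\Pi^0_2$), which the paper leaves as ``clearly''.
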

(The ``uniformly'' assumption can be removed; see \cref{thm:comqpolgpd-homtop}.)
\begin{proof}
Clearly, hom-sets of a quasi-Polish groupoid are $\*\Pi^0_2$; now apply \cref{thm:ucomqpolgpd-qpol}.
The listed consequences follow from the discussion in \cref{rmk:homtop,rmk:orbtop-t0,def:orbtop-weak}.
\end{proof}

The following is a simple example application of \cref{thm:ucomqpolgpd-qpol}:

\begin{example}
Let $(X,G)$ be a \defn{fiberwise countable Borel groupoid} (also known as \emph{locally countable Borel groupoid}, or sometimes simply as \emph{countable Borel groupoid}), i.e., each fiber $\sigma^{-1}(x)$ is countable, or equivalently each connected component contains countably many morphisms.
We may equip $G$ with the discrete $\sigma$-fiberwise topology $\@{BO}_\sigma(G) := \@B(G)$, which is clearly right-invariant, and Borel-overt by the Lusin--Novikov uniformization theorem \cite[18.10]{kechris:cdst}.
Uniform fiberwise continuity of differences is trivial, since
\begin{equation*}
\mu^{-1}(\@{BO}_\sigma(G)) \subseteq \@B(G) \otimes_X \@B(G) = \@{BO}_\tau(G) \otimes_X \@{BO}_\sigma(G).
\end{equation*}
We thus have a Borel-overt uniformly componentwise quasi-Polish groupoid, which by \cref{thm:ucomqpolgpd-qpol} may be turned into an open quasi-Polish groupoid, such that each fiber $\sigma^{-1}(x)$ is discrete, i.e., the diagonal in each $\sigma^{-1}(x) \times \sigma^{-1}(x)$ is open.
We may in fact make the diagonal in $G \tensor[_\sigma]\times{_\sigma} G$ open, i.e., make $G$ into an \defn{étale groupoid}: indeed, the diagonal is in $\@B(G) \tensor[_\sigma]\otimes{_\sigma} \@B(G) = \@{BO}_\sigma(G) \tensor[_\sigma]\otimes{_\sigma} \@{BO}_\sigma(G)$ (by Kunugi--Novikov, or more simply by Lusin--Novikov), hence is a countable union of rectangles $U_i \times_X V_i$ where $U_i, V_i \in \@{BO}_\sigma(G)$; by \cref{thm:ucomqpolgpd-qpol} these may be included in the topology $\@O(G)$.

This recovers a result from \cite[\S7.7]{lupini:polgpd} (proved there using more elementary methods) as a formal consequence of \cref{thm:ucomqpolgpd-qpol}.
\end{example}

Finally, we give the promised example of a countable (hence trivially Borel-overt) fiberwise Polish groupoid \emph{without} fiberwise continuous differences, to show that this extra assumption is indeed necessary for the global topological realization \cref{thm:ucomqpolgpd-qpol} to hold.

\begin{example}
\label{ex:fibqpolgpd-disctscocy}
Fix some nontrivial countable discrete abelian group $\Gamma$, e.g., $\Gamma = \#Z/2\#Z$.
Consider the product group $\Gamma^\#N$ with the product topology, which is zero-dimensional Polish, as well as the direct sum $\Gamma^{\oplus \#N}$ with the discrete topology acting on $\Gamma^\#N$ by translation.
Let $\-{\#N} = \#N \cup \{\infty\}$ be the one-point compactification of the discrete space $\#N$, which is a countable Stone space.
Put
\begin{eqaligned*}
Y := \-{\#N} \times \Gamma^\#N,
\end{eqaligned*}
with the product topology, and let $\Gamma^{\oplus \#N}$ act on the second coordinate of $Y$.
For each $\→\gamma \in \Gamma^{\oplus \#N}$, put
\begin{eqaligned*}
C_{\→\gamma} := \set{(i,\→\delta) \in Y | \forall j < i\, (\delta_j = \gamma_j)},
\end{eqaligned*}
a closed set.
Note that if we adjoin each of these sets to the topology of $Y$, as well as the singletons $\set{(i,\→\gamma)}$ for all $i \in \#N$ and $\→\gamma \in \Gamma^{\oplus \#N}$, then the action $\Gamma^{\oplus \#N} \actson Y$ remains continuous.
Now let
\begin{eqaligned*}
Z := \-{\#N} \times \Gamma^{\oplus \#N}
= \set[\big]{(i,\→\gamma) | i \in \#N,\, \→\gamma \in \Gamma^{\oplus \#N}} \cup \paren[\big]{(\set{\infty} \times \Gamma^\#N) \cap \bigcup_{\→\gamma \in \Gamma^{\oplus \#N}} C_{\→\gamma}} \subseteq Y
\end{eqaligned*}
equipped with the subspace topology induced by the aforementioned finer topology on $Y$, in which $Z$ is a $\*\Pi^0_2$ subspace, hence zero-dimensional Polish; and $\Gamma^{\oplus \#N}$ still acts continuously on $Z$.

Define the groupoid $(X,G)$, where $X := \-{\#N}$ and
\begin{eqaligned*}
G := X \times Z = \-{\#N} \times \-{\#N} \times \Gamma^{\oplus \#N}
\end{eqaligned*}
equipped with the product topology on $X \times Z$.
The source and target maps are the first and second projections $\sigma, \tau : G -> X$; note that $\sigma, \tau$ are both continuous.
The multiplication is
\begin{eqaligned*}
(j, k, \→\delta) \cdot (i, j, \→\gamma) := (i, k, \→\gamma + \→\delta).
\end{eqaligned*}
In other words, disregarding the topology, $G$ is just the product of the indiscrete equivalence relation $\-{\#N}^2$ on $\-{\#N}$ and the group $\Gamma^{\oplus \#N}$.
Note that this is continuous for fixed $(i,j,\→\gamma)$; thus, the $\sigma$-fiberwise restriction of the product topology on $G = X \times Z$ yields a countable fiberwise Polish groupoid.

We now show that there is no global topology on $G$ with the same $\sigma$-fiberwise restriction which renders all of the groupoid operations continuous.
If there were, then the difference map (\labelcref{def:comqpolgpd}\cref{def:comqpolgpd:diff1})
\begin{eqaligned*}
\sigma^{-1}(0) \tensor[_\tau]\times{_\tau} \sigma^{-1}(0) &--> G(0,0) \\
(g,h) = ((0,i,\→\gamma), (0,i,\→\delta)) &|--> g^{-1} \cdot h = (0,0,\→\delta-\→\gamma)
\end{eqaligned*}
would be continuous.
However, the preimage of the isolated point $(0,0,\→0)$ is the diagonal, which is not a neighborhood of $((0,\infty,\→0), (0,\infty,\→0))$: a basic neighborhood of $(0,\infty,\→0) \in G$ is of the form
\begin{equation*}
\set{0} \times (([n,\infty] \times \Gamma^{\oplus \#N}) \cap C_{\→0})
\end{equation*}
for some large $n \in \#N$;
and any such set contains two distinct $(0,n,\→\gamma), (0,n,\→\delta)$, where $\→\gamma, \→\delta$ are both $0$ in the first $n$ terms but differ at some point after that,
which means that the product of two such sets cannot intersect $\sigma^{-1}(0) \tensor[_\tau]\times{_\tau} \sigma^{-1}(0)$ only in the diagonal.
\end{example}

\begin{remark}
The above groupoid, despite not having fiberwise continuous differences, nonetheless obeys \labelcref{thm:ucomqpolgpd}\cref{thm:ucomqpolgpd:fibtop}.
We do not know whether there exists a Borel-overt fiberwise quasi-Polish groupoid failing \labelcref{thm:ucomqpolgpd}\cref{thm:ucomqpolgpd:fibtop}.
\end{remark}

\subsection{Action groupoids and the Becker--Kechris theorem}

Let $(X,G)$ be a Borel-overt fiberwise quasi-Polish groupoid.
Recall from \cref{def:fibqpolgpd-action} that for a Borel $G$-space $p : M -> X$, we may lift the fiberwise topology on $G$ to the action groupoid $G \ltimes M$.

\begin{lemma}
\label{thm:comqpolgpd-action}
If $G$ has (uniformly) fiberwise continuous differences, then so does the lifted fiberwise topology on $G \ltimes M$, for any Borel $G$-space $p : M -> X$.
\end{lemma}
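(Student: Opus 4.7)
The plan is to reduce everything to the corresponding properties of $\mu_G$ via the cocycle functor $\phi : G \ltimes M \to G$, $(g, a) \mapsto g$, which by definition of the lifted fiberwise topology (\cref{def:fibqpolgpd-action}) is a $\sigma$-fiberwise homeomorphism onto $G$. Composing with $\nu_{G \ltimes M}(h, a_2) = (h^{-1}, h a_2)$ shows $\phi$ is also $\tau$-fiberwise a homeomorphism, so the $\tau_{G \ltimes M}$-fiber over $b \in M$ is identified with $\tau_G^{-1}(p(b))$ via $(h, h^{-1}b) \leftrightarrow h$.

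For the non-uniform case (\labelcref{def:comqpolgpd}\cref{def:comqpolgpd:mul}), I would fix $a, b \in M$, set $x := p(a)$, $y := p(b)$, and observe that under the identifications above the fiber product $\tau_{G \ltimes M}^{-1}(b) \times_M \sigma_{G \ltimes M}^{-1}(a)$ becomes the subspace of $\tau_G^{-1}(y) \times_X \sigma_G^{-1}(x)$ cut out by the further constraint $hga = b$ (the side condition $\sigma(h) = \tau(g)$ being automatic from $h^{-1}b = ga$), on which $\mu_{G \ltimes M}$ acts as the restriction $(h, g) \mapsto hg$ of $\mu_G$. Continuity of $\mu_{G \ltimes M}$ on this fiber then follows immediately from continuity of $\mu_G$ applied to $G$.

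For the uniform case, I would use $\@{BO}_\sigma(G \ltimes M) = \@{BO}_\sigma(G) \otimes_X \@B(M)$ from \cref{def:fibqpolgpd-action}, reducing to a basic rectangle $V \times_X B$ with $V \in \@{BO}_\sigma(G)$ and $B \in \@B(M)$. Expanding $\mu_G^{-1}(V) = \bigcup_i (H_i \times_X G_i)$ via \labelcref{def:ucomqpolgpd:mu-sigma} for $G$ and unwinding definitions should give
\[
\mu_{G \ltimes M}^{-1}(V \times_X B) = \bigcup_i \bigl((H_i \times_X M) \times_M (G_i \times_X B)\bigr),
\]
with the factors on the right lying in $\@{BO}_\tau(G \ltimes M)$ and $\@{BO}_\sigma(G \ltimes M)$ respectively, as pullbacks along the fiberwise homeomorphism $\phi$. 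The main obstacle I expect is the bookkeeping in this last unwinding, i.e., matching the two ``$\times_M$'' products to the composability condition $a_2 = g a_1$ together with the $p$-pullback of $B$; I would handle this by consistently expressing everything through $\phi$, so that the pullbacks become automatic.
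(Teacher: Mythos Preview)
Your proposal is correct and matches the paper's approach: the non-uniform case is dispatched by noting that the relevant maps for $G \ltimes M$ are (restrictions of) homeomorphic copies of those for $G$ via the cocycle $\phi$, and the uniform case is the same rectangle computation you outline, transported across the isomorphism $(G \ltimes M) \times_M (G \ltimes M) \cong G \times_X G \times_X M$. The paper carries this out in its condensed ``point-free'' notation on collections of sets (using $(U \times_X M)^{-1} = U^{-1} \times_X M$ to place $H_i \times_X M$ in $\@{BO}_\tau(G \ltimes M)$), while you work elementwise, but the content is identical and your anticipated bookkeeping goes through exactly as you describe.
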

\begin{proof}
The non-uniform version is straightforward (each of the maps in \cref{def:comqpolgpd} for the action groupoid is a homeomorphic copy of the original map for $G$).
For the uniform version: recalling $\mu_{G \ltimes M}, \nu_{G \ltimes M}$ from \cref{def:action} and $\@{BO}_\sigma(G \ltimes M), \@{BO}_\tau(G \ltimes M)$ from \cref{def:fibqpolgpd-action},
\begin{eqaligned*}
\mu_{G \ltimes M}^{-1}(\@{BO}_\sigma(G \ltimes M))
&= (\mu_G \times M)^{-1}(\@{BO}_\sigma(G) \otimes_X \@B(M)) \\
&\subseteq \@{BO}_\tau(G) \otimes_X \@{BO}_\sigma(G) \otimes_X \@B(M) \\
&\cong (\@{BO}_\tau(G) \otimes_X M) \tensor[_{\pi_2}]\otimes{_\alpha} (\@{BO}_\sigma(G) \otimes_X \@B(M)) \\
&\subseteq \nu_{G \ltimes M}^{-1}(\@{BO}_\sigma(G) \otimes_X \@B(M)) \tensor[_{\pi_2}]\otimes{_\alpha} (\@{BO}_\sigma(G) \otimes_X \@B(M)) \\
&= \@{BO}_\tau(G \ltimes M) \otimes_M \@{BO}_\sigma(G \ltimes M)
\end{eqaligned*}
using in the second-last step that $(U \times_X M)^{-1} = U^{-1} \times_X M$.
\end{proof}

We may simplify the description of the orbitwise topology in this case (as in \cref{ex:polgrp-action}):

\begin{lemma}
\label{thm:ucomqpolgpd-action-orbtop}
If $G$ has uniformly fiberwise continous differences, then
\begin{align*}
\@{BO}_G(G \ltimes M)
&= \@{BO}_G(G) \otimes_X \@{BO}_G(M) \\
&= \@O(G) \otimes_X \@{BO}_G(M),
\end{align*}
for any compatible open quasi-Polish groupoid topology $\@O(G)$ as in \cref{thm:ucomqpolgpd-qpol}.
\end{lemma}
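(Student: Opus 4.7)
The plan is to prove two inclusions,
\begin{equation*}
\@{BO}_G(G) \otimes_X \@{BO}_G(M) \subseteq \@{BO}_G(G \ltimes M)
\qquad \text{and} \qquad
\@{BO}_G(G \ltimes M) \subseteq \@O(G) \otimes_X \@{BO}_G(M),
\end{equation*}
which, together with the trivial $\@O(G) \otimes_X \@{BO}_G(M) \subseteq \@{BO}_G(G) \otimes_X \@{BO}_G(M)$ (from $\@O(G) \subseteq \@{BO}_G(G)$), yield both displayed equalities.

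For the first inclusion, fix $V \in \@{BO}_G(G)$ and $B \in \@{BO}_G(M)$; by \cref{eq:orbtop-mor-bor} applied to $G \ltimes M$, it suffices to exhibit $\mu_{G \ltimes M}^{-1}(V \times_X B) \in \@{BO}_\sigma(G \ltimes M) \otimes_M \@{BO}_\tau(G \ltimes M)$. Using \cref{thm:ucomqpolgpd:mu-orb} to write $\mu_G^{-1}(V) = \bigcup_i V_i \times_X W_i$ with $V_i, W_i \in \@{BO}_G(G)$, and unpacking $\mu_{G \ltimes M}((h, gb), (g, b)) = (hg, b)$, one gets
\begin{equation*}
\mu_{G \ltimes M}^{-1}(V \times_X B) = \bigcup_i (V_i \times_X M) \times_M (W_i \times_X B).
\end{equation*}
The factor $V_i \times_X M$ lies in $\@{BO}_\sigma(G \ltimes M) = \@{BO}_\sigma(G) \otimes_X \@B(M)$ because $V_i \in \@{BO}_\sigma(G)$. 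The factor $W_i \times_X B$ lies in $\@{BO}_\tau(G \ltimes M)$: on the $\alpha$-fiber over $c \in M$, under the identification $\alpha^{-1}(c) \cong \tau^{-1}(p(c))$ via $g \leftrightarrow (g, g^{-1}c)$, it becomes $\set{g \in \tau^{-1}(p(c)) : g \in W_i,\ g^{-1}c \in B}$, which is open since $W_i \in \@{BO}_\tau(G)$ and since $g \mapsto g^{-1}c$ from $\tau^{-1}(p(c))$ to $(M, \@O_G(M))$ is continuous by \cref{rmk:orbtop-openquot}, pulling the orbitwise-open $B$ back to an open set.

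For the second inclusion, \cref{thm:comqpolgpd-action} implies that $G \ltimes M$ is itself Borel-overt uniformly componentwise quasi-Polish, so \cref{thm:ucomqpolgpd-qpol} applies to it. Given $V \in \@{BO}_G(G \ltimes M)$, select compatible open quasi-Polish groupoid topologies $\@O'(M) \subseteq \@{BO}_G(M)$ and $\@O(G \ltimes M) \subseteq \@{BO}_G(G \ltimes M)$, arranged by the club flexibility to contain $V$ and each $\phi^{-1}(V_j^0)$ for $V_j^0$ in a countable basis of $\@O(G)$ (these preimages are orbitwise open by the first inclusion). This makes $\phi$ continuous, and together with the open continuous source map $\sigma_{G \ltimes M} = \pi_2$ yields a continuous identity-on-sets bijection $(\phi, \pi_2) : (G \ltimes M, \@O(G \ltimes M)) \to (G \times_X M, \@O(G) \otimes_X \@O'(M))$; the codomain is quasi-Polish because $p : M \to X$ factors as $\sigma \circ \phi \circ \iota_{G \ltimes M}$ through continuous maps. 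Upgrading this continuous bijection to a homeomorphism then identifies $\@O(G \ltimes M) = \@O(G) \otimes_X \@O'(M)$, from which $V \in \@O(G) \otimes_X \@O'(M) \subseteq \@O(G) \otimes_X \@{BO}_G(M)$.

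I expect the main obstacle to be rigorously justifying the homeomorphism upgrade. The cleanest route is to verify directly that $\@O(G) \otimes_X \@O'(M)$ itself already turns $(M, G \ltimes M)$ into an open quasi-Polish groupoid restricting to the lifted $\sigma$-fiberwise topology: continuity of $\mu_{G \ltimes M}$ follows from continuity of $\mu_G$ in $\@O(G)$, while continuity of $\alpha$ requires joint continuity of the action $G \actson (M, \@O'(M))$, which can be arranged by a further Becker--Kechris-style refinement of $\@O'(M)$ within $\@{BO}_G(M)$. Once both $\@O(G \ltimes M)$ and $\@O(G) \otimes_X \@O'(M)$ are compatible open quasi-Polish groupoid topologies restricting to the same lifted fiberwise topology, a Vaught-transform comparison via \cref{thm:orbtop-exists*} forces them to coincide.
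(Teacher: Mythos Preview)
Your first inclusion $\@{BO}_G(G)\otimes_X\@{BO}_G(M)\subseteq\@{BO}_G(G\ltimes M)$ is correct and essentially identical to the paper's $\supseteq$ argument; the only cosmetic difference is that the paper uses \cref{eq:orbtop-mor-bor} rather than \cref{thm:ucomqpolgpd:mu-orb}, so that this direction does not even need continuity of differences.

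The second inclusion, however, has a genuine gap, and your own diagnosis of the obstacle is accurate but the proposed repairs do not close it.  First, the ``homeomorphism upgrade'' is simply false in general: a continuous bijection between quasi-Polish spaces need not be a homeomorphism, and nothing in your setup forces $\@O(G\ltimes M)=\@O(G)\otimes_X\@O'(M)$.  Second, invoking a Becker--Kechris refinement of $\@O'(M)$ to make the action continuous is circular in this paper's logic, since \cref{thm:beckec} is \emph{derived} from the present lemma; even if you cite the external version from \cite{chen:beckec}, you then need $V$ to lie in $\@O(G)\otimes_X\@O''(M)$ for the refined $\@O''(M)$, and you only know $V\in\@O(G\ltimes M)$ for the \emph{unrefined} groupoid topology.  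Third, your closing claim that ``a Vaught-transform comparison via \cref{thm:orbtop-exists*} forces them to coincide'' is false: two open quasi-Polish groupoid topologies restricting to the same $\sigma$-fiberwise topology need not be equal---indeed \cref{thm:ucomqpolgpd-qpol} produces a whole club of them.

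The paper's $\subseteq$ argument proceeds by a direct computation that avoids all of this.  The key intermediate step, proved by an associativity-and-$\exists^*$ trick analogous to the proof of \cref{thm:ucomqpolgpd:mu-fib}, is
\[
\alpha^{-1}(\@{BO}_G(M))\subseteq\@O(G)\otimes_X\@{BO}_G(M).
\]
Once this is in hand, one expands $\@{BO}_G(G\ltimes M)=\@{BO}_\sigma(G\ltimes M)\oast\@{BO}_\tau(G\ltimes M)$ via \cref{thm:orbtop-exists*}, uses the orbitwise Baire property and Pettis \cref{it:vaught-pettis} to replace $\@B(M)$ by $\@{BO}_G(M)$, and then transports across the isomorphism $(G\times_X M)\tensor[_{\pi_2}]\times{_\alpha}(G\times_X M)\cong G\times_X G\times_X M$ to reduce to the displayed inclusion above.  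This is the step your argument is missing: you need to actually compute how $\alpha^{-1}$ interacts with $\@{BO}_G(M)$, rather than hope a topology-comparison shortcut exists.
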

By taking $M = X$, this shows in particular that for an open quasi-Polish groupoid $G$,
\begin{align*}
\@{BO}_G(G) = \sigma^{-1}(\@{BO}_G(X)) \ocap \@O(G) = \tau^{-1}(\@{BO}_G(X)) \ocap \@O(G),
\end{align*}
as mentioned in \cref{ex:qpolgpd-orbtop} (see also \cref{fig:topologies}).
\begin{proof}
$\supseteq$ (this does not use continuity of differences):
Recalling again \cref{def:action,def:fibqpolgpd-action},
\begin{eqaligned*}
\mu_{G \ltimes M}^{-1}(\@{BO}_G(G) \otimes_X \@{BO}_G(M))
&= \mu_G^{-1}(\@{BO}_G(G)) \otimes_X \@{BO}_G(M) \\
&\subseteq \@{BO}_\sigma(G) \otimes_X \@{BO}_\tau(G) \otimes_X \@{BO}_G(M) \\
&\cong (\@{BO}_\sigma(G) \otimes_X M) \tensor[_{\pi_2}]\otimes{_\alpha} (\@{BO}_\tau(G) \otimes_X \@{BO}_G(M)) \\
&\subseteq (\@{BO}_\sigma(G) \otimes_X \@B(M)) \tensor[_{\pi_2}]\otimes{_\alpha} \nu_{G \ltimes M}^{-1}(\@{BO}_\sigma(G) \otimes_X \@B(M)) \\
&= \@{BO}_\sigma(G \ltimes M) \otimes_M \@{BO}_\tau(G \ltimes M)
\end{eqaligned*}
using in the second-last step that $(U \times_X A)^{-1} = (U^{-1} \times_X M) \cap \alpha^{-1}(A)$.

$\subseteq$:
It suffices to prove the statement for $\@O(G)$, by \cref{thm:ucomqpolgpd-qpol}.
Note that since $\@O(G)$ fiberwise restricts to $\@{BO}_\sigma(G)$, we may simplify \cref{def:fibqpolgpd-action} to
\begin{eqaligned*}
\@{BO}_\sigma(G \ltimes M)
= \@{BO}_\sigma(G) \otimes_X \@B(M)
= (\@O(G) \ocap \sigma^{-1}(\@B(X))) \otimes_X \@B(M)
= \@O(G) \otimes_X \@B(M).
\end{eqaligned*}
Similarly to in the proof of \cref{thm:ucomqpolgpd}, we have
\begin{eqaligned*}[b][\tag{$*$}]
\alpha^{-1}(\@{BO}_G(M))
&\subseteq \exists^*_{\id_G \times \alpha}((\id_G \times \alpha)^{-1}(\alpha^{-1}(\@{BO}_G(M))))
    &&\text{by \cref{it:fib-baire-surj}} \\
&= \exists^*_{\id_G \times \alpha}((\mu \times \id_M)^{-1}(\alpha^{-1}(\@{BO}_G(M))))
    &&\text{by associativity} \\
&\subseteq \exists^*_{\id_G \times \alpha}((\mu \times \id_M)^{-1}(\@O(G) \otimes_X \@B(M)))
    &&\text{by \cref{eq:orbtop-action} and above} \\
&\subseteq \exists^*_{\id_G \times \alpha}(\@O(G) \otimes_X \@O(G) \otimes_X \@B(M))
    &&\text{by $\@O(G)$-continuity of $\mu$} \\
&= \@O(G) \otimes_X (\@O(G) \oast \@B(M)) \\
&= \@O(G) \otimes_X \@{BO}_G(M)
    &&\text{by \cref{thm:orbtop-exists*}}.
\end{eqaligned*}
Now by \cref{thm:orbtop-exists*,def:fibqpolgpd-action} and the above,
\begin{zalign*}
\@{BO}_G(G \ltimes M)
&= \@{BO}_\sigma(G \ltimes M) \oast \@{BO}_\tau(G \ltimes M) \\
&= \@{BO}_\sigma(G \ltimes M) \oast \nu_{G \ltimes M}^{-1}(\@{BO}_\sigma(G \ltimes M)) \\
&= (\@O(G) \otimes_X \@B(M)) \oast \nu_{G \ltimes M}^{-1}(\@O(G) \otimes_X \@B(M)); \\
\intertext{by the orbitwise Baire property (\labelcref{thm:orbtop-bp}; recall also \labelcref{thm:ucomqpolgpd-homtop}), every set in $\@B(M)$ is equal mod orbitwise meager to some set in $\@{BO}_G(M)$, whence their $\sigma_{G \ltimes M} = \pi_2$-preimages are equal mod $\tau_{G \ltimes M}$-fiberwise meager, and dually for the $\tau_{G \ltimes M}$-preimages; now by Pettis's theorem \cref{it:vaught-pettis} (for the Vaught transform of the left translation action $(G \ltimes M) \actson (G \ltimes M)$, \emph{not} the action $G \actson M$), we get}
&= \mathrlap{(\@O(G) \otimes_X \@{BO}_G(M)) \oast \nu_{G \ltimes M}^{-1}(\@O(G) \otimes_X \@{BO}_G(M))} \\
&= \mathrlap{\exists^*_{\mu_{G \ltimes M}}((\@O(G) \otimes_X \@{BO}_G(M)) \tensor[_{\pi_2}]\otimes{_\alpha} \nu_{G \ltimes M}^{-1}(\@O(G) \otimes_X \@{BO}_G(M))} \\
\tag{$\dagger$}
&= \mathrlap{\exists^*_{\mu_G \times M}(\@O(G) \otimes_X (\alpha^{-1}(\@{BO}_G(M)) \ocap (\nu_G^{-1}(\@O(G)) \otimes_X M)))} \\
&\subseteq \exists^*_{\mu_G \times M}(\@O(G) \otimes_X (\@O(G) \otimes_X \@{BO}_G(M)))
    &&\text{by ($*$)} \\
&= \@O(G) \otimes_X \@{BO}_G(M),
\end{zalign*}
where step ($\dagger$) is by transporting the sets across the isomorphism
$(G \times_{X} M) \tensor[_{\pi_2}]\times{_\alpha} (G \times_{X} M) \cong G \times_{X} G \times_{X} M$
in \cref{def:action} of $\mu_{G \ltimes M}$.
\end{proof}

Similarly to \cref{thm:ucomqpolgpd}, \cref{thm:ucomqpolgpd-action-orbtop} may be regarded as an abstract, sigma-topological form of the Becker--Kechris topological realization theorem, and easily implies the concrete result:

\begin{corollary}[{Becker--Kechris theorem for open quasi-Polish groupoid actions \cite[4.3.12]{chen:beckec}}]
\label{thm:beckec}
\leavevmode\par\noindent
Let $(X,G)$ be an open quasi-Polish groupoid, $p : M -> X$ be a Borel $G$-space.
Then for a club of compatible quasi-Polish topologies $\@O(M) \subseteq \@{BO}_G(M)$, the action $G \actson (M,\@O(M))$ is continuous.
\end{corollary}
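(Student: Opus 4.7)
The plan is to apply \cref{thm:ucomqpolgpd-qpol} to the action groupoid $(M, G \ltimes M)$, and then arrange via a further club refinement that the resulting topology on morphisms is precisely the product $\@O(G) \otimes_X \@O(M)$, from which continuity of the action falls out as continuity of the target map. Since $(X,G)$ is already open quasi-Polish, its underlying fiberwise quasi-Polish groupoid trivially has uniformly fiberwise continuous differences, so by \cref{thm:comqpolgpd-action}, $(M, G \ltimes M)$ equipped with the lifted fiberwise topology (\cref{def:fibqpolgpd-action}) is itself a Borel-overt uniformly componentwise quasi-Polish groupoid. Thus \cref{thm:ucomqpolgpd-qpol} produces a club of pairs of compatible quasi-Polish topologies $(\@O(M), \@O(G \ltimes M))$ inside $(\@{BO}_G(M), \@{BO}_G(G \ltimes M))$ making $G \ltimes M$ an open quasi-Polish groupoid, with $\@O(G \ltimes M)$ restricting fiberwise to the lifted topology.

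Next, I would use \cref{thm:ucomqpolgpd-action-orbtop}, which identifies $\@{BO}_G(G \ltimes M) = \@O(G) \otimes_X \@{BO}_G(M)$, to force the equality $\@O(G \ltimes M) = \@O(G) \otimes_X \@O(M)$ along a sub-club. Inclusion $\supseteq$ is straightforward: the product $\@O(G) \otimes_X \@O(M)$ sits inside $\@O(G) \otimes_X \@{BO}_G(M) = \@{BO}_G(G \ltimes M)$, so a countable basis for it may be thrown into $\@O(G \ltimes M)$ at each stage. For $\subseteq$, each basic $U \in \@O(G \ltimes M) \subseteq \@{BO}_G(G \ltimes M)$ decomposes as a countable union $\bigcup_i (V_i \times_X A_i)$ with $V_i \in \@O(G)$ and $A_i \in \@{BO}_G(M)$, and the $A_i$'s may be thrown into $\@O(M)$ at the next stage. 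Interleaving these two closures with those already present in the proof of \cref{thm:stopgpd} yields the desired sub-club.

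Once $\@O(G \ltimes M) = \@O(G) \otimes_X \@O(M)$, continuity of $\tau_{G \ltimes M} : (G \ltimes M, \@O(G \ltimes M)) \to (M, \@O(M))$, automatic from the open quasi-Polish groupoid structure, becomes exactly joint continuity of the action $\alpha : (G \times_X M, \@O(G) \otimes_X \@O(M)) \to (M, \@O(M))$. The main verification is the interleaving bookkeeping in the second paragraph: one checks that closing $\@O(G \ltimes M)$ under the product-topology constraint, and closing $\@O(M)$ under the $A_i$-absorbing constraint, remain compatible with the existing closures under the five groupoid operations from \cref{thm:stopgpd}; this is routine given \cref{thm:ucomqpolgpd-action-orbtop}.
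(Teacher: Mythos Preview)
Your proposal is correct and follows essentially the same approach as the paper: apply \cref{thm:ucomqpolgpd-qpol} to the action groupoid $G \ltimes M$ (via \cref{thm:comqpolgpd-action}) and use \cref{thm:ucomqpolgpd-action-orbtop} to reduce to topologies of the form $\@O(G) \otimes_X \@O(M)$. The paper packages your interleaving argument more abstractly, observing that the map $\@O(M) \mapsto \@O(G) \otimes_X \@O(M)$ is a cofinal sigma-complete join-semilattice homomorphism (cofinality being exactly \cref{thm:ucomqpolgpd-action-orbtop}), so that the desired set of topologies is the preimage of the club from \cref{thm:ucomqpolgpd-qpol} and hence itself a club; but this is the same argument you spell out by hand.
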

\begin{proof}
Note that a quasi-Polish topology $\@O(M) \subseteq \@{BO}_G(M)$ makes the action $G \actson M$ continuous iff
$\@O(G) \otimes_X \@O(M)$ is a groupoid topology on $G \ltimes M$ (since the action map $\alpha : G \times_X M -> M$ is $\tau_{G \ltimes M}$).
The collection of all such topologies $\@O(M)$ is the preimage, under the sigma-complete join-semilattice homomorphism $\@O(G) \otimes_X (-)$ which is cofinal by \cref{thm:ucomqpolgpd-action-orbtop}, of the club of groupoid topologies from \cref{thm:ucomqpolgpd-qpol}, hence is itself a club.
\end{proof}

Thus in some sense (modulo the computation in \cref{thm:ucomqpolgpd-action-orbtop}), the topological realization theorem \labelcref{thm:ucomqpolgpd-qpol} for groupoids ``subsumes'' the Becker--Kechris topological realization theorem for actions (which of course largely inspired \cref{thm:ucomqpolgpd-qpol}).

\subsection{Comeager subgroupoids}

We now extend a weakening of \cref{thm:ucomqpolgpd-qpol} to all Borel-overt \emph{fiberwise} quasi-Polish groupoids $(X,G)$: even though such groupoids need not have fiberwise continuous differences by \cref{ex:fibqpolgpd-disctscocy}, we will show that they can always be replaced with a Borel-equivalent groupoid that does.

Recall from \cref{rmk:fibqpolgpd-full} that one way to get such an equivalent groupoid is to restrict to the full subgroupoid $\eval{G}_Y$ on a Borel weakly componentwise $\*\Pi^0_2$ dense subset of objects $Y \subseteq X$, and that all of the Borel componentwise topologies, category quantifiers, and Vaught transforms we have considered in \cref{sec:fibgpd}, when evaluated in such a componentwise comeager full subgroupoid, are just the restrictions of the corresponding notions from the ambient groupoid $(X,G)$.

\begin{lemma}
\label{thm:ucomqpolgpd-dense}
Let $(X,G)$ be a Borel-overt fiberwise quasi-Polish groupoid.
There is a Borel weakly componentwise $\*\Pi^0_2$ dense $Y \subseteq X$ such that $(Y,\eval{G}_Y)$ has uniformly fiberwise continuous differences.
\end{lemma}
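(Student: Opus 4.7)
The plan is to adapt the Solecki--Srivastava argument \cite{solecki-srivastava:lpolgrp} to the groupoid setting: use Baire-categorical methods to approximate $\mu^{-1}(U)$ for each $U$ in a countable $\sigma$-fiberwise basis by a set in $\@{BO}_\tau(G) \otimes_X \@{BO}_\sigma(G)$, and then restrict to a weakly componentwise $\*\Pi^0_2$ dense $Y \subseteq X$ on which the approximations become exact.

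First, I would fix a countable Borel fiberwise basis $\@U \subseteq \@{BO}_\sigma(G)$, closed under finite intersections. For each $U \in \@U$, by \cref{rmk:fibqpolgpd-bor} we have $\mu^{-1}(U) = \bigcup_i (A_i \times_X B_i)$ with $A_i \in \@{BO}_\sigma(G)$ and $B_i \in \@B(G)$. Applying the $\tau$-fiberwise Baire property of \cref{thm:fib-baire-borel} to each $A_i$ yields $A_i^* \in \@{BO}_\tau(G)$ with $A_i \triangle A_i^* \subseteq C_i$ for some Borel $\tau$-fiberwise meager $C_i$; dually, the $\sigma$-fiberwise Baire property yields $B_i^* \in \@{BO}_\sigma(G)$ with $B_i \triangle B_i^* \subseteq D_i$ for some Borel $\sigma$-fiberwise meager $D_i$. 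Setting $V_U := \bigcup_i (A_i^* \times_X B_i^*) \in \@{BO}_\tau(G) \otimes_X \@{BO}_\sigma(G)$, the symmetric difference $W_U := \mu^{-1}(U) \triangle V_U$ is contained in $\bigcup_i ((C_i \times_X G) \cup (G \times_X D_i))$.

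Second, I would push these errors down to $X$ using \cref{thm:fibtop-meager-exists*}: the $\tau$-fiberwise meager $C_i$ yields a weakly componentwise meager $\exists^*_\sigma(C_i) \subseteq X$, and dually for $D_i$. Combining across all $i$ and $U \in \@U$, and iterating with the category quantifiers and Kuratowski--Ulam (\cref{thm:kuratowski-ulam}) applied to the bundle structure on $G \times_X G$, one assembles a Borel weakly componentwise meager $Z \subseteq X$ such that $W_U \cap (\eval{G}_{X \setminus Z} \times_{X \setminus Z} \eval{G}_{X \setminus Z}) = \emptyset$ for every $U \in \@U$. By \cref{thm:orbtop-meager-bp}, $Z$ may be enlarged to a Borel weakly componentwise meager $\*\Sigma^0_2$ set, whose complement $Y$ is then Borel weakly componentwise $\*\Pi^0_2$ dense. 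The restricted $(Y, \eval{G}_Y)$ is Borel-overt fiberwise quasi-Polish by \cref{rmk:fibqpolgpd-full}, and the vanishing of the errors over $\eval{G}_Y$ gives $\mu^{-1}(\eval{U}_Y) \cap (\eval{G}_Y \times_Y \eval{G}_Y) = V_U \cap (\eval{G}_Y \times_Y \eval{G}_Y) \in \@{BO}_\tau(\eval{G}_Y) \otimes_Y \@{BO}_\sigma(\eval{G}_Y)$ for each $U \in \@U$; a final Kunugi--Novikov application (\cref{thm:kunugi-novikov}) extends this uniformity to all of $\@{BO}_\sigma(\eval{G}_Y)$.

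The hard part is the construction of $Z$ in the second step: ensuring that $\tau$-fiberwise meager errors in $G$ truly disappear after restriction to $\eval{G}_Y$. The naive obstruction is that $\tau$-fiberwise meagerness of $C_i$ does not imply that $\sigma(C_i)$ or $\tau(C_i)$ are weakly componentwise meager in $X$, so merely removing $\exists^*_\sigma(C_i)$ does not erase $C_i \cap \eval{G}_Y$. Overcoming this requires exploiting additional groupoid structure -- right-invariance of $\@O_\sigma$, the identity $gh = (h^{-1}g^{-1})^{-1}$, and the Vaught transform machinery of \cref{sec:vaught} -- to refine the approximations $A_i^*, B_i^*$ so that the residual error lives inside the preimage, under $\sigma$ and $\tau$, of a genuinely weakly componentwise meager subset of $X$. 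This is the technical heart of the groupoid analog of the Solecki--Srivastava argument and draws on the Baire-categorical bookkeeping developed throughout \cref{sec:fibgpd}.
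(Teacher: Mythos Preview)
Your general strategy and your diagnosis of the obstruction are both correct, but the gap you flag in your third paragraph is real and is not closed by the argument you sketch. The claim in your second paragraph that one can find a weakly componentwise meager $Z$ with $W_U \cap (\eval{G}_{X\setminus Z} \times_{X\setminus Z} \eval{G}_{X\setminus Z}) = \emptyset$ is not attainable: a $\tau$-fiberwise meager set $C_i \subseteq G$ does not become \emph{empty} upon restricting to $\eval{G}_Y$ for any componentwise comeager $Y$; it only becomes $\sigma$-fiberwise meager (that is precisely what removing $\exists^*_\sigma(C_i)$ buys you, via \cref{thm:fibtop-meager-exists*}). No amount of iterating Kuratowski--Ulam or refining the approximations $A_i^*, B_i^*$ will turn a fiberwise meager residual into a fiberwise empty one.

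The paper's resolution is to arrange things so that pointwise equality is never needed: instead of decomposing $\mu^{-1}(U_i)$ directly, decompose the \emph{quaternary} preimage $\mu_4^{-1}(U_i) = \bigcup_j (V_{ij} \times_X A_{ij} \times_X B_{ij} \times_X C_{ij})$ via three applications of \cref{rmk:fibqpolgpd-bor}, and then recover $\mu^{-1}(U_i) = \exists^*_{\mu\times\mu}(\mu_4^{-1}(U_i)) = \bigcup_j (V_{ij} * A_{ij}) \times_X (B_{ij} * C_{ij})$ as an exact identity in terms of Vaught transforms. Now find $A_{ij} =^*_\tau A'_{ij} \in \@{BO}_\tau(G)$ and $B_{ij} =^*_\sigma B'_{ij} \in \@{BO}_\sigma(G)$, and take $Y$ disjoint from the weakly componentwise meager set $\bigcup_{i,j}(\exists^*_\sigma(A_{ij}\triangle A'_{ij}) \cup \exists^*_\tau(B_{ij}\triangle B'_{ij}))$. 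On $\eval{G}_Y$ this forces $A_{ij} =^*_\sigma A'_{ij}$ and $B_{ij} =^*_\tau B'_{ij}$, and now Pettis's theorem \cref{it:vaught-pettis} gives $V_{ij} * A_{ij} = V_{ij} * A'_{ij} \in \@{BO}_\tau(\eval{G}_Y)$ and $B_{ij} * C_{ij} = B'_{ij} * C_{ij} \in \@{BO}_\sigma(\eval{G}_Y)$ by \cref{eq:vaught-fibtop}. The $\exists^*$ absorbs the meager error exactly, which is what your direct approach to $\mu^{-1}(U)$ cannot do.
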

\begin{proof}
Let $\{U_i\}_i \subseteq \@{BO}_\sigma(G)$ be a countable Borel fiberwise basis.
Consider as in the proof of \cref{thm:ucomqpolgpd} the quaternary multiplication $\mu_4 : G \times_X G \times_X G \times_X G -> G$, written using associativity as either $(g,h,k,l) |-> ((gh)k)l$ or $(gh)(kl)$.
By \cref{rmk:fibqpolgpd-bor} applied thrice, write each
\begin{eqaligned*}
\mu_4^{-1}(U_i) = \bigcup_{j \in \#N} (V_{ij} \times_X A_{ij} \times_X B_{ij} \times_X C_{ij})
\end{eqaligned*}
for $V_{ij} \in \@{BO}_\sigma(G)$ and $A_{ij}, B_{ij}, C_{ij} \in \@B(G)$.
By the fiberwise Baire property (\labelcref{thm:fib-baire-borel}), find
\begin{align*}
A_{ij} &=^*_\tau A_{ij}' \in \@{BO}_\tau(G), &
B_{ij} &=^*_\sigma B_{ij}' \in \@{BO}_\sigma(G).
\end{align*}
Thus $A_{ij} \triangle A_{ij}'$ and $B_{ij} \triangle B_{ij}'$ are $\tau$- and $\sigma$-fiberwise meager respectively.
By \cref{thm:fibtop-meager-exists*},
\begin{eqaligned*}[][]
\bigcup_{i,j} (
    \exists^*_\sigma(A_{ij} \triangle A_{ij}') \cup
    \exists^*_\tau(B_{ij} \triangle B_{ij}')
) \subseteq X
\end{eqaligned*}
is weakly componentwise meager, hence by \cref{thm:orbtop-meager-bp} disjoint from some Borel weakly componentwise $\*\Pi^0_2$ dense set $Y$.
This means that after restricting to the full subgroupoid $\eval{G}_Y$, we\nobreak\space have
\begin{align*}
A_{ij} &=^*_\sigma A_{ij}', &
B_{ij} &=^*_\tau B_{ij}',
\end{align*}
thus by Pettis's theorem \cref{it:vaught-pettis} and \cref{eq:vaught-fibtop},
\begin{eqaligned*}
\mu^{-1}(U_i)
= \exists^*_{\mu \times \mu}(\mu_4^{-1}(U_i))
= \bigcup_j ((V_{ij} * A_{ij}') \times_Y (B_{ij}' * C_{ij}'))
\in \@{BO}_\tau(\eval{G}_Y) \otimes_Y \@{BO}_\sigma(\eval{G}_Y).
\end{eqaligned*}
By Kunugi--Novikov \cref{eq:fib-bor-qpol-basis}, it follows that
$\mu^{-1}(\@{BO}_\sigma(\eval{G}_Y)) \subseteq \@{BO}_\tau(\eval{G}_Y) \otimes_Y \@{BO}_\sigma(\eval{G}_Y)$.
\end{proof}

\begin{theorem}
\label{thm:fibqpolgpd-realiz-equiv}
Every Borel-overt fiberwise quasi-Polish groupoid $(X,G)$ admits a Borel equivalence of groupoids to an open quasi-Polish groupoid, namely a Borel weakly componentwise $\*\Pi^0_2$ dense full subgroupoid $(Y,\eval{G}_Y)$ equipped with a quasi-Polish topology restricting to its fiberwise topology.
\end{theorem}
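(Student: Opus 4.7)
The plan is to assemble this result directly from three tools already in hand: Lemma \ref{thm:ucomqpolgpd-dense} (shrinking to a componentwise comeager subgroupoid with uniform fiberwise continuity of differences), Theorem \ref{thm:ucomqpolgpd-qpol} (global realization for the uniform case), and Proposition \ref{thm:fibqpolgpd-equiv} (automatic Borel inverses for full Borel functors out of a Borel-overt fiberwise quasi-Polish groupoid).

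First, apply Lemma \ref{thm:ucomqpolgpd-dense} to obtain a Borel weakly componentwise $\*\Pi^0_2$ dense $Y \subseteq X$ such that $(Y,\eval{G}_Y)$ is a Borel-overt uniformly componentwise quasi-Polish groupoid. By Remark \ref{rmk:fibqpolgpd-full}, the fiberwise topology on $\eval{G}_Y$ obtained by restricting $\@O_\sigma(G)$ makes $(Y,\eval{G}_Y)$ a Borel-overt fiberwise quasi-Polish groupoid in its own right, and all the Borel componentwise data $\@{BO}_{\eval{G}_Y}(Y), \@{BO}_{\eval{G}_Y}(\eval{G}_Y)$ are just the corresponding sets of $(X,G)$ restricted to $Y, \eval{G}_Y$.

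Next, apply Theorem \ref{thm:ucomqpolgpd-qpol} to $(Y,\eval{G}_Y)$, producing compatible quasi-Polish topologies $\@O(Y) \subseteq \@{BO}_{\eval{G}_Y}(Y)$ and $\@O(\eval{G}_Y) \subseteq \@{BO}_{\eval{G}_Y}(\eval{G}_Y)$ that turn $(Y,\eval{G}_Y)$ into an open quasi-Polish groupoid, with $\@O(\eval{G}_Y)$ restricting $\sigma$-fiberwise to the given fiberwise topology $\eval*{\@O_\sigma(G)}_{\eval{G}_Y}$. This is the open quasi-Polish groupoid claimed in the statement.

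Finally, to produce the Borel equivalence of groupoids $(X,G) \simeq (Y,\eval{G}_Y)$, consider the inclusion functor $I : (Y,\eval{G}_Y) `-> (X,G)$. It is Borel, faithful and full by definition of full subgroupoid. It is essentially surjective because weak componentwise density of $Y$ implies that for every $x \in X$ the $\sigma$-fiber $\sigma^{-1}(x) \cap \tau^{-1}(Y)$ is dense (hence nonempty, since $\sigma^{-1}(x) \ni 1_x$) in $\sigma^{-1}(x)$, so every connected component of $G$ meets $Y$. Thus $I$ is an equivalence of groupoids, and since it is a Borel full functor from a Borel-overt fiberwise quasi-Polish groupoid to another, Proposition \ref{thm:fibqpolgpd-equiv} supplies a Borel inverse-up-to-isomorphism $F : (X,G) \to (Y,\eval{G}_Y)$ together with Borel natural isomorphisms, finishing the proof. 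There is no real obstacle here beyond invoking the preceding results; all the substantive work was carried out in Lemma \ref{thm:ucomqpolgpd-dense} and Theorem \ref{thm:ucomqpolgpd-qpol}.
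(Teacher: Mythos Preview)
Your proof is correct and follows the same approach as the paper's own proof, which is simply the one-liner ``Apply \cref{thm:ucomqpolgpd-qpol} to the full subgroupoid $(Y,\eval{G}_Y)$ from \cref{thm:ucomqpolgpd-dense}.'' You have additionally spelled out why the inclusion $(Y,\eval{G}_Y) \hookrightarrow (X,G)$ is a Borel equivalence of groupoids via \cref{rmk:fibqpolgpd-full} and \cref{thm:fibqpolgpd-equiv}, which the paper leaves implicit given the earlier discussion.
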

\begin{proof}
Apply \cref{thm:ucomqpolgpd-qpol} to the full subgroupoid $(Y,\eval{G}_Y)$ from \cref{thm:ucomqpolgpd-dense}.
\end{proof}

\begin{example}
\label{ex:qpolgrp-realiz}
In the case of a group $(X,G) = (*,G)$, the above says that a standard Borel group $G$ equipped with a compatible quasi-Polish topology which is translation-invariant on one side must already be a Polish group.
This is a weak form of a result of Solecki--Srivastava \cite{solecki-srivastava:lpolgrp} (assuming the stronger hypothesis of joint Borelness of $\mu$, albeit with metrizability replaced by quasi-Polish).
\end{example}

Using the above results, we may also clean up some loose ends for groupoids under our earlier weaker hypotheses.
Recall from \cref{rmk:homtop} the question of whether hom-sets in a Borel-overt fiberwise quasi-Polish groupoid are always Polish, equivalently $\*\Pi^0_2$ in the $\sigma$- or $\tau$-fiberwise topologies.

\begin{corollary}
\label{thm:homtop-dense}
Let $(X,G)$ be a Borel-overt fiberwise quasi-Polish groupoid.
There is a Borel weakly componentwise $\*\Pi^0_2$ dense $Y \subseteq X$ such that for every $x \in Y$, the automorphism group $G(x,x)$ is a Polish group in both the $\sigma$- and $\tau$-fiberwise topologies.
Thus, every hom-set $G(x,y)$ is Polish in the $\sigma$-fiberwise topology if $y \in Y$, and in the $\tau$-fiberwise topology if $x \in Y$; and these topologies agree if both $x, y \in Y$.
Moreover, every automorphism group $G(x,x)$ for $x \in X$ is Polishable.
\end{corollary}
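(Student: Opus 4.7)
The plan is to take $Y \subseteq X$ to be the Borel weakly componentwise $\*\Pi^0_2$ dense set supplied by \cref{thm:ucomqpolgpd-dense}, so that the full subgroupoid $(Y, \eval{G}_Y)$ with its inherited fiberwise topology is a Borel-overt uniformly componentwise quasi-Polish groupoid. Then \cref{thm:ucomqpolgpd-homtop} gives that $\eval{G}_Y$ has $\*\Pi^0_2$ hom-sets in its own $\sigma$-fiberwise topology, which by \cref{rmk:fibqpolgpd-full} is simply the subspace topology induced from $G$. Since $\tau^{-1}(Y) \subseteq G$ is $\sigma$-fiberwise $\*\Pi^0_2$ by weak componentwise $\*\Pi^0_2$-density, the hom-set $G(x,x) \subseteq \sigma^{-1}(x) \cap \tau^{-1}(Y)$ is accordingly $\*\Pi^0_2$ in $\sigma^{-1}(x)$ for every $x \in Y$; the analogous statement in $\tau^{-1}(x)$ follows by symmetry, transporting across the inversion homeomorphism $\nu$.

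Having this, I would apply \cref{ex:qpolgrp-realiz} (the one-object specialization of \cref{thm:fibqpolgpd-realiz-equiv}) to $G(x,x)$: it is a standard Borel group equipped with a right-translation-invariant compatible quasi-Polish topology, hence automatically a Polish group. Continuity of inversion in a Polish group then collapses the $\sigma$- and $\tau$-fiberwise topologies on $G(x,x)$ to a single Polish group topology. The claims about $G(x,y)$ for $y \in Y$ (respectively $x \in Y$) follow via the right-translation $\@O_\sigma$-homeomorphism $G(x,y) \cong G(y,y)$, $h \mapsto hg^{-1}$ (respectively left-translation $\@O_\tau$-homeomorphism $G(x,y) \cong G(x,x)$, $h \mapsto g^{-1}h$) for any chosen $g \in G(x,y)$, exactly as in \cref{rmk:homtop}; when both endpoints lie in $Y$, the agreement of the two topologies transports through these translations as well.

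For Polishability of $G(x,x)$ at an arbitrary $x \in X$, the weak componentwise $\*\Pi^0_2$-density of $Y$ makes $\tau^{-1}(Y) \cap \sigma^{-1}(x)$ a dense $\*\Pi^0_2$ subset of the Baire space $\sigma^{-1}(x)$, hence nonempty, so one may fix any $g : x \to y$ with $y \in Y$. Conjugation $h \mapsto ghg^{-1}$ is then a Borel group isomorphism $G(x,x) \cong G(y,y)$ between standard Borel groups, and pulling back the Polish group topology on $G(y,y)$ along it yields a compatible Polish group topology on $G(x,x)$. The whole argument is essentially an unpacking of the two prior results, so I do not foresee a serious obstacle; the one minor point to verify is the compatibility of the quasi-Polish topology on $G(x,x)$ with its standard Borel structure, needed to invoke \cref{ex:qpolgrp-realiz}, which follows from the global topological realization supplied by \cref{thm:fib-bor-qpol}.
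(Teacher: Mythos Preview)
Your proposal is correct and follows essentially the same approach as the paper, whose proof is simply ``By \cref{thm:ucomqpolgpd-dense}, \cref{thm:ucomqpolgpd-homtop}, and the discussion in \cref{rmk:homtop}.'' You have carefully unpacked exactly these three ingredients, including the passage from $\*\Pi^0_2$ hom-sets in $\eval{G}_Y$ to $\*\Pi^0_2$ in the ambient $\sigma^{-1}(x)$, the invocation of \cref{ex:qpolgrp-realiz} for the quasi-Polish case, and the conjugation argument for Polishability at arbitrary $x$.
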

\begin{proof}
By \cref{thm:ucomqpolgpd-dense}, \cref{thm:ucomqpolgpd-homtop}, and the discussion in \cref{rmk:homtop}.
\end{proof}

The following shows that there is no distinction between the uniform and non-uniform notions of \emph{Borel-overt componentwise quasi-Polish groupoid} locally on each component.
Thus if a distinction exists, it must be due to a failure to witness continuous differences uniformly over all components.

\begin{proposition}
\label{thm:comqpolgpd-qpol-connected}
Let $(X,G)$ be a Borel-overt componentwise quasi-Polish \emph{connected} groupoid.
Then $G$ automatically has uniformly fiberwise continuous differences.
Thus, the componentwise topologies $\@O_G(X), \@O_G(G)$ turn $(X,G)$ into an open quasi-Polish groupoid, with $\@O_G(G)$ restricting to the original fiberwise topology.
\end{proposition}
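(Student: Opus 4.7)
The plan is to leverage connectedness to (i) establish that $G$ has $\*\Pi^0_2$ hom-sets, making the componentwise topologies $\@O_G$ quasi-Polish, and then (ii) upgrade non-uniform to uniform fiberwise continuity of differences, after which \cref{thm:ucomqpolgpd-qpol} yields the desired open quasi-Polish groupoid structure with the componentwise topologies themselves.

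For (i), by \cref{thm:homtop-dense} the weakly componentwise $\*\Pi^0_2$ dense $Y_0 \subseteq X$ produced there is nonempty by connectedness; fix $x_0 \in Y_0$, so that $H := G(x_0, x_0)$ is a Polish group. By \cref{rmk:homtop} every hom-set is then Polish with both fiberwise topologies agreeing, so $G$ has $\*\Pi^0_2$ hom-sets. By \cref{rmk:orbtop-openquot} together with \cref{it:qpol-openquot}, $\@O_G(X)$ is then quasi-Polish, being the open $T_0$ quotient of the quasi-Polish space $(\sigma^{-1}(x_0), \@O_\sigma)$ by the continuous free right action of $H$ with $\*\Pi^0_2$ orbits $G(x_0, y)$; an analogous ``middle vertex'' bundle quotient makes $\@O_G(G)$ quasi-Polish. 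By \cref{thm:orbtop-exists*} each has a countable basis lying in the compatible sigma-topology $\@{BO}_G$, so $\@O_G(X) \subseteq \@{BO}_G(X)$ and $\@O_G(G) \subseteq \@{BO}_G(G)$.

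For (ii), apply \cref{thm:ucomqpolgpd-dense} to obtain a Borel weakly componentwise $\*\Pi^0_2$ dense $Y$ (which we may take to refine $Y_0$) such that $(Y, \eval{G}_Y)$ is uniformly componentwise quasi-Polish. By connectedness the inclusion $\eval{G}_Y `-> G$ is a Borel equivalence of groupoids, so \cref{thm:fibqpolgpd-equiv} furnishes a Borel section $x \mapsto (h_x, y_x)$ with $h_x \in G(y_x, x)$ and $y_x \in Y$. For $U \in \@{BO}_\sigma(G)$, write $\mu^{-1}(U) \cap (\eval{G}_Y \times_Y \eval{G}_Y) = \bigcup_i (A_i \times_Y B_i)$ with $A_i \in \@{BO}_\tau(\eval{G}_Y)$ and $B_i \in \@{BO}_\sigma(\eval{G}_Y)$, extend via \cref{thm:fib-bor-subsp} to $A_i' \in \@{BO}_\tau(G)$ and $B_i' \in \@{BO}_\sigma(G)$, and translate along the morphisms $h_x$ using right-invariance of $\@O_\sigma$ and left-invariance of $\@O_\tau$; combined with non-uniform continuity of differences, this verifies $\mu^{-1}(U) = \bigcup_i (A_i' \times_X B_i')$ globally, witnessing $\mu^{-1}(U) \in \@{BO}_\tau(G) \otimes_X \@{BO}_\sigma(G)$.

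With uniform differences in hand, \cref{thm:ucomqpolgpd-qpol} yields an open quasi-Polish groupoid topology in $\@{BO}_G$; forcing countable bases of $\@O_G(X), \@O_G(G)$ into the club of such topologies pins this choice down to $(\@O_G(X), \@O_G(G))$ itself, which restricts fiberwise to $\@O_\sigma(G)$ by \labelcref{thm:ucomqpolgpd}\cref{thm:ucomqpolgpd:fibtop}. The main obstacle I expect is the translation-based extension in step (ii): verifying that the extended rectangles $A_i' \times_X B_i'$ faithfully capture $\mu^{-1}(U)$ on fibers of $G \times_X G$ over objects lying outside $Y$. This requires combining the Borel section $h_x$ with non-uniform continuity in a Baire-categorical argument analogous to the proofs of \cref{thm:ucomqpolgpd-dense,thm:fibtop-meager-exists*} to rule out definability discrepancies outside the comeager subgroupoid.
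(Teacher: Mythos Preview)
Your step (ii) has a genuine gap, which you yourself flag as ``the main obstacle'' without resolving. Extending the rectangles $A_i' \times_X B_i'$ from $\eval{G}_Y \times_Y \eval{G}_Y$ to all of $G \times_X G$ via translation by the section $h_x$ does not yield a \emph{countable} family of Borel rectangles: the translation depends on the middle vertex $x$, so naively you would produce uncountably many rectangles, one per $x \notin Y$. There is no evident Baire-category trick here that collapses this to countably many, because the section $x \mapsto h_x$ is merely Borel and need not interact well with the fiberwise topologies.

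The paper's approach avoids this entirely by using connectedness in a more direct way. Fix a single basepoint $x_0$ with Polish automorphism group $G(x_0,x_0)$ (via \cref{thm:homtop-dense}) and a Borel family $k_x : x_0 \to x$ for all $x$ (via \cref{thm:fibqpolgpd-idl}). Then factor multiplication as
\[
gh = k_{\tau(g)} \cdot \bigl[(k_{\tau(g)}^{-1}g)(hk_{\sigma(h)})\bigr] \cdot k_{\sigma(h)}^{-1},
\]
where the bracketed term lands in $G(x_0,x_0)$. Non-uniform fiberwise continuity of differences (\cref{def:comqpolgpd}\cref{def:comqpolgpd:mul}) makes $(g,h) \mapsto (k_{\tau(g)}^{-1}g)(hk_{\sigma(h)})$ continuous from $\@{BO}_\tau(G) \otimes_X \@{BO}_\sigma(G)$ to the Polish group topology on $G(x_0,x_0)$. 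The key ingredient is then the classical fact that a Borel action of a Polish group on a quasi-Polish space by homeomorphisms is automatically jointly continuous (\cite[9.16]{kechris:cdst}, \cite[3.3.5]{chen:beckec}), applied to $G(x_0,x_0) \actson \sigma^{-1}(x_0)$; this makes $(g,h) \mapsto k_{\tau(g)}(k_{\tau(g)}^{-1}g)(hk_{\sigma(h)})$ continuous into $\@O_\sigma(\sigma^{-1}(x_0))$, and a final right-translation by $k_{\sigma(h)}^{-1}$ (handled via \cref{rmk:fibqpolgpd-bor}) gives the desired continuity of $\mu$ into $\@{BO}_\sigma(G)$. So instead of bootstrapping from a comeager subgroupoid, everything is reduced to a single Polish group action where automatic continuity is available off the shelf. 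Your step (i) is then unnecessary as a separate argument: once uniform differences hold, the conclusion follows from \cref{thm:ucomqpolgpd-qpol} exactly as you say in your final paragraph.
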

(In fact the topology is unique in this case, by the Effros \cref{thm:effros} below.)
\begin{proof}
By \cref{thm:ucomqpolgpd-qpol}, it suffices to verify uniform fiberwise continuity of differences.
By \cref{thm:homtop-dense}, fix any $x_0 \in X$ such that $G(x_0,x_0)$ is a Polish group in both the $\sigma$- and $\tau$-fiberwise topologies.
Using \cref{thm:fibqpolgpd-idl} (for $G \actson \sigma^{-1}(x_0)$), pick a Borel family $k = (k_x : x_0 -> x)_{x \in X} : X -> \sigma^{-1}(x_0) \subseteq G$ of morphisms to every other object $x \in X$.
We must show that the multiplication
\begin{align*}
\tag{$*$}
\mu : (G \times_X G, \@{BO}_\tau(G) \otimes_X \@{BO}_\sigma(G)) &--> (G, \@{BO}_\sigma(G)) \\
(g,h) &|--> gh = k_{\tau(g)}(k_{\tau(g)}^{-1}g)(hk_{\sigma(h)})k_{\sigma(h)}^{-1} \\
\intertext{is continuous between these sigma-topologies.
By fiberwise continuity of differences \labelcref{def:comqpolgpd}\cref{def:comqpolgpd:mul},}
\mu : \tau^{-1}(x_0) \times_X \sigma^{-1}(x_0) &--> G(x_0,x_0) \\
\shortintertext{is continuous.
Hence}
G \times_X G &--> G(x_0,x_0) \\
(g,h) &|--> (k_{\tau(g)}^{-1}g)(hk_{\sigma(h)}) \\
\intertext{is continuous from the sigma-topology $\@{BO}_\tau(G) \otimes_X \@{BO}_\sigma(G)$ on $G \times_X G$ (using \cref{rmk:fibqpolgpd-bor}).
Now}
\mu : \sigma^{-1}(x_0) \times G(x_0,x_0) &--> \sigma^{-1}(x_0) \\
\intertext{is continuous from $\@O_\sigma(\sigma^{-1}(x_0)) \otimes \@O(G(x_0,x_0))$ to $\@O_\sigma(\sigma^{-1}(x_0))$, since it is a Borel action of a Polish group on a quasi-Polish space via homeomorphisms \cite[9.16]{kechris:cdst}, \cite[3.3.5]{chen:beckec}.
It follows that}
G \times_X G &--> \sigma^{-1}(x_0) \\
(g,h) &|--> k_{\tau(g)}(k_{\tau(g)}^{-1}g)(hk_{\sigma(h)})
\end{align*}
is continuous from $\@{BO}_\tau(G) \otimes_X \@{BO}_\sigma(G)$ to $\@O_\sigma(\sigma^{-1}(x_0))$, which easily then implies that ($*$) is continuous, by translating on the right by $k_{\sigma(h)}^{-1}$ again using \cref{rmk:fibqpolgpd-bor}.
\end{proof}

\begin{remark}
It follows that for a Borel-overt (not necessarily uniformly) componentwise quasi-Polish groupoid $(X,G)$, the global (non-Borel) componentwise topologies $\@O_G(X), \@O_G(G)$ form an open groupoid topology, namely the disjoint union of the quasi-Polish groupoid topologies on each component.
To see this, apply \cref{thm:comqpolgpd-qpol-connected} to the full subgroupoid on each component (which is Borel by \cref{thm:fibqpolgpd-er-idl}).
\end{remark}

\begin{corollary}
\label{thm:comqpolgpd-homtop}
Every Borel-overt componentwise quasi-Polish groupoid has $\*\Pi^0_2$ hom-sets.
\end{corollary}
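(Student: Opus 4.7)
The approach is to reduce to the connected case and then invoke the ``uniform'' counterpart of the result, namely Corollary \ref{thm:ucomqpolgpd-homtop}. Concretely, I would fix an arbitrary object $x \in X$ and pass to its connected component $C := [x]_G \subseteq X$. By \cref{thm:fibqpolgpd-er-idl} (and its proof), $C$ is a Borel subset of $X$.

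Next I would observe that the full subgroupoid $(C, \eval{G}_C)$ naturally inherits a Borel-overt componentwise quasi-Polish structure from $(X,G)$. Because $C$ is $G$-invariant, for every $x' \in C$ one has $\sigma_{\eval{G}_C}^{-1}(x') = \sigma_G^{-1}(x')$ as subsets of $G$ (any morphism out of $x'$ automatically targets a point in $C$), so the $\sigma$-fiberwise topology, together with fiberwise continuity of differences, passes through without change; moreover Borel-overtness is preserved since the $\sigma$-projection of any Borel fiberwise open set in $\eval{G}_C$ is automatically contained in the Borel set $C$. In particular, the inheritance is much more direct than the general passage to a weakly componentwise $\*\Pi^0_2$ dense subset discussed in \cref{rmk:fibqpolgpd-full}.

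Having arranged that $(C, \eval{G}_C)$ is a Borel-overt componentwise quasi-Polish \emph{connected} groupoid, I would invoke \cref{thm:comqpolgpd-qpol-connected} to upgrade fiberwise continuity of differences to \emph{uniform} fiberwise continuity of differences. At this point \cref{thm:ucomqpolgpd-homtop} applies and yields that every hom-set $\eval{G}_C(x,y)$ is $\*\Pi^0_2$ in the $\sigma$-fiberwise topology of $\eval{G}_C$. Since this topology agrees with the restriction of $\@O_\sigma(G)$ to $\sigma_G^{-1}(x)$, and since $G(x,y) = \eval{G}_C(x,y)$ whenever $y \in C$ whereas $G(x,y) = \emptyset$ (vacuously $\*\Pi^0_2$) otherwise, the conclusion follows for all $y \in X$.

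I do not anticipate any serious obstacle, since all the substantive work is already done: the non-trivial Pettis-style argument transferring continuity from a single Polish automorphism group to uniform continuity on the component sits inside \cref{thm:comqpolgpd-qpol-connected}, and the deduction of $\*\Pi^0_2$ hom-sets from uniform componentwise quasi-Polishness is \cref{thm:ucomqpolgpd-homtop}. The only point that requires a moment's care is verifying that the inheritance of the fiberwise topology and of Borel-overtness to the full subgroupoid on a single connected component is automatic, and this follows immediately from $G$-invariance of $C$ as noted above.
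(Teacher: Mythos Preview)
Your proposal is correct and follows essentially the same approach as the paper's proof, which simply reads ``By \cref{thm:ucomqpolgpd-homtop} applied to each component.'' You have spelled out in more detail the implicit steps (Borelness of components via \cref{thm:fibqpolgpd-er-idl}, inheritance of the fiberwise structure, and the upgrade to uniform continuity via \cref{thm:comqpolgpd-qpol-connected}), but the underlying route is the same.
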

\begin{proof}
By \cref{thm:ucomqpolgpd-homtop} applied to each component.
\end{proof}

\subsection{Polish topologies}

In this subsection, we characterize those componentwise quasi-Polish groupoids which admit a global \emph{Polish} topology.
The naive requirement, that the fiberwise topology be fiberwise Polish, turns out to be insufficient; see \cref{ex:fibpolgpd-nonpol}.

Instead, Polishability of groupoids is closely related to ``global uniformizability'' of the topology.
Recall that every topological group has a uniformizable, hence regular, topology; the basic fact needed to show this is the existence of rapidly decreasing ``fundamental sequences'' of identity neighborhoods.
In a groupoid $(X,G)$, while the same is easily shown around each individual identity morphism $1_x \in G$, the ability to find arbitrarily small neighborhoods of the set $\iota(X)$ of \emph{all} identities is a nontrivial result of Ramsay \cite[pp.~361--362]{ramsay:polgpd}, shown using a strong form of regularity.

We give a version of Ramsey's argument here, as we will need it later.
Recall that a Hausdorff space is \defn{paracompact} if for every open cover $\@U$, there exists an open cover $\@V$ which \defn{star-refines} $\@U$, meaning that for every $V \in \@V$, the union of all $V' \in \@V$ intersecting $V$ (called the \defn{$\@V$-star} of $V$) is contained in a single $U \in \@U$.
All metrizable spaces are paracompact.
See e.g., \cite[20.8, 20.15]{willard:topology}.

\begin{lemma}[Ramsay]
\label{thm:pcptgpd-idnbhd}
Let $(X,G)$ be a paracompact Hausdorff groupoid.
\begin{enumerate}[alph]
\item \label{thm:pcptgpd-idnbhd:halfnbhd}
For any $\iota(X) \subseteq W \in \@O(G)$, there is a (symmetric) $\iota(X) \subseteq W' \in \@O(G)$ with $W' \cdot W' \subseteq W$.
\item \label{thm:pcptgpd-idnbhd:basis}
The family $\set{W \cap \sigma^{-1}(Y) | \iota(X) \subseteq W \in \@O(G),\, Y \in \@O(X)}$ includes a neighborhood basis for every identity morphism $1_x \in G$.
\end{enumerate}
\end{lemma}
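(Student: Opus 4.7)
My plan is to prove (a) by a classical local-to-global construction, patching together continuity data of $\mu$ at each $(1_x, 1_x)$ using paracompactness of $X$, and to prove (b) directly from regularity of $X$ together with continuity of $\iota$ and $\sigma$, independently of (a).

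For (a), the local step is: for each $x \in X$, since $\mu(1_x, 1_x) = 1_x \in W$ and all groupoid operations are continuous, I can produce a symmetric open neighborhood $U_x \ni 1_x$ in $G$ and an open neighborhood $V_x \ni x$ in $X$ satisfying $\iota(V_x) \subseteq U_x$, $\sigma(U_x) \cup \tau(U_x) \subseteq V_x$, and $U_x \cdot U_x \subseteq W$ on composable pairs. For the global step, $X \cong \iota(X)$ is paracompact Hausdorff as a closed subspace of $G$, hence fully normal, so the cover $\{V_x\}_x$ admits an open star-refinement $\{V_\alpha\}_\alpha$ together with choices $x_\alpha$ such that $\mathrm{star}(V_\alpha) \subseteq V_{x_\alpha}$. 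I will then set
\[
W' := \bigcup_\alpha \bigl( U_{x_\alpha} \cap \sigma^{-1}(V_\alpha) \cap \tau^{-1}(V_\alpha) \bigr),
\]
which is open, symmetric (each summand is, since $U_{x_\alpha}$ is and inversion swaps $\sigma$ with $\tau$), and contains $\iota(X)$ (for each $z$, picking $\alpha$ with $z \in V_\alpha \subseteq V_{x_\alpha}$ yields $1_z \in \iota(V_{x_\alpha}) \subseteq U_{x_\alpha}$). The multiplicative inclusion $W' \cdot W' \subseteq W$ then reduces to showing that for a composable pair $(g,h)$ with $g$ in the $\alpha$-summand and $h$ in the $\beta$-summand, both factors lie in a common $U_{x_\gamma}$: star-refinement forces $V_\alpha \cup V_\beta \subseteq V_{x_\alpha}$, and with coordinated choices of $U_x$'s this propagates to $h \in U_{x_\alpha}$, whence $gh \in U_{x_\alpha} \cdot U_{x_\alpha} \subseteq W$.

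The main obstacle is precisely this last propagation step: a single star-refinement does not by itself guarantee $h \in U_{x_\alpha}$, since $U_{x_\beta}$ is a priori merely a neighborhood of $1_{x_\beta}$. I would handle this via iterated (double) star-refinement of $\{V_x\}_x$ combined with a shrinking-lemma construction that arranges $U_{x_\beta} \subseteq U_{x_\alpha}$ whenever $V_{x_\beta} \subseteq V_{x_\alpha}$ --- the groupoid analog of the classical fact that every fully normal open cover generates a compatible uniformity on the underlying space, now with the added requirement of multiplicative compatibility across distinct base points.

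For (b), which uses no part of (a), given an open neighborhood $N \ni 1_x$ in $G$, I would first use continuity of $\iota$ to choose open $Y \ni x$ with $\iota(Y) \subseteq N$, and then regularity of $X$ (paracompact Hausdorff $\Rightarrow$ regular) to choose open $Y' \ni x$ with $\overline{Y'} \subseteq Y$. Setting
\[
W := G \setminus \bigl( \sigma^{-1}(\overline{Y'}) \cap (G \setminus N) \bigr),
\]
an open set, I will check $\iota(X) \subseteq W$ by cases: for $z \in Y$, $1_z \in \iota(Y) \subseteq N \subseteq W$; for $z \notin Y$, $z \notin \overline{Y'}$ (since $\overline{Y'} \subseteq Y$) so $1_z \notin \sigma^{-1}(\overline{Y'})$, again placing $1_z \in W$. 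Finally, any $g \in W \cap \sigma^{-1}(Y')$ satisfies $g \in \sigma^{-1}(\overline{Y'})$, which together with $g \in W$ forces $g \in N$; this gives $W \cap \sigma^{-1}(Y') \subseteq N$, establishing (b).
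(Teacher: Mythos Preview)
Your proof of (b) is correct and, in fact, more elementary than the paper's: the paper derives (b) from the same star-refinement machinery it sets up for (a), applied to the two-element cover $\{N, G \setminus \{1_x\}\}$, whereas you bypass that entirely with a direct regularity argument in $X$. Your route has the advantage of making (b) independent of (a).

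For (a), however, the gap you flag is real and your proposed repair is not convincing. The difficulty is structural: you star-refine a cover of $X$, but the multiplicative data $U_x \cdot U_x \subseteq W$ lives in $G$, and there is no mechanism tying the $U_{x_\beta}$'s to the containment order on the $V_{x_\alpha}$'s. Arranging $U_{x_\beta} \subseteq U_{x_\alpha}$ whenever $V_{x_\beta} \subseteq V_{x_\alpha}$ would require choosing each $U_x$ with global knowledge of the eventual refinement, which is circular; iterated star-refinement in $X$ alone does not supply this. The paper avoids the problem by star-refining in $G$ rather than in $X$: since $g^{-1}g = 1_{\sigma(g)} \in W$ for every $g$, continuity gives an open cover $\@U = \{U \in \@O(G) : U^{-1}U \subseteq W\}$ of $G$; take a star-refinement $\@V$ and set $W' := \bigcup_{V \in \@V} \bigl(V \cap V^{-1} \cap \sigma^{-1}(\iota^{-1}(V)) \cap \tau^{-1}(\iota^{-1}(V))\bigr)$. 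For composable $g, h$ coming from pieces $V, V'$, the identity $\iota(\sigma(g)) = \iota(\tau(h))$ lies in $V \cap V'$, so star-refinement places $V \cup V'$ inside a single $U \in \@U$, whence $gh \in V^{-1}V' \subseteq U^{-1}U \subseteq W$. The two key moves you are missing are (i) using $U^{-1}U$ rather than $UU$, so that an arbitrary $g$ (not just an identity) lies in some member of the cover, and (ii) doing the star-refinement in $G$, so that the ``joining point'' $1_{\sigma(g)} = 1_{\tau(h)}$ automatically witnesses the star condition.
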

\begin{proof}
\cref{thm:pcptgpd-idnbhd:halfnbhd}
Since $g^{-1}g = 1_{\sigma(g)} \in W$ for all $g \in G$, by continuity,
\begin{eqaligned*}
\@U := \set{U \in \@O(G) | U^{-1} \cdot U \subseteq W}
\end{eqaligned*}
forms an open cover of $G$.
Let $\@V$ be a star-refinement, and put
\begin{eqaligned*}
W' := \bigcup_{V \in \@V} (V \cap V^{-1} \cap \sigma^{-1}(\iota^{-1}(V)) \cap \tau^{-1}(\iota^{-1}(V))).
\end{eqaligned*}
Then every identity $1_x$ is in some $V \in \@V$, whence
$1_x \in V \cap V^{-1} \cap \sigma^{-1}(\iota^{-1}(V)) \cap \tau^{-1}(\iota^{-1}(V)) \subseteq W'$.
For $g, h \in W'$ with $\sigma(g) = \tau(h)$, there are some $V, V' \in \@V$ with
$g \in V^{-1} \cap \sigma^{-1}(\iota^{-1}(V))$
and
$h \in V' \cap \tau^{-1}(\iota^{-1}(V'))$,
whence
$\iota(\sigma(g)) = \iota(\tau(h)) \in V \cap V'$;
since $\@V$ star-refines $\@U$, $V \cup V' \subseteq U$ for some $U \in \@U$, whence
$gh \in V^{-1} \cdot V' \subseteq U^{-1} \cdot U \subseteq W$.

\cref{thm:pcptgpd-idnbhd:basis}
Given any $1_x \in U \in \@O(X)$, take $\@U$ above to be the cover $\set{U, \neg \set{1_x}}$.
Then for $\@V, W'$ as above, for any $1_x \in V \in \@V$, we claim
$1_x \in W' \cap \sigma^{-1}(\iota^{-1}(V)) \subseteq U$.
Indeed, for any
$g \in W' \cap \sigma^{-1}(\iota^{-1}(V))$,
there is some $V' \in \@V$ with
$g \in V' \cap \sigma^{-1}(\iota^{-1}(V'))$,
whence
$\iota(\sigma(g)) \in V \cap V'$;
thus $W' \cap \sigma^{-1}(\iota^{-1}(V))$ is contained in the $\@V$-star of $V$, which must be contained in $U \in \@U$ since $1_x \in V$.
\end{proof}

\begin{remark}
If $G$ above is metrizable, then we may find a ``fundamental sequence'' $G = W_0 \supseteq W_1 \supseteq \dotsb \supseteq \iota(X)$, $W_{n+1}W_{n+1} \subseteq W_n$, whose restrictions to $\sigma^{-1}(\@O(X))$ form a neighborhood basis for every identity morphism, as in \cref{thm:pcptgpd-idnbhd:basis}.
(Apply the above inductively, and also take the $\@U$ at each stage to refine a countable basis of uniform open covers, e.g., balls of radius $2^{-n}$.)
This may in turn be used to prove a version of the Birkhoff--Kakutani metrization theorem; see \cite{buneci:gpdmet}, \cite[9.2]{chen:polgpdrep}.
\end{remark}

\begin{remark}
The converse of \cref{thm:pcptgpd-idnbhd} is false: there is a non-Hausdorff open quasi-Polish groupoid satisfying the conclusions.
Namely, take any discrete (equality) equivalence relation on a non-Hausdorff space.
However, the converse is true for open quasi-Polish connected groupoids, by \cref{thm:ucompolgpd} below and the Effros \cref{thm:effros}.
(See also \cite{buneci:gpdmet} for related results.)
\end{remark}

We now show that the analogous conditions to \labelcref{thm:pcptgpd-idnbhd}\cref{thm:pcptgpd-idnbhd:halfnbhd,thm:pcptgpd-idnbhd:basis} for \emph{componentwise} quasi-Polish groupoids yield a characterization of global Polishability.

\begin{lemma}[{cf.~\cite[3.6.4]{chen:beckec}}]
\label{thm:fibqpolgpd-idnbhd-reg}
Let $(X,G)$ be a Borel-overt fiberwise quasi-Polish groupoid, $U, V, W \in \@{BO}_G(G)$ with $\sigma(V) = \sigma(W) = X$.
Then the $\@O_G(G)$-closure of $U$ is contained in $V^{-1}VUW^{-1}W$, as witnessed by the Borel componentwise closed set
\begin{eqaligned*}
U \subseteq \neg (V^{-1} * \neg (VUW^{-1}) * W) \subseteq V^{-1}VUW^{-1}W.
\end{eqaligned*}
\end{lemma}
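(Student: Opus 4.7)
The plan is to set $C := VUW^{-1}$ and $A := V^{-1} * \neg C * W$, and verify three claims: (a) $A \in \@{BO}_G(G)$; (b) $U \cap A = \emptyset$; and (c) $\neg A \subseteq V^{-1}VUW^{-1}W$. Claim (a) makes $\neg A$ a Borel $\@O_G(G)$-closed set; (b) ensures it contains $U$, hence the $\@O_G(G)$-closure of $U$; and (c) bounds it inside $V^{-1}VUW^{-1}W$, giving the two displayed inclusions simultaneously.

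First, for setup: $V, U, W^{-1} \in \@{BO}_G(G) \subseteq \@{BO}_\sigma(G) \cap \@{BO}_\tau(G)$ by \cref{eq:orbtop-fibtop,eq:orbtop-inv}, so by \cref{it:vaught-im} Vaught transforms coincide with set products here, and by iterated application of $\@{BO}_G(G) = \@{BO}_\sigma(G) \oast \@{BO}_\tau(G)$ from \cref{thm:orbtop-exists*} one gets $C = V * U * W^{-1} \in \@{BO}_G(G)$, so in particular $\neg C$ is Borel. For (a), regroup $A = V^{-1} * (\neg C * W)$ by the Kuratowski--Ulam associativity of Vaught transforms from \cref{sec:vaught}. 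Since $\neg C \in \@B(G)$ and $W \in \@{BO}_\tau(G)$, the closure law $\@B(G) \oast \@{BO}_\tau(G) \subseteq \@{BO}_\tau(G)$ from \cref{eq:vaught-fibtop} yields $\neg C * W \in \@{BO}_\tau(G)$; together with $V^{-1} \in \@{BO}_\sigma(G)$, \cref{thm:orbtop-exists*} then gives $A \in \@{BO}_\sigma(G) \oast \@{BO}_\tau(G) = \@{BO}_G(G)$. For (b), use $A \subseteq V^{-1} \cdot \neg C \cdot W$ (from \cref{it:vaught-im}): if $u \in U$ could be written $u = v^{-1} b w$ with $v \in V$, $w \in W$, $b \in \neg C$, then $b = vuw^{-1} \in VUW^{-1} = C$, a contradiction.

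For (c), take $g : x -> y$ with $g \notin V^{-1}CW$. Because $\sigma(V) = \sigma(W) = X$, the hom-fibers $V \cap \sigma^{-1}(y)$ and $W \cap \sigma^{-1}(x)$ are nonempty and $\sigma$-fiberwise open, hence $\sigma$-fiberwise nonmeager. For every $v$ and $w$ in these two sets, $vgw^{-1} \notin C$, since otherwise $g = v^{-1}(vgw^{-1})w \in V^{-1}CW$. Now unpack $g \in A = (V^{-1} * \neg C) * W$ in two stages using the $\mu$-fiber parameterization $w \mapsto (gw^{-1}, w)$ of $\mu^{-1}(g) \cong \sigma^{-1}(x)$ from \cref{diag:gpd-2simp-edge}: $g \in A$ iff for $\exists^*_\sigma$-many $w \in \sigma^{-1}(x) \cap W$, $gw^{-1} \in V^{-1} * \neg C$; applying the same parameterization to the inner transform and then the right-translation homeomorphism $c \mapsto v := cwg^{-1}$ from $\sigma^{-1}(\tau(w))$ onto $\sigma^{-1}(y)$ converts the inner condition into $\exists^*_\sigma$-many $v \in \sigma^{-1}(y) \cap V$ with $vgw^{-1} \in \neg C$. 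Since every such pair $(v, w)$ satisfies this by the preceding sentence, the Kuratowski--Ulam theorem \labelcref{thm:kuratowski-ulam} gives $g \in A$, as desired.

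The main obstacle is step (c): while conceptually it uses only the nonmeagerness of $V, W$ in the relevant $\sigma$-fibers — which is precisely where $\sigma(V) = \sigma(W) = X$ enters — the bookkeeping of source/target matching under the successive right translations $h \mapsto gw^{-1}$ and $c \mapsto cwg^{-1}$, needed to convert the iterated Vaught transform into a simultaneous $\exists^*_\sigma$ over pairs $(v, w)$, is the technical part. Steps (a) and (b) are routine once the closure laws of $\@{BO}_G(G)$ under the Vaught transform (\cref{thm:orbtop-exists*,eq:vaught-fibtop}) and the bound $A \subseteq V^{-1}\neg C W$ from \cref{it:vaught-im} are in hand.
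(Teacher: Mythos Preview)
Your proof is correct and structurally matches the paper's: the same set $A = V^{-1} * \neg C * W$ with $C = VUW^{-1}$, the same three claims, and essentially identical arguments for (a) and (b). The only real difference is in (c). You prove the contrapositive pointwise: given $g \notin V^{-1}CW$, you unwind the iterated Vaught transform via the $\mu$-fiber parametrizations and right-translation homeomorphisms, reducing to the nonmeagerness of the nonempty open sets $V \cap \sigma^{-1}(y)$ and $W \cap \sigma^{-1}(x)$. (Incidentally, you don't actually need Kuratowski--Ulam there: since \emph{every} $v$ in the relevant fiber satisfies the inner condition, the inner $\exists^*$ holds outright for each $w$, and then the outer $\exists^*$ follows directly.)

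The paper instead handles (c) by a one-line algebraic manipulation: from $\sigma(V)=\sigma(W)=X$ and \cref{it:vaught-im} one has $G = V^{-1}GW = V^{-1} * G * W$; splitting $G = C \cup \neg C$ and applying the union law \cref{it:vaught-union} gives $G = (V^{-1} * C * W) \cup A = V^{-1}CW \cup A$, whence $\neg A \subseteq V^{-1}CW$. This avoids all the fiber bookkeeping you flagged as the ``main obstacle'' and is exactly the kind of point-free calculation advertised in \cref{sec:intro-localic}. Your approach has the virtue of making explicit where the nonmeagerness is used, but the paper's is considerably shorter.
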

\begin{proof}
Note that $VUW^{-1} = V * U * W^{-1}$ is indeed Borel, by \cref{it:vaught-im},
and $\neg (V^{-1} * \neg (VUW^{-1}) * W)$ is componentwise closed, by \cref{eq:vaught-fibtop,thm:orbtop-exists*}.
The first containment follows from
\begin{eqaligned*}
\emptyset &= (VUW^{-1}) \cap \neg (VUW^{-1}) \\
\implies  \emptyset
&= U \cap (V^{-1} \cdot \neg (VUW^{-1}) \cdot W) \\
&\supseteq U \cap (V^{-1} * \neg (VUW^{-1}) * W)
    &&\text{by \cref{it:vaught-im}},
\end{eqaligned*}
while the second containment follows from
\begin{eqaligned*}[b][\qedhere]
G &= V^{-1}GW = V^{-1} * G * W
    &&\text{by $\sigma(V) = \sigma(W) = X$ and \cref{it:vaught-im}} \\
&= (V^{-1} * (VUW^{-1}) * W) \cup (V^{-1} * \neg (VUW^{-1}) * W)
    &&\text{by \cref{it:vaught-union}} \\
&= V^{-1}VUW^{-1}W \cup (V^{-1} * \neg (VUW^{-1}) * W)
    &&\text{by \cref{it:vaught-im}}.
\end{eqaligned*}
\end{proof}

\begin{theorem}
\label{thm:ucompolgpd}
Let $(X,G)$ be a Borel-overt uniformly componentwise quasi-Polish groupoid.
The following are equivalent:
\begin{enumerate}[roman]
\item \label{thm:ucompolgpd:reg}
$\@{BO}_G(G)$ is a regular sigma-topology (hence so is $\@{BO}_G(X)$).
\item \label{thm:ucompolgpd:dblnbhd}
The family $\set{W^{-1} W | W \in \@{BO}_G(G),\, \sigma(W) = X} \ocap \sigma^{-1}(\@{BO}_G(X))$ includes, for any $U \in \@{BO}_G(G)$, a subset of $U$ containing $U \cap \iota(X)$.
\item \label{thm:ucompolgpd:halfnbhd}
For any $\iota(X) \subseteq W \in \@{BO}_G(G)$, there is a (symmetric) $\iota(X) \subseteq W' \in \@{BO}_G(G)$ with $W' \cdot W' \subseteq W$; and the sigma-topology generated by all such $W$ together with $\sigma^{-1}(\@{BO}_G(X))$ includes, for any $U \in \@{BO}_G(G)$, a subset of $U$ containing $U \cap \iota(X)$.
\item \label{thm:ucompolgpd:pol}
There are compatible Polish topologies $\@O(X) \subseteq \@{BO}_G(X)$ and $\@O(G) \subseteq \@{BO}_G(G)$ turning $(X,G)$ into an open Polish groupoid, such that $\@O(G)$ restricts to the original fiberwise topology.
\item \label{thm:ucompolgpd:club}
A club of compatible open quasi-Polish groupoid topologies $\@O(G) \subseteq \@{BO}_G(G)$ restricting to the original fiberwise topology as in \cref{thm:ucomqpolgpd-qpol} are Polish.
\end{enumerate}
If these hold, we call $(X,G)$ a \defn{Borel-overt uniformly componentwise \emph{Polish} groupoid}.
\end{theorem}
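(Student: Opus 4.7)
The plan is to prove the cycle $(\text{v}) \Rightarrow (\text{iv}) \Rightarrow (\text{iii}) \Rightarrow (\text{ii}) \Rightarrow (\text{i}) \Rightarrow (\text{v})$, which establishes all five equivalences. The implication $(\text{v}) \Rightarrow (\text{iv})$ is trivial. For $(\text{iv}) \Rightarrow (\text{iii})$: the compatible Polish groupoid topology $\@O(G) \subseteq \@{BO}_G(G)$ is metrizable hence paracompact Hausdorff, so Ramsay's Lemma~\ref{thm:pcptgpd-idnbhd} applied to $(X,G)$ with $\@O(X), \@O(G)$ produces the required symmetric half-neighborhoods $W' \cdot W' \subseteq W$ of $\iota(X)$ inside $\@O(G)$, as well as the identity-neighborhood basis of the form $W \cap \sigma^{-1}(Y)$; since $\@O(G) \subseteq \@{BO}_G(G)$ and $\@O(X) \subseteq \@{BO}_G(X)$, these witness (iii). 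For $(\text{iii}) \Rightarrow (\text{ii})$: if $W'$ is a symmetric half-neighborhood satisfying $W' \cdot W' \subseteq W$, then $W'^{-1} W' = W' \cdot W' \subseteq W$, so the families of identity neighborhoods from (iii) and (ii) generate the same sigma-topology over $\sigma^{-1}(\@{BO}_G(X))$, and the coverage condition transfers.

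The main obstacle is $(\text{ii}) \Rightarrow (\text{i})$, which translates the pointwise identity-neighborhood description into global regularity of $\@{BO}_G(G)$ via Lemma~\ref{thm:fibqpolgpd-idnbhd-reg}. Given $U \in \@{BO}_G(G)$ and $g \in U$, right-translate by $g^{-1}$ (which is a fiberwise homeomorphism by Definition~\ref{def:fibqpolgpd}) to reduce to covering an identity $1_{\sigma(g)} \in g^{-1}U$; iteratively apply (ii) to find $W_0, W_1 \in \@{BO}_G(G)$ with $\sigma(W_i) = X$ and $Y_0, Y_1 \in \@{BO}_G(X)$ such that $W_0^{-1}W_0 \cap \sigma^{-1}(Y_0) \subseteq g^{-1} U$ and $W_1^{-1}W_1 \cap \sigma^{-1}(Y_1)$ is nested suitably small. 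Lemma~\ref{thm:fibqpolgpd-idnbhd-reg} with $V = W = W_0$ then ensures that the $\@O_G(G)$-closure of $W_1^{-1}W_1 \cap \sigma^{-1}(Y_1)$, witnessed as a Borel componentwise-closed set whose complement is of the form $W_0^{-1} * (\cdots) * W_0$ and hence lies in $\@{BO}_G(G)$, sits inside $W_0^{-1}W_0W_1^{-1}W_1 W_0^{-1} W_0 \cap \sigma^{-1}(Y_0) \subseteq g^{-1}U$ (shrinking $W_1$ further if necessary). Right-translating by $g$ and collecting countably many such covers — using that every $U \in \@{BO}_G(G)$ is a countable union of basic opens in some compatible quasi-Polish groupoid topology refining the fiberwise topology (Theorem~\ref{thm:ucomqpolgpd-qpol}) so only countably many $g$'s are needed — gives the regularity decomposition required by (i).

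Finally, for $(\text{i}) \Rightarrow (\text{v})$: by Lemma~\ref{thm:stop-pol}, the compatible regular sigma-topology $\@{BO}_G(G)$ contains a club of Polish topologies, and by Theorem~\ref{thm:ucomqpolgpd-qpol} the compatible open quasi-Polish groupoid topologies restricting to the given fiberwise topology also form a club in $\@{BO}_G(G)$; since countable intersections of clubs are clubs (Remark~\ref{rmk:stop-club}), their intersection is a club of compatible open Polish groupoid topologies restricting to the fiberwise topology. The Polish topology on $X$ is obtained analogously, refining via openness of $\sigma$ so that each basic open on $X$ has its $\sigma$-preimage in the chosen Polish topology on $G$, and applying Lemma~\ref{thm:stop-pol} to the regular sigma-topology $\@{BO}_G(X)$.
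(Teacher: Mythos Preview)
Your cycle has two genuine gaps.

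\textbf{The step $(\text{ii}) \Rightarrow (\text{i})$ does not go through as written.} Right-translation by a fixed $g^{-1}$ is only a bijection between two individual $\sigma$-fibers, not a self-map of $G$; so ``$g^{-1}U$'' is not a set in $\@{BO}_G(G)$ to which condition~(ii) can be applied, and the translate back of whatever you build is a subset of a single $\sigma$-fiber, not a Borel componentwise open set containing $g$. Even granting a pointwise construction of $V_g, F_g$ with $g \in V_g \subseteq F_g \subseteq U$, your countability argument fails: writing $U$ as a countable union of basic opens $V_n$ in some quasi-Polish groupoid topology does not let you pick one $g$ per $V_n$, since the witness you produce near $g_n$ need not cover $V_n$. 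The paper avoids any pointwise translation: it uses sigma-continuity of the \emph{triple} multiplication $(h,g,k) \mapsto hgk$ from \cref{thm:ucomqpolgpd}\cref{thm:ucomqpolgpd:mu-orb} to write $\{(h,g,k) : hgk \in U\} = \bigcup_i (A_i \times_X U_i \times_X B_i)$ with $A_i, U_i, B_i \in \@{BO}_G(G)$, then applies~(ii) and its dual to the $A_i, B_i$ (which automatically contain identities whenever $U_i$ meets $U$). Projecting to the middle coordinate gives a \emph{countable} cover $U = \bigcup_{i,j,k} U'_{ijk}$, and \cref{thm:fibqpolgpd-idnbhd-reg} bounds the closure of each $U'_{ijk}$ inside $U$ uniformly. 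This is the key idea you are missing.

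\textbf{The step $(\text{iv}) \Rightarrow (\text{iii})$ is incomplete.} The first clause of~(iii) asks for half-neighborhoods of \emph{every} $\iota(X) \subseteq W \in \@{BO}_G(G)$, not only those $W$ open in the single Polish topology $\@O(G)$ given by~(iv). Ramsay's \cref{thm:pcptgpd-idnbhd} applied to $(\@O(X),\@O(G))$ only handles $W \in \@O(G)$. The paper sidesteps this by proving $(\text{v}) \Rightarrow (\text{iii})$ instead: given any $W \in \@{BO}_G(G)$, the club in~(v) lets you first refine to a Polish groupoid topology making $W$ open, and then apply Ramsay. Your cycle $(\text{v}) \Rightarrow (\text{iv}) \Rightarrow (\text{iii})$ loses exactly this flexibility. (The second clause of~(iii) can be salvaged from~(iv) alone via $\@{BO}_G(G) = \sigma^{-1}(\@{BO}_G(X)) \ocap \@O(G)$ from \cref{thm:ucomqpolgpd-action-orbtop}, as the paper does for $(\text{iv}) \Rightarrow (\text{ii})$; but the first clause needs~(v).)
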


(The generation conditions in \cref{thm:ucompolgpd:dblnbhd,thm:ucompolgpd:halfnbhd} mean, morally speaking, that the sets $W \cap \sigma^{-1}(Y)$ for $Y \in \@{BO}_G(X)$ form a neighborhood basis for each identity morphism $1_x$ as in \cref{thm:pcptgpd-idnbhd}\cref{thm:pcptgpd-idnbhd:basis}; but here we must say this ``uniformly over all $x$'' as $\@{BO}_G(G)$ is not second-countable.)

\begin{proof}
Note that $\iota, \sigma$ exhibit $X$ as a continuous retract of $G$; thus regularity of $\@{BO}_G(G)$ (respectively $\@O(G)$) implies that of $\@{BO}_G(X)$ (respectively $\@O(X)$).

\cref{thm:ucompolgpd:reg}$\implies$\cref{thm:ucompolgpd:club}
by \cref{thm:ucomqpolgpd-qpol,thm:stop-pol}.

\cref{thm:ucompolgpd:club}$\implies$\cref{thm:ucompolgpd:pol}
is obvious.

\cref{thm:ucompolgpd:club}$\implies$\cref{thm:ucompolgpd:halfnbhd}
follows immediately from \cref{thm:pcptgpd-idnbhd}, by making the given sets in $\@{BO}_G(G)$ open.

\cref{thm:ucompolgpd:halfnbhd}$\implies$\cref{thm:ucompolgpd:dblnbhd}
is straightforward (take $W$ in \cref{thm:ucompolgpd:dblnbhd} to be $W' \cap W^{\prime-1}$ for $W'$ from \cref{thm:ucompolgpd:halfnbhd}).

\cref{thm:ucompolgpd:pol}$\implies$\cref{thm:ucompolgpd:dblnbhd}
follows similarly to the preceding two parts, using \cref{thm:ucomqpolgpd-action-orbtop} which shows that in \cref{thm:ucompolgpd:dblnbhd} it is enough to consider $U = V \cap \sigma^{-1}(Y) \in \@{BO}_G(G)$ where $V \in \@O(G)$ and $Y \in \@{BO}_G(X)$;
by \cref{thm:pcptgpd-idnbhd} we may write
$V \cap \iota(X) \subseteq \bigcup_i (W_iW_i \cap \sigma^{-1}(Z_i)) \subseteq V$
for symmetric $\iota(X) \subseteq W_i \in \@O(G)$, whence
$U \cap \iota(X) \subseteq \bigcup_i (W_i^{-1}W_i \cap \sigma^{-1}(Z_i \cap Y)) \subseteq U$.

\cref{thm:ucompolgpd:dblnbhd}$\implies$\cref{thm:ucompolgpd:reg}:
Let $U \in \@{BO}_G(G)$; we must show that $U$ is a countable union of $U'_i \in \@{BO}_G(G)$ which are contained in some Borel componentwise closed sets contained in $U$.
By \labelcref{thm:ucomqpolgpd}\cref{thm:ucomqpolgpd:mu-orb},
\begin{eqaligned*}
\set{(h,g,k) \in G \times_X G \times_X G | hgk \in U} &= \bigcup_i (A_i \times_X U_i \times_X B_i)
\end{eqaligned*}
for countably many $A_i, U_i, B_i \in \@{BO}_G(G)$.
By \cref{thm:ucompolgpd:dblnbhd} and its dual, we may find
\begin{align*}
A_i \cap \iota(X) &\subseteq \bigcup_j (V_j^{-1} V_j \cap \sigma^{-1}(Y_j)) \subseteq A_i, \\
B_i \cap \iota(X) &\subseteq \bigcup_k (W_k^{-1} W_k \cap \tau^{-1}(Z_k)) \subseteq B_i
\end{align*}
where $V_j, W_k \in \@{BO}_G(G)$ with $\sigma(V_j) = \sigma(W_k) = X$ and $Y_j, Z_k \in \@{BO}_G(X)$.
It follows that
\begin{eqaligned*}
\set{(1_y,g,1_z) | g : z -> y \in U}
&\subseteq \bigcup_{i,j,k} \paren[\big]{
        (V_j^{-1} V_j \cap \sigma^{-1}(Y_j))
        \otimes_X U_i
        \otimes_X (W_k^{-1} W_k \cap \tau^{-1}(Z_k))
    } \\
&= \bigcup_{i,j,k} \paren[\big]{
        V_j^{-1} V_j \otimes_X (\tau^{-1}(Y_j) \cap U_i \cap \sigma^{-1}(Z_k)) \otimes_X W_k^{-1} W_k
    } \\
&\subseteq \set{(h,g,k) \in G \times_X G \times_X G | hgk \in U},
\end{eqaligned*}
whence projecting to the middle coordinate, and also multiplying, yields
\begin{eqaligned*}
U
&\subseteq \bigcup_{i,j,k} (\tau^{-1}(Y_j) \cap U_i \cap \sigma^{-1}(Z_k))
\subseteq \bigcup_{i,j,k} \paren[\big]{
        V_j^{-1} V_j (\tau^{-1}(Y_j) \cap U_i \cap \sigma^{-1}(Z_k)) W_k^{-1} W_k
    }
\subseteq U.
\end{eqaligned*}
By the preceding lemma, each $U'_{ijk} := \tau^{-1}(Y_j) \cap U_i \cap \sigma^{-1}(Z_k) \in \@{BO}_G(G)$ is contained in a Borel componentwise closed set which is contained in $V_j^{-1} V_j U'_{ijk} W_k^{-1} W_k \subseteq U$, as desired.
\end{proof}

\begin{remark}
Note the non-obvious implication \labelcref{thm:ucompolgpd}\cref{thm:ucompolgpd:pol}$\implies$\cref{thm:ucompolgpd:club}: if $(X,G)$ admits at least one compatible open Polish groupoid topology, then cofinally many topologies are Polish.

This implication may be shown directly using the version of the Becker--Kechris theorem for open Polish groupoids due to Lupini \cite{lupini:polgpd}.
Conversely, we may easily deduce this version of Becker--Kechris from \cref{thm:ucompolgpd}, thereby showing that as in the quasi-Polish context (\cref{thm:beckec}), topological realization for Polish groupoids ``subsumes'' that for actions:
\end{remark}

\begin{lemma}
\label{thm:ucompolgpd-action}
If $(X,G)$ is a Borel-overt uniformly componentwise Polish groupoid, then so is $G \ltimes M$ with the lifted fiberwise topology, for any Borel $G$-space $p : M -> X$.
\end{lemma}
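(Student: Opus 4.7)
The strategy is to verify one of the equivalent conditions of \cref{thm:ucompolgpd} for $G \ltimes M$, starting from the same condition for $G$. By \cref{thm:comqpolgpd-action}, $G \ltimes M$ equipped with the lifted fiberwise topology is already a Borel-overt uniformly componentwise quasi-Polish groupoid, so the task reduces to upgrading ``quasi-Polish'' to ``Polish''. I will check condition~\labelcref{thm:ucompolgpd}\cref{thm:ucompolgpd:dblnbhd}, since the fundamental double-neighborhoods $W^{-1}W \cap \sigma^{-1}(Y)$ of $\iota_G(X) \subseteq G$ provided by the hypothesis on $G$ lift naturally along the cocycle $\phi : G \ltimes M -> G$ to play the analogous role around $\iota_{G \ltimes M}(M)$.

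Given any $U \in \@{BO}_G(G \ltimes M)$, \cref{thm:ucomqpolgpd-action-orbtop} (applicable because $G \ltimes M$ inherits uniformly fiberwise continuous differences) lets one write $U = \bigcup_i (V_i \otimes_X A_i)$ with $V_i \in \@{BO}_G(G)$ and $A_i \in \@{BO}_G(M)$. Applying the hypothesis to each $V_i$ produces $W_{ij} \in \@{BO}_G(G)$ with $\sigma_G(W_{ij}) = X$ and $Y_{ij} \in \@{BO}_G(X)$ such that $V_i \cap \iota_G(X) \subseteq \bigcup_j (W_{ij}^{-1}W_{ij} \cap \sigma_G^{-1}(Y_{ij})) \subseteq V_i$. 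The natural lifts are
$W'_{ij} := W_{ij} \otimes_X M$
and
$Y'_{ij} := A_i \cap p^{-1}(Y_{ij})$.
The identities $(W'_{ij})^{-1}W'_{ij} = W_{ij}^{-1}W_{ij} \otimes_X M$ and $\sigma_{G \ltimes M}(W'_{ij}) = p^{-1}(\sigma_G(W_{ij})) = M$ are immediate from the formulas defining $\mu_{G \ltimes M}$ and $\sigma_{G \ltimes M}$ in \cref{def:action}, and then the required chain of inclusions $\iota_{G \ltimes M}(M) \cap U \subseteq \bigcup_{i,j} ((W'_{ij})^{-1}W'_{ij} \cap \sigma_{G \ltimes M}^{-1}(Y'_{ij})) \subseteq U$ follows by a direct unpacking of definitions.

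The one nontrivial step is verifying that $Y'_{ij} \in \@{BO}_G(M)$, which reduces to the compatibility $p^{-1}(\@{BO}_G(X)) \subseteq \@{BO}_G(M)$. This is where the cocycle structure enters in an essential way: the equality $p \circ \alpha = \tau_G \circ \phi$ together with $\phi^{-1}(\@{BO}_\sigma(G)) \subseteq \@{BO}_\sigma(G \ltimes M)$ (which holds by the very definition of the lifted fiberwise topology) lets the Beck--Chevalley identity~\cref{it:fib-baire-bc} convert the description $\@{BO}_G(X) = \exists^*_{\tau_G}(\@{BO}_\sigma(G))$ from \cref{thm:orbtop-exists*} into $p^{-1}(\@{BO}_G(X)) \subseteq \exists^*_\alpha(\@{BO}_\sigma(G \ltimes M)) = \@{BO}_G(M)$. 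Apart from this compatibility, the argument is essentially algebraic bookkeeping in the orbitwise sigma-topologies, so the cocycle compatibility is the only point I would expect to require care.
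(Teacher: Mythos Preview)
Your proposal is correct and follows essentially the same approach as the paper: reduce to the quasi-Polish case via \cref{thm:comqpolgpd-action}, use \cref{thm:ucomqpolgpd-action-orbtop} to decompose sets in $\@{BO}_G(G \ltimes M)$, and lift the witnesses $W$ for condition~\cref{thm:ucompolgpd:dblnbhd} to $W \times_X M$. The paper compresses all of this into the phrase ``it is easily seen''; your expanded verification, including the check that $p^{-1}(\@{BO}_G(X)) \subseteq \@{BO}_G(M)$, is a faithful unpacking of that sentence.
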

\begin{proof}
By \cref{thm:comqpolgpd-action,thm:ucomqpolgpd-action-orbtop}, $G \ltimes M$ is Borel-overt uniformly componentwise quasi-Polish, with $\@{BO}_G(G \ltimes M) = \@{BO}_G(G) \otimes_X \@{BO}_G(M)$; it is easily seen that for the $W \in \@{BO}_G(G)$ witnessing \labelcref{thm:ucompolgpd}\cref{thm:ucompolgpd:dblnbhd} for $G$, the corresponding $W \times_X M \in \@{BO}_G(G \ltimes M)$ witness \labelcref{thm:ucompolgpd}\cref{thm:ucompolgpd:dblnbhd} for $G \ltimes M$.
\end{proof}

\begin{corollary}[{Becker--Kechris theorem for open Polish groupoid actions \cite[4.1.1]{lupini:polgpd}}]
\label{thm:beckec-pol}
\leavevmode\par\noindent
Let $(X,G)$ be an open Polish groupoid, $p : M -> X$ be a Borel $G$-space.
Then for a club of compatible Polish topologies $\@O(M) \subseteq \@{BO}_G(M)$, the action $G \actson (M,\@O(M))$ is continuous.
\end{corollary}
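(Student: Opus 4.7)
The plan is to imitate the proof of \cref{thm:beckec} verbatim, replacing its appeal to the quasi-Polish realization theorem \cref{thm:ucomqpolgpd-qpol} with the Polish refinement \cref{thm:ucompolgpd}.

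First, since $(X,G)$ is already an open Polish groupoid, its underlying Borel-overt fiberwise Polish groupoid structure satisfies condition \labelcref{thm:ucompolgpd}\cref{thm:ucompolgpd:pol}, and hence is a Borel-overt uniformly componentwise Polish groupoid. By \cref{thm:ucompolgpd-action}, the lifted fiberwise topology on the action groupoid $G \ltimes M$ is likewise a Borel-overt uniformly componentwise Polish groupoid. Applying \labelcref{thm:ucompolgpd}\cref{thm:ucompolgpd:club} to $G \ltimes M$ then yields a club of compatible open \emph{Polish} groupoid topologies on $G \ltimes M$ contained in $\@{BO}_G(G \ltimes M)$ and restricting to the lifted fiberwise topology.

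Next, by \cref{thm:ucomqpolgpd-action-orbtop} we have $\@{BO}_G(G \ltimes M) = \@O(G) \otimes_X \@{BO}_G(M)$, so that $\@O(M) |-> \@O(G) \otimes_X \@O(M)$ is a cofinal sigma-complete join-semilattice homomorphism from (Polish) topologies $\@O(M) \subseteq \@{BO}_G(M)$ to (Polish) topologies contained in $\@{BO}_G(G \ltimes M)$. As in the proof of \cref{thm:beckec}, such an $\@O(M)$ makes the action $G \actson M$ continuous precisely when $\@O(G) \otimes_X \@O(M)$ is a groupoid topology on $G \ltimes M$, because the action map coincides with $\tau_{G \ltimes M}$ while the other groupoid operations of $G \ltimes M$ are automatic from those on $G$. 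Therefore the collection of Polish $\@O(M)$ making the action continuous is the preimage of the club just constructed under this cofinal homomorphism, and so is itself a club.

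The one bookkeeping point I expect to need a brief check is that $\@O(G) \otimes_X \@O(M)$ being Polish (rather than merely quasi-Polish) forces $\@O(M)$ itself to be Polish; this is immediate because the identity section $a |-> (1_{p(a)},a)$ exhibits $(M,\@O(M))$ as a closed subspace of $(G \times_X M, \@O(G) \otimes_X \@O(M))$ (closedness uses that $\iota(X) \subseteq G$ is closed since $G$ is Hausdorff), so Polishness transfers to the subspace topology, which is $\@O(M)$.
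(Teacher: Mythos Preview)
Your proof is correct and follows the same approach as the paper, whose proof is the single sentence ``As in \cref{thm:beckec}, but take preimage under $\@O(G) \otimes_X (-)$ of the club of Polish topologies on $G \ltimes M$ from \cref{thm:ucompolgpd} instead.'' You have unpacked this correctly, making explicit the appeal to \cref{thm:ucompolgpd-action} (which the paper leaves implicit) and adding the closed-subspace bookkeeping to confirm that $\@O(M)$ itself is Polish, a detail the paper does not spell out.
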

\begin{proof}
As in \cref{thm:beckec}, but take preimage under $\@O(G) \otimes_X (-)$ of the club of Polish topologies on $G \ltimes M$ from \cref{thm:ucompolgpd} instead.
\end{proof}

If we allow the freedom to pass to componentwise comeager subgroupoids as in \cref{thm:fibqpolgpd-realiz-equiv}, then rather surprisingly, the distinction between quasi-Polishability and Polishability completely disappears, yielding a significant strengthening of that earlier result.

\begin{theorem}
\label{thm:fibqpolgpd-pol}
Let $(X,G)$ be a Borel-overt fiberwise quasi-Polish groupoid.
Then there is a Borel weakly componentwise dense $\*\Pi^0_2$ set $Y \subseteq X$ such that the full subgroupoid $(Y,\eval{G}_Y)$ (with the fiberwise subspace topology) is Borel-overt uniformly componentwise Polish.
\end{theorem}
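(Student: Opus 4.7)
Following the template of Lemma~\cref{thm:ucomqpolgpd-dense}, we perform a second Borel weakly componentwise comeager restriction, this time to secure the Polishability criterion of Theorem~\cref{thm:ucompolgpd} on top of the uniform componentwise quasi-Polishness produced by that lemma. First apply Lemma~\cref{thm:ucomqpolgpd-dense} to reduce to the case where $(X,G)$ is already Borel-overt uniformly componentwise quasi-Polish (by restricting to a Borel weakly componentwise $\*\Pi^0_2$ dense subset), and fix via Theorem~\cref{thm:ucomqpolgpd-qpol} a compatible open quasi-Polish groupoid topology $\@O$ restricting fiberwise to $\@O_\sigma$. The goal is to find a further Borel weakly componentwise $\*\Pi^0_2$ dense $Y \subseteq X$ verifying condition~(ii) of Theorem~\cref{thm:ucompolgpd}: every $1_x$ in every Borel componentwise open $U$ lies in some $V^{-1}V \cap \sigma^{-1}(Z) \subseteq U$ with $V \in \@{BO}_G(G)$, $\sigma(V) = X$, and $Z \in \@{BO}_G(X)$.

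Fix a countable open basis $\{U_n\}$ for $\@O(G)$ closed under finite intersections. For each $n$, joint continuity of multiplication at each pair $(1_x,1_x) \in \mu^{-1}(U_n)$ gives a basic open rectangle $V \times V' \ni (1_x, 1_x)$ with $V \cdot V' \subseteq U_n$; by Kunugi--Novikov uniformization (\cref{thm:kunugi-novikov}), this yields a countable cover $\bigcup_i (V_{n,i} \times_X V'_{n,i}) \subseteq \mu^{-1}(U_n)$ of the diagonal $\{(1_x,1_x) : 1_x \in U_n\}$, with $V_{n,i} \in \@{BO}_\tau(G)$ and $V'_{n,i} \in \@{BO}_\sigma(G)$ (using uniform continuity of differences from \cref{def:ucomqpolgpd}). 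Using the fiberwise Baire property (\cref{thm:fib-baire-borel}) and Pettis's theorem (\cref{thm:vaught-props}), approximate $V'_{n,i}$ by $V_{n,i}^{-1}$ modulo fiberwise meager, thereby converting the product $V_{n,i} \cdot V'_{n,i}$ into the form $V_{n,i}^{-1} V_{n,i}$ required by condition~(ii); at the same time, enlarge $V_{n,i}$ to have full source projection $\sigma(V_{n,i}) = X$ by exploiting overtness together with right-translation-invariance of $\@O_\sigma$, translating $V_{n,i}$ by a Borel selector of morphisms supplied by \cref{thm:fibqpolgpd-idl}.

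The exceptional set of $x$ for which these approximations fail in the fiber $\sigma^{-1}(x)$ is weakly componentwise meager by \cref{thm:fibtop-meager-exists*}, and is therefore contained in a Borel weakly componentwise meager $\*\Sigma^0_2$ set by \cref{thm:orbtop-meager-bp}. Taking $Y \subseteq X$ to be its complement yields a Borel weakly componentwise $\*\Pi^0_2$ dense set on which the sets $V_{n,i}^{-1} V_{n,i} \cap \sigma^{-1}(Z_{n,i}) \subseteq U_n$ genuinely provide the identity-neighborhood cover required by condition~(ii) of \cref{thm:ucompolgpd}; hence $(Y, \eval{G}_Y)$ is Borel-overt uniformly componentwise Polish. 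The principal obstacle is coordinating the several transformations simultaneously---replacing $V'$ by $V^{-1}$ up to fiberwise meager, globalizing the source projection of $V$ while preserving $V^{-1}V \cap \sigma^{-1}(Z) \subseteq U$, and uniformizing in $x$ over all basic $U_n$---so that a single weakly componentwise meager exceptional set simultaneously captures all failures. Each ingredient is handled by a tool already established in the paper, but managing them in concert is what dictates the structure of the argument.
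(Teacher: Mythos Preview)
The proposal has two genuine gaps that prevent it from going through as stated.

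First, the step ``approximate $V'_{n,i}$ by $V_{n,i}^{-1}$ modulo fiberwise meager'' is unfounded: $V_{n,i}$ and $V'_{n,i}$ are merely two arbitrary neighborhoods of the same identity morphism, with no a priori relationship between $V'_{n,i}$ and $V_{n,i}^{-1}$. Pettis's theorem (\cref{it:vaught-pettis}) lets you replace a Borel set by an equal-mod-meager fiberwise open set, but it cannot convert one given neighborhood into another prescribed one. Even granting some such local replacement, the harder problem remains: condition~\cref{thm:ucompolgpd}\cref{thm:ucompolgpd:dblnbhd} demands sets $W \in \@{BO}_G(G)$ with $\sigma(W) = X$ (equivalently $\iota(X) \subseteq W$), i.e., a \emph{single global} identity neighborhood, not just local pieces near each identity.

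Second, the plan to ``enlarge $V_{n,i}$ to have full source projection by translating by a Borel selector from \cref{thm:fibqpolgpd-idl}'' is doubly problematic: that corollary only supplies selectors when the connectedness relation is \emph{smooth}, which is not assumed here; and in any case right-translating $V_{n,i}$ into other $\sigma$-fibers would not keep $V_{n,i}^{-1}V_{n,i}$ contained in $U_n$.

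The paper's proof handles exactly this difficulty by adapting Ramsay's star-refinement construction (\cref{thm:pcptgpd-idnbhd}) to the Borel componentwise sigma-topology. Given $\iota(X) \subseteq W \in \@{BO}_G(G)$, one covers $G = \bigcup_i U_i$ with each $U_i^{-1}U_i \subseteq W$ using continuity of $(g,h) \mapsto g^{-1}h$ from \cref{thm:ucomqpolgpd}\cref{thm:ucomqpolgpd:mu-orb}, then \emph{disjointifies along identities} via $V_i := U_i \setminus \bigcup_{j<i}\sigma^{-1}(\iota^{-1}(U_j))$ and sets $W' := \bigcup_i \bigl(V_i \cap V_i^{-1} \cap \sigma^{-1}(\iota^{-1}(V_i)) \cap \tau^{-1}(\iota^{-1}(V_i))\bigr)$. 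This forces any two pieces containing a common identity to lie in one $U_i$, whence $W'W' \subseteq W$ and $\iota(X) \subseteq W'$. The subtracted sets are not componentwise open, but become so after passing to a componentwise comeager $Y$ via the componentwise Baire property (\cref{thm:orbtop-bp}). A companion trick $W := U \cup \sigma^{-1}(X \setminus \iota^{-1}(U))$ supplies the basis condition. This disjointification is the missing ingredient in your plan; neither Pettis approximation nor translation can substitute for it.
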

\begin{proof}
By \cref{thm:fibqpolgpd-realiz-equiv}, we may assume $G$ is uniformly componentwise quasi-Polish.
The idea is to ``realize \labelcref{thm:pcptgpd-idnbhd}\cref{thm:pcptgpd-idnbhd:halfnbhd,thm:pcptgpd-idnbhd:basis}'' on a further comeager set of objects.

\cref{thm:pcptgpd-idnbhd:basis}
Given any $U \in \@{BO}_G(G)$, clearly
$W := U \cup \sigma^{-1}(X \setminus \iota^{-1}(U)) \subseteq G$
has
$\iota(X) \subseteq W$
and
$W \cap \sigma^{-1}(\iota^{-1}(U))
= U \cap \sigma^{-1}(\iota^{-1}(U))
\supseteq U \cap \iota(X)$.
Now $X \setminus \iota^{-1}(U) \subseteq X$ might not be componentwise open;
but by the componentwise Baire property
(\cref{thm:orbtop-bp}; we may also just take the componentwise interior, i.e., $\exists^*_\tau$ of the complement of the $\sigma$-fiberwise closure of $\tau^{-1}(\iota^{-1}(U))$ as in \cref{eq:fib-bor-closure}),
there is a Borel componentwise $\*\Pi^0_2$ dense $Y \subseteq X$ on which $X \setminus \iota^{-1}(U)$ becomes componentwise open,
hence $W$ becomes componentwise open when restricted to $\eval{G}_Y$.

\cref{thm:pcptgpd-idnbhd:halfnbhd}
Given any $\iota(X) \subseteq W \in \@{BO}_G(G)$, by continuity of $\mu$ with respect to $\@{BO}_G(G)$ (\labelcref{thm:ucomqpolgpd}\cref{thm:ucomqpolgpd:mu-orb}),
\begin{eqaligned*}
G = \set{g \in G | g^{-1}g \in W} = \bigcup_{i \in \#N} U_i
\end{eqaligned*}
for countably many $U_i \in \@{BO}_G(G)$ such that $U_i^{-1} \cdot U_i \subseteq W$.
By the proof of \cref{thm:pcptgpd-idnbhd}\cref{thm:pcptgpd-idnbhd:halfnbhd}, if $\@V$ is a star-refinement of the cover $\set{U_i}_i$, then the union of the symmetric unital parts of $V \in \@V$ yields a smaller neighborhood $W' \supseteq \iota(X)$ with $W' \cdot W' \subseteq W$.
In fact, that argument did not require $\@V$ to form a star-refinement, but only that any two sets in $\@V$ containing the same identity are contained in a single $U_i$; moreover it did not require $\@V$ to cover $G$, but only $\iota(X)$.
So put
\begin{eqgathered*}
V_i := U_i \setminus \bigcup_{j < i} \sigma^{-1}(\iota^{-1}(U_j)), \\
W' := \bigcup_{i \in \#N} (V_i \cap V_i^{-1} \cap \sigma^{-1}(\iota^{-1}(V_i)) \cap \tau^{-1}(\iota^{-1}(V_i)));
\end{eqgathered*}
then $\iota(X) \subseteq W' = W^{\prime-1}$ and $W' \cdot W' \subseteq W$ as in the proof of \cref{thm:pcptgpd-idnbhd}\cref{thm:pcptgpd-idnbhd:halfnbhd}.
As above, we may pass to a Borel componentwise $\*\Pi^0_2$ dense $Y \subseteq X$ on which each $X \setminus \bigcup_{j < i} \iota^{-1}(U_j)$ becomes componentwise open, hence $W'$ becomes componentwise open in $\eval{G}_Y$.

Now fix by \cref{thm:ucomqpolgpd-qpol} some open quasi-Polish groupoid topology $(\@O(X),\@O(G))$ inducing the given fiberwise topology, and fix a countable basis $\@U \subseteq \@O(G)$.
For each $U \in \@U$, find as in step \cref*{thm:pcptgpd-idnbhd:basis} above some
$\iota(X) \subseteq W_U \subseteq G$,
and then find as in step \cref*{thm:pcptgpd-idnbhd:halfnbhd} some symmetric
$\iota(X) \subseteq W'_U \subseteq G$
with
$W'_U \cdot W'_U \subseteq W_U$,
such that $W_U, W'_U$ become Borel componentwise open and
$W_U \ocap \sigma^{-1}(\@{BO}_G(Y))$
includes a subset of $U$ containing the same identity morphisms after restricting to a single Borel componentwise $\*\Pi^0_2$ dense $Y \subseteq X$.
Then we may directly verify \labelcref{thm:ucompolgpd}\cref{thm:ucompolgpd:dblnbhd} for $\eval{G}_Y$ by imitating the proof of \labelcref{thm:ucompolgpd}\cref{thm:ucompolgpd:pol}$\implies$\cref{thm:ucompolgpd:dblnbhd}.
Indeed, a basic Borel componentwise open subset of $\eval{G}_Y$ is (by \cref{rmk:fibqpolgpd-full}) the restriction of one in $G$, hence by \cref{thm:ucomqpolgpd-action-orbtop} (applied to $G$) is of the form
\begin{eqaligned*}
\eval{G}_Y \cap U \cap \sigma^{-1}(Z) \in \@{BO}_G(\eval{G}_Y)
\end{eqaligned*}
for some $U \in \@U$ and $Z \in \@{BO}_G(X)$.
By the above,
\begin{eqaligned*}
U \cap \iota(Y)
&\subseteq (\eval{G}_Y \cap W'_U)^{-1}(\eval{G}_Y \cap W'_U) \cap \sigma^{-1}(Z_U)
\subseteq W_U \cap \sigma^{-1}(Z_U)
\subseteq U
\end{eqaligned*}
for some $Z_U \in \@{BO}_G(Y)$, whence
\begin{eqaligned*}
\eval{G}_Y \cap U \cap \sigma^{-1}(Z) \cap \iota(Y)
\subseteq (\eval{G}_Y \cap W'_U)^{-1}(\eval{G}_Y \cap W'_U) \cap \sigma^{-1}(Z \cap Z_U)
\subseteq \eval{G}_Y \cap U \cap \sigma^{-1}(Z)
\end{eqaligned*}
for $\iota(Y) \subseteq \eval{G}_Y \cap W'_U \in \@{BO}_G(\eval{G}_Y)$ and $Z \cap Z_U \in \@{BO}_G(Y)$ as required by \labelcref{thm:ucompolgpd}\cref{thm:ucompolgpd:dblnbhd}.
\end{proof}

\begin{theorem}
\label{thm:fibqpolgpd-realiz-equiv-pol}
Every Borel-overt fiberwise quasi-Polish groupoid $(X,G)$ admits a Borel equivalence of groupoids to an open Polish groupoid, namely a Borel weakly componentwise $\*\Pi^0_2$ dense full subgroupoid $(Y,\eval{G}_Y)$ equipped with a Polish topology restricting to its fiberwise topology.
\end{theorem}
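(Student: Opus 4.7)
The proof will be an immediate combination of the two deepest results just established, namely Theorem \ref{thm:fibqpolgpd-pol} and Theorem \ref{thm:ucompolgpd}. Concretely, given a Borel-overt fiberwise quasi-Polish groupoid $(X,G)$, I would first apply Theorem \ref{thm:fibqpolgpd-pol} to extract a Borel weakly componentwise $\*\Pi^0_2$ dense set $Y \subseteq X$ such that the full subgroupoid $(Y,\eval{G}_Y)$, equipped with the fiberwise subspace topology, is Borel-overt uniformly componentwise Polish. By Remark \ref{rmk:fibqpolgpd-full}, this full subgroupoid is indeed itself a Borel-overt fiberwise quasi-Polish groupoid, so the hypotheses of Theorem \ref{thm:ucompolgpd} are met on the nose.

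Next, I would invoke the implication \cref{thm:ucompolgpd:pol} (equivalently \cref{thm:ucompolgpd:club}) of Theorem \ref{thm:ucompolgpd} to obtain compatible Polish topologies $\@O(Y) \subseteq \@{BO}_{\eval{G}_Y}(Y)$ and $\@O(\eval{G}_Y) \subseteq \@{BO}_{\eval{G}_Y}(\eval{G}_Y)$ turning $(Y,\eval{G}_Y)$ into an open Polish groupoid whose $\sigma$-fiberwise restriction agrees with the given fiberwise topology. It remains only to observe that the inclusion functor $(Y,\eval{G}_Y) \hookrightarrow (X,G)$ is a Borel equivalence of groupoids: it is faithful and full by construction of the full subgroupoid; it is Borel because $Y \in \@B(X)$ and $\eval{G}_Y = \sigma^{-1}(Y) \cap \tau^{-1}(Y) \in \@B(G)$; and it is essentially surjective because weak componentwise density of $Y$ implies in particular that $Y$ meets every connected component of $G$.

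There is no real obstacle here beyond what has already been done: Theorem \ref{thm:fibqpolgpd-pol} has already absorbed the difficult Solecki--Srivastava-style argument used to enforce uniform fiberwise continuity of differences together with the further ``identity-neighborhood-basis'' condition \cref{thm:ucompolgpd:dblnbhd} needed for Polishability, while Theorem \ref{thm:ucompolgpd} packages the implication from condition \cref{thm:ucompolgpd:dblnbhd} to an actual Polish groupoid topology via the regularity of $\@{BO}_G(G)$ and the club-of-topologies machinery of Lemma \ref{thm:stop-pol} and Lemma \ref{thm:stopgpd}. The present theorem is thus simply the bookkeeping step that packages those two results together with the observation about equivalences of groupoids from Proposition \ref{thm:fibqpolgpd-equiv}.
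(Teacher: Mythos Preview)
Your proposal is correct and follows essentially the same approach as the paper, which proves the theorem in one line by citing \cref{thm:fibqpolgpd-pol,thm:ucompolgpd}. Your elaboration correctly unpacks this: first pass to the componentwise $\*\Pi^0_2$ dense full subgroupoid which is uniformly componentwise Polish, then apply the characterization in \cref{thm:ucompolgpd} to obtain the global Polish topology, with the inclusion being a Borel equivalence since weak componentwise density ensures essential surjectivity.
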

\begin{proof}
By \cref{thm:fibqpolgpd-pol,thm:ucompolgpd}.
\end{proof}

\begin{corollary}
\label{thm:fibqpolgpd-polact}
Every Borel-overt fiberwise quasi-Polish groupoid $(X,G)$ admits a Borel equivalence of groupoids to an action groupoid of a Polish group action.
\end{corollary}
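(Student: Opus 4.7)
The plan is to derive this corollary by composing two Borel equivalences of groupoids. First, by the immediately preceding Theorem \ref{thm:fibqpolgpd-realiz-equiv-pol}, the given Borel-overt fiberwise quasi-Polish groupoid $(X,G)$ admits a Borel equivalence of groupoids to an open Polish groupoid, namely the full subgroupoid $(Y, \eval{G}_Y)$ on a suitable Borel weakly componentwise $\*\Pi^0_2$ dense $Y \subseteq X$, equipped with the Polish topology realizing its fiberwise topology. Second, by the main result of \cite{chen:polgpdrep}, every open Polish groupoid admits a Borel equivalence of groupoids to the action groupoid of a Borel action of a Polish group on a standard Borel space, which by the Becker--Kechris theorem (\cref{thm:beckec-pol}) we may upgrade to a continuous action on a Polish space.

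The remaining task is to compose these two Borel equivalences of groupoids. Both $(Y, \eval{G}_Y)$ (as a restriction of the Borel-overt fiberwise quasi-Polish groupoid $(X,G)$; see \cref{rmk:fibqpolgpd-full}) and the intermediate open Polish groupoid are Borel-overt fiberwise quasi-Polish, as is any action groupoid of a Polish group action (\cref{ex:polgrp-action}). Thus \cref{thm:fibqpolgpd-equiv} applies to both equivalences and yields Borel inverses-up-to-isomorphism together with Borel natural isomorphisms. The composite of the forward functors, and separately of the inverse functors, is then a Borel functor, and the Borel natural isomorphisms paste together to witness that the composite is a Borel equivalence of groupoids between $(X,G)$ and the final action groupoid.

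The only mild subtlety to check is that the inclusion $(Y, \eval{G}_Y) \hookrightarrow (X,G)$, which is a Borel equivalence of groupoids by componentwise density of $Y$, indeed has a Borel inverse-up-to-isomorphism supplied by \cref{thm:fibqpolgpd-equiv}: this requires the full subgroupoid inclusion to be a Borel full functor from a Borel-overt fiberwise quasi-Polish groupoid to a standard Borel groupoid, which it is. No step presents a substantial obstacle: the entire argument is essentially a formal consequence of \cref{thm:fibqpolgpd-realiz-equiv-pol} plus \cite{chen:polgpdrep}, with \cref{thm:fibqpolgpd-equiv} ensuring that Borel equivalences compose in the Borel category.
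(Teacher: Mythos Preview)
Your proposal is correct and follows essentially the same approach as the paper: combine \cref{thm:fibqpolgpd-realiz-equiv-pol} with the main result of \cite{chen:polgpdrep}. The paper's proof is a one-line citation, while you have spelled out the justification for composing Borel equivalences (via \cref{thm:fibqpolgpd-equiv}); this extra care is correct but not required, and the Becker--Kechris upgrade to a continuous action is harmless but unnecessary for the statement as written.
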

\begin{proof}
Combine the preceding result with \cite[1.2]{chen:polgpdrep}.
\end{proof}

\begin{corollary}
\label{thm:bocqper-polact}
Every Borel-overt classwise quasi-Polish equivalence relation $(X,E)$ is Borel bireducible with the orbit equivalence relation of a free Polish group action.
\end{corollary}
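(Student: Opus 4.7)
The plan is to apply \cref{thm:fibqpolgpd-polact} to $(X,E)$ regarded as a Borel-overt fiberwise quasi-Polish groupoid (\cref{rmk:bocqper}), yielding a Borel equivalence of groupoids $F : (X,E) \to (M, G \ltimes M)$ for some Borel action of a Polish group $G$ on a standard Borel space $M$. The two remaining tasks are to upgrade the action to a free one, and to extract from $F$ a Borel bireduction between $E$ and the orbit equivalence relation $E_M := \#E_{G \ltimes M}$.

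For freeness --- which is the most substantive point beyond invoking prior results --- I would exploit the fact that an equivalence of groupoids preserves isotropy. Being full and faithful (\cref{def:ftr-equiv}), $F$ restricts to bijections $E(x,x') \cong (G \ltimes M)(F(x), F(x'))$ on every hom-set; in particular it identifies the trivial automorphism group $E(x,x) = \set{1_x}$ with $\Aut_{G \ltimes M}(F(x)) = G_{F(x)}$, the stabilizer at $F(x)$. Essential surjectivity of $F$ ensures every $a \in M$ is $(G \ltimes M)$-isomorphic to some $F(x)$, and any morphism $h : F(x) \to a$ in $G \ltimes M$ conjugates the automorphism groups; hence every stabilizer $G_a$ is trivial, and the action $G \actson M$ is free.

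For the bireduction, I would apply \cref{thm:fibqpolgpd-equiv} to obtain a Borel inverse-up-to-isomorphism $F^{-1} : (M, G \ltimes M) \to (X,E)$ of $F$, and denote the object-level components by $f : X \to M$ and $g : M \to X$. Since $F$ and $F^{-1}$ are full functors between equivalence relations, $f$ and $g$ automatically satisfy the reduction property $x \mathrel{E} x' \iff f(x) \mathrel{E_M} f(x')$, and similarly for $g$; essential surjectivity of $F$ additionally ensures $g$ is defined on all of $M$ (equivalently, $[f(X)]_{E_M} = M$). Thus $f$ and $g$ are Borel reductions, mutually inverse on quotients, exhibiting Borel bireducibility of $E$ with the orbit equivalence relation of the free Polish group action $G \actson M$.
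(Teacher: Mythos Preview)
Your proof is correct and follows essentially the same approach as the paper. The paper's argument is a two-line observation that being an equivalence relation (i.e., having trivial isotropy) transfers along an equivalence of groupoids, and that an action groupoid is an equivalence relation iff the action is free; you have simply unpacked these two observations explicitly, together with the routine passage from a Borel equivalence of groupoids (with Borel inverse via \cref{thm:fibqpolgpd-equiv}) to a Borel bireduction on the underlying equivalence relations.
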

\begin{proof}
Being an equivalence relation transfers along an equivalence of groupoids; and an action groupoid is an equivalence relation iff the action is free.
\end{proof}

\begin{remark}
\label{rmk:fibpolgpd-realiz}
We sketch a simpler proof of \cref{thm:fibqpolgpd-realiz-equiv-pol}, hence also \cref{thm:fibqpolgpd-polact,thm:bocqper-polact}, not using the ``uniformization'' machinery of \cref{thm:ucompolgpd}, in the special case when the original fiberwise topology on $G$ is fiberwise Polish (as originally stated in the introduction).
By \cref{thm:fibqpolgpd-realiz-equiv}, we may assume $G$ is an open quasi-Polish groupoid.
For each $U_i \in \@O(G)$ in a countable basis, $U_i$ is $\sigma$-fiberwise Polish, hence by \cref{thm:fib-bor-pol-reg} is a countable union of Borel $\sigma$-fiberwise open $V_{ij}$ which are contained in Borel $\sigma$-fiberwise closed $F_{ij} \subseteq U_i$.
By Kunugi--Novikov \cref{eq:fib-bor-qpol-basis}, $\sigma^{-1}(B_{ijk})$ for countably many Borel $B_{ijk} \subseteq X$ need to be adjoined to $\@O(G)$ to make these $V_{ij},F_{ij}$ genuinely open and closed.
By the componentwise Baire property (\labelcref{thm:orbtop-bp}), there is a Borel componentwise $\*\Pi^0_2$ dense $Y \subseteq X$ on which these $B_{ijk}$ become componentwise open, hence become open in a finer compatible open quasi-Polish groupoid topology on $(Y,\eval{G}_Y)$.
Now repeat this procedure $\omega$ times; the resulting subgroupoid with a finer topology is easily seen to be open Polish.
\end{remark}

Finally, we give a counterexample to show that the conditions in \cref{thm:ucompolgpd} characterizing global Polishability of the \emph{original} groupoid cannot be simplified to ``fiberwise Polish'', and must necessarily involve interaction between the fiberwise (or componentwise) topology and the algebraic structure of the groupoid, as in \labelcref{thm:ucompolgpd}\cref{thm:ucompolgpd:dblnbhd,thm:ucompolgpd:halfnbhd}.

\begin{example}
\label{ex:fibpolgpd-nonpol}
Let $G$ be the set of partial bijections of $\#N$ with infinite domain and range, topologized by treating their graphs as points in the Sierpiński product space $\#S^{\#N \times \#N}$; thus
\begin{eqaligned*}
G = \set*{g \in \#S^{\#N \times \#N} | \begin{gathered}
    \forall i\, \forall j \ne k\, \paren[\big]{((i,j) \notin g \OR (i,k) \notin g) \AND ((k,i) \notin g \OR (k,j) \notin g)}, \\
    \forall i\, \exists j, k > i\, ((j,k) \in g)
\end{gathered}}
\end{eqaligned*}
is a $\*\Pi^0_2$ subspace of $\#S^{\#N \times \#N}$, hence quasi-Polish.
Let $X$ be the set of infinite subsets of $\#N$, which is a $\@G_\delta$ subspace of $\#S^\#N$.
Then $(X,G)$ with the usual domain $\sigma$, range $\tau$, composition, identity, and inverse operations is easily seen to be an open quasi-Polish groupoid; openness is because the $\sigma$-image of the basic open set of all $g \in G$ extending a finite partial bijection $\gamma$ is easily seen to be the set of all infinite subsets containing the domain of $\gamma$.

For any fixed infinite subset $x \in X$, the fiber $\sigma^{-1}(x)$ is the space of injections $x `-> \#N$.
The subspace topology induced from $G$ is the same as the zero-dimensional product topology on $\#N^x$, since for $g \in \sigma^{-1}(x)$, to say that $(i,j) \notin g$ is equivalently to say that $(i,k) \in g$ for some $k \ne j$.
Thus, the $\sigma$-fiberwise topology on $G$ is fiberwise zero-dimensional Polish.

However, there does not exist any open Polish groupoid topology on $G$ with the same $\sigma$-fiberwise restriction.
This is most easily seen using the Effros \cref{thm:effros} below: since $G$ is a connected groupoid, the unique open quasi-Polish groupoid topology on it inducing the same $\sigma$-fiberwise topology is its original topology defined above, which is not Polish.

For a direct argument, note that the componentwise topology $\@O_G(X)$ is the same as the original quasi-Polish topology on $X$: by \cref{rmk:orbtop-openquot}, for any $x \in X$, we have a continuous open surjection $\tau : \sigma^{-1}(x) ->> X$ from the $\sigma$-fiberwise topology to the componentwise topology $\@O_G(X)$; and the $\tau$-image of the basic open set of all $g$ with domain $x$ extending a finite $\gamma$ is again easily seen to be all infinite subsets containing the range of $\gamma$.
Thus, any compatible open Polish groupoid topology on $(X,G)$ inducing the same $\@O_\sigma(G)$ must have $\@O(X) \subseteq \@O_G(X)$.
But $\@O_G(X)$ is a non-$T_1$ topology; indeed, no $x \in X$ is a closed point (its closure consists of all infinite subsets $y \subseteq x$).
Thus no topology contained in $\@O_G(X)$ can be $T_1$ either.
\end{example}

\subsection{Miscellaneous results}
\label{sec:misc}

In the rest of the paper, we record the groupoid generalizations of some other standard results about Polish groups and their actions.
Most of these have appeared before in the literature on groupoids in some context, but not for open quasi-Polish groupoids.
Several of these results also have natural reformulations in the fiberwise framework of this paper.

\begin{remark}
\label{rmk:gpd-sub-closure}
Recall that every Polish (equivalently $\*\Pi^0_2$) subgroup $H \subseteq G$ of a Polish group $G$ is closed, by Baire category; see \cite[2.2.1]{gao:idst}.
Indeed, otherwise $H$ and a coset of it would be disjoint $\*\Pi^0_2$ sets dense in $\-H$, or alternatively, $H = H * H = \-H * \-H = \-H$ by Pettis's theorem (for $\-H$).

It follows that for a Borel-overt componentwise quasi-Polish groupoid $(X,G)$, for a fiberwise $\*\Pi^0_2$ subgroupoid $(Y,H) \subseteq (X,G)$, for every $y \in Y$, the automorphism group $H(y,y)$ is a closed subgroup of $G(y,y)$ (in the Polish group topology on $G(y,y)$ from \cref{rmk:homtop,thm:ucomqpolgpd-homtop}); hence more generally every hom-set $H(x,y) \subseteq G(x,y)$ is closed.
Note also that $H \subseteq G$ being fiberwise $\*\Pi^0_2$ implies that $Y \subseteq X$ is componentwise $\*\Pi^0_2$, by \cref{rmk:orbtop-openquot,thm:qpol-openquot-bor}.

We note that some natural strengthenings of these statements are false, even if $G$ is assumed to be an open quasi-Polish groupoid and $H$ is also assumed to be an open groupoid:
\begin{itemize}

\item
It is not necessarily the case that $H \subseteq G$ is $\sigma$-fiberwise closed: consider the indiscrete equivalence relation $G = X^2$ on a quasi-Polish space $X$, and $H = Y^2$ for an open $Y \subseteq X$.
For concreteness, say $H := (0,1)^2 \subseteq \#R^2 =: G$.
Note that this example is a full subgroupoid.

\item
Assuming alternatively that $Y = X$, it is again not necessary that $H \subseteq G$ is $\sigma$-fiberwise closed: consider $G = \#R^2$, and $H = (-\infty,0)^2 \cup \set{(0,0)} \cup (0,\infty)^2$.

\end{itemize}
\end{remark}

The following seems to be the best possible groupoid analog of the fact for Polish subgroups, and was shown by Johnstone \cite{johnstone:locgpd} for localic groupoids with essentially the same proof as below.

\begin{proposition}[closed subgroupoid theorem]
Let $(X,G)$ be a Borel-overt componentwise quasi-Polish groupoid, $(Y,H)$ be an arbitrary subgroupoid.
Let $\-H \subseteq G$ be the intersection of the $\sigma$- and $\tau$-fiberwise closures of $H$.
Then $(Y,\-H)$ is a subgroupoid of $(X,G)$.
If moreover $H \subseteq G$ is $\sigma$-fiberwise $\*\Pi^0_2$, then $H = \-H$.
\end{proposition}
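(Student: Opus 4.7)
The plan is to verify the three subgroupoid axioms for $\bar H$ (identities, inverse closure, multiplicative closure), and then deduce $H = \bar H$ under the $\*\Pi^0_2$ hypothesis. The first two axioms are essentially immediate: for $y \in Y$, $1_y \in H \subseteq \bar H$ handles identities; and since $\@O_\tau = \@O_\sigma^{-1}$ by \cref{def:fibqpolgpd}, the inverse map $\nu$ is a homeomorphism between $(G,\@O_\sigma)$ and $(G,\@O_\tau)$, so combined with $H = H^{-1}$ this yields $\nu(\bar H^\sigma) = \overline{\nu(H)}^\tau = \bar H^\tau$ and hence $\nu(\bar H) = \bar H^\tau \cap \bar H^\sigma = \bar H$.

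For closure under multiplication, my strategy combines simple intermediate cases with the joint fiberwise continuity of $\mu$. I first establish the auxiliary inclusion $\bar H \cdot H \subseteq \bar H^\sigma$: for $g \in \bar H^\sigma$ and $h \in H$ with $\sigma(g) = \tau(h)$, right translation $R_h : \sigma^{-1}(\tau(h)) \to \sigma^{-1}(\sigma(h))$ is a $\sigma$-fiberwise homeomorphism mapping $H \cap \sigma^{-1}(\tau(h))$ into $H$ (by the subgroupoid axiom $Hh \subseteq H$), and hence sends $\bar H^\sigma$-points to $\bar H^\sigma$-points; dually $H \cdot \bar H \subseteq \bar H^\tau$. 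For the general case $g, h \in \bar H$, I invoke the joint fiberwise continuity of $\mu$ from $(G \times_X G, \@O_\tau \otimes^\infty_X \@O_\sigma)$ to $(G, \@O_\sigma)$ provided by \cref{def:comqpolgpd}\cref{def:comqpolgpd:mul}: given an $\@O_\sigma$-open neighborhood $U$ of $gh$ and a basic product neighborhood $V \times_X W \subseteq \mu^{-1}(U)$ with $V \in \@O_\tau$ and $W \in \@O_\sigma$, I pick $h_0 \in H \cap W \cap \sigma^{-1}(\sigma(h))$ using $h \in \bar H^\sigma$, then combine $g \in \bar H^\sigma$ with the intermediate case iterated within matching hom-sets (using \cref{rmk:homtop} and \cref{thm:comqpolgpd-homtop} to identify the $\@O_\sigma$- and $\@O_\tau$-subspace topologies on each hom-set with its canonical Polish group topology) to produce a composable $g_0 \in H \cap V$ with $\sigma(g_0) = \tau(h_0)$; then $g_0 h_0 \in H \cap U$, so $gh \in \bar H^\sigma$, and $gh \in \bar H^\tau$ follows symmetrically via inversion. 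The hard part will be precisely this composability step, which forces a bootstrap through the intermediate cases and a careful simultaneous use of both fiberwise closures.

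For the second part, assume $H$ is $\sigma$-fiberwise $\*\Pi^0_2$. By \cref{thm:comqpolgpd-homtop} and \cref{rmk:homtop}, each $G(x,x)$ carries a canonical Polish group topology, and $H(x,x)$ is a $\*\Pi^0_2$ subgroup, hence closed by the standard Polish-group Baire argument: a non-closed $\*\Pi^0_2$ subgroup would admit disjoint dense $\*\Pi^0_2$ cosets, contradicting the Baire property. I then extend this to the full equality $\bar H = H$ by an analogous fiberwise Baire-category argument in each $\sigma^{-1}(x)$: if some $k \in \bar H^\sigma \cap \sigma^{-1}(x) \setminus H$ existed, then right multiplication within $\sigma^{-1}(x)$ by $k$ (when $k \in G(x,x) \cap \bar H$, using the subgroupoid structure from Part 1) or by $k h_0^{-1}$ for some $h_0 \in H(x, \tau(k))$ (reducing to the automorphism case when $H(x, \tau(k)) \ne \emptyset$) would translate $H \cap \sigma^{-1}(x)$ to a disjoint copy inside $\bar H^\sigma \cap \sigma^{-1}(x)$; both would be dense $\*\Pi^0_2$ subsets of this Baire space, a contradiction.
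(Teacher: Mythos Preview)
Your Part~1 is on the right track and matches the paper's strategy (use \cref{def:comqpolgpd}\cref{def:comqpolgpd:mul} to push closures through $\mu$), but the ``composability step'' you flag as hard is not actually resolved by your hom-set bootstrap. After picking $h_0 \in H \cap W$ with $\sigma(h_0) = \sigma(h)$, you need $g_0 \in H \cap V$ with $\sigma(g_0) = \tau(h_0)$; but $V$ is $\@O_\tau$-open in $\tau^{-1}(y)$, while your appeal to $g \in \bar H^\sigma$ only produces approximants with $\sigma$-value equal to $\sigma(g) = \tau(h)$, which need not equal $\tau(h_0)$. Restricting to hom-sets does not help: the approximants you get from $\bar H^\sigma$ or $\bar H^\tau$ live in entire $\sigma$- or $\tau$-fibers, not in the particular hom-set $G(\tau(h_0), y)$ you would need. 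The paper sidesteps this by working directly with the continuous map $\mu : \overline{\tau_H^{-1}(y)} \times_X \overline{\sigma_H^{-1}(x)} \to \sigma^{-1}(x)$ and the closed target $\overline{\sigma_H^{-1}(x)}$, rather than attempting a pointwise two-step approximation.

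Your Part~2 takes a genuinely different route from the paper. The paper gives a one-line Pettis argument: $H \subseteq \bar H$ is $\sigma$- and $\tau$-fiberwise dense $\*\Pi^0_2$, hence fiberwise comeager in the fiberwise Baire groupoid $\bar H$, so $H = H * H = \bar H * \bar H = \bar H$ by \cref{it:vaught-pettis}. Your direct Baire-category argument via disjoint dense $\*\Pi^0_2$ translates is more elementary and essentially works, but needs two fixes. First, the ambient Baire space must be $\bar H \cap \sigma^{-1}(x)$, not $\bar H^\sigma \cap \sigma^{-1}(x)$: you need the subgroupoid structure of $\bar H$ from Part~1 to know that right translation by $k \in \bar H$ permutes $\bar H$-fibers, which is what makes the translated copy dense. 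Second, your case split is both incomplete (you do not handle $H(x,\tau(k)) = \emptyset$) and unnecessary: for any $k : x \to w$ in $\bar H \setminus H$, the set $(H \cap \sigma^{-1}(w)) \cdot k$ is directly a $\*\Pi^0_2$ subset of $\sigma^{-1}(x)$, disjoint from $H \cap \sigma^{-1}(x)$, and dense in $\bar H \cap \sigma^{-1}(x)$ since $R_k$ maps $\bar H \cap \sigma^{-1}(w)$ onto $\bar H \cap \sigma^{-1}(x)$. No reduction to automorphisms is needed.
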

(In fact the first claim holds for an arbitrary componentwise topological groupoid; neither Borel, nor overt, nor quasi-Polish is needed.
For the second claim, we need only assume that $G$ is fiberwise completely Baire.
We state the result in the restricted context of \cref{def:fibqpolgpd} for simplicity.)
\begin{proof}
For each $x, y \in X$, the multiplication $\mu : \-H \times_X \-H -> G$ lands in $\-H$: indeed, the restriction to each $\mu^{-1}(G(x,y))$ has domain contained in the product of fiberwise closures $\-{\tau_H^{-1}(y)} \times_X \-{\sigma_H^{-1}(x)}$, hence lands in $\-{\sigma_H^{-1}(x)}$ by fiberwise continuity of differences (see \labelcref{def:comqpolgpd}\cref{def:comqpolgpd:mul}), and dually lands in $\-{\tau_H^{-1}(y)}$, hence in $\-H$.
The rest of the groupoid operations are easily seen to also restrict to $(Y,\-H)$.

Now suppose that $H$ is fiberwise $\*\Pi^0_2$ (and $G$ is fiberwise completely Baire).
Then $H \subseteq \-H$ is fiberwise comeager, hence $H = H * H = \-H * \-H = \-H$ by Pettis's theorem \cref{it:vaught-pettis} (whose proof only uses that $\-H$ is fiberwise Baire, since it contains the fiberwise dense $\*\Pi^0_2$ $H$, using that $G$ is fiberwise completely Baire).
\end{proof}

The classical Effros theorem on orbits of Polish group actions \cite{effros:groups} (see also \cite[3.2.4]{gao:idst}) was generalized to open Polish groupoid actions by Ramsay \cite[\S3]{ramsay:polgpd} and Lupini \cite[\S3.1]{lupini:polgpd}, but does not appear to have been verified before for open quasi-Polish groupoids.
We do so here.

\begin{theorem}[Effros theorem]
\label{thm:effros}
Let $(X,G)$ be a Borel-overt componentwise quasi-Polish groupoid.
Then for each component $C \in X/G$, the componentwise topologies $\@O_G(C)$ and $\@O_G(\eval{G}_C)$ are the unique second-countable topologies $\@O(C) \subseteq \@B(C)$ and $\@O(\eval{G}_C) \subseteq \@B(G)$ turning $(C,\eval{G}_C)$ into an open topological groupoid, such that
\begin{enumerate}[roman]
\item  \label{thm:effros:fibtop}
$\@O(\eval{G}_C)$ induces the original $\sigma$-fiberwise topology $\@O_\sigma(\eval{G}_C)$;
\item  \label{thm:effros:compat}
$\@O(C)$ generates the Borel sigma-algebra $\@B(C)$; and
\item  \label{thm:effros:nonmeager}
$C$ is $\@O(C)$-nonmeager in itself.
\end{enumerate}
(For example, the first two conditions hold if $(X,G)$ is open quasi-Polish and $\@O(C), \@O(\eval{G}_C)$ are the subspace topologies, while the last two conditions hold if $\@O(C)$ is quasi-Polish.)
\end{theorem}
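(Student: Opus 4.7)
My plan is to first reduce to a connected groupoid by passing to the component $C$, then apply \cref{thm:comqpolgpd-qpol-connected} for the existence assertion, and carry out an Effros--Pettis-style argument for uniqueness. Concretely, I replace $(X,G)$ by the full subgroupoid $(C,\eval{G}_C)$: this is Borel by \cref{thm:fibqpolgpd-er-idl}, inherits the structure of a Borel-overt componentwise quasi-Polish groupoid via \cref{rmk:fibqpolgpd-full} (since fiberwise continuity of differences restricts trivially), and is connected by construction. \Cref{thm:comqpolgpd-qpol-connected} then supplies $(\@O_G(C),\@O_G(\eval{G}_C))$ as an open quasi-Polish groupoid topology whose morphism part restricts fiberwise to the given $\@O_\sigma$. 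Condition \cref{thm:effros:fibtop} is immediate, \cref{thm:effros:compat} holds because quasi-Polish topologies generate their standard Borel structure, and \cref{thm:effros:nonmeager} holds because quasi-Polish spaces are Baire.

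For uniqueness, suppose $(\@O(C),\@O(\eval{G}_C))$ is another second-countable open topological groupoid structure satisfying \cref{thm:effros:fibtop,thm:effros:compat,thm:effros:nonmeager}. The inclusions $\@O \subseteq \@O_G$ are the easy direction: for $U \in \@O(C)$, continuity of $\tau$ for $\@O$ together with \cref{thm:effros:fibtop} forces $\tau^{-1}(U) \cap \sigma^{-1}(x) \in \@O_\sigma(\sigma^{-1}(x))$ for every $x$, so $\tau^{-1}(U) \in \@O_\sigma(\eval{G}_C)$ and hence $U \in \@O_G(C)$ by definition of the componentwise topology on objects; the analogous statement $\@O(\eval{G}_C) \subseteq \@O_G(\eval{G}_C)$ follows from continuity of $\mu$ combined with \cref{thm:effros:fibtop}, using the description of $\@O_G(\eval{G}_C)$ via $\mu^{-1}$.

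For the reverse inclusions $\@O_G \subseteq \@O$, I carry out the classical Effros--Pettis argument, adapted to the quasi-Polish groupoid setting. Given $U \in \@O_G(C)$ and $y \in U$, I invoke \cref{thm:effros:fibtop} to pick $W \in \@O(\eval{G}_C)$ with $1_y \in W$ and $W \cap \sigma^{-1}(y) \subseteq \tau^{-1}(U)$; by continuity of $\mu$ for $\@O$ at $(1_y,1_y)$, I then obtain a symmetric $W' \in \@O(\eval{G}_C)$ with $1_y \in W'$ and $W' \cdot W' \subseteq W$. By \cref{thm:ucomqpolgpd-homtop}, each $G(y,y)$ is a Polish group acting continuously on $\sigma^{-1}(y)$ by right translation, and the hom-sets inside $C$ are homeomorphic copies of this Polish group. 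A Baire-category argument using \cref{thm:effros:nonmeager}---the key step, modeled on the classical Effros theorem---then shows that $\tau(W')$ is an $\@O(C)$-open neighborhood of $y$ contained in $U$, so $U \in \@O(C)$. The morphism inclusion follows by the same strategy, applied to identity neighborhoods in $\eval{G}_C$ via the fiberwise product of the $\sigma$- and $\tau$-fiberwise topologies.

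I expect the Pettis--Effros step to be the main obstacle. The subtlety is that fiberwise data of the form $W \cap \sigma^{-1}(y) \subseteq \tau^{-1}(U)$ does not translate into global data about $\tau(W')$ directly, since $\sigma$-fibers are not $\@O$-open; indeed, without the nonmeagerness hypothesis \cref{thm:effros:nonmeager}, the conclusion fails already for Polish group actions. The proof will closely track the structure of the classical Effros theorem \cite{effros:groups} and its groupoid extensions in \cite{ramsay:polgpd,lupini:polgpd}, with the non-Hausdorff features of the quasi-Polish setting handled by the Vaught-transform and fiberwise Baire-category machinery developed in \cref{sec:vaught} (in particular \cref{thm:vaught-fibact,thm:fibtop-meager-exists*,thm:orbtop-bp}) rather than by classical closure arguments.
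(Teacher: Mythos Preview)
Your overall plan (reduce to connected, existence via \cref{thm:comqpolgpd-qpol-connected}, easy inclusion $\@O \subseteq \@O_G$ from continuity of $\tau,\mu$ plus \cref{thm:effros:fibtop}) matches the paper. The divergence is in the hard inclusion $\@O_G \subseteq \@O$, where you propose a pointwise classical-Effros argument while the paper runs a global Pettis/Vaught-transform argument.

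There is a concrete gap in your sketch. Your claim that $\tau(W')$ is contained in $U$ does not follow from $W'W' \subseteq W$ and $W \cap \sigma^{-1}(y) \subseteq \tau^{-1}(U)$: a morphism $g \in W'$ with $\sigma(g) \ne y$ has no reason to satisfy $\tau(g) \in U$, since the containment hypothesis only controls $W$ on the single fiber $\sigma^{-1}(y)$. What the classical Effros argument actually proves is that the restricted map $\tau : (\sigma^{-1}(y),\@O_\sigma) \to (C,\@O(C))$ is open; one shows via Baire category that $\tau(W' \cap \sigma^{-1}(y))$ (not $\tau(W')$) is somewhere $\@O(C)$-comeager and then uses the $W'W'$ trick to extract an interior point. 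Once that openness is established, $U = \tau(\tau^{-1}(U) \cap \sigma^{-1}(y)) \in \@O(C)$ follows. This can be made to work, but it is more delicate than you indicate, and the morphism case needs a separate argument of the same type.

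The paper avoids this pointwise analysis entirely. It first proves a meager-transfer lemma: any Borel $\@O(X)$-meager $A$ has $\tau^{-1}(A)$ $\@O(G)$-meager (continuity and openness of $\tau$), hence by Kuratowski--Ulam and \cref{thm:effros:fibtop} the set $\exists^*_\sigma(\tau^{-1}(A))$ is $\@O(X)$-meager; but this set is $G$-invariant, so by connectedness and \cref{thm:effros:nonmeager} it is empty, i.e., $A$ is $\@O_G(X)$-meager. Then for $A \in \@{BO}_G(X)$ one writes $\tau^{-1}(A) = \bigcup_i (\sigma^{-1}(B_i) \cap U_i)$ with $U_i \in \@O(G)$ via Kunugi--Novikov, uses \cref{thm:effros:compat} and Baire property to replace each $B_i$ by an $\@O(X)$-open $B_i'$ equal mod $\@O(X)$-meager, applies the meager-transfer lemma to get equality mod $\@O_G(X)$-meager, and concludes $A = \exists^*_\tau(\tau^{-1}(A)) \in \tau(\@O(G)) = \@O(X)$. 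The morphism case is handled symmetrically via $\mu$ and Pettis \cref{it:vaught-pettis}. This route sidesteps the fiber-by-fiber openness question and uses exactly the machinery already built in \cref{sec:vaught}.
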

\begin{proof}
By restricting to the full subgroupoid $(C,\eval{G}_C)$, we may assume $C = X$, i.e., $G$ is connected.

First, note that $\@O_G(X), \@O_G(G)$ do indeed form an open quasi-Polish groupoid, by \cref{thm:comqpolgpd-qpol-connected}.
And any other open groupoid topology $\@O(X), \@O(G)$ obeying the above conditions must be contained in $\@O_G(X), \@O_G(G)$, since $\tau^{-1}(\@O(X)) \subseteq \@O(G) \subseteq \@O_\sigma(G)$ by \cref{thm:effros:fibtop}, and $\mu^{-1}(\@O(G)) \subseteq \@O(G) \otimes_X \@O(G) \subseteq \@O_\sigma(G) \otimes_X \@O_\tau(G)$.
So it remains to show $\@O_G(X) \subseteq \@O(X)$ and $\@O_G(G) \subseteq \@O(G)$.

We first show that any Borel $\@O(X)$-meager $A \subseteq X$ is $\@O_G(X)$-meager.
Indeed, $\tau^{-1}(A) \subseteq G$ is $\@O(G)$-meager since $\tau : (G,\@O(G)) -> (X,\@O(X))$ is continuous open.
Thus, $\exists^*_\sigma(\tau^{-1}(A)) \subseteq X$ is $\@O(X)$-meager by the Kuratowski--Ulam \cref{thm:kuratowski-ulam} and \cref{thm:effros:fibtop}.
But since $\tau^{-1}(A) \subseteq G$ is invariant under right multiplication, $\exists^*_\sigma(\tau^{-1}(A)) \subseteq X$ is $G$-invariant, hence must be $\emptyset$ by \cref{thm:effros:nonmeager}.
This means $\tau^{-1}(A)$ is $\sigma$-fiberwise meager, i.e., $A$ is $\@O_G(X)$-meager (\cref{def:orbtop-weak,thm:comqpolgpd-homtop}).

Now for $A \in \@{BO}_G(X)$,
\begin{eqaligned*}
\tau^{-1}(A) = \bigcup_i (\sigma^{-1}(B_i) \cap U_i)
\end{eqaligned*}
for $B_i \in \@B(X)$ and $U_i \in \@O(G)$ by \cref{eq:orbtop-obj-bor}, \cref{thm:effros:fibtop}, and Kunugi--Novikov.
By the Baire property and \cref{thm:effros:compat}, find $B_i =^*_{\@O(X)} B'_i \in \@O(X)$.
By the above, also $B_i =^*_{\@O_G(X)} B'_i$, so by \cref{def:orbtop-weak},
\begin{eqaligned*}
\tau^{-1}(A) =^*_\tau \bigcup_i (\sigma^{-1}(B'_i) \cap U_i) \in \@O(G),
\end{eqaligned*}
and so
$A = \exists^*_\tau(\tau^{-1}(A)) \in \exists^*_\tau(\@O(G)) = \tau(\@O(G)) = \@O(X)$.

Similarly for $U \in \@{BO}_G(G)$, write $\mu^{-1}(U) = \bigcup_i (V_i \times_X W_i)$ for $V_i \in \@{BO}_\sigma(G)$ and $\@{BO}_\tau(G)$, and replace them mod $\tau$- and $\sigma$-fiberwise meager respectively with $V'_i, W'_i \in \@O(G)$ as above; then $U = \bigcup_i (V_i * W_i) = \bigcup_i (V'_i \cdot W'_i) \in \@O(G)$ by Pettis \cref{it:vaught-pettis}.
\end{proof}

\begin{corollary}
\label{thm:effros-action}
For an open quasi-Polish groupoid $(X,G)$ and Borel $G$-space $p : M -> X$, the orbitwise topology $\@O_G(C)$ on each $C \in M/G$ is the unique second-countable topology $\@O(C)$ generating $\@B(C)$ and making $p : (C,\@O(C)) -> (X,\@O(X))$ and the action $\alpha : (G,\@O(G)) \times_X (C,\@O(C)) -> (C,\@O(C))$ continuous and $C$ nonmeager in itself.

Similarly, for a Borel-overt componentwise quasi-Polish $(X,G)$ and Borel $G$-space $p : M -> X$, the orbitwise topology $\@O_G(C)$ on each $C \in M/G$ is the unique second-countable topology $\@O(C)$ generating $\@B(C)$ and making $p : (C,\@O(C)) -> (X,\@O_G(X))$ and $\alpha : (G,\@O_G(G)) \times_X (C,\@O(C)) -> (C,\@O(C))$ continuous and $C$ nonmeager in itself.
\end{corollary}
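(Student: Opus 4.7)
The plan is to reduce to \cref{thm:effros} applied to the action groupoid $G \ltimes M$ restricted to the orbit $C$. Since any $(g,a) \in G \ltimes M$ satisfies $\sigma_G(g) = p(a)$ and $\tau_G(g) = p(ga)$, the orbit $C$ lies over a single connected component of $X$; restricting to that component, and using \cref{thm:comqpolgpd-qpol-connected} in the second case to promote the componentwise structure to a global open quasi-Polish groupoid topology fiberwise restricting to $\@O_\sigma(G)$, I may assume that $G$ is connected and that the given topology on $G$, which I denote by $\@O(G)$ in both cases, is an open quasi-Polish groupoid topology inducing $\@O_\sigma(G)$ on $\sigma$-fibers. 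By \cref{thm:comqpolgpd-action}, $G \ltimes M$ is then a Borel-overt componentwise quasi-Polish groupoid, and by \cref{def:fibqpolgpd-action} the orbitwise topology $\@O_G(C)$ coincides with the componentwise topology on $C$ viewed as a connected component of $G \ltimes M$.

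Given any $\@O(C) \subseteq \@B(C)$ satisfying the corollary's hypotheses, I equip $\eval{(G \ltimes M)}_C = G \times_X C$ with the subspace topology from $\@O(G) \times \@O(C)$ and verify that this makes $(C, \eval{(G \ltimes M)}_C)$ into an open topological groupoid satisfying the hypotheses of \cref{thm:effros}. Continuity of $p$ exhibits $\pi_2 = \sigma_{G \ltimes M}$ as a pullback of the open map $\sigma_G$, hence itself continuous and open; continuity of $\alpha$ gives continuity of $\tau_{G \ltimes M} = \alpha$; multiplication, inverse, and identity in $G \ltimes M$ are continuous because their $G$-counterparts are (using continuity of $\alpha$ once more for $\nu_{G \ltimes M}$). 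On each $\sigma_{G \ltimes M}$-fiber $\sigma_G^{-1}(p(a)) \times \set{a}$, the product topology restricts to the subspace topology from $\@O(G)$, which equals $\@O_\sigma(\sigma_G^{-1}(p(a)))$; this matches the lifted $\sigma$-fiberwise topology on $G \ltimes M$, verifying hypothesis \labelcref{thm:effros:fibtop} of Effros, while \labelcref{thm:effros:compat} and \labelcref{thm:effros:nonmeager} are direct hypotheses of the corollary. \Cref{thm:effros} then forces $\@O(C) = \@O_{G \ltimes M}(C) = \@O_G(C)$. Conversely, that $\@O_G(C)$ itself satisfies the corollary's conditions follows from \cref{thm:ucomqpolgpd-qpol} applied to $G \ltimes M$, together with \cref{thm:ucomqpolgpd-action-orbtop}, which identifies the resulting open quasi-Polish groupoid topology on the morphism space with $\@O(G) \otimes_X \@O_G(C)$.

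The main obstacle is purely bookkeeping: recognizing that Effros's ``open topological groupoid with prescribed fiberwise topology'' hypotheses on $(C, \eval{(G \ltimes M)}_C)$ correspond precisely to the corollary's ``continuity of $p$ and $\alpha$'' hypotheses, once the product topology $\@O(G) \otimes_X \@O(C)$ on the action groupoid's morphism space is identified as the correct global topology to consider.
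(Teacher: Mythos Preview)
Your proof is correct and takes essentially the same approach as the paper: apply \cref{thm:effros} to the action groupoid $G \ltimes M$, equipping its morphism space with the product topology $\@O(G) \otimes_X \@O(C)$ (respectively $\@O_G(G) \otimes_X \@O(C)$). Your reduction of the second case to the first via \cref{thm:comqpolgpd-qpol-connected} after restricting to a connected component of $X$, and your explicit verification of the converse direction via \cref{thm:ucomqpolgpd-action-orbtop}, are more detailed than the paper's one-sentence proof but follow the same route.
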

\begin{proof}
Follows by applying \cref{thm:effros} to the action groupoid $G \ltimes M$, and noting that when the action on $C$ is continuous, then $\@O(C)$ together with $\@O(G) \otimes_X \@O(C)$, respectively $\@O_G(G) \otimes_X \@O(C)$, form the required open groupoid topology on $G \ltimes C$.
\end{proof}

An important application of the Effros theorem is to show that for groupoids or actions inducing a \emph{smooth} equivalence relation, the componentwise topologies may be realized simultaneously on each component.
(Smoothness is clearly necessary for this, by \cref{it:qpol-openquot}.)
The following result is one equivalent formulation of the quasi-Polish generalization of \cite[7.1.2]{becker-kechris:polgrp} for Polish group actions and \cite[5.2.2]{lupini:polgpd} for Polish groupoids (see \cref{thm:ucomqpolgpd-erquot} for the direct generalization).

\begin{theorem}
\label{thm:ucomqpolgpd-smooth}
Let $(X,G)$ be a Borel-overt uniformly componentwise quasi-Polish groupoid, and suppose its connectedness relation $\#E_G \subseteq X^2$ is smooth, i.e., $X/G$ is standard Borel (\cref{thm:fibqpolgpd-idl}).
Then the quotient map $\pi : X ->> X/G$ equipped with the fiberwise topology $\@{BO}_G(X)$ is a Borel-overt bundle of quasi-Polish spaces; similarly for $\pi\sigma = \pi\tau : G ->> X/G$ with $\@{BO}_G(G)$.

Thus, for a club of pairs $(\@O(X),\@O(G))$ of quasi-Polish topologies below $(\@{BO}_G(X),\@{BO}_G(G))$ forming an open quasi-Polish groupoid topology inducing the given fiberwise topology, as in \cref{thm:ucomqpolgpd-qpol}, we have that $\@O(X), \@O(G)$ restrict to the componentwise topologies on each component:
\begin{eqaligned*}
\@{BO}_G(X) &= \pi^{-1}(\@B(X/G)) \ocap \@O(X), \\
\@{BO}_G(G) &= \sigma^{-1}(\pi^{-1}(\@B(X/G))) \ocap \@O(G) = \tau^{-1}(\pi^{-1}(\@B(X/G))) \ocap \@O(G).
\end{eqaligned*}

Moreover, for any Borel transversal $Y \subseteq X$ of $\#E_G$, the surjection
\begin{eqaligned*}
\tau : \sigma^{-1}(Y) -->> X
\end{eqaligned*}
is ``uniformly open from $\@O_\sigma$ to $\@O_G$'', i.e., preserves Borelness of fiberwise open sets:
\begin{eqaligned*}
\tau(\@{BO}_\sigma(\sigma^{-1}(Y))) = \@{BO}_G(X);
\end{eqaligned*}
and there is a Borel section $\gamma : X `-> \sigma^{-1}(y)$ of $\tau : \sigma^{-1}(Y) -> X$, picking a unique morphism $\gamma(x) : y -> x$ from the unique $y \in Y \cap [x]_G$ to each $x \in X$.
\end{theorem}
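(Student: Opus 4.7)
The proof proceeds in three steps: exhibit the bundle structures on $\pi$ and $\pi\sigma = \pi\tau$, derive the club of global topologies that restrict to the componentwise topologies on each component, and then deduce the transversal claims.

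First, smoothness of $\#E_G$ combined with its idealisticity from \cref{thm:fibqpolgpd-er-idl} yields via \cref{thm:fibqpolgpd-idl} that $X/G$ is standard Borel. Each fiber of $\pi : X -> X/G$ is a component $C$, on which \cref{thm:comqpolgpd-qpol-connected} provides a unique open quasi-Polish groupoid structure with componentwise topology $\@O_G(C)$ inducing the given $\@O_\sigma(\eval*{G}_C)$. To exhibit $\pi$ as a Borel-overt bundle of quasi-Polish spaces with fiberwise topology $\@O_G$, I take any compatible $\@O_0(X) \subseteq \@{BO}_G(X)$ from \cref{thm:ucomqpolgpd-qpol} and refine it by adjoining $\pi^{-1}$ of a countable basis for a Polish topology on $X/G$; each such preimage is $G$-invariant, hence in $\@{BO}_G(X)$, and $\*\Delta^0_2$ so quasi-Polishness is preserved by \cref{it:qpol-dis:clopen}. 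In the resulting $\@O(X)$, every component $C$ becomes closed, hence $\*\Pi^0_2$, hence a Baire subspace; Effros uniqueness (\cref{thm:effros}) then identifies $\eval*{\@O(X)}_C$ with $\@O_G(C)$, so any countable basis of $\@O(X)$ restricts to a countable Borel fiberwise basis. Borel-overtness follows because for $A \in \@{BO}_G(X)$ the saturation $[A]_G = \tau(\sigma^{-1}(A))$ is Borel, using $\sigma^{-1}(A) \in \@{BO}_G(G) \subseteq \@{BO}_\tau(G)$ from \cref{eq:orbtop-st,eq:orbtop-fibtop} and Borel-overtness of $\tau$. The same argument applied to $\pi\sigma = \pi\tau : G -> X/G$ with fiberwise topology $\@{BO}_G(G)$ (refining a compatible $\@O_0(G)$ by $(\pi\sigma)^{-1}$ of a basis on $X/G$, with each $(\pi\sigma)^{-1}(\{C\}) = \eval*{G}_C$ a componentwise clopen subset of $G$) shows it is also a Borel-overt bundle of quasi-Polish spaces.

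Second, \cref{thm:fib-bor-qpol} applied to the bundles $\pi$ and $\pi\sigma$ produces compatible quasi-Polish topologies on $X/G$, $X$, and $G$ rendering both bundles continuous, with $\@O(X) \subseteq \@{BO}_G(X)$ and $\@O(G) \subseteq \@{BO}_G(G)$ fiberwise restricting to the componentwise topologies on each component. Intersecting this cofinal condition with the club from \cref{thm:ucomqpolgpd-qpol} (of pairs $(\@O(X), \@O(G))$ making $(X,G)$ an open quasi-Polish groupoid with $\@O(G)$ restricting to $\@O_\sigma(G)$) yields a sub-club satisfying all the stated properties simultaneously. The identity $\@{BO}_G(X) = \pi^{-1}(\@B(X/G)) \ocap \@O(X)$ is then Kunugi--Novikov \cref{eq:fib-bor-qpol-basis} applied to the bundle $\pi$ with any countable basis of $\@O(X)$ as fiberwise basis; similarly $\@{BO}_G(G) = (\pi\sigma)^{-1}(\@B(X/G)) \ocap \@O(G)$, and the $\sigma^{-1}, \tau^{-1}$-preimage formulations follow upon combining with \labelcref{thm:ucomqpolgpd}\cref{thm:ucomqpolgpd:fibtop} and \cref{eq:orbtop-st}.

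Third, for the transversal $Y$: the Borel bijection $Y \cong X/G$ makes $\tau : \sigma^{-1}(Y) -> X$ a map of Borel-overt bundles of quasi-Polish spaces over $X/G$, which fiberwise is the continuous open surjection $\tau : \sigma^{-1}(y) ->> [y]_G$ of \cref{eq:orbtop-orb}. Fiberwise Borel reflection via \cref{thm:qpol-openquot-bor} combined with Kunugi--Novikov \cref{eq:fib-bor-qpol-basis} applied to both the domain and codomain bundles yields $\tau(\@{BO}_\sigma(\sigma^{-1}(Y))) = \@{BO}_G(X)$. For the Borel section $\gamma$: the isotropy subgroupoid $\eval*{G}_Y = \sigma^{-1}(Y) \cap \tau^{-1}(Y) = \bigsqcup_{y \in Y} G(y,y)$ is $\sigma$-fiberwise $\*\Pi^0_2$ in $G$ (each $G(y,y)$ being Polish by \cref{thm:ucomqpolgpd-homtop}), so by \cref{thm:fib-bor-subsp} inherits a right-translation-invariant Borel fiberwise quasi-Polish topology, which is readily seen to be Borel-overt (for $U = V \cap \eval*{G}_Y$ with $V \in \@{BO}_\sigma(G)$, one has $\sigma(U) = Y \cap \sigma(V \cap \tau^{-1}(Y))$, Borel by Borel-overtness of $\sigma$). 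Its right-translation action on the bundle $\sigma^{-1}(Y) -> Y$ has orbit equivalence relation exactly $\ker(\tau|_{\sigma^{-1}(Y)})$, which is smooth via $\tau$, so \cref{thm:fibqpolgpd-idl} produces the desired Borel selector $\gamma$.
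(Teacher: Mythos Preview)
Your first two steps are essentially correct and parallel the paper's argument (though you prove the bundle structure first and derive the club, whereas the paper does the reverse; both work).

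The third step, however, has a genuine gap in the uniform openness claim $\tau(\@{BO}_\sigma(\sigma^{-1}(Y))) \subseteq \@{BO}_G(X)$.  You assert this follows from ``fiberwise Borel reflection via \cref{thm:qpol-openquot-bor} combined with Kunugi--Novikov,'' treating $\tau$ as a fiberwise continuous open map between Borel-overt bundles over $X/G$.  But the paper explicitly notes, in the remark preceding \cref{thm:fib-open-borel-reg}, that it does \emph{not} seem possible to prove that a Borel fiberwise continuous open map between Borel-overt bundles is automatically ``uniformly fiberwise open'' in this sense.  \Cref{thm:qpol-openquot-bor} only gives you \emph{reflection} of Borel complexity along $\tau$ on each fiber; it says nothing about whether $\tau(U)$ is globally Borel for $U \in \@{BO}_\sigma(\sigma^{-1}(Y))$, which is a priori only analytic.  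The paper's proof instead uses the groupoid structure in an essential way: \cref{thm:fib-open-borel-reg} gives $\tau(V) =^*_G V'$ for some $V' \in \@{BO}_G(X)$, then Pettis's theorem \cref{it:vaught-pettis} and \labelcref{thm:ucomqpolgpd}\cref{thm:ucomqpolgpd:mu-fib} yield $\@{BO}_G(G) \cdot \tau(V) = \@{BO}_G(G) * V' \subseteq \@{BO}_G(X)$, whence $\tau(\@{BO}_\sigma(\sigma^{-1}(Y))) = \tau(\@{BO}_G(G) \odot \@{BO}_\sigma(\sigma^{-1}(Y))) = \@{BO}_G(G) \cdot \tau(\@{BO}_\sigma(\sigma^{-1}(Y))) \subseteq \@{BO}_G(X)$.

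Your argument for the Borel section $\gamma$ also has a gap: you claim $\sigma(V \cap \tau^{-1}(Y))$ is Borel ``by Borel-overtness of $\sigma$,'' but Borel-overtness applies only to $\sigma$-fiberwise \emph{open} sets, and $\tau^{-1}(Y)$ is only $\sigma$-fiberwise $\*\Pi^0_2$ (its fiber over $x$ is the single hom-set $G(x,y_x)$ for $y_x \in Y \cap [x]_G$), so $V \cap \tau^{-1}(Y) \notin \@{BO}_\sigma(G)$.  The paper instead derives the section directly from the uniform openness claim: once $\tau(\@{BO}_\sigma(\sigma^{-1}(Y))) \subseteq \@{BO}_G(X)$ is established, it follows that $\tau : \sigma^{-1}(Y) \to X$ (with fibers $G(y,x)$) is itself a Borel-overt bundle, and then large section uniformization \labelcref{thm:largeunif} applies.
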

\begin{proof}
We prove the second claim first.
Clearly pairs of topologies $(\@O(X),\@O(G))$ obeying the second claim are closed under countable increasing joins.
For cofinality, fix any countable separating family of Borel sets $B_i \subseteq X/G$.
Then by \cref{thm:ucomqpolgpd-qpol}, we may find $\@O(X), \@O(G)$ forming an open quasi-Polish groupoid topology inducing the given fiberwise topology on $G$, and also making open any desired countable family of sets in $\@{BO}_G(X), \@{BO}_G(G)$, as well as each of the sets $\pi^{-1}(B_i)$.
Then clearly, each $C \in X/G$ will be $\*\Pi^0_2$ in $\@O(X)$, hence the full subgroupoid on $C$ is quasi-Polish, which means by \cref{thm:effros} that $\@O(X), \@O(G)$ restrict to the componentwise topologies on $C$.
The claimed formulas for $\@{BO}_G(X), \@{BO}_G(G)$ then follow from Kunugi--Novikov \cref{eq:fib-bor-qpol-basis}, and imply that $X, G$ form Borel-overt fiberwise quasi-Polish bundles over $X/G$, with countable Borel fiberwise bases given by any countable bases for $\@O(X), \@O(G)$, proving the first claim.

For the third claim, it is clear by \cref{rmk:orbtop-openquot} that $\tau(\@{BO}_\sigma(\sigma^{-1}(Y))) \subseteq \@O_G(X)$; we must verify that Borelness is preserved.
We treat $\tau$ as a map between the Borel-overt bundles
\begin{eqtikzcd*}
(\sigma^{-1}(Y), \@{BO}_\sigma(\sigma^{-1}(Y))) \drar["\pi\sigma"'] \rar["\tau"] &
(X, \@{BO}_G(X)) \dar["\pi"]
\\
& X/G
\end{eqtikzcd*}
By \cref{thm:fib-open-borel-reg}, while fiberwise openness of $\tau$ might not guarantee that it preserves Borel fiberwise open sets, it must at least map each $V \in \@{BO}_\sigma(\sigma^{-1}(Y))$ to some $\tau(V) =^*_G V' \in \@{BO}_G(X)$.
Since $\tau(V)$ is componentwise open, it follows by \cref{it:vaught-im} and Pettis \cref{it:vaught-pettis} that
\begin{eqaligned*}
\@{BO}_G(G) \cdot \tau(V)
= \@{BO}_G(G) * \tau(V)
= \@{BO}_G(G) * V'
\subseteq \@{BO}_G(X)
\end{eqaligned*}
where $\cdot$ and $*$ refer to the trivial action $G \actson X$.
Thus
\begin{eqaligned*}
\tau(\@{BO}_\sigma(\sigma^{-1}(Y)))
&= \tau(\@{BO}_G(G) \odot \@{BO}_\sigma(\sigma^{-1}(Y)))
    &&\text{by \cref{thm:ucomqpolgpd}\cref{thm:ucomqpolgpd:mu-fib}} \\
&= \@{BO}_G(G) \odot \tau(\@{BO}_\sigma(\sigma^{-1}(Y)))
    &&\text{(since $\tau(gh) = \tau(g)$)} \\
&\subseteq \@{BO}_G(X)
    &&\text{by above}.
\end{eqaligned*}
It follows that $\@{BO}_G(G)$ restricts to a Borel-overt fiberwise quasi-Polish topology on $\tau : \sigma^{-1}(Y) ->> X$ (whose fibers are the hom-sets $G(y,x)$ for $y \in Y$, which are Polish by \cref{rmk:homtop}), which hence has a Borel section $\gamma$ by the large section uniformization theorem \labelcref{thm:largeunif}.
\end{proof}

\begin{remark}
\Cref{thm:ucomqpolgpd-smooth} says that in some sense, the structure of a smooth Borel-overt uniformly componentwise quasi-Polish groupoid may be ``fully uniformly'' decomposed into its components.
Namely, both $G$ and $X$ may be treated as Borel-overt bundles over $X/G$, and all $5$ of the groupoid operations $\sigma, \tau, \mu, \iota, \nu$ are Borel fiberwise continuous over $X/G$, with $\sigma, \tau, \mu$ also being uniformly fiberwise open (i.e., preserving Borel fiberwise open sets), yielding a ``Borel-overt bundle of connected open quasi-Polish groupoids''.

If one is not interested in preserving the componentwise topology, then it is even possible to decompose each component, into the set of objects together with the automorphism group.
Precisely, letting $\upsilon : X ->> Y$ map each $x$ to the unique $y \in [x]_G \cap Y$, we have an isomorphism
\begin{align*}
\smash{\origdisplaystyle
\bigsqcup_{y \in Y} ([y]_G^2 \times G(y,y))}
= \set[\big]{(y,x,z,h) \in X^3 \times G | y = \upsilon(x) = \upsilon(z) = \sigma(h) = \tau(h)} &--> G \\
(y,x,z,h) &|--> \gamma(z) h \gamma(x)^{-1}
\end{align*}
which encodes each morphism $g \in G$ as its component ($y$), its two endpoints $(x,z)$, and its conjugate into the automorphism group $G(y,y)$ by the selected morphisms $\gamma$ from \cref{thm:ucomqpolgpd-smooth}.
The groupoid structure on the left is the disjoint union of product groupoids of the indiscrete equivalence relation $[y]_G^2$ and the group $G(y,y)$ on each component $[y]_G$.
\end{remark}

We may reformulate \cref{thm:ucomqpolgpd-smooth} for actions:

\begin{corollary}
\label{thm:ucomqpolgpd-smooth-action}
Let $(X,G)$ be a Borel-overt uniformly componentwise quasi-Polish groupoid, $p : M -> X$ be a Borel $G$-space with smooth orbit equivalence relation.
Then the quotient map $\pi : M ->> X/G$ with the orbitwise topology $\@{BO}_G(M)$ is a Borel-overt bundle of quasi-Polish spaces.

If $G$ is an open quasi-Polish groupoid, then for a club of quasi-Polish topologies $\@O(M) \subseteq \@{BO}_G(M)$ making the action continuous as in \cref{thm:beckec}, we have that the orbit equivalence relation $\#E_{G \ltimes M}$ is $\*\Pi^0_2$, and $\@O(M)$ restricts to the orbitwise topology $\@{BO}_G(M)$ on each orbit.

Moreover, for any Borel transversal $Y \subseteq X$, we have a uniformly open equivariant surjection
\begin{eqgathered*}
\alpha : G \times_X Y -->> M, \\
\@{BO}_\sigma(G) \cdot Y = \@{BO}_G(M);
\end{eqgathered*}
and there is a Borel map $\gamma : M -> G$ choosing some $\gamma(a)$ such that $\gamma(a)^{-1} \cdot x \in Y$ for each $a \in M$.
\end{corollary}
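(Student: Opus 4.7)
The plan is to reduce this entire corollary to the groupoid version \cref{thm:ucomqpolgpd-smooth} applied to the action groupoid $H := G \ltimes M$. By \cref{thm:comqpolgpd-action}, $H$ equipped with the lifted fiberwise topology is a Borel-overt uniformly componentwise quasi-Polish groupoid. Its connectedness relation is precisely the orbit equivalence relation $\#E_{G \ltimes M}$, which is smooth by hypothesis. So \cref{thm:ucomqpolgpd-smooth} applies to $H$.

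For the first claim, \cref{thm:ucomqpolgpd-smooth} immediately gives that the quotient $M \twoheadrightarrow M/G$ equipped with the orbitwise topology $\@{BO}_H(M) = \@{BO}_G(M)$ is a Borel-overt bundle of quasi-Polish spaces; since the $G$-equivariant map $p$ induces a Borel map $M/G \to X/G$, composing yields the stated bundle structure of $\pi$ over $X/G$. For the second claim, by \cref{thm:beckec} a club of $\@O(M) \subseteq \@{BO}_G(M)$ make the action continuous; by \cref{thm:ucomqpolgpd-action-orbtop}, $(X, G) \ltimes (M, \@O(M))$ is then an open quasi-Polish groupoid with topology $\@O(G) \otimes_X \@O(M)$. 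Intersecting with the club produced by \cref{thm:ucomqpolgpd-smooth} applied to $H$ gives a club on which $\@O(M)$ restricts to $\@{BO}_G(M)$ on each orbit. In particular each orbit is $\*\Pi^0_2$ in $\@O(M)$, which by Sierpiński's theorem \cref{it:qpol-openquot} (applicable because the action is continuous and open, hence the orbit equivalence relation has open quotient) forces $\#E_{G \ltimes M} \subseteq M^2$ to be $\*\Pi^0_2$.

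For the third claim, given a Borel transversal $Y$ (of the orbit equivalence relation on $M$), we apply the transversal part of \cref{thm:ucomqpolgpd-smooth} to $H$. Since the source map of $H$ is $\pi_2 : G \times_X M \to M$, we have $\sigma_H^{-1}(Y) = G \times_X Y$; the target map of $H$ is $\alpha$, and its restriction to $\sigma_H^{-1}(Y)$ is the displayed $\alpha : G \times_X Y \twoheadrightarrow M$. The ``uniformly open'' conclusion of \cref{thm:ucomqpolgpd-smooth} reads
\begin{eqaligned*}
\alpha(\@{BO}_{\sigma_H}(G \times_X Y)) = \@{BO}_H(M) = \@{BO}_G(M).
\end{eqaligned*}
Combining this with \labelcref{def:fibqpolgpd-action}, which gives $\@{BO}_{\sigma_H}(G \times_X Y) = \@{BO}_\sigma(G) \otimes_X \@B(Y)$, and noting that $\alpha$ on $G \times_X Y$ can be computed set-theoretically as $\@{BO}_\sigma(G) \cdot Y$ (subsuming the Borel factors $\@B(Y)$ via Frobenius reciprocity \cref{it:fib-baire-frob} since $Y$ is a transversal), yields the identity $\@{BO}_\sigma(G) \cdot Y = \@{BO}_G(M)$. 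The Borel selector from \cref{thm:ucomqpolgpd-smooth} is a section of $\alpha : \sigma_H^{-1}(Y) \to M$, i.e., a Borel map $M \to G \times_X Y$ sending $a$ to some $(\gamma(a), \gamma(a)^{-1} a)$ with $\gamma(a)^{-1} a \in Y$; the first coordinate is the desired $\gamma : M \to G$.

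The only non-routine step is the bookkeeping in the third paragraph, chiefly the identification of the ``uniformly open'' conclusion with the simplified identity $\@{BO}_\sigma(G) \cdot Y = \@{BO}_G(M)$. This requires verifying that the factor $\@B(Y)$ appearing in $\@{BO}_{\sigma_H}(G \times_X Y) = \@{BO}_\sigma(G) \otimes_X \@B(Y)$ may be absorbed: any Borel $B \subseteq Y$ is just $\alpha^{-1}(B) \cap \sigma_H^{-1}(Y)$ intersected with the full source fiber, so projecting through $\alpha$ eliminates it using that $Y$ meets each orbit exactly once. The remaining ingredients are immediate citations of previously established results.
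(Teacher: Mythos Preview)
Your approach matches the paper's exactly: apply \cref{thm:ucomqpolgpd-smooth} to the action groupoid $G \ltimes M$ (using \cref{thm:comqpolgpd-action} to check the hypotheses), and for the second claim use that a club of compatible groupoid topologies on $G \ltimes M$ arise as $\@O(G) \otimes_X \@O(M)$ via \cref{thm:beckec}. You supply considerably more detail than the paper's two-line proof, and your unpacking of $\sigma_H^{-1}(Y) = G \times_X Y$, $\tau_H = \alpha$, and the section $\gamma$ is all correct.

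The one genuine gap is the absorption step you flag as ``non-routine.'' Your claim that the $\@B(Y)$ factor in $\@{BO}_{\sigma_H}(\sigma_H^{-1}(Y)) = \@{BO}_\sigma(G) \otimes_X \@B(Y)$ can be eliminated, so that $\alpha(\@{BO}_\sigma(G) \otimes_X \@B(Y))$ collapses to $\{U \cdot Y : U \in \@{BO}_\sigma(G)\}$, is false in general. Consider $G = \#Z/2$ acting on $M = \{a,b,c,d\}$ by swapping $a \leftrightarrow b$ and $c \leftrightarrow d$, with transversal $Y = \{a,c\}$: then $\{a,d\} = (\{e\} \cdot \{a\}) \cup (\{g\} \cdot \{c\}) \in \@{BO}_G(M)$, but the only sets of the form $U \cdot Y$ are $\emptyset$, $\{a,c\}$, $\{b,d\}$, $M$, and no countable union of these gives $\{a,d\}$. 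Your argument via ``$\alpha^{-1}(B) \cap \sigma_H^{-1}(Y)$'' does not rescue this: that intersection picks out stabilizer elements over $B$, which is unrelated to expressing $U \cdot B$ as $U' \cdot Y$. The displayed identity $\@{BO}_\sigma(G) \cdot Y = \@{BO}_G(M)$ in the corollary should be read as shorthand for the precise conclusion $\alpha(\@{BO}_{\sigma_H}(\sigma_H^{-1}(Y))) = \@{BO}_G(M)$ that you correctly derive from \cref{thm:ucomqpolgpd-smooth}, not as a separate literal equality to be justified; simply stop after establishing that.
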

\begin{proof}
Follows from applying \cref{thm:ucomqpolgpd-smooth} to $G \ltimes M$, in the quasi-Polish case recalling from \cref{thm:beckec} that a club of compatible groupoid topologies on $G \ltimes M$ are of the form $\@O(G) \otimes_X \@O(M)$ for some topology $\@O(M)$ on $M$ making the action continuous.
\end{proof}

As another application of the above results, we have a groupoid version of the open mapping theorem for Polish groups (see \cite[2.3.3]{gao:idst}), which shows that a Borel surjective homomorphism $f : G ->> H$ from a Polish group onto a standard Borel group is a continuous open map to the unique Polish group topology on $H$.

\begin{lemma}
\label{thm:ftr-cts}
Let $F : (X,G) -> (Y,H)$ be a Borel functor between Borel-overt componentwise quasi-Polish groupoids.
\begin{enumerate}[alph]
\item  \label{thm:ftr-cts:fibcts}
$F$ is $\sigma$-fiberwise continuous (i.e., $F^{-1}(\@O_\sigma(H)) \subseteq \@O_\sigma(G)$, or $F : \sigma_G^{-1}(x) -> \sigma_H^{-1}(F(x))$ is continuous between the $\sigma$-fiberwise topologies for each $x \in X$) iff $F$ is componentwise continuous on morphisms, and this implies componentwise continuity on objects.
\item  \label{thm:ftr-cts:fibopen}
If $F$ is $\sigma$-fiberwise open, then $F$ is componentwise open on objects as well as morphisms.
\end{enumerate}
\end{lemma}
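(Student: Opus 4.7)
For the forward direction of (a), the plan is a direct computation. Since $F$ commutes with inversion (as a functor), $\sigma$-fiberwise continuity of $F$ implies $\tau$-fiberwise continuity as well. Then for any $V \in \@O_H(H)$, writing $\mu_H^{-1}(V) = \bigcup_i A_i \times_Y B_i$ with $A_i \in \@O_{\sigma_H}(H)$ and $B_i \in \@O_{\tau_H}(H)$, functoriality yields $\mu_G^{-1}(F^{-1}(V)) = \bigcup_i F^{-1}(A_i) \times_X F^{-1}(B_i) \in \@O_{\sigma_G}(G) \otimes_X^\infty \@O_{\tau_G}(G)$, so $F^{-1}(V) \in \@O_G(G)$. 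Componentwise continuity on objects follows from the analogous identity $\tau_G^{-1}(F^{-1}(B)) = F^{-1}(\tau_H^{-1}(B))$ for $B \in \@O_H(Y)$.

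For the reverse direction of (a), I would invoke the Effros theorem. Since both groupoids are componentwise quasi-Polish, each component $(C,\eval{G}_C)$ is an open quasi-Polish groupoid by \cref{thm:comqpolgpd-qpol-connected}, and clause~(i) of \cref{thm:effros} asserts that its global componentwise topology $\@O_G(\eval{G}_C)$ restricts on every $\sigma$-fiber to the original fiberwise topology, and similarly for $H$. Since $F$ sends components to components, componentwise continuity of $F$ restricted to $\sigma_G^{-1}(x)$ becomes continuity of $F : \sigma_G^{-1}(x) \to \sigma_H^{-1}(F(x))$ in the fiberwise topologies, i.e., $\sigma$-fiberwise continuity.

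For part (b), the morphism case is by decomposition: given $U \in \@O_G(G)$, write $\mu_G^{-1}(U) = \bigcup_i V_i \times_X W_i$ with $V_i \in \@O_{\sigma_G}(G)$ and $W_i \in \@O_{\tau_G}(G)$, so $U = \bigcup_i V_i \cdot W_i$ and $F(U) = \bigcup_i F(V_i) \cdot F(W_i)$. The hypothesis and the inversion argument from (a) give $F(V_i) \in \@O_{\sigma_H}(H)$ and $F(W_i) \in \@O_{\tau_H}(H)$, and then openness of $\mu_H$ from the fiberwise product topology to $\@O_H(H)$ (\cref{rmk:orbtop-openquot}) yields $F(V_i) \cdot F(W_i) = \mu_H(F(V_i) \times_Y F(W_i)) \in \@O_H(H)$, so $F(U) \in \@O_H(H)$.

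The step I expect to be the main trick is the object case of (b), which rests on the saturation identity
\[ \tau_H^{-1}(F(A)) = F(\tau_G^{-1}(A)) \cdot H. \]
The $\supseteq$ inclusion is immediate from $\tau_H(F(g) \cdot h) = F(\tau_G(g))$, while $\subseteq$ uses the trivial factorization $h' = F(1_a) \cdot h'$ for any $h'$ with $\tau_H(h') = F(a) \in F(A)$, noting that $1_a \in \tau_G^{-1}(A)$. Given this, $\sigma$-fiberwise openness of $F$ yields $F(\tau_G^{-1}(A)) \in \@O_{\sigma_H}(H)$, and right-translation invariance of $\@O_{\sigma_H}(H)$, applied fiberwise, upgrades this to $F(\tau_G^{-1}(A)) \cdot H \in \@O_{\sigma_H}(H)$, giving $F(A) \in \@O_H(Y)$ by the definition of the componentwise topology on objects.
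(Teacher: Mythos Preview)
Your proof is correct and follows essentially the same approach as the paper's. The only cosmetic differences are that for the reverse direction of (a) the paper cites \cref{thm:ucomqpolgpd}\cref{thm:ucomqpolgpd:fibtop} directly (applied per component) rather than routing through Effros, and for (b) the paper works per component via the identity $F(\tau_G(V)) = \tau_H(F(V))$ using the open surjections of \cref{rmk:orbtop-openquot}, which is the same content as your global saturation identity $\tau_H^{-1}(F(A)) = F(\tau_G^{-1}(A)) \cdot H$.
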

\begin{proof}
\cref{thm:ftr-cts:fibcts}
$\Longleftarrow$ since $\@O_G(G)$ fiberwise restricts to $\@O_\sigma(G)$ by \labelcref{thm:ucomqpolgpd}\cref{thm:ucomqpolgpd:fibtop};
$\Longrightarrow$ since $\mu_G^{-1}(F^{-1}(\@O_H(H))) = (F \times F)^{-1}(\mu_H^{-1}(\@O_H(H))) \subseteq (F \times F)^{-1}(\@O_\sigma(H) \otimes_X \@O_\tau(H)) \subseteq \@O_\sigma(G) \otimes_X \@O_\tau(G)$, and similarly $\tau_G^{-1}(F^{-1}(\@O_H(Y))) \subseteq \@O_\sigma(G)$.

\cref{thm:ftr-cts:fibopen}
For any $x \in X$, using \cref{rmk:orbtop-openquot}, we have
$F(\@O_G([x]_G))
= F(\tau_G(\@O_\sigma(\sigma_G^{-1}(x))))
= \tau_H(F(\@O_\sigma(\sigma_G^{-1}(x))))
\subseteq \tau_H(\@O_\sigma(\sigma_H^{-1}(F(x))))
= \@O_H([F(x)]_H)$;
and similarly we have
$F(\@O_G(\eval{G}_{[x]_G})) \subseteq \@O_H(\eval{H}_{[F(x)]_H})$,
using that $F$ preserves multiplication.
\end{proof}

\begin{theorem}[open mapping theorem]
\label{thm:openmap}
Let $F : (X,G) -> (Y,H)$ be a Borel surjective full\nobreak\space functor from a Borel-overt uniformly componentwise quasi-Polish groupoid to a standard Borel groupoid.
\begin{enumerate}[alph]

\item \label{thm:openmap:bij}
If $F$ is bijective on objects, then
\begin{eqaligned*}
\@{BO}_\sigma(H) := F(\@{BO}_\sigma(G)) \subseteq \@B(H);
\end{eqaligned*}
and the \defn{quotient fiberwise topology} generated by this fiberwise basis turns $H$ into a Borel-overt uniformly componentwise quasi-Polish groupoid, such that $F$ is fiberwise continuous open on morphisms, hence also componentwise continuous open on objects as well as morphisms.

\item \label{thm:openmap:open}
If $H$ is a Borel-overt fiberwise quasi-Polish groupoid, and $F$ is fiberwise continuous (on morphisms) and componentwise open on objects, then for every $x \in X$,
\begin{eqaligned*}
F(\@O_\sigma(\sigma_G^{-1}(x))) = \@O_\sigma(\sigma_H^{-1}(F(x))),
\end{eqaligned*}
i.e., $F$ is fiberwise open, hence also componentwise open on morphisms.

Hence if $F$ is moreover bijective on objects, then $F$ is ``uniformly fiberwise open'':
\begin{eqaligned*}
F(\@{BO}_\sigma(G)) = \@{BO}_\sigma(H).
\end{eqaligned*}

\end{enumerate}
\end{theorem}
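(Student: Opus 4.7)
The plan is to treat parts (a) and (b) separately. Part (a) is the technical heart, requiring a quotient-by-kernel construction to build the fiberwise topology on $H$; part (b) then reduces to the Effros theorem (\cref{thm:effros}) combined with the classical open mapping theorem for Polish groups applied hom-set-wise.

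For part (a), I identify $Y = X$ so that $F$ is the identity on objects, and introduce the kernel subgroupoid $K := F^{-1}(\iota(X))$. Fullness together with bijectivity on objects forces the fibers of $F : \sigma_G^{-1}(x) \twoheadrightarrow \sigma_H^{-1}(x)$ to be exactly the right cosets $g \cdot K_{\sigma(g)}$. Each $K_x \subseteq G(x,x)$ is a Borel subgroup of the canonical Polish group $G(x,x)$ from \cref{thm:ucomqpolgpd-homtop}; the hypothesis that $H$ is standard Borel forces $H(x,x) = G(x,x)/K_x$ to also be standard Borel, which (by the standard Polish-group fact that smoothness of a coset equivalence relation requires the subgroup to be closed) implies $K_x$ is $\*\Pi^0_2$ in $G(x,x)$. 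Hence all cosets $g \cdot K_{\sigma(g)}$ are $\*\Pi^0_2$ in $\sigma_G^{-1}(\sigma(g))$. I then equip $H$ with the quotient fiberwise topology $\@O_\sigma(H) := \set{V \subseteq H | F^{-1}(V) \in \@O_\sigma(G)}$: right-translation invariance of $\@O_\sigma(G)$ makes the quotient map $F : \sigma_G^{-1}(x) \twoheadrightarrow \sigma_H^{-1}(x)$ open (since $F^{-1}(F(U)) = \bigcup_{k \in K_x} U \cdot k$ is fiberwise open), and then Sierpiński's theorem (\cref{it:qpol-openquot}) combined with the $\*\Pi^0_2$-ness of cosets yields that each $\sigma_H^{-1}(x)$ is quasi-Polish.

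To lift this to the Borel-structure claim $\@{BO}_\sigma(H) := F(\@{BO}_\sigma(G)) \subseteq \@B(H)$, I produce a Borel section $s : H \to G$ of $F$ by applying large-section uniformization (\cref{thm:largeunif}) to the smooth equivalence relation $E_F = \ker(F)$; idealisticity of $E_F$ comes from the $\sigma$-fiberwise meager ideal on each coset, Borel-definable via \cref{thm:fib-baire-borel}. For $U \in \@{BO}_\sigma(G)$, the saturation $U \cdot K = F^{-1}(F(U))$ is then Borel by Kunugi--Novikov applied to the Borel action of $K$ on $G$, whence $F(U) = s^{-1}(U \cdot K)$ is Borel. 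The remaining properties---right-translation invariance, Borel-overtness ($\sigma_H(F(U)) = \sigma_G(U) \in \@B(X)$), uniform fiberwise continuity of differences, and componentwise continuity/openness of $F$ on objects and morphisms---all transfer from $G$ through $F$ by functoriality and \cref{thm:ftr-cts}.

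For part (b), I restrict to a single connected component: by \cref{thm:comqpolgpd-qpol-connected}, the restriction of $G$ to $[x]_G$ is globally an open quasi-Polish groupoid, and a parallel argument (using fiberwise continuity of $F$ to check that $H$ inherits continuous differences on $[F(x)]_H$) gives the same for $H$. By Effros (\cref{thm:effros}), both topologies are unique, so $F$ becomes a Borel continuous surjective full functor between connected open quasi-Polish groupoids, open on objects. The classical open mapping theorem for Polish groups applied to each $F : G(x,x) \twoheadrightarrow H(F(x),F(x))$ yields openness on automorphism groups, and hence (via translation-equivalence) on all hom-sets. Combining hom-set-wise openness with componentwise openness on objects through the decomposition $\@{BO}_G(G) = \sigma^{-1}(\@{BO}_G(X)) \ocap \@O(G)$ from \cref{thm:ucomqpolgpd-action-orbtop} yields fiberwise openness of $F$. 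The uniformly-fiberwise-open statement when $F$ is bijective on objects then follows by combining this with part (a), which ensures $F(\@{BO}_\sigma(G)) \subseteq \@B(H)$. The main obstacle is in part (a): both the Polish-group-theoretic deduction that $K_x$ must be $\*\Pi^0_2$ from $H$'s standard Borelness, and the verification of idealisticity of $E_F$ enabling the Borel section, are delicate and rely on \cref{thm:ucomqpolgpd-homtop} together with fiberwise Baire category.
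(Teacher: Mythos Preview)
Both parts take routes different from the paper's, and each has a circularity at precisely the step you flag as delicate.

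\textbf{Part (a).} Your idealisticity claim for $E_F = \ker(F)$ is the gap. For the coset-wise meager ideal to have a Borel quantifier, you need the bundle $F : G \to H$ (with the right $K$-cosets as fibers) to be Borel-overt, equivalently $U \cdot K = F^{-1}(F(U)) \in \@B(G)$ for basic $U \in \@{BO}_\sigma(G)$; but this is exactly $F(U) \in \@B(H)$, the statement you are proving. Your appeal to ``Kunugi--Novikov applied to the Borel action of $K$'' does not help, since $K$ is not known to be Borel-overt as a fiberwise groupoid (the images $\sigma(U \cap K)$ are a priori only analytic), so none of the existing machinery applies to $K \ltimes G$. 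Nor does \cref{thm:fib-baire-borel} compute the meager ideal \emph{within} a coset from the ambient $\sigma$-fiberwise category, since a $\sigma$-fiberwise meager set may be comeager in a given coset. The paper sidesteps all of this by acting with the \emph{known-nice} groupoid $G$ rather than the kernel: it uses the left translation action of $G$ on $\tau_H : H \to X$ via $g \cdot h := F(g)h$, whose orbits are the $\sigma_H$-fibers by fullness. The action groupoid $G \ltimes H$ is Borel-overt uniformly componentwise quasi-Polish by \cref{thm:comqpolgpd-action} (inheriting from $G$), so \cref{thm:ucomqpolgpd-smooth} applied with transversal $\iota_H(X) \subseteq H$ yields $F(\@{BO}_\sigma(G)) \subseteq \@B(H)$, the Borel section, and Borel-overtness of the quotient topology simultaneously.

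\textbf{Part (b).} Your assertion that ``$H$ inherits continuous differences on $[F(x)]_H$'' from fiberwise continuity of $F$ is unjustified: pushing continuity of $\mu_H$ forward along $F$ requires $F$ to already be fiberwise open (or a quotient map), which is the conclusion. Since $H$ is only assumed \emph{fiberwise} quasi-Polish, \cref{rmk:homtop} leaves open whether $H(F(x),F(x))$ is $\*\Pi^0_2$ in its $\sigma$-fiber, so you cannot invoke the Polish-group open mapping theorem on $G(x,x) \twoheadrightarrow H(F(x),F(x))$ without first knowing the codomain carries a Polish group topology agreeing with the given one. The paper instead factors $F$ through the pullback groupoid $F^*(H) = X \times_Y H \times_Y X$, on which the induced $\~F : G \to F^*(H)$ is bijective on objects so part (a) applies. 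Then Effros (\cref{thm:effros-action}) identifies the resulting quotient topology on each $\sigma_{F^*(H)}^{-1}(x) \cong [x]_G \times_Y \sigma_H^{-1}(F(x))$ with the product topology, and openness of the second projection---which is a pullback of the assumed-open $F : [x]_G \to [F(x)]_H$---gives fiberwise openness directly, without ever needing $H$ itself to have continuous differences.
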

\begin{proof}
\cref{thm:openmap:bij}
We may assume $X = Y$ and $F$ is the identity on objects.
Then we have the left translation action of $G$ on $\tau_H : H -> X$, where $g \cdot h := F(g)h$, whose orbits are the $\sigma_H$-fibers since $F$ is full.
By applying \cref{thm:ucomqpolgpd-smooth} to the transversal consisting of identity elements $\iota_H(X) \subseteq H$, we get that $\@{BO}_\sigma(H) := F(\@{BO}_\sigma(G)) \subseteq \@B(H)$ is a Borel-overt $\sigma$-fiberwise quasi-Polish topology on $H$, namely the orbitwise topology of $G \actson H$, and that $F : (G,\@{BO}_\sigma(G)) -> (H,\@{BO}_\sigma(H))$ is $\sigma$-fiberwise continuous (and uniformly fiberwise open by definition).
It is easily seen that $\@{BO}_\sigma(H)$ is invariant under right translation by $H$, again using that $F$ is full; thus $H$ becomes a Borel-overt fiberwise quasi-Polish groupoid.
Uniform fiberwise continuity of differences easily descends from $G$:
\begin{eqaligned*}
\mu_H^{-1}(\@{BO}_\sigma(H))
&= (F \times F)((F \times F)^{-1}(\mu_H^{-1}(\@{BO}_\sigma(H)))) \\
&= (F \times F)(\mu_G^{-1}(F^{-1}(\@{BO}_\sigma(H)))) \\
&\subseteq (F \times F)(\mu_G^{-1}(\@{BO}_\sigma(G))) \\
&\subseteq (F \times F)(\@{BO}_\tau(G) \otimes_X \@{BO}_\sigma(G)) \\
&= \@{BO}_\tau(H) \otimes_X \@{BO}_\sigma(H).
\end{eqaligned*}
By the preceding lemma, $F$ is componentwise continuous open.

\cref{thm:openmap:open}
Consider the \defn{pullback groupoid}
\begin{eqaligned*}
F^*(H) := X \times_Y H \times_Y X = \set{(x',h,x) \in X \times H \times X | h : F(x) -> F(x')}
\end{eqaligned*}
on $X$, with the hom-sets lifted from $H$, where $\sigma_{F^*(H)}, \tau_{F^*(H)}$ are the third and first projections, and $\mu, \iota, \nu$ are defined as in $H$; then $F$ factors as the identity-on-objects full functor $\~F := (\tau_G,F,\sigma_G) : G -> F^*(H)$ followed by the full and faithful functor $\pi_2 : F^*(H) -> H$.
The $\sigma_{F^*(H)}$-fibers are
\begin{eqaligned*}
\sigma_{F^*(H)}^{-1}(x) \cong [x]_G \times_Y \sigma_H^{-1}(F(x)).
\end{eqaligned*}
By \cref{thm:openmap:bij}, we get a Borel-overt $\sigma$-fiberwise topology on $F^*(H)$ which is generated by
\begin{eqaligned*}
\~F(U) \cong (\tau_G,F)(U) \subseteq [x]_G \times_Y \sigma_H^{-1}(F(x))
\end{eqaligned*}
for $U \in \@O_\sigma(\sigma_G^{-1}(x))$, for each $x \in X$.

If now $H$ is equipped with a Borel-overt componentwise quasi-Polish groupoid topology, and $F$ is fiberwise continuous, then for each $x \in X$, the topology
$\@O_G([x]_G) \otimes_Y \@O_\sigma(\sigma_H^{-1}(F(x)))$
on $\sigma_{F^*(H)}^{-1}(x)$ makes $\pi_1 : \sigma_{F^*(H)}^{-1}(x) -> (X,\@O_G(X))$ and the left action of $G$ continuous, hence by \cref{thm:effros-action} must in fact equal the quotient topology defined above.
If $F$ is moreover componentwise open on objects, then
$\pi_2 : \sigma_{F^*(H)}^{-1}(x) -> (\sigma_H^{-1}(F(x)), \@O_\sigma(\sigma_H^{-1}(F(x))))$
is open, being a pullback of $F : [x]_G -> [F(x)]_H$; thus
\begin{eqaligned*}
\pi_2((\tau_G,F)(U)) = F(U) \subseteq \sigma_H^{-1}(F(x))
\end{eqaligned*}
is $\@O_\sigma(H)$-open for each $U \in \@O_\sigma(\sigma_G^{-1}(x))$.
\end{proof}

\begin{remark}
As in \cref{rmk:gpd-sub-closure}, some obvious attempts to strengthen the above result fail:
\begin{itemize}

\item
The assumption in \cref{thm:openmap:open} that $F$ is open on objects cannot be dropped: consider a continuous bijection from the same space $X$ with a finer to a coarser topology, regarded as a functor between the indiscrete equivalence relations $X^2$.

\item
The assumption in \cref{thm:openmap:open} that $F$ is continuous on morphisms cannot be dropped, even assuming continuity and openness on objects: let $X$ be a non-discrete Polish space, with a non-open set $A \subseteq X$, and consider the product of the indiscrete equivalence relation $X^2$ and the discrete group $\#Z$, as a groupoid $G = X^2 \times \#Z$ on $X$; we have a functor $F : G -> G$ which fixes the objects, and maps $(x,y,n) |-> (x,y,n)$ if $(x \in A \iff y \in A)$, else $(x,y,-n)$.

\item
The same examples show that it is difficult to formulate a general ``automatic continuity'' theorem for Borel functors between open Polish groupoids, that yields more than just Pettis's theorem for Polish groups which implies continuity on hom-sets.
(See \cite{rosendal:cocycle} for some recent results on automatic continuity for functors on special types of groupoids.)

\item
We cannot weaken the assumption that $F$ is a surjective full functor to $F$ being a fibration (recall \cref{def:ftr-equiv}) which descends to an injection on the quotient $X/G `-> Y/H$.
Let $H$ be a non-discrete Polish group, let $X$ be $H$ with a strictly finer topology and $G$ be the indiscrete equivalence relation $X^2$, and let $F : G -> H$ take $(g,g') |-> g'g^{-1}$.
(So $G$ is the action groupoid of the free transitive action $H \actson H$, but with a strictly finer topology.)
Then $F$ is continuous and trivially componentwise open on objects, but clearly not fiberwise open.

\item
It is easy to see that \cref{thm:openmap:bij} as stated does not hold if $F$ is not assumed to be bijective: consider a badly behaved (non-Borel-overt) continuous surjection between uncountable Polish spaces with the indiscrete equivalence relations.
A suitable generalization to non-bijective quotient groupoids needs to require $F$ to yield well-behaved open quotient topologies on each component, along the lines of \cref{it:qpol-openquot}.
We leave the details of such a generalization to future work.

\end{itemize}
\end{remark}

The following is the quasi-Polish version of \cite[7.1.2]{becker-kechris:polgrp}, \cite[5.2.2]{lupini:polgpd}:

\begin{corollary}
\label{thm:ucomqpolgpd-erquot}
Let $(X,G)$ be a Borel-overt uniformly componentwise quasi-Polish groupoid, and suppose its connectedness relation $\#E_G \subseteq X^2$ is Borel.
Then the componentwise topology $\@{BO}_G(X)$ turns $\#E_G$ into a Borel-overt uniformly classwise quasi-Polish equivalence relation, such that the quotient functor $(\sigma,\tau) : G ->> \#E_G$ is fiberwise as well as componentwise continuous uniformly open:
\begin{eqaligned*}
\@{BO}_\sigma(\#E_G) = F(\@{BO}_\sigma(G)) \subseteq \@B(\#E_G).
\end{eqaligned*}

Thus more generally, for a Borel-overt uniformly componentwise quasi-Polish groupoid $(X,G)$ and Borel $G$-space $p : M -> X$ whose orbit equivalence relation $E := \#E_{G \ltimes M} \subseteq M^2$ is Borel, the orbitwise topology $\@{BO}_G(M)$ turns $E$ into a Borel-overt uniformly classwise quasi-Polish equivalence relation, whose fiberwise topology $\@{BO}_\sigma(E)$ has a Borel fiberwise open basis consisting of
\begin{eqaligned*}
\~U := \bigcup_{a \in M} (\set{a} \times Ua) = \set{(a,b) \in E | \exists g \in U\, (ga = b)}
\end{eqaligned*}
for any Borel fiberwise open basis of $U \in \@{BO}_\sigma(G)$.
\end{corollary}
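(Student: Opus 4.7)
The plan is to apply the open mapping theorem, Theorem~\ref{thm:openmap}\labelcref{thm:openmap:bij}, to the canonical quotient functor $F := (\sigma_G, \tau_G) : (X, G) ->> (X, \#E_G)$ that collapses parallel morphisms. This $F$ is Borel, the identity on objects (hence bijective on objects), surjective, and full by the very definition of $\#E_G$: any $(x,y) \in \#E_G$ arises from some $g : x -> y \in G$. Thus Theorem~\ref{thm:openmap}\labelcref{thm:openmap:bij} applies directly and gives that $\@{BO}_\sigma(\#E_G) := F(\@{BO}_\sigma(G)) \subseteq \@B(\#E_G)$, and that this quotient fiberwise topology turns $\#E_G$ into a Borel-overt uniformly componentwise quasi-Polish groupoid. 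Since $\#E_G$ is an equivalence relation, by Remark~\ref{rmk:bocqper} this is precisely the data of a Borel-overt uniformly classwise quasi-Polish equivalence relation.

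To confirm that the induced classwise topology on objects coincides with the ambient $\@{BO}_G(X)$, observe that the same theorem asserts that $F$ is componentwise continuous \emph{and} open on objects; since $F$ is the identity on $X$, this forces $\@{BO}_{\#E_G}(X) = \@{BO}_G(X)$, as required. For the second claim of the corollary, I would apply the first claim to the action groupoid $G \ltimes M$ in place of $G$: by Lemma~\ref{thm:comqpolgpd-action}, the lifted fiberwise topology makes $G \ltimes M$ into a Borel-overt uniformly componentwise quasi-Polish groupoid; its connectedness relation is $E$, and its componentwise topology on the object space $M$ is by construction the orbitwise topology $\@{BO}_G(M)$. The first claim then yields the desired classwise quasi-Polish structure on $E$ with classwise topology $\@{BO}_G(M)$.

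For the explicit fiberwise basis $\~U$: by Definition~\ref{def:fibqpolgpd-action}, $\@{BO}_\sigma(G \ltimes M) = \@{BO}_\sigma(G) \otimes_X \@B(M)$, so the basic $\sigma$-fiberwise open sets are of the form $U \times_X M$ for $U \in \@{BO}_\sigma(G)$; applying the quotient functor $(\sigma_{G \ltimes M}, \tau_{G \ltimes M}) = (\pi_2, \alpha) : G \ltimes M -> E$ sends each such rectangle to $\set{(a, ga) | g \in U,\, \sigma(g) = p(a)} = \~U$, which by Theorem~\ref{thm:openmap}\labelcref{thm:openmap:bij} lies in $\@{BO}_\sigma(E)$ and yields the claimed fiberwise basis. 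Since all the delicate content (uniform fiberwise continuity of differences for the quotient topology, uniform openness of $F$, and compatibility with the Borel structure) has already been absorbed into Theorem~\ref{thm:openmap}\labelcref{thm:openmap:bij}, the present corollary is essentially a mechanical packaging, and I anticipate no substantial obstacle beyond the careful verification that $F$ meets the hypotheses of the open mapping theorem.
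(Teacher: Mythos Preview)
Your proposal is correct and takes essentially the same approach as the paper: apply Theorem~\ref{thm:openmap}\ref{thm:openmap:bij} to the bijective-on-objects full quotient functor $F = (\sigma,\tau) : G \to \#E_G$, then invoke Remark~\ref{rmk:bocqper} to identify the resulting structure as a classwise quasi-Polish equivalence relation; the action case follows by passing to $G \ltimes M$ via Lemma~\ref{thm:comqpolgpd-action}. Your write-up is in fact more detailed than the paper's one-line proof, and your explicit check that $\@{BO}_{\#E_G}(X) = \@{BO}_G(X)$ (via $F$ being the identity on objects and componentwise open) is a useful clarification the paper leaves implicit.
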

\begin{proof}
Follows from \cref{thm:openmap}\cref{thm:openmap:bij} for $H := \#E_G$, recalling from \cref{rmk:bocqper} that the fiberwise topology on an equivalence relation $E$ is determined by the classwise topology $\@O_E(X)$.
\end{proof}

\begin{remark}
We may reformulate this result more abstractly as in \cite[7.1.2]{becker-kechris:polgrp}, as follows.

For any standard Borel bundle of quasi-Polish spaces $p : X -> Y$, define the \defn{fiberwise lower powerspace} $\@F_p(X)$ to be the space of pairs $(y,F)$ where $y \in Y$ and $F \subseteq X_y$ is closed, equipped with the Borel sigma-algebra generated by the first projection $\@F_p(X) -> Y$ as well as the sets
\begin{eqaligned*}
\Dia_Y U := \set{(y,F) \in \@F_p(X) | F \cap U \ne \emptyset}
\end{eqaligned*}
for $U \in \@{BO}_p(X)$, and the $p$-fiberwise topology generated by the same sets as a fiberwise subbasis.
This is the Borel fiberwise topological version of the quasi-Polish fiberwise lower powerspace from \cite[2.5.7, 2.5.9]{chen:beckec}, which along with \cref{thm:fib-bor-qpol} shows that $\@F_p(X)$ is indeed a standard Borel bundle of quasi-Polish spaces over $Y$.
For $p$ to be Borel-overt means precisely that
\begin{eqaligned*}
p^* : Y &--> \@F_p(X) \\
y &|--> (y, X_y)
\end{eqaligned*}
is Borel; indeed, $(p^*)^{-1}(\Dia_Y U) = p(U)$.
(One often abuses notation and refers to an element of $\@F_p(X)$ as just a closed set $F$, instead of the pair $(y,F)$, when the basepoint $y$ is clear from context.)

Now \cref{thm:ucomqpolgpd-erquot} shows that for a Borel-overt uniformly componentwise quasi-Polish groupoid $(X,G)$ with $\#E_G$ Borel, the bundle $(\sigma,\tau) : G ->> \#E_G \subseteq X^2$ equipped with the hom-set-wise topology (the restriction of the $\sigma$- and $\tau$-fiberwise topologies; see \cref{thm:ucomqpolgpd-homtop}) is Borel-overt, i.e., the map
\begin{align*}
X^2 &--> \@F_{(\sigma,\tau)}(G) \\
(x,y) &|--> G(x,y) \text{ (i.e., $(x,y,G(x,y))$)}
\shortintertext{is Borel.
In particular,}
\Aut_G : X &--> \@F_{(\sigma,\tau)}(G) \\
x &|--> \Aut_G(x) = G(x,x)
\end{align*}
is Borel.
In the case of a Borel $G$-space $M$ with Borel orbit equivalence relation, these maps become
\begin{align*}
\begin{aligned}[t]
M^2 &--> \@F_{(\sigma,\tau)}(G) \\
(a,b) &|--> \set{g : p(a) -> p(b) | ga = b},
\end{aligned}
&&
\begin{aligned}[t]
\Stab_G : M &--> \@F_{(\sigma,\tau)}(G) \\
a &|--> \set{g : p(a) -> p(a) | ga = a}.
\end{aligned}
\end{align*}
Similarly, \labelcref{thm:openmap}\cref{thm:openmap:bij} shows that for a full identity-on-objects Borel functor $F : (X,G) ->> (X,H)$,
the ``kernel'' map
$X \ni x |-> F^{-1}(1_x) \in \@F_{(\sigma,\tau)}(G)$
is Borel.
\end{remark}

\def\MR#1{}
\bibliographystyle{amsalpha}
\bibliography{refs}

\medskip
\noindent
Department of Mathematics\\
University of Michigan\\
Ann Arbor, MI, USA\\
\nolinkurl{ruiyuan@umich.edu}

\end{document}